\theoremstyle{definition}
\newtheorem{definition}{Definition}[section]
\theoremstyle{theorem}
\newtheorem{theorem}[definition]{Theorem}
\newtheorem{lemma}[definition]{Lemma}
\newtheorem{proposition}[definition]{Proposition}
\newtheorem{corollary}[definition]{Corollary}
\newtheorem{lem}[definition]{Lemma}
\newtheorem{prop}[definition]{Proposition}
\newtheorem{cor}[definition]{Corollary}
\begin{document}

\title{Internal sums for synthetic fibered \texorpdfstring{$\inftyone$}{(∞,1)}-categories}
\author[Jonathan Weinberger]{Jonathan Weinberger}
\address{Johns Hopkins University, Department of Mathematics, 3400 N Charles St, 21218 Baltimore, MD, USA}
\email{jweinb20@jhu.edu}

\date{\today}

\maketitle

\begin{abstract}
We give structural results about bifibrations of (internal) $\inftyone$-categories with internal sums. 
This includes a higher version of Moens' Theorem, characterizing cartesian bifibrations with extensive \aka~stable and disjoint 
internal sums over lex bases as Artin gluings of lex functors. We also treat a generalized version of Moens' Theorem due to Streicher which does not require the Beck--Chevalley condition. Furthermore, we show that also in this setting the Moens fibrations can be characterized via a condition due to Zawadowski.

Our account overall follows Streicher's presentation of fibered category theory \`{a} la B\'{e}nabou, generalizing the results to the internal,
higher-categorical case, formulated in a synthetic setting. Namely, we work inside simplicial homotopy type theory, which has been introduced by Riehl and Shulman as a logical system to reason about
internal $\inftyone$-categories, interpreted as Rezk objects in any given Grothendieck--Rezk--Lurie $\inftyone$-topos. \\[1.5\baselineskip]
\emph{MSC2020}: 03B38, 18N60, 18D30, 18B50, 18N45, 55U35, 18N50 \\[.5\baselineskip]
\emph{Keywords:} cartesian fibrations, bifibrations, Beck--Chevalley condition, Artin gluing, extensivity, Moen's Theorem, $\inftyone$-categories,
Segal spaces, Rezk spaces, homotopy type theory, simplicial type theory
\end{abstract}

\section{Introduction}

\subsection{Overview}
In the work at hand, we investigate the notion of \emph{internal sums} for fibrations of synthetic $\inftyone$-categories. In particular, 
we consider the case of \emph{extensive} internal sums and prove a version of Moens' Theorem which says that essentially all fibrations with extensive internal sums
arise as Artin gluings of lex functors. The setting is Riehl--Shulman's synthetic $\inftyone$-category theory~\cite{RS17} which takes place in a simplicially augmented
version of \emph{homotopy type theory (HoTT)}~\cite{hottbook,AW05,StrSimp}. We generalize several results and proofs from Streicher's notes~\cite{streicher2020fibered}
on fibered category theory \`{a} la B\'{e}nabou to this setting.

Classically, a category $\mathbb C$ with all small coproducts is called \emph{(infinitary) extensive} if for any small family of objects $(a_i)_{i \in I}$ in $\mathbb C$ the canonical maps $\prod_{i \in I} \mathbb C/a_i \to \mathbb C/\coprod_{i \in I} a_i$. This can be equivalently expressed as saying that small coproducts are \emph{stable} under arbitrary pullbacks, and \emph{disjoint} in the sense that any coproduct injection is a monomorphism, and the intersection of a pair of different coproduct injections is the initial object. If $\mathbb C$ is a finitely complete extensive category, it is called \emph{lextensive}.

The latter phrasing of extensiveness is most useful in the fibrational context, and indeed Moens or lextensive fibrations are exactly the fibrational analogue of lextensive categories.

We provide a comprehensive treatment of them in the higher setting, formulated in a type theory that is interpretable in internal $(\infty,1)$-categories inside an arbitrary $(\infty,1)$-topos.

The extensivity property is akin to the \emph{descent property}, which is part of Giraud's characterization of Grothendieck toposes, which lifts to the $(\infty,1)$-categorical case via work by Rezk and Lurie. Therefore, our study adds to the study of higher topos theory from a fibrational point of view. It can also be used to study \emph{internal higher topos theory}~\cite{MWInt,SteInt} because it is phrased in an appropriate type theory. Indeed, classically Moens' Theorem \aka~the characterization theorem of lextensive fibrations has been used to study geometric morphisms from a fibered point of view~\cite{streicher2020fibered,StrFVGM}. Thus we also hope for future applications in higher categorical logic~\cite{SteCompr,SteGSh,SteSites}, some of which suggested in~\Cref{ssec:persp}.

\subsection{Beck--Chevalley families}
Let $\mc U$ be a universe. A co-/cartesian family $P:B \to \UU$ of $\mc U$-small synthetic $\inftyone$-categories \aka~\emph{Rezk types} 
possesses functorial transport for directed arrows in the base, \ie~any arrow $u:a \to_B b$ induces a transport functor 
$u_!: P\,a \to P\,b$ or $u^*: P\,b \to P\,a$, respectively. A family is \emph{bicartesian} if it is both cocartesian and cartesian 
in which case the transport functors form an adjunction $u_! : P\,a \rightleftarrows P\,b : u^*$. A \emph{Beck--Chevalley family} 
is a bicartesian family that satisfies the \emph{Beck--Chevalley condition (BCC)}, which is phrased as follows. We denote 
cartesian arrows by $\cartarr$ and cocartesian arrows by $\cocartarr$, respectively. The BCC says that any dependent square
\[\begin{tikzcd}
	\bullet && \bullet \\
	\bullet && \bullet
	\arrow[from=1-1, to=2-1, cart]
	\arrow[from=2-1, to=2-3, cocart]
	\arrow[from=1-1, to=1-3]
	\arrow[from=1-3, to=2-3, cart]
\end{tikzcd}\]
over a pullback is itself a pullback precisely if the top horizontal map is cocartesian, too. This condition is also referred to as $P$ having \emph{internal sums}. The motivation for this can be understood from the following ($1$-categorical) situation. Let $\mathbb C$ be some not necessarily small category. The \emph{family fibration of $\mathbb C$}~\cite[Paragraph~(3.3)]{benabou-foundations}, \cite[Example~1.5]{streicher2020fibered}, \cite[Example~8.1.9(b)]{BorHandb2}, \cite[Example~1.2.1]{jacobs-cltt}, \cite[Example~2.7]{SterAng-RelCT}, is defined as the Grothendieck construction of the functor
\[ \Fam(\mathbb C): \mathbf{Set}^{\Op} \to \mathbf{Cat}\]
with
\[ \Fam(\mathbb C)(I) := \mathbb C^I,\]
\[ \Fam(\mathbb C)(u:J \to I) := \big(\mathbb C^u := u^* := (-) \circ u: \mathbb C^I \to \mathbb C^J\big).\]
One can show that the fibration $\Fam(\mathbb C) \to \mathbf{Set}$ having internal sums (namely, being a bifibration satisfying the BCC) is equivalent to $\mathbb C$ having small sums. In accordance with B\'{e}nabou's philosophy, which is alluded to in the aforementioned sources, one seeks to generalize such considerations to an arbitrary base with pullbacks in place of just the category of sets.

Another important instance of bifibrations satisfying the Beck--Chevalley condition is given by \emph{Artin gluings} of lex functors. Namely, given a lex functor $F:B \to C$ between two lex Rezk types $B$ and $C$, its \emph{Artin gluing} $\gl(F): \commaty{C}{F} \fibarr B$ is defined by pulling back the codomain fibration $\partial_1: C^{\Delta^1} \fibarr C$ along $F$:
\[\begin{tikzcd}
	{\commaty{C}{F}} && {C^{\Delta^1}} \\
	B && C
	\arrow[from=1-1, to=1-3]
	\arrow["{\gl(F)}"', two heads, from=1-1, to=2-1]
	\arrow["F"', from=2-1, to=2-3]
	\arrow["{\partial_1}", two heads, from=1-3, to=2-3]
	\arrow["\lrcorner"{anchor=center, pos=0.125}, draw=none, from=1-1, to=2-3]
\end{tikzcd}\]

\subsection{Moens families}
If $F : B \to C$ is a functor between some lex Rezk types one can show that the functor $\gl(F): \commaty{C}{F} \fibarr B$ is a lex bifibration, meaning that each fiber is a lex Rezk type and the cartesian transport maps are lex functors (which is then automatic since $u^*$ is a right adjoint). If, in addition, the functor $F$ is lex, the internal sums of $\gl(F)$ are \emph{stable} and \emph{disjoint}. The former condition means that cocartesian arrows are stable under pullback along any arrow. Disjointness is defined as the condition that the fibered diagonal $\delta_f$ of a cocartesian arrow $f$ is always cocartesian:
\[\begin{tikzcd}
	d \\
	& {d \times_e d} && d \\
	& d && e
	\arrow["f"', from=3-2, to=3-4, cocart]
	\arrow["f", from=2-4, to=3-4, cocart]
	\arrow[curve={height=-18pt}, Rightarrow, no head, from=1-1, to=2-4]
	\arrow[curve={height=12pt}, Rightarrow, no head, from=1-1, to=3-2]
	\arrow[from=2-2, to=3-2]
	\arrow[from=2-2, to=2-4]
	\arrow["{\delta_f}", dashed, from=1-1, to=2-2, cocart]
	\arrow["\lrcorner"{anchor=center, pos=0.125}, draw=none, from=2-2, to=3-4]
\end{tikzcd}\]
These are fibrational generalizations of the usual notion of stability and disjointness of sums in a category, giving rise to the notion of \emph{extensivity}. Accordingly, we call a lex cartesian family \emph{extensive} or \emph{Moens family} if it has stable and disjoint internal sums. Occasionally, we might be interested in discarding the disjointness condition in which case we call the fibration \emph{pre-Moens}. As a central result, \emph{Moens' Theorem} establishes a correspondence between lex functors and lextensive fibration.

More precisely, fix a lex type $B$. Consider the subuniverse
\[ \LexRezk \defeq \sum_{A:\UU} \isRezk(A) \times \isLex(A)\]
of lex Rezk types. This gives rise to the type $B \downarrow^\lex \LexRezk$ of lex functors $F \colon B \to C$ from $B$ to some other lex Rezk type $C$.
Moens' Theorem will establish an equivalence of $B \downarrow^\lex$ with the type $\MoensFam(B)$ via the following mutually inverse maps:
\[\begin{tikzcd}
	{\mathrm{MoensFam}(B)} &&& {B \downarrow^\lex \mathrm{LexRezk}}
	\arrow[""{name=0, anchor=center, inner sep=0}, "\Phi", curve={height=-18pt}, from=1-1, to=1-4]
	\arrow[""{name=1, anchor=center, inner sep=0}, "\Psi", curve={height=-18pt}, from=1-4, to=1-1]
	\arrow["\simeq"{description}, Rightarrow, draw=none, from=0, to=1]
\end{tikzcd}\]
\begin{align*} 
	\Phi(P:B \to \UU) & \defeq  \lambda b.(!_b)_!(\zeta_b): B \to P\,z, \\
	\Psi(F:B \to C)&  \defeq \big(\gl(F):\commaty{C}{F} \to B\big),
\end{align*} 
where$z : B$ is terminal in $B$, and $\zeta : \prod_{b:B} P\,b$ is defined by letting $\zeta_b : P\,b$ be terminal in $P\,b$ for $b:B$. The theorem is due to Moens and his dissertation work~\cite{MoensPhD}. A systematic and comprehensive discussion is given by Streicher~\cite{streicher2020fibered} in his notes on fibered category theory \`{a} la B\'{e}nabou. In fact, Streicher observed that a generalized version of Moen's Theorem holds when dropping the assumption that the bifibration in questions satisfies the Beck--Chevalley condition. One still gets a meaningful notion of extensivity in this case, namely that a dependent square as follows is a pullback if and only if the top horizontal edge is cocartesian:
\[\begin{tikzcd}
	\bullet && \bullet \\
	\bullet && \bullet
	\arrow[from=1-1, to=1-3]
	\arrow[squiggly, from=1-1, to=2-1]
	\arrow[squiggly, from=1-3, to=2-3]
	\arrow[from=2-1, to=2-3, cocart]
\end{tikzcd}\]
This corresponds to the functor from the base only preserving the terminal element but not necessarily pullbacks. The equivalences give rise to a diagram as follows
\[\begin{tikzcd}
	{\MoensFam(B)} &&& {B \downarrow^\lex {\LexRezk}} \\
	\\
	{\GMoensFam(B)} &&& {B  \downarrow^\ter {\LexRezk}}
	\arrow[""{name=0, anchor=center, inner sep=0}, "{\omega'_{(-)}}"{description}, curve={height=12pt}, from=1-1, to=1-4]
	\arrow[hook, from=1-1, to=3-1]
	\arrow[hook, from=1-4, to=3-4]
	\arrow[""{name=1, anchor=center, inner sep=0}, "{\omega'_{(-)}}"{description}, curve={height=12pt}, from=3-1, to=3-4]
	\arrow[""{name=2, anchor=center, inner sep=0}, "\gl"{description}, curve={height=12pt}, from=3-4, to=3-1]
	\arrow[""{name=3, anchor=center, inner sep=0}, "\gl"{description}, curve={height=12pt}, from=1-4, to=1-1]
	\arrow["\simeq"{description}, Rightarrow, draw=none, from=1, to=2]
	\arrow["\simeq"{description}, Rightarrow, draw=none, from=0, to=3]
\end{tikzcd}\]
where the types at hand are the $\Sigma$-types over the type $B \to \UU$ of the respective flavors of fibrations, and of lex or terminal object-preserving functors, respectively. These \emph{generalized Moens fibrations} also play a role in work by Zawadowski~\cite{zawadowski-lax-mon} who introduced them using a different characterization. Streicher has proven both notions to be equivalent~\cite{streicher-cartbifib} which we also discuss.

\begin{figure}
		\[\begin{tikzcd}
			&&&&&&&&& {} \\
			& {} &&&&&&&&& {} \\
			& {\text{Moens/lextensive}} &&&&&&&& {} \\
			& {\text{pre-Moens}} \\
			& {\text{lex BC}} & {\text{gen.~Moens}} \\
			& {\text{BC}} & {\text{lex~cartesian}} \\
			{\text{cocartesian}} & {\text{bicartesian}} & {\text{cartesian}} \\
			& {}
			\arrow[hook, from=5-2, to=6-3]
			\arrow[hook, from=6-3, to=7-3]
			\arrow[hook, from=7-2, to=7-3]
			\arrow[hook', from=7-2, to=7-1]
			\arrow[hook, from=5-2, to=6-2]
			\arrow[hook, from=6-2, to=7-2]
			\arrow[hook, from=5-3, to=6-3]
			\arrow[hook, from=4-2, to=5-2]
			\arrow[hook, from=4-2, to=5-3]
			\arrow[hook, from=3-2, to=4-2]
		\end{tikzcd}\]
	\caption{Logical dependency of notions of fibration}
	\label{fig:notions-fib}
	\end{figure}

	\newpage

	\subsection{Further context and perspective on the work}\label{ssec:persp}

	\subsubsection{Geometric families of synthetic \texorpdfstring{$\inftyone$}{(∞,1)}-categories}
	Another interesting notion is that of a \emph{geometric fibration}, \ie, a Moens fibration $\pi:E \fibarr B$ that has \emph{small global sections} meaning that the fiberwise terminal element map $\zeta:B \to E$ has a right adjoint. Under the equivalence from Moens' Theorem, these correspond exactly to geometric morphisms between lex categories. This is discussed by Streicher in~\cite[Section~16]{streicher2020fibered} as well as Lietz in~\cite[Section~8]{LietzDip}, with previous considerations made by B\'{e}nabou in~\cite{ben-catlog}. We believe that this can be worked out in simplicial HoTT as well.
	
	\subsubsection{Jibladze's Theorem for higher toposes}
	The geometric fibrations naturally have a context in topos theory~\cite{StrFVGM}, and this should generalize in one way or the other to the setting of higher toposes. We hope that it will be possible to establish a version of Jibladze's Theorem~\cite[Appendix~A]{streicher2020fibered} for $\inftyone$-toposes (in the sense of $\infty$-sheaf toposes \`{a} la Grothendieck--Rezk--Lurie, or possibly the elementary case \`{a} la Shulman--Rasekh~\cite{ShuEHT,RasEHT}). In the classical case, Jibladze's Theorem~\cite{jibladze1989geometric} classifies fibered toposes with internal sums as gluings of lex functors (\ie, the inverse image part of some geometric morphism) to some topos.\footnote{also mentioned in~\cite{elephant2}, but see the remark at the end of~\cite[Appendix~A]{streicher2020fibered}} Note that in the higher case this relies on Moens' Theorem for $\inftyone$-categories which we have established in this article. At present, there is no synthetic notion of $\inftyone$-\emph{topos} within simplicial type theory. Thus,  when turning to the topos-specific aspects one would have to use a different setting, such as a suitable $\infty$-cosmos (or class thereof). For a notion of fibered $\infty$-toposes relying on a (possibly higher) notion of logical morphisms the work by Rasekh~\cite{RasEHT} on elementary higher toposes in an $\infty$-cosmos of $\inftyone$-categories could be guiding. For ensuing considerations of (local) smallness in this context the work by Stenzel~\cite{SteCompr} could be relevant. 
	
	\subsubsection{Moens' Theorem in $\infty$-cosmoses}
	We have shown that going from bicartesian fibrations to Beck--Chevalley and Moens fibrations goes through relatively smoothely in the setting of simplicial homotopy type theory. We find it reasonable to conjecture that it could very well be carried out for bicartesian fibrations internal to an arbitrary $\infty$-cosmos within Riehl--Verity's theory~\cite{RV21,RVyoneda}. This would then yield a model-independent version of Moens' Theorem for extensive bicartesian fibrations of $(\infty,n)$-categories for $1 \le n \le \infty$ (though not formulated in type theory). However, finding the correct definitions in terms of (generalized) co-/cartesian cells might take some work.

\subsection{Structure, contributions, and related work}

This text essentially consists of parts of the author's PhD dissertation, namely~\cite[Section~3.4 and Chapter~5]{jw-phd} plus some additional material. All of this can be found in~\Cref{ssec:more,ssec:gen-moens,ssec:zawadowski}. In~\Cref{sec:syn-fib} we recall the basics of synthetic fibered $\inftyone$-category theory as found in~\cite[Sections~2 and~5]{BW21} and~\cite[Chapter~3]{jw-phd}. The basis for these developments are provided by Riehl--Shulman's account of discrete fibrations of synthetic fibered $\inftyone$-categories~\cite[Section~8]{RS17} and Riehl--Verity's co-/cartesian fibrations in general $\infty$-cosmoses, \cf~\cite[Chapter~5]{RV21} and \cite{RVscratch,RVyoneda}. These were in turn deeply influenced by Gray~\cite{GrayFib} and Street~\cite{StrYon,StrBicat,StrBicatCorr}. A thorough development of an analytic account to (internal) co-/cartesian fibrations is given by Rasekh in~\cite{rasekh2021cartesian,Ras17Cart,RasYonDSp}.

Segal and Rezk spaces were studied by Rezk~\cite{rez01}, Joyal--Tierney~\cite{joyal2007quasi}, Lurie~\cite{lurie2009goodwillie}, Kazhdan--Varshavsky~\cite{KV14}, and Rasekh~\cite{rasekh2021cartesian}. Segal objects are treated by Boavida de Brito~\cite{dB16segal}, Stenzel~\cite{SteSegObj}, and Rasekh~\cite{Ras17Cart,RasUniv}. The ensuing treatments of lex, Beck--Chevalley, and (pre-)Moens fibrations in~\Cref{ssec:lex-fam,sec:bc-fam,sec:moens-fam} are adaptations of the work and presentation by Streicher~\cite[Sections~5 and 6]{streicher2020fibered} to the setting of synthetic $\inftyone$-categories. Several of these results can also be found in the diploma thesis by Lietz~\cite{LietzDip}. In~\Cref{sec:bicart-fam} we briefly discuss \emph{bicartesian families}, \ie~families that are both co- and contravariantly functorial. Of particular interest will be the Artin gluing of a functor between lex Rezk types, discussed in~\Cref{ssec:fam-fib}. Internal sums are discussed via the notion of \emph{Beck--Chevalley family} in~\Cref{sec:bc-fam}. A characterization of those families via cartesianness of the cocartesian transport functor is given by~\Cref{thm:cocartfams-via-transp}, \cf~also \cite[Chapter~3, Theorem~1/(iii)]{LietzDip} and~\cite[Theorem~4.4]{streicher2020fibered}. Next, we prove in~\Cref{prop:bcc-for-pb-pres} that a functor between Rezk types preserves pullback if and only if its gluing fibration has internal sums, after~\cite[Lemma~13.2]{streicher2020fibered}.

The main part of the paper is developed in~\Cref{sec:moens-fam} in the form of a thorough discussion of Moens \aka~\emph{extensive} families. We provide characterizations of disjointness of stable internal sums in~\Cref{prop:char-disj-stable}. A characterization of extensivity in the presence of internal sums is given in~\Cref{prop:ext-sums}. For the most part, this goes through even in the absence of internal sums. The goal of these and further developments in~\Cref{ssec:ext} is then a synthetic version of \emph{Moens' Theorem} in~\Cref{ssec:moens-thm}, \Cref{thm:moens-thm}, which establishes an equivalence between the type of small Moens fibrations over a lex Rezk base $B$ and the type of lex functors from $B$ to some other small lex Rezk type. Classically, this can be found in~\cite[Theorem~15.18]{streicher2020fibered} and \cite[Subsection~4.2, Theorem~5, and Chapter~5, Proposition~12]{LietzDip}. The result gets generalized in~\Cref{ssec:gen-moens}, \Cref{thm:gen-moens-thm}, to the setting where the Beck--Chevalley condition is not necessary, corresponding to the functor from the base only preserving the terminal element but not necessarily pullbacks. This generalization originally is due to Streicher~\cite{streicher-simpl-moens,streicher2020fibered}.

We conclude by showing in~\Cref{ssec:zawadowski} that Zawadowski's notion of \emph{cartesian bifibrations}~\cite{zawadowski-lax-mon} coincides with this generalized notion of Moens fibration. This has been also observed by Streicher~\cite{streicher-cartbifib}, and we provide a slight complementation of his proof, and argue that everything goes through in the setting at hand.

\subsection{Further related work}

An account to Moens \aka~extensive fibrations can be found in~Jacob's textbook~\cite{jacobs-cltt}. The \emph{fibered viewpoint of geometric morphisms} builds crucially on this and has been comprehensively developed by Streicher~\cite{streicher2020fibered,streicher-cartbifib,StrFVGM} and Lietz~\cite{LietzDip}, building on central results by Moens~\cite{MoensPhD}. The material in~\cite{streicher2020fibered}, commenced 1999, is based on B\'{e}nabou's notes~\cite{ben-cat-fib,ben-catlog}. Central aspects of his philosophy are laid out in his essay \emph{Fibered categories and the foundations of naive category theory}~\cite{benabou-foundations}. These perspectives are also reflected in the recent online book by~Sterling--Angiuli~\cite{SterAng-RelCT}. In the context of $\inftyone$-categories Stenzel has adapted some concepts from B\'{e}nabou's work~\cite{SteCompr}. In logic, the fibered view of geometric morphisms has furthermore been used in realizability by Frey~\cite{FreyPhD,FreyMoensFib} and Frey--Streicher~\cite{FreStr-Tripos}, and in categorical modal logic by Doat~\cite{DoatMSc}. Initially, the theory of Grothendieck \aka~cartesian fibrations of $\inftyone$-categories (implemented as quasi-categories) was developed by Joyal~\cite{joyal2007quasi} and Lurie~\cite{LurHTT}. Follow-up groundlaying work on fibrations of (internal) $\inftyone$-categories has been done notably by~Ayala--Francis~\cite{AFfib}, Boavida de Brito~\cite{dB16segal}, Barwick--Dotto--Glasman--Nardin--Shah~\cite{barwick2016}, Barwick--Shah~\cite{BarwickShahFib}, Mazel-Gee~\cite{MazGee-UserCart}, Rezk~\cite{rezk2017stuff}, Cisinski~\cite{CisInfBook}, and Nguyen~\cite{NguyPhD}. Recently, a model-independent theory of internal $\inftyone$-categories and \\ co-/cartesian fibrations internal to an $\inftyone$-topos has been under development by Martini~\cite{mar-yon,MarCocart}, and Martini--Wolf~\cite{mw-lim}. Directed homotopy type theories have been proposed and suggested in various settings by Warren~\cite{War-DTT-IAS}, Licata--Harper~\cite{LH2DTT}, Nuyts~\cite{NuyMSc}, North~\cite{NorthDHoTT}, and Kavvos~\cite{KavQuant}. Parallel approaches in the context of \emph{two-level} type theories have been developed by Voevodsky~\cite{VV-HTS}, Capriotti~\cite{CapriottiPhD}, and Annenkov--Capriotti--Kraus--Sattler~\cite{2ltt}. This is put in a wider perspective by Buchholtz~\cite{B19} in an essay on higher structures in homotopy type theory. In the setting of simplicial type theory an account to directed univalence has been given by Cavallo--Riehl--Sattler~\cite{CRS18}, and in a bicubical setting by Weaver--Licata~\cite{WL19}. A development of limits and colimits in simplicial HoTT has been established by Bardomiano Mart\'{i}nez~\cite{BM21}. Two-sided cartesian fibrations in this setting have been treated in~\cite{W22-2sCart}. A prototype proof assistant for simplicial type theory has been developed by Kudasov~\cite{KudRzk}.

\section{Synthetic fibered \texorpdfstring{$\inftyone$}{(∞,1)}-category theory}\label{sec:syn-fib}

\subsection{Synthetic \texorpdfstring{$\inftyone$}{(∞,1)}-categories}
We work in Riehl--Shulman's simplicial homotopy type theory~\cite{RS17}. This is an augmentation of standard homotopy type theory (HoTT)~\cite{hottbook} by simplicial shapes (such as the $n$-simplices $\Delta^n$, boundaries $\partial \Delta^n$, $(n,k)$-horns $\Lambda_k^n$, \ldots). HoTT has semantics in any given Grothendieck--Rezk--Lurie $\inftyone$-topos\footnote{more precisely, in a representing \emph{type-theoretic model topos}, \cf~\cite{Shu19,Riehl-Sem,Wei-StrExt}} $\mathscr E$, so the additional layer corresponds to the image of the category whose objects are finite cartesian powers of the interval $\I \defeq \Delta^1$ with morphisms all monotone maps, embedded into $\mathscr E^{\Simplex^{\Op}}$ in the categorical direction. Within the shape layer, we can reason about extensional equality of terms. This is reflected into the type layer, so that one can talk about strict \emph{extension types} defined in~\cite{RS17}, after Lumsdaine--Shulman. Given a shape inclusion $\Phi \subseteq \Psi$ (implicitly, in a common cube context $I$), a family $A: \Psi \to \UU$ of $\UU$-small types, and a partial section $a:\prod_{t:\Phi} A(t)$, we can sonsider the ensuing \emph{extension type}
\[ \exten{t:\Psi}{A(t)}{\Phi}{a}\]
of sections $b:\prod_{t:\Psi} A(t)$ such that $t:\Phi \vdash a(t) \jdeq b(t)$. This allows one for any type $A$ to define \eg~the \emph{hom-type}
\[ \hom_A(a,b) \defeq (a \to_A b) \defeq \ndexten{\Delta^1}{A}{\partial \Delta^1}{[a,b]}\]
for terms $a,b:A$. In the case of a family $P:A \to \UU$ we can analogously define the \emph{dependent hom-type}
\[ \hom_u^P(d,e) \defeq (d \to_u^P e) \defeq \exten{t:\Delta^1}{P(u(t))}{\partial \Delta^1}{[d,e]}\]
for $u:a \to_A b$ and $d:P\,a$, $e:P\,b$. This formalism allows one to conveniently express the Segal and Rezk completeness conditions, \cf~\cite[Sections~5 and 10]{RS17}: A type $A$ is \emph{Segal} if the map
\[ A^\iota : A^{\Delta^2} \to A^{\Lambda_1^2}\]
induced from the inclusion $\iota: \Lambda_1^2 \subseteq \Delta^2$ is an equivalence, \ie~its fibers are homotopy propositions. A Segal type $A$ is \emph{Rezk} if the canonical map
\[ \idtoiso_A:\prod_{x,y:A} (x=_A y) \to (x \cong_A y)\]
defined by path induction is an equivalence. This is precisely the \emph{(local) univalence} condition for $A$~\aka~\emph{Rezk completeness}. Since it is convenient for our purposes to also consider maps whose domain is a type while the codomain is a shape we coerce the shapes to be types as well (making use of the additional strict layer as needed), \cf~\cite[Section~2.4]{BW21}. For a comprehensive introduction to simplicial homotopy type theory \cf~\cite[Sections~2 and~3]{RS17}, or the more compact overview in~\cite[Section~2]{BW21}. The basics of plain homotopy type theory are laid out in~\cite{hottbook,AwoTT,Grayson-UF,ShuLogSp,RijIntro,RieHoTT}.

\subsection{Cocartesian families}

Cocartesian families capture the idea of \emph{covariantly functorial} type families. Namely, let $B$ be a Rezk type and $P:B \to \UU$ be a 
family of Rezk types. If $P$ is cocartesian, then any arrow $u:a \to_B b$ in $B$ induces a transport functor $u_!: P\,a \to P\,b$. 
Cocartesian families are precisely the type-theoretic analogue of cocartesian fibrations which have been well-studied semantically. 
In fact, since in HoTT, type families over a type $B$ correspond to maps with codomain $B$ (\cf~\cite[Theorem~4.8.3]{hottbook} and~\cite[Section~2.5]{BW21}) 
a notion of cocartesian family simultaneously determines a notion of cocartesian fibration, and \emph{vice versa}. Given a family $P:B \to \UU$ (over any type $B$) we write the \emph{unstraightening of $P$} as
\[ \Un_B(P) \defeq \pair{\totalty{P}}{\pi_P}\]
where $\pi_P \defeq \totalty{P} \to B$ denotes the projection from the total type $\totalty{P} \defeq \sum_{b:B} P\,b$. Conversely, given a map $\pi:E \to B$, the \emph{straightening of $P$} is given by the family
\[ \St_B(\pi) \defeq \lambda b.\fib(\pi,b)\]
of homotopy fibers $\fib(\pi,b) \defeq \sum_{e:E} \pi(e) =_B b$. We will often denote a map $\pi:E \to B$ as $\pi:E \fibarr B$, particularly in cases where it actually happens to be a 
functorial fibration (cartesian or cocartesian). In HoTT, any type family $P:B \to \UU$ transforms covariantly \wrt~undirected paths $p:a=_Bb$, and the cocartesian families over 
Rezk types precisely generalize this to the case of directed arrows.

\subsubsection{Cocartesian families}

We want to consider families (or equivalently fibrations) that admit a functorial transport operation. For this to make sense, the underlying structure should be as categorical as possible, meaning that the base type, the total type, and all the fibers should be Rezk. This is captured by what we call \emph{isoinner families}, \ie, type families satisfying the \emph{relative version} of the Segal and Rezk conditions. Note that, in general, if a map is right orthogonal to some fixed map or shape inclusion, all its fibers are, too~\cite[Corollary~3.1.10]{BW21}. Furthermore, any map between types that are both right orthogonal to a given type map or shape inclusion is itself orthogonal to that given map or shape inclusion~\cite[Proposition~3.1.1]{BW21}. Maps between local types are necessarily local, see~\cite[Proposition~3.1.15]{BW21}.

\begin{definition}[(Iso)inner families, \protect{\cite[Def.~4.1.1, Prop.~4.1.2, Def.~4.2.3]{BW21}}]
	Let $P: B \to \UU$ be a family. 
	\begin{enumerate}
		\item We call $P$ \emph{inner} if the square
		\[\begin{tikzcd}
			{\totalty{P}^{\Delta^2}} & {\totalty{P}^{\Lambda_1^2}} \\
			{B^{\Delta^2}} & {B^{\Lambda_1^2}}
			\arrow[from=1-1, to=2-1]
			\arrow[from=1-1, to=1-2]
			\arrow[from=1-2, to=2-2]
			\arrow[from=2-1, to=2-2]
		\end{tikzcd}\]
		is a pullback, induced by restricting the projection $\pi : \totalty{P} \to B$ along the inclusion $\Lambda_1^2 \hookrightarrow \Delta^2$.
		\item An inner family is \emph{isoinner} if every fiber is a Rezk type.
	\end{enumerate}
	
\end{definition}

\begin{prop}[\protect{\cite[Prop.~4.1.4, 4.1.5, and 4.2.6]{BW21}}]
	Let $P: B \to \UU$ be a family.
	\begin{enumerate}
		\item Let $P$ be inner. Then every fiber $P\,b$ is inner, for $b:B$. If $B$ is Segal, then the total type $\totalty{P}$ is Segal.
		\item Let $P$ be isoinner. If $B$ is Rezk, then the total type $\totalty{P}$ is Rezk.
	\end{enumerate}
\end{prop}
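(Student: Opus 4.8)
The plan is to read the inner condition as a right-orthogonality statement for the projection $\pi_P \colon \totalty{P} \to B$ and then propagate Segal- and Rezk-ness from the base and the fibers to the total type along $\pi_P$. Concretely, the defining pullback square says exactly that $\pi_P$ is right orthogonal to the horn inclusion $\Lambda_1^2 \hookrightarrow \Delta^2$. For the fibers in part~(1), the fibers of $\pi_P$ are precisely the types $P\,b$, so I would invoke \cite[Corollary~3.1.10]{BW21}: the fibers of a map right orthogonal to a shape inclusion are themselves right orthogonal to it. Since right-orthogonality of a type (i.e.\ of $P\,b \to \mathbf 1$) to $\Lambda_1^2 \hookrightarrow \Delta^2$ is by definition Segal-ness, every fiber $P\,b$ is inner/Segal, with no hypothesis on $B$ required.

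For the total type in part~(1), assume $B$ is Segal, so that the bottom map $B^{\Delta^2} \to B^{\Lambda_1^2}$ of the defining square is an equivalence. Since $P$ is inner, that square is a pullback, whence $\totalty{P}^{\Delta^2} \simeq B^{\Delta^2} \times_{B^{\Lambda_1^2}} \totalty{P}^{\Lambda_1^2}$ and the top map $\totalty{P}^{\Delta^2} \to \totalty{P}^{\Lambda_1^2}$ is the base change of $B^{\Delta^2} \to B^{\Lambda_1^2}$ along $\totalty{P}^{\Lambda_1^2} \to B^{\Lambda_1^2}$. As equivalences are stable under base change, the top map is an equivalence, i.e.\ $\totalty{P}$ is Segal. (Equivalently, $!_{\totalty{P}} = !_B \circ \pi_P$ is a composite of maps right orthogonal to the horn, hence right orthogonal.)

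For part~(2), since $B$ Rezk implies $B$ Segal, part~(1) already yields that $\totalty{P}$ is Segal, and it remains to prove completeness, i.e.\ that $\idtoiso_{\totalty{P}}$ is an equivalence. Fix $s \defeq (a,x)$ and $t \defeq (b,y)$ in $\totalty{P}$. The functor $\pi_P$ sends paths to paths and isomorphisms to isomorphisms, giving a commuting square whose vertical maps are induced by $\pi_P$, whose top map is $\idtoiso_{\totalty{P}}$, and whose bottom map $\idtoiso_B \colon (a =_B b) \to (a \cong_B b)$ is an equivalence by Rezk-ness of $B$. Since a map inducing equivalences on all fibers is an equivalence, it suffices to check that $\idtoiso_{\totalty{P}}$ restricts to an equivalence on the fibers over each path $w \colon a =_B b$. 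On the path side the fiber is the path space $w_* x =_{P\,b} y$ in the fiber $P\,b$ (the standard characterization of paths in a $\Sigma$-type), and I would identify the corresponding fiber on the isomorphism side with the type of fiber isomorphisms $w_* x \cong_{P\,b} y$, so that the induced map on fibers is $\idtoiso_{P\,b}$. As each $P\,b$ is Rezk, this is an equivalence, and completeness of $\totalty{P}$ follows.

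The main obstacle is the last identification in part~(2): characterizing the isomorphisms of $\totalty{P}$ lying over a given isomorphism $\phi \colon a \cong_B b$ as fiber isomorphisms $\phi_* x \cong_{P\,b} y$, and verifying that under this identification the total $\idtoiso$ matches the fiberwise $\idtoiso$. Since inner families carry no directed transport, this requires decomposing a morphism of $\totalty{P}$ into a base component $u \colon a \to_B b$ together with a dependent component in $\hom_u^P(x,y)$, recognizing invertibility of the pair in terms of invertibility of $u$ together with a fiberwise invertibility condition, and then transporting along the path $w$ obtained from $\phi$ via Rezk-completeness of $B$ so as to land inside the single fiber $P\,b$. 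Establishing this compatibility---essentially that the inner structure makes dependent homs over isomorphisms agree with fiber homs after transport---is the technical heart; the Segal reductions above and the two-out-of-three bookkeeping around $\idtoiso_B$ are then routine.
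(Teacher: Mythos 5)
Your argument is correct and is essentially the intended one: the paper states this proposition without proof, deferring to \cite[Propositions~4.1.4, 4.1.5, and 4.2.6]{BW21}, and your route---reading innerness as right orthogonality of $\pi_P$ to $\Lambda_1^2 \subseteq \Delta^2$, deducing Segalness of the fibers and of the total type from the orthogonality closure facts the paper recalls immediately before the statement, and then checking Rezk completeness of $\totalty{P}$ fiberwise over $\idtoiso_B$---is exactly how the cited source proceeds. The step you defer, identifying isomorphisms of $\totalty{P}$ lying over $\idtoiso_B(w)$ with isomorphisms $w_*x \cong_{P\,b} y$ in the fiber, is indeed the crux, and it goes through as you describe: by path induction on $w$ one reduces to showing that an arrow of $\totalty{P}$ over $\id_a$ is invertible if and only if it is invertible in $P\,a$, using the $\Sigma$-decomposition of homs in the total type and Segalness of $\totalty{P}$ established in part~(1).
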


Thus, isoinner families are families of synthetic $\inftyone$-categories, but they in general do not admit functorial transport. This motivates our notion of \emph{(co)cartesian} family. Cocartesian families can be defined in terms of the existence of liftings of arrows in the base to \emph{cocartesian arrows} lying over 
them (with prescribed source vertex). This is reminiscent of the classical $1$-categorical notion of 
\emph{Grothendieck fibration}~\cite{streicher2020fibered}, but pertinent to a homotopical context. It constitutes a type-theoretic version of the 
usual notions of Grothendieck or (co-)cartesian fibration of $\inftyone$-categories~\cite{JoyQcat,LurHTT,RVyoneda,RV21,rasekh2021cartesian}.

\begin{definition}[Cocartesian arrow, \protect{\cite[Def.~5.1.1]{BW21}, \cf~\cite[Def.~5.4.1]{RV21}}]\label{def:cocart-arr}
	Let $P:B \to \UU$ an isoinner family over a Rezk type $B$. An arrow $f:e \to_u^P e'$ over an arrow $u: b \to_B b'$ is \emph{$P$-cocartesian} if and only if
	\[\isCocartArr_u^P(f) \defeq \prod_{b'':B} \prod_{v:b' \to b''} \prod_{e'':P\,b''} \prod_{h:e \to^P_{vu} e''} \isContr\Big( \sum_{g: e' \to^P_v e''} h =_{vu}^P gf \Big).\]
	Given an arrow $u: b \to_B b'$ in the base $B$, points $e:P\,b$ and $e':P\,b'$ in the fibers, we denote the type of \emph{$P$-cocartesian arrows over $u$} from $e$ to $e'$ as
	\[ (e \cocartarr^P_u e') \defeq \sum_{f:e \to^P_u e'} \isCocartArr_u^P(f). \]
\end{definition}

\begin{definition}[Cocartesian family, \protect{\cite[Def.~5.2.2]{BW21}}, \cf~\protect{\cite[Def.~5.4.2]{RV21}}]
	For a Rezk type $B$, an isoinner family $P:B \to \UU$ is \emph{cocartesian} if and only if for $u:b \to_B b'$ and $e:P\,b$ the type
	\[ \sum_{e':P\,b'} (e \cocartarr^P_u e')  \]
	is inhabited.
\end{definition}

In fact, cocartesian arrows with a fixed starting point are determined uniquely up to homotopy, implying that cocartesian lifts in the above sense are determined uniquely up to homotopy. For a cocartesian family $P:B \to \UU$, given arrows $u:b \to_B b'$, $v : b' \to_B b''$ in the base, a cocartesian arrow $f : e \cocartarr_u^P e'$, and a dependent arrow $h : e \to^P_{vu} e''$, we denote the unique filler $g : e' \to^P_v e''$ such that $h = g \circ^P f$ as
\[ g \defeq \tyfill^P_f(g).\]
There exists another characterization of cocartesian families precisely in terms of the transport operation mentioned earlier.\footnote{Yet another characterization in terms of the existence of a left adjoint right inverse map to the projection $E^{\Delta^1} \to B^{\Delta^1} \times_B E$, serving as the cocartesian \emph{lifting} map, is given in~\cite[Theorem~5.2.6]{BW21} after~\cite[Proposition~5.2.8(ii)]{RV21}. This is known as the \emph{Chevalley criterion}.} 

\begin{theorem}[Cocart.~families via transport, \protect{\cite[Thm.~5.2.7]{BW21}}, \cf~\protect{\cite[Pr~~5.2.8(ii)]{RV21}}]\label{thm:cocartfams-via-transp}
	Let $B$ be a Rezk type, and $P:B \to \UU$ an isoinner family with associated total type projection $\pi:E \to B$.
	
	Then, $P$ is cocartesian if and only if the map
	\[ \iota \defeq \iota_P : E \to \commaty{\pi}{B}, \quad \iota \, \pair{b}{e} :\jdeq \pair{\id_b}{e}  \]
	has a fibered left adjoint $\tau \defeq \tau_P: \commaty{\pi}{B} \to E$ as indicated in the diagram:
	\[\begin{tikzcd}
		{} && E && {\pi \downarrow B} \\
		\\
		&&& B
		\arrow[""{name=0, anchor=center, inner sep=0}, "\iota"', curve={height=12pt}, from=1-3, to=1-5]
		\arrow["\pi"', two heads, from=1-3, to=3-4]
		\arrow["{\partial_1'}", two heads, from=1-5, to=3-4]
		\arrow[""{name=1, anchor=center, inner sep=0}, "\tau"', curve={height=12pt}, from=1-5, to=1-3]
		\arrow["\dashv"{anchor=center, rotate=-90}, draw=none, from=1, to=0]
	\end{tikzcd}\]
\end{theorem}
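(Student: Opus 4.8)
The plan is to identify, pointwise over $\commaty{\pi}{B}$, the unit of the putative fibered adjunction with a cocartesian lift, so that the adjunction transpose becomes literally the contractibility clause in $\isCocartArr_u^P$. Writing $\commaty{\pi}{B} \jdeq \sum_{e : E}\sum_{b':B}(\pi\,e \to_B b')$ with codomain projection $\partial_1'$, the inclusion $\iota$ sends an $E$-element $\pair{a}{\bar e}$ (with $\bar e : P\,a$) to the comma-element $\pair{\id_a}{\bar e}$, so that $\partial_1' \circ \iota \jdeq \pi$ and $\iota$ is a functor over $B$. A short hom-type computation shows $\iota$ is moreover fully faithful (indeed a section of the domain projection $\commaty{\pi}{B} \to E$), so asking for a \emph{fibered left adjoint} $\tau$ over $B$ is asking for a fibered reflection: a functor $\tau$ over $B$ (hence $\tau\pair{u}{\bar e} : P\,b'$ for $u : a \to_B b'$) together with, for each object $X$, a \emph{vertical} unit arrow $\eta_X : X \to \iota\,\tau\,X$ that is left-universal among maps from $X$ into the image of $\iota$. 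Appealing to the standard reformulation of adjunctions via transposition, I would reduce the theorem to producing such vertical universal units pointwise, with uniqueness assembling $\tau$ into a functor and $\eta$ into a natural transformation, and full faithfulness of $\iota$ making the counit invertible so that the triangle identities hold automatically.

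The heart of the argument is a hom-type computation. Fix $X \jdeq \pair{u}{\bar e}$ with $\bar e : P\,a$, $u : a \to_B b'$, and an object $Y \jdeq \pair{b''}{e''}$ of $E$ with $e'' : P\,b''$. Unfolding arrows in the comma type and in the total type $E \jdeq \totalty{P}$ yields (writing $\tau\,X$ also for its fiber component over $b'$)
\[ (\tau\,X \to_E Y) \;\simeq\; \sum_{v : b' \to_B b''} (\tau\,X \to_v^P e''), \qquad (X \to_{\commaty{\pi}{B}} \iota\,Y) \;\simeq\; \sum_{v : b' \to_B b''} (\bar e \to_{vu}^P e''), \]
where in the second type the comma commutativity condition forces a morphism with base component $v$ to have its $E$-fiber component lie over $vu$. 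Since $\eta_X$ is vertical, i.e.\ has base component $\id_{b'}$, its $E$-fiber component is an arrow $f : \bar e \to_u^P \tau\,X$ over $u$, and under these identifications the transpose $g \mapsto \iota\,g \circ \eta_X$ is, over each $v$, precomposition $g_0 \mapsto g_0 f$. Hence the transpose is an equivalence precisely when, for all $v$, $e''$ and $h : \bar e \to_{vu}^P e''$, the type $\sum_{g_0}(h =_{vu}^P g_0 f)$ is contractible; this is exactly $\isCocartArr_u^P(f)$.

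Both directions now follow. If $P$ is cocartesian, then for each $X \jdeq \pair{u}{\bar e}$ I choose a cocartesian lift $f : \bar e \cocartarr_u^P e'$, set $\tau\,X \defeq \pair{b'}{e'}$, and take $\eta_X$ to be the comma arrow with base component $\id_{b'}$ and $E$-fiber component $f$; verticality is immediate, and the computation above exhibits $\eta_X$ as universal, so $\tau \dashv \iota$ fibered. Conversely, given a fibered left adjoint $\tau$ with vertical unit $\eta$, for each $u : a \to_B b'$ and $\bar e : P\,a$ the verticality of $\eta_{\pair{u}{\bar e}}$ exhibits its $E$-fiber component as an arrow $f : \bar e \to_u^P \tau\pair{u}{\bar e}$ lying strictly over $u$; running the transpose equivalence backwards shows $f$ is cocartesian, so $\sum_{e':P\,b'}(\bar e \cocartarr_u^P e')$ is inhabited and $P$ is cocartesian.

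The step I expect to be the main obstacle is the fibered/vertical bookkeeping: making precise that a fibered left adjoint over $B$ is equivalent to the pointwise data of \emph{vertical} universal units, and checking that verticality forces the extracted arrow $f$ to lie over $u$ on the nose rather than over some factorization of it. This is precisely what lets the transpose equivalence coincide with $\isCocartArr_u^P(f)$ as stated, rather than merely up to a reindexing; getting this correspondence exact, together with the assembly of the pointwise reflections into a genuine fibered functor via uniqueness of cocartesian lifts, is the crux of the proof.
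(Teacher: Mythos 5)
The paper does not actually prove this statement itself---it recalls it from \cite[Theorem~5.2.7]{BW21}---but your argument is a faithful reconstruction of the proof given there: the hom-type computation identifying the adjunction transpose with precomposition by the $E$-component $f$ of the vertical unit, so that the transpose being an equivalence is literally the contractibility clause in $\isCocartArr_u^P(f)$, is exactly the mechanism of the cited proof, and your unfolding of arrows in $\commaty{\pi}{B}$ (including the fact that the comma commutativity forces the $E$-component to lie over $vu$, and that verticality of the unit forces $f$ to lie over $u$ on the nose) is correct. The only ingredients you leave implicit are the ones you rightly flag as the crux---that a fibered left adjoint in this setting comes equipped with a vertical unit, and that pointwise vertical universal arrows assemble into a fibered functor and fibered adjunction---both of which are supplied by the adjunction machinery of \cite{BW21} that the paper presupposes.
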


\subsubsection{Examples of cocartesian families}

\begin{prop}
	Let $g:C \to A \leftarrow B:f$ be a cospan of Rezk types. Then the codomain projection from the comma object
	\[\begin{tikzcd}
		{f \downarrow g} && {A^{\Delta^1}} \\
		{C \times B} && {A \times A} \\
		C
		\arrow[from=1-1, to=2-1]
		\arrow["{g \times f}"', from=2-1, to=2-3]
		\arrow[from=1-1, to=1-3]
		\arrow["{\langle \partial_1, \partial_0 \rangle}", from=1-3, to=2-3]
		\arrow["\lrcorner"{anchor=center, pos=0.125}, draw=none, from=1-1, to=2-3]
		\arrow[from=2-1, to=3-1]
		\arrow["{\partial_1}"', curve={height=22pt}, from=1-1, to=3-1]
	\end{tikzcd}\]
	is a cocartesian fibration.
\end{prop}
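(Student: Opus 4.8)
The plan is to show that the straightening $P \defeq \St_C(\partial_1) \colon C \to \UU$, whose fiber over $c$ is $P\,c \simeq \sum_{b:B} \hom_A(f\,b, g\,c)$, is an isoinner family and that it admits all cocartesian lifts, with cocartesian transport along $u \colon c \to_C c'$ given by postcomposition with the functorial image $g(u) \colon g\,c \to_A g\,c'$.

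First I would check that $\partial_1$ is isoinner. By construction $f \downarrow g$ is the pullback of the endpoint projection $\langle \partial_1, \partial_0\rangle \colon A^{\Delta^1} \to A \times A$ along $g \times f$, and this projection is an isoinner family over $A \times A$: its fibers are the hom-types $\hom_A(x,y)$ of the Rezk type $A$, which are discrete and hence Rezk, while innerness is precisely the relative Segal condition for $A$. Since the classes of local (right-orthogonal) maps are stable under base change and closed under composition --- by the closure properties recalled before the definition of inner families --- the projection $f \downarrow g \to C \times B$ is isoinner, and composing with the product projection $C \times B \to C$, the constant family at the Rezk type $B$ and hence isoinner, shows that $\partial_1 \colon f \downarrow g \to C$ is isoinner. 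In particular every fiber $P\,c$ is Rezk, and $f \downarrow g$ is Rezk because $C$ is.

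Next I would exhibit the cocartesian lifts. Given $u \colon c \to_C c'$ and a point $w \defeq (b, \alpha) \colon P\,c$ with $\alpha \colon f\,b \to_A g\,c$, set $w' \defeq (b, g(u) \circ \alpha) \colon P\,c'$ and let $\chi \colon w \to^P_u w'$ be the dependent arrow whose $B$-coordinate is the identity $\id_b$ and whose $A^{\Delta^1}$-coordinate is the canonical $2$-simplex in $A$ witnessing $g(u) \circ \alpha$ as the composite. To verify $\isCocartArr^P_u(\chi)$, fix $v \colon c' \to_C c''$, a point $w'' \colon P\,c''$, and a dependent arrow $h \colon w \to^P_{vu} w''$; unwinding the dependent hom-types of $f \downarrow g$, such an $h$ consists of a $B$-arrow together with a commuting square in the Segal type $A$. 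Because the $B$-coordinate of $\chi$ is an identity, the filler problem splits: the $B$-coordinate of any $k \colon w' \to^P_v w''$ with $h =^P_{vu} k \circ \chi$ is forced to equal that of $h$ (a contractible choice), while its $A$-coordinate is a composition filler in $A$, whose type is contractible by the Segal condition and the universal property of composites. Hence $\sum_{k} (h =^P_{vu} k \circ \chi)$ is contractible, so $\chi$ is cocartesian; as this holds for every $u$ and every $w$, the family $P$ is cocartesian.

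The hard part is the bookkeeping in the last step: making the identification of a dependent arrow of $f \downarrow g$ over an arrow of $C$ with a pair (a $B$-arrow, a commuting square in $A$) precise, and checking that the cocartesian condition genuinely reduces to contractibility of composition fillers in $A$ with the $B$-direction contributing only identities. A cleaner alternative, which I would fall back on, is to invoke the transport characterization of \Cref{thm:cocartfams-via-transp}: one exhibits the fibered left adjoint $\tau$ to $\iota$ sending a point of $\commaty{\partial_1}{C}$ of the form $((b,\alpha), u \colon c \to c')$ to $(b, g(u)\circ\alpha) \colon P\,c'$, with unit and counit assembled from the unitality and associativity of composition in $A$. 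One may also observe that $f \downarrow g \to C \times B$ is a two-sided fibration and simply read off the cocartesian structure on its $C$-leg.
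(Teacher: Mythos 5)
Your argument is correct and is essentially the proof that the paper itself omits: the paper's "proof" is only a pointer to \cite[Proposition~5.2.15]{BW21}, and that cited proof proceeds exactly as you do, establishing isoinnerness of $\partial_1$ from the closure properties of local maps under pullback, composition, and cotensoring, and then exhibiting the cocartesian lift of $u:c\to_C c'$ at $(b,\alpha)$ as the square with $B$-component $\id_b$ and $A$-component the degenerate-on-top $2$-simplex witnessing $g(u)\circ\alpha$, with the filler condition reducing to Segal composition in $A$. Your fallback via the fibered left adjoint of \Cref{thm:cocartfams-via-transp} is also a legitimate alternative, so there is no gap to report.
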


\begin{proof}
	Cf.~\cite[Proposition~5.2.15]{BW21}, or the version for fibered comma types,~\cite[Proposition~3.5]{W22-2sCart}.
\end{proof}

\begin{cor}[Codomain opfibration]\label{prop:cod-cocartfam}
	For any Rezk type $B$, the projection
	\[ \partial_1 : B^{\Delta^1} \to B, ~  \partial_1 :\jdeq \lambda f.f(1).\]
	is a cocartesian fibration, called the \emph{codomain opfibration}.
\end{cor}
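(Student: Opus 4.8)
The plan is to derive \Cref{prop:cod-cocartfam} as an immediate special case of the preceding proposition on comma objects. I would observe that the codomain opfibration $\partial_1 : B^{\Delta^1} \to B$ arises from the comma construction applied to the cospan $B \xrightarrow{\id_B} B \xleftarrow{\id_B} B$, so that $\id_B \downarrow \id_B \simeq B^{\Delta^1}$ and the codomain projection $\partial_1$ is exactly the projection identified as a cocartesian fibration in the proposition. Concretely, instantiating the cospan $g : C \to A \leftarrow B : f$ with $C \jdeq B$, $A \jdeq B$, and both legs the identity, the comma object $f \downarrow g$ becomes the arrow type $B^{\Delta^1}$, the map $g \times f$ becomes the diagonal into $B \times B$, and the long curved projection $\partial_1 : f \downarrow g \to C$ becomes the endpoint evaluation $\lambda f.f(1)$.

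The key step is to verify that this instantiation genuinely recovers $B^{\Delta^1}$ with the claimed projection. First I would note that the pullback defining $f \downarrow g$ in the proposition, namely $B^{\Delta^1} \times_{B \times B} (C \times B)$ along $\langle \partial_1,\partial_0\rangle$ and $g \times f$, collapses to $B^{\Delta^1}$ itself when both $g$ and $f$ are identities, since the constraint $\langle \partial_1,\partial_0\rangle$ merely records the two endpoints of an arrow and the map $g \times f = \id_{B \times B}$ imposes no further condition beyond naming a pair of points in $C \times B \simeq B \times B$. Thus the comma object is equivalent to the arrow type $B^{\Delta^1}$, and the codomain leg $\partial_1$ of the comma object corresponds to evaluation at the target vertex $1 : \Delta^1$, matching the definition $\partial_1 :\jdeq \lambda f.f(1)$.

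There is essentially no obstacle here: the result is a corollary precisely because the preceding proposition already does the substantive work of exhibiting cocartesian lifts for comma fibrations. The only point requiring a modicum of care is confirming that the orientation conventions agree, i.e.\ that the projection declared as cocartesian in the proposition (evaluation at the codomain vertex) is the same $\partial_1$ named in the corollary, and that Rezk-ness of $B$ suffices as the hypothesis (the cospan legs being identities, all three types $C$, $A$, $B$ in the proposition are $B$, which is Rezk by assumption). Accordingly, the proof is a one-line appeal to the proposition with the cospan $\id_B, \id_B$, citing the equivalence $\id_B \downarrow \id_B \simeq B^{\Delta^1}$ and noting that the codomain projection is cocartesian by the proposition.
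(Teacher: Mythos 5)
Your proposal is correct and matches the paper's intent exactly: the corollary is stated immediately after the comma-object proposition precisely so that it follows by instantiating the cospan with both legs the identity on $B$, under which $\id_B \downarrow \id_B \simeq B^{\Delta^1}$ and the comma codomain projection becomes $\lambda f.f(1)$. The paper leaves this instantiation implicit, and your check that the pullback collapses and that Rezk-ness of $B$ is the only hypothesis needed is exactly the right verification.
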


The domain projection of a Rezk type is a cocartesian fibration given that the base has pushouts.
\begin{prop}[Domain opfibration]
	If $B$ is a Rezk type that has all pushouts, then the domain projection
	\[ \partial_0 : B^{\Delta^1} \to B, \quad \partial_0(u) :\jdeq u(0)\]
	is a cocartesian fibration.
\end{prop}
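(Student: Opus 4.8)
The plan is to exhibit, for each arrow in the base and each element of the fibre over its source, an explicit cocartesian lift given by a pushout square, and then to read off the defining contractibility condition of \Cref{def:cocart-arr} directly from the universal property of that pushout. First I would record that $\St_B(\partial_0)$ is isoinner, so that \Cref{def:cocart-arr} applies: since $B$ is Rezk, the total type $B^{\Delta^1}$ is again Rezk, so both are Segal and hence $\partial_0$ is inner; its fibres over $a:B$ are the coslices $\sum_{y:B}(a \to_B y)$ of arrows out of $a$, which are Rezk, so $\partial_0$ is isoinner. I would also note that an arrow of $B^{\Delta^1}$ lying over an arrow $w : a \to_B a'$ of $B$ is exactly a commutative square in $B$ whose left-hand edge is $w$.

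For the lift, fix $w : a \to_B a'$ and an element $s$ of the fibre over $a$, i.e.\ an arrow $s : a \to_B y$ with $y \defeq s(1)$. Using the hypothesis that $B$ has all pushouts I would form the pushout $Q \defeq y \sqcup_a a'$ of the span $y \xleftarrow{\,s\,} a \xrightarrow{\,w\,} a'$, with coprojections $\bar{w} : y \to_B Q$ and $s' : a' \to_B Q$. The candidate cocartesian lift is the resulting pushout square, viewed as an arrow $f$ from $s$ to $s'$ in $B^{\Delta^1}$; its left edge is $w$ by construction, so it lies over $w$, has source $s$, and its target $s'$ lands in the fibre over $a'$ since $s'(0) \jdeq a'$. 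Equivalently, one could package this construction as a fibered left adjoint to the map $\iota : B^{\Delta^1} \to \commaty{\partial_0}{B}$ of \Cref{thm:cocartfams-via-transp}, sending $(u, b, \alpha : u(0) \to_B b)$ to the coprojection $b \to_B u(1) \sqcup_{u(0)} b$; I will, however, carry out the direct verification.

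To see that $f$ is $P$-cocartesian I would unwind \Cref{def:cocart-arr}. Given a further arrow $v : a' \to_B a''$, an element $t : a'' \to_B z$ of the fibre over $a''$, and an arrow $h : s \to^{P}_{v w} t$ over the composite --- that is, a commutative square with top $s$, left edge $v w$, bottom $t$, and right edge some $h_1 : y \to_B z$ --- I must show that the type of fillers $g : s' \to^P_v t$ with $h = g f$ is contractible. Such a $g$ is a square over $v$ with top $s'$ and bottom $t$; by the universal property of $Q$ it is the same datum as a map $Q \to_B z$ restricting to $h_1$ along $\bar{w}$ and to $t \circ v$ along $s'$, the compatibility of the two restrictions on $a$ being precisely the commutativity $h_1 \circ s = t \circ v \circ w$ witnessed by $h$. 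Hence the type of fillers is equivalent to the type of extensions of the cocone $(h_1, t \circ v)$ along the coprojections of $Q$, which is contractible exactly because $Q$ is an $\infty$-categorical pushout. This gives cocartesianness of $f$, and as $w$ and $s$ were arbitrary, $\partial_0$ is a cocartesian fibration.

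The genuine difficulty is not the mathematical content, which is carried entirely by the universal property of the pushout, but the simplicial bookkeeping needed to align it with \Cref{def:cocart-arr}: one must identify arrows --- and the higher cell witnessing $h = g f$ --- in the total type $B^{\Delta^1}$ with commutative squares, respectively prisms, in $B$, and then confirm that the contractible type of fillers corresponds exactly to the contractible type of cocone extensions out of $Q$. It is essential here that ``$B$ has all pushouts'' is read in the homotopy-coherent sense of colimits in Rezk types~\cite{BM21}, so that the mapping-out property is an equivalence of hom-types and not merely a statement about homotopy categories. Finally, I would observe that the hypothesis is explained by op-duality: under $B \mapsto B^{\mathrm{op}}$ the domain projection $\partial_0$ becomes the codomain projection $\partial_1$, cocartesian transport becomes cartesian transport, and pushouts become pullbacks, so that the pushouts assumed here play exactly the role that the simple post-composition of the codomain opfibration of \Cref{prop:cod-cocartfam} plays on the dual side.
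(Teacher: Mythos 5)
Your argument is correct, and it is the expected one. Note that the paper itself does not prove this proposition; it only cites \cite[Proposition~3.2.10]{BW21}, so there is no in-text proof to compare against --- but your construction (cocartesian lift of $w : a \to_B a'$ at $s : a \to_B y$ given by the pushout square for $y \sqcup_a a'$, with cocartesianness read off from the contractibility of the type of cocone extensions out of the pushout) is precisely the standard argument, and it is the exact dual of the proof of \Cref{prop:cod-fib}, where cartesian lifts for $\partial_1$ are pullback squares. Your preliminary reductions (isoinnerness of $\partial_0$, identification of its fibres with coslices, identification of arrows over $w$ with squares whose source-edge is $w$) are all sound, and you rightly flag that the only real work is the strictification matching the filler type of \Cref{def:cocart-arr} with the cocone-extension type, which requires the homotopy-coherent reading of pushouts as in \cite{BM21}. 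One small correction to your closing remark: under $B \mapsto B^{\mathrm{op}}$ this proposition dualizes to the statement that $\partial_1$ is a \emph{cartesian} fibration when the base has pullbacks, i.e.\ to \Cref{prop:cod-fib}, not to the codomain \emph{op}fibration of \Cref{prop:cod-cocartfam}; the latter is unconditionally cocartesian via post-composition and needs no colimit hypothesis, so it is not the mirror image of the pushout hypothesis here. This does not affect the proof.
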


\begin{proof}
	Cf.~\cite[Proposition~3.2.10]{BW21}.
\end{proof}

\begin{theorem}[Free cocartesian family]
	Let $\pi:E \fibarr B$ be a map between Rezk types. Then the map $\partial_1': L(\pi) \fibarr B$ defined by
	\[\begin{tikzcd}
		{L(\pi)} && E \\
		{B^{\Delta^1}} && B \\
		B
		\arrow[from=1-1, to=1-3]
		\arrow[two heads, from=1-1, to=2-1]
		\arrow["{\partial_0}"', from=2-1, to=2-3]
		\arrow["\pi", two heads, from=1-3, to=2-3]
		\arrow["{\partial_1}", two heads, from=2-1, to=3-1]
		\arrow["\lrcorner"{anchor=center, pos=0.125}, draw=none, from=1-1, to=2-3]
		\arrow["{\partial_1'}"', curve={height=24pt}, two heads, from=1-1, to=3-1]
	\end{tikzcd}\]
	is a cocartesian family, the \emph{free cocartesian family on $\pi$} or the \emph{cocartesian replacement of $\pi$}. 
\end{theorem}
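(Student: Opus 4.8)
The plan is to recognise the total type $L(\pi)$ as the comma object $\commaty{\pi}{B}$ and then invoke the proposition, established above, that the codomain projection out of a comma object of Rezk types is a cocartesian fibration. Concretely, I instantiate that proposition at the cospan $B \xrightarrow{\id_B} B \xleftarrow{\pi} E$, i.e.\ at $g \defeq \id_B$ and $f \defeq \pi$ in its notation $g:C\to A\leftarrow B:f$. The comma object $f \downarrow g = \commaty{\pi}{B}$ then fibers over $C = B$ via its codomain projection, and the content of the theorem is that this projection agrees with $\partial_1'$.

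First I would unfold $\commaty{\pi}{B}$ as its defining pullback: its points are triples $(c,e,u)$ with $u:B^{\Delta^1}$, $e:E$, and an identification $\langle \partial_1,\partial_0\rangle(u) = (c,\pi\,e)$ in $B\times B$, that is, $u(1)=c$ together with $u(0)=\pi\,e$. Since $\sum_{c:B}(u(1)=c)$ is contractible, projecting the redundant vertex $c$ away yields an equivalence $\commaty{\pi}{B}\simeq \sum_{u:B^{\Delta^1}}\sum_{e:E}(u(0)=\pi\,e)$, and the right-hand type is exactly the pullback $B^{\Delta^1}\times_{\partial_0,\pi} E = L(\pi)$ defining the theorem. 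I would then check that this equivalence sits over $B$, matching the codomain projection $(c,e,u)\mapsto c$ with $\partial_1'(u,e)=u(1)$; it is this compatibility over the base that lets the fibration structure transfer.

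Granting the identification, the comma-object proposition yields that $\partial_1':L(\pi)\fibarr B$ is a cocartesian fibration, equivalently a cocartesian family, using that co-/cartesian fibrations and families correspond and that both the isoinner condition and the existence of cocartesian lifts are invariant under equivalence over $B$. The transport is the expected one: an arrow $v:b\to_B b'$ sends $(u:\pi\,e\to_B b,\,e)$ to $(v\circ u:\pi\,e\to_B b',\,e)$, the cocartesian lift being the square in $B^{\Delta^1}$ witnessing the codomain opfibration's transport (\Cref{prop:cod-cocartfam}), carrying the untouched datum $e$ along. One could alternatively verify the universal property of \Cref{def:cocart-arr} for this explicit lift directly, but that merely re-proves the comma-object proposition in the present case.

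I expect the only genuine work, as opposed to conceptual difficulty, to lie in the second step: confirming that the reshuffling from a pullback over $B\times B$ to the single pullback over $B$ along $\partial_0$ is an equivalence \emph{over $B$}, so that cocartesianness — a property of a map stable under equivalence in the slice over $B$ — is genuinely inherited. Everything else is a direct appeal to results already in place.
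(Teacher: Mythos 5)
Your argument is correct; note that the paper gives no internal proof of this theorem at all — it simply cites \cite[Theorem~5.2.19]{BW21} — so what you have written supplies a reduction that the text leaves implicit. The identification $L(\pi) \simeq \commaty{\pi}{B}$ by contracting the redundant codomain vertex is exactly right, and it is visibly an equivalence over $B$ since both projections return the target $u(1)$ of the arrow component; indeed the paper tacitly makes this same identification when it writes $\partial_1' \colon \commaty{\pi}{B} \fibarr B$ in \Cref{thm:cocartfams-via-transp}. Your instantiation of the comma-object proposition has the variance right: its codomain projection lands in the factor $C$ of the cospan $g \colon C \to A \leftarrow B \colon f$, so taking $g = \id_B$ and $f = \pi$ gives precisely the covariant (post-composition) transport you describe, matching \Cref{prop:cod-cocartfam} on the $B^{\Delta^1}$-component. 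For the final transfer step, you do not even need to argue invariance of cocartesianness under fibered equivalence from scratch: by \Cref{prop:cocart-cosm-closure} the fibered equivalence $L(\pi) \to_B \commaty{\pi}{B}$ is itself a cocartesian family, and cocartesian families are closed under composition, which yields the claim directly. So the proposal is complete; it trades the paper's external citation for a short self-contained reduction to the comma-object proposition, which is arguably how the cited result is organized in the source anyway.
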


\begin{proof}
	Cf.~\cite[Theorem~5.2.19]{BW21}, as well as \cite[Theorem~4.3]{GHN17}, \cite[Lemma~3.3.1]{BarwickShahFib}.
\end{proof}

Indeed, the cocartesian replacement can be shown in our setting to satisfy the expected universal property (insofar we can formulate it in this type theory in its present state), \cf~\cite[Proposition~5.2.20]{BW21}, \cite[Theorem~4.5]{GHN17}, and~\cite[Corollary~3.3.4]{BarwickShahFib}. See also \cite{RVexp} for the general context of co-/monadicity.

\subsubsection{Cocartesian functors}
Maps between type families are given by families of maps between the fibers. These are also called \emph{fiberwise maps}. In case of isoinner fibrations these correspond to families of functors between the fibers, called \emph{fibered functor} in that case.

\begin{definition}[Fiberwise map]
	For types $A$ and $B$ consider families $P : B \to \UU$ and $ Q: A \to \UU$. A \emph{fiberwise map} or \emph{family of maps} is given by a pair
	\[ \Big \langle j:A \to B, \varphi : \prod_{a:A} Q\,a \to P\,j\,a \Big \rangle.\]
	In case $P$ and $Q$ are isoinner families over Rezk types $B$ and $A$, we call $\pair{j}{\varphi}$ a \emph{fibered functor}, and we write the corresponding type as
	\[ \FibMap_{A,B}(Q,P) \defeq \sum_{j:A \to B} \prod_{a:A} Q\,a \to P\,j\,a \]
\end{definition}

It is a standard result in homotopy type theory (see \cite[Definition~4.7.5, Theorem~4.7.6]{hottbook} and \cite[Definition~11.1.1, Lemma~11.1.2]{RijIntro}) that, for fixed $j\colon A \to B$, a family of maps $\varphi : \prod_{a:A} Q\,a \to P\,j\,a$ corresponds to a map $\Phi : \widetilde{Q} \to \widetilde{P}$ between the total types together with a homotopy witnessing that the following square between the total type projections commutes:
\[\begin{tikzcd}
	{\totalty{Q}} & {\totalty{P}} \\
	A & B
	\arrow[from=1-1, to=2-1]
	\arrow["j"', from=2-1, to=2-2]
	\arrow["\Phi", from=1-1, to=1-2]
	\arrow[from=1-2, to=2-2]
\end{tikzcd}\]

An appropriate notion of morphism between \emph{cocartesian} families is given by fibered functors that preserve the cocartesian arrows. We call those fibered functors \emph{cocartesian functors}.

\begin{definition}[Cocart.~functors, \protect{\cite[Def.~5.3.2]{BW21}}, \cf~\protect{\cite[Def.~5.3.2]{RV21}}]\label{def:cocart-fun}
	For Rezk types $A$ and $B$, let $Q \colon A \to \UU$ and $P : B \to \UU$ be cocartesian families. Given a fibered functor $\big \langle j:A \to B, \varphi : \prod_{a:A} Q\,a \to P\,j\,a \big \rangle$, we call it a \emph{cocartesian functor} if the map
	\[ \Phi: \totalty{Q} \to \totalty{P}, \quad \Phi \, a \, d :\jdeq \pair{j(a)}{\varphi_{j(a)}(d) } \]
	preserves cocartesian arrows, \ie, the following proposition\footnote{Note that being a cocartesian arrow is a proposition~\cite[Subsubsection~5.1.1]{BW21}.} is satisfied:
	\[ \isCocartFun_{P,Q}(\Phi) :\jdeq \prod_{\substack{u:\Delta^1 \to B \\ f: \Delta^1 \to u^*P}}  \isCocartArr^P_u(f) \to \isCocartArr^Q_{ju}(\varphi_u f).\]
	Here, $\varphi_u f \defeq \lambda t.\varphi_{u\,t}(f\,t)$ is the arrow defined by the action of the dependent function $\varphi$ on the arrow $u$.
	Accordingly, the type of cocartesian functors from $Q$ to $P$ is defined as
	\[ \CocartFun_{A,B}(Q,P) :\jdeq \sum_{\Phi:\FibMap_{A,B}(Q,P)} \isCocartFun_{Q,P}(\Phi).\]
	
	Given $P: B \to \UU$ and $Q: B \to \UU$ over the same base $B$ a \emph{fibered functor} from $Q \to P$ is a dependent function $\varphi : \prod_{x:B} Qx \to Px$. It is a cocartesian functor if and only if
	\[ \isCocartFun_{Q,P}(\pair{\id_B}{\varphi}) \equiv \prod_{\substack{u:\Delta^1 \to B \\ f: \Delta^1 \to u^*Q}} \isCocartArr^Q_u(f) \to \isCocartArr^P_{u}(\varphi_u f).\]
	
	Accordingly, we define the type of cocartesian functors over a common base as
	\[ \CocartFun_{B}(Q,P) :\jdeq \sum_{\varphi:\prod_B Q \to P} \isCocartFun_{Q,P}(\pair{\id_B}{\varphi}).\]
\end{definition}

Cocartesian families and cocartesian functors satisfy analogues of the familiar naturality properties, \cf~\cite[Propositions~5.2.4, 5.2.5, and~5.3.4]{BW21}. There exists a characterization of cocartesian functors analogous to~\Cref{thm:cocartfams-via-transp}:

\begin{theorem}[Characterizations of cocart.~functors, \protect{\cite[Thm.~5.3.19]{BW21}}, \cf~\protect{\cite[Thm.~5.3.4]{RV21}}]\label{thm:char-cocart-fun}
	Let $A$ and $B$ be Rezk types, and consider cocartesian families $P:B \to \UU$ and $Q:A \to \UU$ with total types $E\defeq \totalty{P}$ and $F\defeq \totalty{F}$, respectively.
	
	For a fibered functor\footnote{We are introducing a slight abuse of notation here, conflating between the dependent function and its totalization. We hope that this increases readability rather than cause confusion.} $\Phi\defeq \pair{j}{\varphi}$ giving rise to a square
	\[
	\begin{tikzcd}
		F \ar[r, "\varphi"] \ar[d, "\xi" swap] & E \ar[d, "\pi"] \\
		A \ar[r, "j" swap] & B
	\end{tikzcd}
	\]
	the following are equivalent:
	\begin{enumerate}
		\item The fiberwise map $\Phi$ is a cocartesian functor.
		\item The mate of the induced canonical fibered natural isomorphism is invertible, too:
		\[\begin{tikzcd}
			{F} && {E} & {} & {F} && {E} \\
			{\xi \downarrow A} && {\pi \downarrow B} & {} & {\xi \downarrow A} && {\pi \downarrow B}
			\arrow["{\iota}"', from=1-1, to=2-1]
			\arrow["{\varphi}", from=1-1, to=1-3]
			\arrow["{\iota'}", from=1-3, to=2-3]
			\arrow[Rightarrow, "{=}", from=2-1, to=1-3, shorten <=7pt, shorten >=7pt]
			\arrow["{\rightsquigarrow}" description, from=1-4, to=2-4, phantom, no head]
			\arrow["{\kappa}", from=2-5, to=1-5]
			\arrow["{\varphi}", from=1-5, to=1-7]
			\arrow["{\varphi \downarrow j}"', from=2-5, to=2-7]
			\arrow["{\kappa'}"', from=2-7, to=1-7]
			\arrow[Rightarrow, "{=}"', from=2-7, to=1-5, shorten <=7pt, shorten >=7pt]
			\arrow["{\varphi \downarrow j}"', from=2-1, to=2-3]
		\end{tikzcd}\]
	\end{enumerate}
	
\end{theorem}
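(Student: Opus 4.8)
The plan is to deduce this from the transport characterization of cocartesian families in \Cref{thm:cocartfams-via-transp}, applied to both $Q$ and $P$, together with the mate calculus. Since $Q$ and $P$ are cocartesian, \Cref{thm:cocartfams-via-transp} supplies fibered left adjoints $\kappa \defeq \tau_Q \dashv \iota$ over $A$ and $\kappa' \defeq \tau_P \dashv \iota'$ over $B$, where $\iota$ and $\iota'$ insert the identity arrows. Unstraightening, a point of $\xi \downarrow A$ is a triple consisting of $d : F$, a point $a : A$, and an arrow $u : \xi(d) \to_A a$, and the left adjoint $\kappa$ sends it to the codomain $u_!(d)$ of the cocartesian lift of $u$ at $d$; likewise $\kappa'$ computes cocartesian transport in $P$. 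The left-hand square commutes up to a canonical natural isomorphism because $\varphi$, being a fibered functor between Rezk types, preserves identity arrows: the composite $\iota' \circ \varphi$ sends $d$ to $\pair{\varphi(d)}{\id}$ over $j\xi(d)$, and so does $(\varphi \downarrow j) \circ \iota$, using $\pi\varphi(d) = j\xi(d)$ and $j(\id) = \id$. I then take the transformation filling the right-hand square to be the mate of this isomorphism with respect to $\kappa \dashv \iota$ and $\kappa' \dashv \iota'$, assembled as usual from the unit of $\kappa' \dashv \iota'$ and the counit of $\kappa \dashv \iota$.

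The crux is to identify this mate $\bar\alpha$ pointwise. Unwinding the mate construction at a point $\pair{d}{\pair{a}{u}}$ of $\xi \downarrow A$, I would show that the component of $\bar\alpha$ is precisely the canonical comparison arrow $\varphi(u_!(d)) \to (ju)_!(\varphi(d))$, obtained as the unique factorization, via the cocartesian filler $\tyfill$ of the cocartesian lift of $ju$ at $\varphi(d)$, of the image under $\varphi$ of the cocartesian lift of $u$ at $d$. Thus $\bar\alpha$ measures exactly the discrepancy between applying $\varphi$ after transport and transporting after applying $\varphi$.

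With this identification, the equivalence follows from the uniqueness of cocartesian lifts recalled after \Cref{def:cocart-arr}. For $(1) \Rightarrow (2)$: if $\varphi$ is a cocartesian functor, the image under $\varphi$ of the cocartesian lift of $u$ at $d$ is again cocartesian over $ju$ with source $\varphi(d)$; since cocartesian arrows with a fixed source are determined uniquely up to homotopy, its comparison to $(ju)_!(\varphi(d))$ is invertible, so $\bar\alpha$ is a pointwise, hence a fibered, isomorphism. For $(2) \Rightarrow (1)$: every $Q$-cocartesian arrow $f : (d \cocartarr^Q_u d')$ is, up to isomorphism, one of the generic lifts produced by $\kappa$, and invertibility of $\bar\alpha$ identifies $\varphi(f)$ up to isomorphism with the genuinely $P$-cocartesian arrow underlying $(ju)_!(\varphi(d))$; since cocartesianness is invariant under isomorphism of arrows, $\varphi$ preserves cocartesian arrows in the sense of \Cref{def:cocart-fun}.

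The main obstacle I anticipate is the bookkeeping of the mate in the synthetic, fibered setting: making the pointwise identification of $\bar\alpha$ with the comparison arrow fully precise requires tracking the units and counits of the fibered adjunctions of \Cref{thm:cocartfams-via-transp} through the extension-type description of the comma objects $\xi \downarrow A$ and $\pi \downarrow B$, and verifying the relevant coherences. A secondary point is the reduction of \emph{invertible natural transformation} to pointwise invertibility, which rests on equivalences being detected fiberwise and on isomorphisms forming a subtype in Rezk types; both are available in the cited framework but should be invoked explicitly.
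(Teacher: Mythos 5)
The paper does not reprove this theorem---it is recalled from \cite[Thm.~5.3.19]{BW21} with no proof given here---but your argument is the standard mate-calculus proof used there and in \cite[Thm.~5.3.4]{RV21}: identify the mate's component at $\pair{d}{u}$ with the comparison between $\varphi(u_!\,d)$ and $(ju)_!(\varphi\,d)$, then invoke uniqueness of cocartesian lifts and closure of cocartesian arrows under composition with isomorphisms; this is correct. One small slip: the filler you actually construct (factoring $\varphi(Q_!(u,d))$ through the cocartesian lift of $ju$ at $\varphi(d)$) points in the direction $(ju)_!(\varphi\,d) \to \varphi(u_!\,d)$, opposite to the arrow you wrote down, though since invertibility is what matters this does not affect the argument.
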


Furthermore, cocartesian families and functors satisfy a lot of important closure properties. We can prove this in type theory mirroring \eg~their corresponding $\infty$-cosmological closure properties,~\cite[Definition~1.2.1 and Proposition~6.3.14]{RV21}.

\begin{proposition}[Cosmological closure properties of cocartesian families, \protect{\cite[Proposition~5.3.17]{BW21}}]\label{prop:cocart-cosm-closure}
	Over Rezk bases, it holds that:
	
	Cocartesian families are closed under composition, dependent products, pullback along arbitrary maps, and cotensoring with maps/shape inclusions. Families corresponding to equivalences or terminal projections are always cocartesian.
	
	Between cocartesian families over Rezk bases, it holds that:
	Cocartesian functors are closed under (both horizontal and vertical) composition, dependent products, pullback, sequential limits,\footnote{all three objectwise limit notions satisfying the expected universal properties \wrt~to cocartesian functors} and Leibniz cotensors.
	
	Fibered equivalences and fibered functors into the identity of $\unit$ are always cocartesian.
\end{proposition}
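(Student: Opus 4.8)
The plan is to reduce every clause to the two adjoint-functorial characterizations already in hand, \Cref{thm:cocartfams-via-transp} for families and \Cref{thm:char-cocart-fun} for functors, thereby mirroring the purely formal $\infty$-cosmological arguments of \cite[Proposition~6.3.14]{RV21}. Concretely, by \Cref{thm:cocartfams-via-transp} a family $P : B \to \UU$ with total projection $\pi : \totalty{P} \fibarr B$ is cocartesian precisely when the comparison map $\iota_P : \totalty{P} \to \commaty{\pi}{B}$ admits a fibered left adjoint $\tau_P$, and by \Cref{thm:char-cocart-fun} a fibered functor is cocartesian precisely when a certain canonical mate is invertible. Once the statements are phrased this way, each closure property becomes a consequence of the stability of fibered adjunctions (resp.\ of invertible mates) under the operation in question, together with the fact that the comma construction $(-)\downarrow B$ commutes suitably with composition, pullback, (co)tensoring, dependent products, and tower limits.

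For the family-level clauses I would argue as follows. If the total projection is an equivalence then $\iota_P$ is an equivalence and so trivially has a left adjoint; for a terminal projection every dependent arrow lies over the identity of the unique point, so the contractibility condition of \Cref{def:cocart-arr} is met vacuously. Stability under pullback along an arbitrary $j : A \to B$ holds because forming $j^{*}P$ pulls $\iota_P$ back to the comparison map of $j^{*}P$, and fibered left adjoints are stable under pullback; the transported cocartesian lifts are then exactly the images of those of $P$ by uniqueness of cocartesian lifts up to homotopy. Closure under composition follows by composing the two fibered left adjoints: if $P$ is cocartesian over $B$ and $Q$ is cocartesian over $\totalty{P}$, the comma objects compose, so the composite comparison map inherits a left adjoint, and a lift for the composite is assembled from a $P$-lift followed by a $Q$-lift over it. Closure under dependent products and under cotensoring with a map or shape inclusion follows since these operations preserve adjunctions pointwise and commute with the relevant comma constructions, so the left adjoint is produced objectwise and then reassembled.

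The functor-level clauses I would handle uniformly through the mate criterion of \Cref{thm:char-cocart-fun}: a fibered functor is cocartesian exactly when the mate of its canonical fibered natural isomorphism is invertible. Horizontal and vertical composition then reduce to compatibility of mates with pasting of squares; stability under pullback reduces to preservation of mates under restriction along the pulling-back map; and closure under dependent products, sequential limits, and Leibniz cotensors follows from compatibility of the mate calculus with these weighted-limit constructions, each taken so as to satisfy the expected universal property with respect to cocartesian functors. A fibered equivalence is cocartesian because an equivalence preserves the contractibility condition of \Cref{def:cocart-arr}, and a fibered functor into the identity family on $\unit$ is cocartesian vacuously.

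The main obstacle is the uniform treatment of the limit and Leibniz-cotensor cases. There one cannot simply transport a single lift: one must check that the pointwise cocartesian lifts assemble into a genuine cocartesian arrow in the limit, which requires the universal property of the limit to interact correctly with the contractibility condition defining cocartesianness in \Cref{def:cocart-arr}. The delicate point is thus to verify that the comma construction and the weighted limits commute up to the coherent equivalences needed to transport invertibility of the mate, rather than the individual closure arguments, each of which is formal once this commutation is secured. In every case the argument parallels the corresponding $\infty$-cosmological closure proofs, and we refer to \cite[Proposition~5.3.17]{BW21} for the full details.
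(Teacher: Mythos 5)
This proposition is stated in the paper without proof: it is imported verbatim from \cite[Proposition~5.3.17]{BW21}, with the surrounding text merely remarking that it mirrors the $\infty$-cosmological closure properties of \cite[Proposition~6.3.14]{RV21}. Your outline — reducing the family-level clauses to the fibered-left-adjoint characterization of \Cref{thm:cocartfams-via-transp} and the functor-level clauses to the mate criterion of \Cref{thm:char-cocart-fun}, and then invoking stability of fibered adjunctions and of invertible mates under composition, pullback, dependent products, (Leibniz) cotensors, and tower limits — is exactly the strategy of the cited source, so at the level of architecture there is nothing to object to, and your closing deferral to \cite[Proposition~5.3.17]{BW21} is where the paper itself stops as well.

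One local step in your sketch is wrong as stated, though the conclusion it supports is fine. If the total projection $\pi : \totalty{P} \to B$ is an equivalence, the comparison map $\iota_P : \totalty{P} \to \commaty{\pi}{B}$ is \emph{not} an equivalence: the comma type is then equivalent to $B^{\Delta^1}$, and $\iota_P$ corresponds to the identity-inclusion $B \to B^{\Delta^1}$, which is only an equivalence when $B$ has no nontrivial arrows. What is true is that this inclusion has a left adjoint (the codomain evaluation), or, more elementarily, that every dependent arrow in a family with contractible fibers is cocartesian, which is how one should justify the "equivalences are cocartesian" clause. Similarly, for the terminal projection the contractibility condition of \Cref{def:cocart-arr} is not met "vacuously" — one still has to exhibit lifts and verify the filler condition — it is just easy because the fibers are constant and composition in the fiber supplies the unique filler. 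Neither slip affects the viability of the overall plan, but both justifications would need to be repaired before the sketch could be called a proof.
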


\subsubsection{More on cocartesian and vertical arrows}\label{ssec:more}

Let $P:B \to \UU$ be a cocartesian family over a Rezk $B$. Given an arrow $u:a \to_B b$ and a term $e:P\,a$ we denote the cocartesian lift, determined uniquely up to homotopy, as
\[ P_!(u,e): e \cocartarr_u^P u_!^P(e),\]
somtimes suppressing superscripts whenever they are clear from the context.

We recover the following naturality and functoriality results for cocartesian families and functors.

\begin{proposition}[Functoriality of cocartesian families, \protect{\cite[Proposition~5.2.4]{BW21}}]\label{prop:cocart-functoriality}
	Let $B$ be a Rezk type and $P:B \to \UU$ a cocartesian family. For any $a:B$ and $x:P\,a$ there is an identity
	\[ \coliftarr{P}{\id_b}{x} = \id_x, \]
	and for any $u:\hom_B(a,b)$, $v:\hom_B(b,c)$, there is an identity
	\[ \coliftarr{P}{v \circ u}{x} = \coliftarr{P}{v}{\coliftpt{u}{x}} \circ \coliftarr{P}{u}{x}. \]
\end{proposition}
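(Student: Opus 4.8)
The plan is to deduce both identities from the uniqueness of cocartesian lifts recorded above: for the cocartesian family $P$, an arrow $u:a\to_B b$, and $x:P\,a$, the type $\sum_{y:P\,b}(x\cocartarr^P_u y)$ of cocartesian lifts with source $x$ is contractible. Hence, to identify the chosen lift $\coliftarr{P}{u}{x}$ with any given arrow, it suffices to exhibit that arrow as a $P$-cocartesian arrow over $u$ with source $x$; contractibility then supplies an identification of both the arrows and (as their first components) their targets.

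For the unit law I would first verify that $\id_x : x\to^P_{\id_a} x$ is itself $P$-cocartesian. Unfolding $\isCocartArr_{\id_a}^P(\id_x)$, this asks that for all $c:B$, $v:a\to_B c$, $z:P\,c$, and $h:x\to^P_{v\circ\id_a} z$, the type $\sum_{g:x\to^P_v z}\big(h=_{v\circ\id_a}^P g\circ\id_x\big)$ be contractible. Rewriting along the unit laws of the Segal structure, namely $v\circ\id_a=v$ in $B$ and $g\circ\id_x=g$ in $\totalty{P}$, this reduces to $\sum_{g:x\to^P_v z}(h=g)$, a based path space and hence contractible. By the uniqueness principle this yields $\coliftarr{P}{\id_a}{x}=\id_x$, and incidentally $\coliftpt{\id_a}{x}=x$.

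For the composition law, set $f\defeq\coliftarr{P}{u}{x}$ and $g\defeq\coliftarr{P}{v}{\coliftpt{u}{x}}$, both cocartesian by construction, so that $g\circ f:x\to^P_{v\circ u}\coliftpt{v}{\coliftpt{u}{x}}$ is an arrow over $v\circ u$ with source $x$. I would then show the composite of two cocartesian arrows is again cocartesian by the standard pasting argument from the universal property: given $h:x\to^P_{w\circ(v\circ u)} z$, cocartesianness of $f$ produces a unique $k:\coliftpt{u}{x}\to^P_{w\circ v} z$ with $h=k\circ f$ (after rewriting $w\circ(v\circ u)=(w\circ v)\circ u$), and then cocartesianness of $g$ produces a unique $\ell:\coliftpt{v}{\coliftpt{u}{x}}\to^P_w z$ with $k=\ell\circ g$; associativity gives $h=\ell\circ(g\circ f)$, and the two uniqueness statements propagate backwards to exhibit $g\circ f$ as cocartesian over $v\circ u$. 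The uniqueness principle then identifies $g\circ f$ with the chosen lift $\coliftarr{P}{v\circ u}{x}$, which is the claim.

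The main obstacle is the coherence bookkeeping inside the dependent hom-types: the factorization witnesses live over base composites such as $v\circ\id_a$ and $w\circ(v\circ u)$, so each reduction must transport identity types along the base-level unit and associativity homotopies and align them with the matching laws in $\totalty{P}$. Conceptually the argument mirrors the classical $1$-categorical proof line for line, but keeping this coherence data aligned through the dependent identifications makes the composition step the delicate one.
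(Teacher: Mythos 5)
Your argument is correct and follows the standard route: uniqueness (contractibility) of cocartesian lifts with fixed source, plus the facts that identities are cocartesian and that composites of cocartesian arrows are cocartesian. The paper simply cites \cite[Proposition~5.2.4]{BW21} for this, and the closure facts you reprove by hand are already available as \Cref{lem:cocart-arrows-isos}(2) and \Cref{prop:cocart-arr-closure}(1), so you could shorten the argument by invoking them directly.
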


\begin{proposition}[\protect{\cite[Proposition~5.2.5]{BW21}}]
	Let $B$ be a Rezk type and $P:B \to \UU$ be a cocartesian family. For any arrow $u:\hom_B(a,b)$ and elements $d:P\,a$, $e:P\,b$, we have equivalences between the types of (cocartesian) lifts of arrows (recall~\Cref{def:cocart-arr}) from $d$ to $e$ and maps (isomorphisms, respectively) from $u_!d$ to $e$:
	\[\begin{tikzcd}
		{(d} & {e)} && {(u_!d} & {e)} \\
		{(d} & {e)} && {(u_!d} & {e)}
		\arrow["\equiv", from=1-2, to=1-4]
		\arrow["\equiv", from=2-2, to=2-4]
		\arrow[""{name=0, anchor=center, inner sep=0}, "u"', from=1-1, to=1-2, cocart]
		\arrow[""{name=1, anchor=center, inner sep=0}, "{P\,b}" below, from=2-4, to=2-5]
		\arrow[""{name=2, anchor=center, inner sep=0}, "u"', from=2-1, to=2-2]
		\arrow[""{name=3, anchor=center, inner sep=0}, "{P\,b}" below,equals,  from=1-4, to=1-5]
		\arrow[shorten <=9pt, shorten >=6pt, hook, from=0, to=2]
		\arrow[shorten <=6pt, shorten >=9pt, hook, from=3, to=1]
	\end{tikzcd}\]
\end{proposition}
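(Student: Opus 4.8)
The plan is to obtain both equivalences from the universal property of the canonical cocartesian lift $P_!(u,d) : d \cocartarr_u^P u_!^P d$, treating the bottom row (all arrows) first and then cutting it down to the top row (cocartesian arrows versus isomorphisms). For the bottom equivalence I would take the comparison map to be postcomposition with the cocartesian lift, $(u_!^P d \to_{P\,b} e) \to (d \to_u^P e)$, sending $g \mapsto g \circ^P P_!(u,d)$; since $\id_b \circ u \jdeq u$, such a composite indeed lies over $u$. That this map is an equivalence is almost exactly \Cref{def:cocart-arr}: instantiating $\isCocartArr_u^P(P_!(u,d))$ at the codomain $b'' \defeq b$, the base arrow $v \defeq \id_b$, the target $e'' \defeq e$, and an arbitrary $h : d \to_u^P e$ shows that the type $\sum_{g : u_!^P d \to_{\id_b}^P e}(h =_u^P g \circ^P P_!(u,d))$ is contractible. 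As a dependent arrow over $\id_b$ is the same datum as an arrow of the fiber $P\,b$ (the extension type over $\Delta^1$ of the constant family at $P\,b$), this contractibility says precisely that every homotopy fiber of postcomposition by $P_!(u,d)$ is contractible, hence that the map is an equivalence.

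For the top row I would show that, transported along the bottom equivalence, a vertical map $g : u_!^P d \to_{P\,b} e$ is an isomorphism if and only if the arrow $f \defeq g \circ^P P_!(u,d)$ is $P$-cocartesian; since both $\isCocartArr_u^P(f)$ and ``$g$ is an isomorphism'' are propositions, this biconditional restricts the bottom equivalence to the respective subtypes and yields the top equivalence together with the commuting square of subtype inclusions. The direction ``$g$ an isomorphism $\Rightarrow f$ cocartesian'' uses that a fiberwise isomorphism is a cocartesian arrow (precomposition by an iso is an equivalence on the relevant hom-types, which gives the required contractibility) together with closure of cocartesian arrows under composition. For the converse I appeal to left cancellation: if $f$ and $P_!(u,d)$ are both cocartesian and $f = g \circ^P P_!(u,d)$, then $g$ is cocartesian; and a cocartesian arrow lying over the identity $\id_b$ is necessarily an isomorphism, since by \Cref{prop:cocart-functoriality} the canonical lift $\coliftarr{P}{\id_b}{u_!^P d}$ equals $\id_{u_!^P d}$ and cocartesian lifts of a fixed arrow at a fixed source are unique up to isomorphism.

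The main obstacle I expect lies entirely in the top equivalence, namely the two fibrational inputs: the left-cancellation property for cocartesian arrows, and the identification of vertical cocartesian arrows with fiberwise isomorphisms. The bottom equivalence is essentially a restatement of the defining contractibility condition and should be routine. I would prove left cancellation directly from \Cref{def:cocart-arr} by the standard manipulation of the contractible $\Sigma$-types of fillers (a two-out-of-three argument at the level of filler types), and derive the identification of vertical cocartesian arrows with isomorphisms from \Cref{prop:cocart-functoriality} and uniqueness of cocartesian lifts. The remaining work is bookkeeping: checking that these identifications are strictly compatible with the comparison maps so that the two squares genuinely commute, which is straightforward but must be tracked with care.
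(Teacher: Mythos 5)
Your proof is correct and is essentially the intended argument: the paper does not prove this statement itself but imports it from \cite[Proposition~5.2.5]{BW21}, and your route --- postcomposition with $P_!(u,d)$ for the bottom equivalence (whose fibers are contractible by exactly the defining universal property of \Cref{def:cocart-arr} instantiated at $v \defeq \id_b$), then restricting along the propositions ``cocartesian'' and ``isomorphism'' using closure under composition and right cancellation (\Cref{prop:cocart-arr-closure}) together with the facts that dependent isomorphisms are cocartesian and cocartesian arrows over identities are isomorphisms (\Cref{lem:cocart-arrows-isos}) --- is the standard one, with all the needed lemmas already available in the text. The only nitpick is that $\id_b \circ u$ is not judgmentally equal to $u$ in simplicial type theory (composites are only characterized up to contractible choice), so you need the propositional identification $\id_b \circ u = u$ and a transport along it, which does not affect the argument; also, what you call ``left cancellation'' is what \Cref{prop:cocart-arr-closure} calls right cancelation.
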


\begin{proposition}[Naturality of cocartesian liftings, \protect{\cite[Proposition~5.3.4]{BW21}}]\label{prop:nat-cocartlift-arr}
	Let $B$ be a Rezk type, $P:B \to \UU$, $Q:C \to \UU$ cocartesian families, and $\Phi \jdeq \pair{j}{\varphi} : \CocartFun_{B,C}(P,Q)$ a cocartesian functor. Then $\Phi$ commutes with cocartesian lifts, \ie,~for any $u:\hom_B(a,b)$ there is an identification of arrows\footnote{Here, $(ju)^*Q$ denotes the pulled back family $(ju)^*Q \defeq \lambda t.Q(j(u(t))) : \Delta^1 \to \UU$.}
	\[ \varphi \big(P_!(u,d)\big) =_{\Delta^1 \to (ju)^*Q} Q_!(ju,\varphi_ad) \]
	and hence of endpoints
	\[ \varphi_b(u_!^Pd) =_{Q(jb)} (ju)_!^Q(\varphi_ad). \]
	In particular there is a homotopy commutative square:
	\[
	\begin{tikzcd}
		Pa \ar[rr, "\varphi_a"] \ar[d, "\coliftptfammap{P}{u}" swap] & & Qa \ar[d, "\coliftptfammap{Q}{(ju)}"] \\
		Pb \ar[rr, "\varphi_b" swap] && Qb
	\end{tikzcd}
	\]
\end{proposition}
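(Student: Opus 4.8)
The plan is to derive everything from the defining property of a cocartesian functor together with the essential uniqueness of cocartesian lifts. Recall from the notation for cocartesian families that $P_!(u,d) : d \cocartarr_u^P u_!^P d$ is a $P$-cocartesian arrow over $u$ whose source is $d$. First I would apply the cocartesian-functor witness $\isCocartFun_{P,Q}(\Phi)$ supplied by \Cref{def:cocart-fun}: since $P_!(u,d)$ is $P$-cocartesian over $u$, its image $\varphi_u\big(P_!(u,d)\big) \jdeq \lambda t.\varphi_{u(t)}\big(P_!(u,d)(t)\big)$ is a $Q$-cocartesian arrow over $ju$. Evaluating at the two endpoints of $\Delta^1$ shows that this arrow has source $\varphi_a d$ and target $\varphi_b(u_!^P d)$.

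Next I would compare it with the canonical lift $Q_!(ju,\varphi_a d) : \varphi_a d \cocartarr_{ju}^Q (ju)_!^Q(\varphi_a d)$, which is by definition a $Q$-cocartesian arrow over $ju$ with the same source $\varphi_a d$. Invoking the fact, recorded just after the definition of cocartesian family, that cocartesian arrows with a fixed starting point are unique up to homotopy, the type $\sum_{e':Q(jb)}\big(\varphi_a d \cocartarr_{ju}^Q e'\big)$ is contractible. The two pairs
\[ \pair{\varphi_b(u_!^P d)}{\varphi_u(P_!(u,d))} \qquad\text{and}\qquad \pair{(ju)_!^Q(\varphi_a d)}{Q_!(ju,\varphi_a d)} \]
are both terms of this contractible type and are therefore identified. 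Forgetting the cocartesianness witness, the underlying identification is one of dependent arrows over $ju$ with source $\varphi_a d$, which in particular yields the claimed $\varphi\big(P_!(u,d)\big) =_{\Delta^1 \to (ju)^*Q} Q_!(ju,\varphi_a d)$; projecting instead onto the first coordinate gives the identification of endpoints $\varphi_b(u_!^P d) = (ju)_!^Q(\varphi_a d)$. The homotopy commutative square then follows at once, since letting $d$ vary the endpoint identifications constitute a homotopy $\varphi_b \circ u_!^P \sim (ju)_!^Q \circ \varphi_a$ between the two composite transport functors $P\,a \to Q(jb)$.

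The conceptual content is entirely contained in these two inputs, so I expect no genuine difficulty; the only care needed is bookkeeping. Specifically, one must check that both arrows indeed have source exactly $\varphi_a d$ so that they lie in the same contractible type — this holds because $\varphi$ acts pointwise and hence fixes the source vertex — and one must be attentive to which ambient type each identification is stated in, namely that the identification of arrows is obtained from the contractibility of the fixed-source lift type and then read off in the free dependent-arrow type $\Delta^1 \to (ju)^*Q$ by discarding the explicit target datum.
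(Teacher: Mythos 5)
Your argument is correct and is exactly the expected one: the paper gives no proof of its own here (it defers to \cite[Proposition~5.3.4]{BW21}), and the cited proof proceeds just as you do, by noting that a cocartesian functor sends the cocartesian lift $P_!(u,d)$ to a $Q$-cocartesian arrow over $ju$ with source $\varphi_a d$, and then invoking the essential uniqueness of cocartesian lifts with fixed source to identify it with $Q_!(ju,\varphi_a d)$. Your attention to the bookkeeping (that both arrows share the source $\varphi_a d$, and that the identification is first obtained in the contractible type $\sum_{e':Q(jb)}(\varphi_a d \cocartarr_{ju}^Q e')$ and then read off in $\Delta^1 \to (ju)^*Q$) is the only delicate point, and you handle it correctly.
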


If $f$ is a cocartesian arrow and $h$ some dependent arrow with the same source, we denote the homotopically unique filler $g$ such that $h = gf$ by $g \jdeq \tyfill^P_f(h)$. This gives the following picture:
\[\begin{tikzcd}
	E &&& {u_!\,e} \\
	&& e && {e''} \\
	&&& {b'} \\
	B && b && {b''}
	\arrow[two heads, from=1-1, to=4-1]
	\arrow["vu", from=4-3, to=4-5]
	\arrow["u", from=4-3, to=3-4]
	\arrow["v", from=3-4, to=4-5]
	\arrow["h"', from=2-3, to=2-5]
	\arrow["f", from=2-3, to=1-4, cocart]
	\arrow["{g \defeq \tyfill_{f}(h)}", dashed, from=1-4, to=2-5]
\end{tikzcd}\]

\begin{definition}[Vertical arrows]
	Let $P:B \to \UU$ a family of Rezk types over a Rezk type $B$. A dependent arrow $f$ in $P$ is called \emph{vertical} if $\pi(f)$ is an isomorphism in $B$, where $\pi : \totalty{P} \to B$ denotes the unstraightening of $P$. For $d,e:P\,a$ we write $(d \vertarr^P_a e) \defeq (d \to_{P\,a} e)$ for the type of vertical arrows from $d$ to $e$, or simply just $(d \vertarr e)$ leaving the underlying data implicit.{\tiny {\tiny }}
\end{definition}

Note that by the Rezk condition any biinvertible arrow can be replaced by a (constant) path, \cf~\cite[Proposition~4.2.2]{BW21}. Therefore, the type of vertical arrows in $\totalty{P}$ is equivalent to the type $\sum_{b:B} \Delta^1 \to P\,b$, \cf~\cite[Definition~2.7.1]{jw-phd} or~\cite[Definition~2.13]{W22-2sCart}. Recall also several closure properties of cocartesian arrows that will be important throughout the text.

\begin{prop}[Closedness under composition and right cancelation, \protect{\cite[Prop.~5.1.8]{BW21}}, \cf~\protect{\cite[Lem.~5.1.5]{RV21}}]\label{prop:cocart-arr-closure} Let $P: B \to \UU$ be a cocartesian family over a Rezk type $B$. For arrows $u:\hom_B(b,b')$, $v:\hom_B(b',b'')$, with $b,b',b'':B$, consider dependent arrows $f:\dhom_{P,u}(e,e')$, $g:\dhom_{P,v}(e',e'')$ lying over, for $e:P\,b$, $e':P\,b'$, $e'':P\,b''$.
	\begin{enumerate}
		\item\label{it:cocart-arr-comp} If both $f$ and $g$ are are cocartesian arrows, then so is their composite $g \circ f$.
		\item\label{it:cocart-arr-cancel} If $f$ and $g \circ f$ are cocartesian arrows, then so is $g$.
	\end{enumerate}
\end{prop}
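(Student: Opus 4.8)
The plan is to reformulate cocartesianness of an arrow as an equivalence of dependent hom-types and then reduce both claims to the two-out-of-three property of equivalences. Unfolding \Cref{def:cocart-arr}, an arrow $f : e \to^P_u e'$ over $u : b \to_B b'$ is cocartesian precisely when, for every $w : b' \to_B b'''$ and every $e''' : P\,b'''$, the precomposition map
\[ f^* \defeq (-) \circ f : (e' \to^P_w e''') \to (e \to^P_{wu} e''') \]
is an equivalence. Indeed, the fiber of $f^*$ over a given $h : e \to^P_{wu} e'''$ is exactly the type $\sum_{g}\,(h = g \circ f)$ whose contractibility the definition demands, and a map is an equivalence if and only if all of its fibers are contractible. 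This reformulation is the only real input needed, together with the (relative) Segal structure that makes dependent composition well-defined and associative.

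With this in hand, the key step is a factorization. Given $f : e \to^P_u e'$ and $g : e' \to^P_v e''$, fix a test arrow $w : b'' \to_B b'''$ and $e''' : P\,b'''$. Associativity of composition in the Segal type $B$ gives $(wv)u = w(vu)$, and the corresponding associativity of dependent composition in $P$ identifies $k \circ (g \circ f)$ with $(k \circ g) \circ f$; hence, up to these identifications, the precomposition map $(g\circ f)^*$ factors as
\[ (e'' \to^P_w e''') \xrightarrow{\ g^*\ } (e' \to^P_{wv} e''') \xrightarrow{\ f^*\ } (e \to^P_{w(vu)} e'''), \]
where $f^*$ is precomposition by $f$ applied with the shifted test arrow $wv : b' \to_B b'''$. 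Now both statements are immediate. For (1), if $f$ and $g$ are cocartesian then $g^*$ and $f^*$ are equivalences for every choice of test data, so their composite $(g\circ f)^*$ is an equivalence; as this holds for all $w$ and $e'''$, the arrow $g \circ f$ is cocartesian. For (2), if $f$ and $g\circ f$ are cocartesian then $f^*$ and the composite $(g\circ f)^* = f^* \circ g^*$ are equivalences, so $g^*$ is an equivalence by two-out-of-three, for all test data, whence $g$ is cocartesian. Since $\isCocartArr$ is a proposition, no further coherence need be checked beyond exhibiting these equivalences.

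I expect the main obstacle to be bookkeeping rather than conceptual: one must handle the associativity coherences carefully, transporting the equivalence statements along the base identification $(wv)u = w(vu)$ and along the witness $(k\circ g)\circ f = k \circ (g \circ f)$ supplied by the Segal condition, and observe that $f^*$ is available at the shifted test arrow $wv$ (which it is, because cocartesianness of $f$ quantifies over \emph{all} test arrows out of $b'$). Alternatively, one can bypass the equivalence packaging and argue directly from the universal property: for (1) a factorization of $h : e \to^P_{w(vu)} e'''$ through $g\circ f$ is produced by first factoring $h$ through $f$ and then factoring the result through $g$, with uniqueness following by chaining the two uniqueness clauses; dually for (2), one factors $h' \circ f$ through the cocartesian arrow $g\circ f$ and uses uniqueness of the $f$-factorization to identify the outcome with $h'$, yielding the required factorization of $h'$ through $g$.
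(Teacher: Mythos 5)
The paper states this proposition without proof, deferring to \cite[Prop.~5.1.8]{BW21}, and your argument is correct and essentially the standard one used there: recast $\isCocartArr^P_u(f)$ as the statement that precomposition $(-)\circ f$ is an equivalence on dependent hom-types (its fibers being exactly the filler types required to be contractible), factor $(g\circ f)^*$ as $f^*\circ g^*$ up to the associativity coherences, and conclude both claims from two-out-of-three for equivalences. No gaps; the bookkeeping you flag (transport along $(wv)u = w(vu)$ and the associator) is the only content left to make precise.
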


\begin{lem}[\protect{\cite[Prop.~5.1.9]{BW21}}, \cf~\protect{\cite[Lem.~5.1.6]{RV21}}]\label{lem:cocart-arrows-isos}
	Let $P: B \to \UU$ be an inner family over a Segal type $B$.
	\begin{enumerate}
		\item\label{it:depisos-are-over-isos} If $f$ is a dependent isomorphism in $P$ over some morphism $u$ in $B$, then $u$ is itself an isomorphism.
		\item\label{it:depisos-are-cocart} Any dependent isomorphism in $P$ is cocartesian.
		\item\label{it:cocart-arrows-over-ids-are-isos} If $f$ is a cocartesian arrow in $P$ over an identity in $B$, then $f$ is an isomorphism.
	\end{enumerate}
\end{lem}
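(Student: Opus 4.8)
The plan is to treat the three parts in turn, writing $\pi : \totalty{P} \to B$ for the unstraightening. Since $B$ is Segal and $P$ is inner, the preceding proposition gives that $\totalty{P}$ is Segal, so I may compose dependent arrows and reason about two-sided inverses throughout. For \cref{it:depisos-are-over-isos} I would simply use that $\pi$, being a map between Segal types, is a functor and hence preserves isomorphisms: a dependent isomorphism $f$ over $u$ is by definition a biinvertible arrow $(u,f)$ of $\totalty{P}$, and $\pi$ carries a two-sided inverse $(\bar u, \bar f)$ to a two-sided inverse $\bar u$ of $u$, so that $u$ is an isomorphism in $B$.

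For \cref{it:depisos-are-cocart}, let $f : e \to^P_u e'$ be a dependent isomorphism. By \cref{it:depisos-are-over-isos} the base arrow $u$ is invertible with inverse $\bar u$, and $f$ carries a dependent inverse $\bar f : e' \to^P_{\bar u} e$ with $\bar f \circ f = \id_e$ and $f \circ \bar f = \id_{e'}$. To check $\isCocartArr_u^P(f)$, fix $v : b' \to_B b''$, a point $e'' : P\,b''$, and a dependent arrow $h : e \to^P_{vu} e''$; I claim the filler type $\sum_{g : e' \to^P_v e''}(h =_{vu}^P g \circ f)$ is contractible. The natural center is $g_0 \defeq h \circ \bar f$, which lies over $v u \bar u = v$ and satisfies $g_0 \circ f = h \circ (\bar f \circ f) = h$; for uniqueness, any competing filler $g$ obeys $g = g \circ (f \circ \bar f) = (g \circ f) \circ \bar f = h \circ \bar f = g_0$. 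More structurally, this is the assertion that precomposition $(-) \circ f$ is an equivalence from $(e' \to^P_v e'')$ to $(e \to^P_{vu} e'')$, with inverse $(-) \circ \bar f$, whose fiber over $h$ is exactly the filler type.

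For \cref{it:cocart-arrows-over-ids-are-isos}, let $f : e \cocartarr_{\id_a}^P e'$ be cocartesian over an identity. I would first produce a one-sided inverse by the existence half of cocartesianness: instantiating the contractible filler type at $v \defeq \id_a$, $e'' \defeq e$, and $h \defeq \id_e$ yields $g : e' \to^P_{\id_a} e$ with $g \circ f = \id_e$. To upgrade $g$ to a two-sided inverse, I would invoke the uniqueness half once more, now for the filler type of $h \defeq f$ along $f$: both $\id_{e'}$ and $f \circ g$ are fillers, since $\id_{e'} \circ f = f$ and $(f \circ g) \circ f = f \circ (g \circ f) = f$, so by contractibility $f \circ g = \id_{e'}$. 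Hence $g$ is a two-sided inverse and $f$ is an isomorphism.

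The step I expect to require the most care is the bookkeeping of base coherences in \cref{it:depisos-are-cocart}: the candidate $g_0 = h \circ \bar f$ a priori lies over the composite $v u \bar u$ rather than strictly over $v$, so one must transport along the witnessing identification $u \circ \bar u = \id$ (and dually $\bar u \circ u = \id$) and verify that the filler equation survives this transport. Promoting ``precomposition with an isomorphism is an equivalence'' from the total Segal type to the level of dependent hom-types, where it must be phrased via extension types, is the only genuinely technical point; the remaining arguments are just the unit and associativity laws available in any Segal type.
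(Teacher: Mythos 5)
The paper states this lemma without proof, importing it directly from \cite[Prop.~5.1.9]{BW21}, so there is no in-paper argument to compare against; your proof is correct and is essentially the standard one given in that reference (precomposition with an invertible arrow is an equivalence of dependent hom-types, and a cocartesian arrow over an identity acquires a two-sided inverse from two applications of the contractible filler type). You also correctly identify the only delicate point, namely transporting the candidate fillers along the coherences $u\bar u = \mathrm{id}$ and $\mathrm{id}\circ\mathrm{id} = \mathrm{id}$ in the Segal base.
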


Moreover, in a cocartesian family vertical arrows are stable under pullback along arbitrary arrows, given they exist. To prove this, we will need some preliminary results. Note that these could presumably be stated in our setting more abstractly introducing a notion of (orthogonal) factorization system, and showing that in a cocartesian family cocartesian arrows as a left class and vertical arrows as a right class yield an instance. However, we will not need the full power of this here so we refrain from developing this result, but see the works by Joyal~\cite[Paragraph~24.13]{JoyQcat}, Lurie~\cite[Example~5.2.8.15]{LurHTT}, Myers~\cite[Proposition~2.4]{MyersCartFS}, as well as the nLab~\cite[3.~Examples]{nLabOFS}, \cite[Proposition~3.1]{nLabWFS}, and Joyal's CatLab~\cite[6.~Examples: More examples]{JoyCatFS}.

\begin{lemma}[Retract closedness of vertical arrows]\label{lem:vertarr-retr}
	Let $\pi:E \fibarr B$ be a map between Rezk types. Then vertical arrows are closed under retracts, meaning: Given a vertical dependent arrow $f: x \vertarr^P y$ in $P \defeq \St_B(\pi)$, then a dependent arrow $f':x ' \to^P y'$ is vertical if there exist dependent squares as follows:
	\[\begin{tikzcd}
		{x'} && x && {x'} \\
		{y'} && y && {y'}
		\arrow["{f'}"', from=1-1, to=2-1]
		\arrow["g", from=1-1, to=1-3]
		\arrow["{g'}", from=2-1, to=2-3]
		\arrow["f"', squiggly, from=1-3, to=2-3]
		\arrow["k", from=1-3, to=1-5]
		\arrow["{k'}", from=2-3, to=2-5]
		\arrow["{f'}", from=1-5, to=2-5]
		\arrow["{\id_x}"{description}, curve={height=-18pt}, Rightarrow, no head, from=1-1, to=1-5]
		\arrow["{\id_y}"{description}, curve={height=18pt}, Rightarrow, no head, from=2-1, to=2-5]
	\end{tikzcd}\]
\end{lemma}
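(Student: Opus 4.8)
The plan is to reduce the statement to the fact that isomorphisms in a Segal type are closed under retracts, applied to the base $B$. Recall that $f'$ is vertical exactly when $\pi(f')$ is an isomorphism in $B$, and that $f$ being vertical means $\pi(f)$ is an isomorphism. Since being an isomorphism is a proposition in a Segal type, and hence being vertical is a proposition, it suffices to exhibit a single inverse of $\pi(f')$.

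First I would apply $\pi\colon \totalty{P} \to B$ to the two dependent squares. As $\pi$ is a map between Rezk types it is functorial, so the retract witnesses $k \circ g = \id_{x'}$ and $k' \circ g' = \id_{y'}$ become identities $\pi(k) \circ \pi(g) = \id$ and $\pi(k') \circ \pi(g') = \id$ in $B$, and the two squares become commuting squares in $B$. Abbreviating $u \defeq \pi(f)$, $u' \defeq \pi(f')$, $p \defeq \pi(g)$, $q \defeq \pi(k)$, $p' \defeq \pi(g')$, $q' \defeq \pi(k')$, this yields $q \circ p = \id$ and $q' \circ p' = \id$ together with $u \circ p = p' \circ u'$ from the left square and $u' \circ q = q' \circ u$ from the right square; all composites exist because $B$ is Segal. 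This is precisely the datum exhibiting $u'$ as a retract of $u$ in the arrow $\inftyone$-category of $B$.

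Next, since $f$ is vertical, $u$ is an isomorphism; write $w$ for its inverse. I would then propose $c \defeq q \circ w \circ p'$ as an inverse of $u'$ and verify the two triangle identities: using $p' \circ u' = u \circ p$ and $w \circ u = \id$ one gets $c \circ u' = q \circ w \circ u \circ p = q \circ p = \id$, and using $u' \circ q = q' \circ u$ and $u \circ w = \id$ one gets $u' \circ c = q' \circ u \circ w \circ p' = q' \circ p' = \id$. Hence $c$ is a two-sided inverse, so $u' = \pi(f')$ is an isomorphism and $f'$ is vertical.

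Both computations are entirely formal, so there is no genuine obstacle. The only points requiring care are the functoriality bookkeeping (making sure the squares and the two retract witnesses project to the corresponding identities in $B$, with associativity and unit laws supplied by the Segal structure) and the observation that a single two-sided inverse already witnesses an isomorphism in the Riehl--Shulman sense, so that propositionality of being vertical lets us conclude. This matches the remark preceding the lemma: what we are proving is exactly the retract-closedness of the right class of the (cocartesian, vertical) factorization system, and here we extract only the one instance needed rather than develop the full factorization-system machinery.
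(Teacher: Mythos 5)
Your proof is correct and follows essentially the same route as the paper's: project the retract diagram to $B$ via $\pi$ and exhibit an explicit two-sided inverse of $\pi(f')$ built from the projected retract witnesses and the (inverse of the) isomorphism $\pi(f)$. The only cosmetic difference is that the paper first uses Rezk completeness to replace $\pi(f)$ by an identity, which makes the inverse $wv'$ rather than your $q\circ w\circ p'$, but the computation is the same.
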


\begin{proof}
	Since $f$ is vertical and by functoriality of $\pi$ we can assume the above diagram to lie over a square in $B$ as follows:
	\[\begin{tikzcd}
		{a'} && a && {a'} \\
		{b'} && a && {b'}
		\arrow["{u'}"', from=1-1, to=2-1]
		\arrow["v", from=1-1, to=1-3]
		\arrow["{v'}", from=2-1, to=2-3]
		\arrow[Rightarrow, no head, from=1-3, to=2-3]
		\arrow["w", from=1-3, to=1-5]
		\arrow["{w'}", from=2-3, to=2-5]
		\arrow["{\id_{a'}}"{description}, curve={height=-18pt}, Rightarrow, no head, from=1-1, to=1-5]
		\arrow["{\id_{b'}}"{description}, curve={height=18pt}, Rightarrow, no head, from=2-1, to=2-5]
		\arrow["{u'}", from=1-5, to=2-5]
	\end{tikzcd}\]
	It suffices to show that $u':a' \to_B b'$ is an isomorphism. Indeed, from the assumption we obtain chains of identifications
	\[ u'(wv') = (u'w)v' = w'v' = \id_{b'}\]
	and
	\[ (wv')u' = w(v'u') = wv = \id_{a'}.\]
\end{proof}

In the following proposition we characterize the vertical arrows in a cocartesian family as (merely) having right liftings against cocartesian arrows.

\begin{proposition}[Right lifting property of vertical arrows]\label{prop:vert-rlp}
	Let $P:B \to \UU$ be a family of Rezk types over a small Rezk type $B$. If $P$ is cocartesian, then a dependent arrow $f:x \to^P_c y$, $c:B$, in $P$ is vertical if and only it \emph{merely has a right lift against every cocartesian arrow}, \ie, if and only if the type
	\[ \prod_{\substack{a,b,c:B \\ u:a \to_B b \\ v:b \to_B c}} \prod_{\substack{x': P\,a \\ y':P\,b}} \prod_{f':x' \cocartarr^P_u y'} \prod_{\substack{g:y' \to^P_v y \\ g': x' \to^P_{vu} x}} \Big \lVert \sum_{h:y' \to^P_{v} x} (hf' = g') \times (fh = g) \Big \rVert_{-1} \]
	is inhabited.\footnote{In fact, if $f$ is vertical the type $\sum_{h:y' \to^P_{v} x} (hf' = g') \times (fh = g)$ of lifts can be shown to be contractible (since $f'$ is cocartesian). Thus, the lifting property could be strengthened to \emph{unique lifting up to homotopy}, \cf~\cite[Definition~5.2.8.1 and Example~5.2.8.15]{LurHTT}.}
\end{proposition}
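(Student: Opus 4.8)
The plan is to prove the two implications separately, reading the statement as the assertion that, in the cocartesian--vertical factorization system associated with $P$, the vertical arrows are exactly the right class.

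For the forward implication I would assume $f$ vertical and solve an arbitrary lifting problem of the form quantified over in the statement, with $f'$ cocartesian over $u$ and the outer square commuting, \ie~$f g' = g f'$. The diagonal comes for free from cocartesianness of $f'$: since $f' : x' \cocartarr^P_u y'$ is cocartesian and $g' : x' \to^P_{vu} x$ lies over $vu$, the type $\sum_{h : y' \to^P_v x}(h f' = g')$ is contractible, so there is a unique $h \defeq \tyfill^P_{f'}(g')$ with $h f' = g'$. It then remains only to verify the lower triangle $f h = g$, and for this I would compute $(f h) f' = f(h f') = f g' = g f'$, the last equality by commutativity of the square. Thus $f h$ and $g$ are two arrows $y' \to y$ over $v$ whose precomposites with $f'$ agree, so the uniqueness clause in the cocartesianness of $f'$ forces $f h = g$. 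This argument in fact shows the whole type of lifts is contractible, which is the strengthening recorded in the footnote; inhabiting the propositional truncation is then immediate.

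For the converse I would suppose $f$ merely has the right lifting property and set $w \defeq \pi(f)$; the goal is to show $w$ is an isomorphism, so that $f$ is vertical. The idea is to probe the lifting property against the cocartesian lift of $w$ itself. Concretely, put $\chi \defeq P_!(w, x) : x \cocartarr^P_w w_!\,x$ and factor $f$ as $f = \bar f \circ \chi$ with $\bar f \defeq \tyfill^P_\chi(f)$ vertical. Applying the hypothesis to the square with left leg $\chi$, top edge $\id_x$, bottom edge $\bar f$, and right leg $f$ --- which commutes since $\bar f \circ \chi = f = f \circ \id_x$ --- yields, merely, a diagonal $h$ satisfying $h \chi = \id_x$ and $f h = \bar f$. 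Projecting both identities along $\pi$ gives $\pi(h) \circ w = \id$ and $w \circ \pi(h) = \id$, so $\pi(h)$ is a two-sided inverse of $w$; hence $w$ is an isomorphism and $f$ is vertical.

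The main obstacle is this converse: one must manufacture a lifting problem whose every solution is forced to project to an inverse of the base arrow, and the natural choice requires the cocartesian--vertical factorization $f = \bar f \circ \chi$ built from the cocartesian lift together with the filler operation. A secondary subtlety is that the lifting property is only a mere (propositionally truncated) statement; this is harmless precisely because being an isomorphism in the Rezk type $B$ is a proposition, so the truncation may be eliminated when concluding that $w$ is invertible.
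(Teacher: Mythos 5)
Your proof is correct, and the overall strategy matches the paper's. The forward direction is identical: the filler $h \defeq \tyfill_{f'}(g')$ is supplied by cocartesianness of $f'$, and the lower triangle $fh = g$ follows by cancelling $f'$ against the computation $(fh)f' = f(hf') = fg' = gf'$; your observation that the entire type of lifts is then contractible is precisely the strengthening recorded in the footnote. For the converse you construct the same lifting problem as the paper does --- factor $f$ as a cocartesian arrow $\chi$ followed by a vertical arrow $\bar f$, and lift the square with $\id_x$ on top and $\bar f$ on the bottom against $\chi$ --- but you conclude differently. The paper assembles the two identities $k\chi = \id_x$ and $fk = \bar f$ into a retract diagram exhibiting $f$ as a retract of the vertical arrow $\bar f$ and then invokes \Cref{lem:vertarr-retr}; you instead project both identities along $\pi$ and read off a two-sided inverse of $\pi(f)$ directly (using that $\pi(\bar f)$ is invertible, so that $\pi(f)$ is both split mono and split epi). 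Your route is slightly more economical in that it bypasses the retract lemma, while the paper's isolates a reusable closure property; both are valid. Your remark that the propositional truncation is harmless because invertibility of $\pi(f)$ is a proposition makes explicit a point the paper leaves tacit. One caveat applies equally to your argument and to the paper's: the lift in the converse lives over an arrow $c \to a$ that is not prescribed in advance, so the instantiation does not literally match the displayed quantification over $v$; this imprecision originates in the statement itself rather than in your proof.
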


\begin{proof}
	Consider an arrow $u:a \to_B b$ and a term $c:C$. Let $f:x \vertarr^P_c y$ be vertical and $f':x' \cocartarr^P_u y'$ be cocartesian, for $x:P\,c$, $x':P\,a$, $y:P\,c$, and $y':P\,b$
	
	For any arrow $v:b \to_B c$ together with dependent arrows $g:y' \to^P_v y$, $g': x' \to^P_{vu} x$ we want to find a filler $h:y' \to^P_v x$ as indicated, \ie~such that $hf' = g'$ and $fh = g$:
	\[\begin{tikzcd}
		& {x'} && x \\
		{\totalty{P}} & {y'} && y \\
		& a && c \\
		B & b && c
		\arrow["{f'}"', from=1-2, to=2-2, cocart]
		\arrow["{g'}", from=1-2, to=1-4]
		\arrow["g", from=2-2, to=2-4]
		\arrow["f", squiggly, from=1-4, to=2-4]
		\arrow["u"', from=3-2, to=4-2]
		\arrow["v"', from=4-2, to=4-4]
		\arrow["vu", from=3-2, to=3-4]
		\arrow[Rightarrow, no head, from=3-4, to=4-4]
		\arrow[two heads, from=2-1, to=4-1]
		\arrow["h"{description}, dashed, from=2-2, to=1-4]
	\end{tikzcd}\]
	Since $f'$ is cocartesian, we readily obtain $h := \tyfill_{f}(g'): y' \to_v x$ such that $hf' = g$. To get an identification for $fh = g$, we note that 
	\[ (fh)f' = f(hf') = fg' = gf'\]
	But this means $fh = g$ since $f'$ is cocartesian. This proves one direction.
	
	Conversely, let $h:x \to^P_u z$ be a dependent arrow over $u:a \to_B c$, $a,c:B$ which merely has a lift against each cocartesian arrow. Since $P$ is cocartesian we obtain a factorization as follows, together with a lift $k$ as indicated:
	\[\begin{tikzcd}
		& x && x \\
		{\totalty{P}} & y && z \\
		& a && a \\
		B & c && c
		\arrow["f"', from=1-2, to=2-2, cocart]
		\arrow["g"', squiggly, from=2-2, to=2-4]
		\arrow[Rightarrow, no head, from=1-2, to=1-4]
		\arrow["h", from=1-4, to=2-4]
		\arrow[Rightarrow, no head, from=3-2, to=3-4]
		\arrow["u", from=3-4, to=4-4]
		\arrow[Rightarrow, no head, from=4-2, to=4-4]
		\arrow["u"', from=3-2, to=4-2]
		\arrow[two heads, from=2-1, to=4-1]
		\arrow["k"{description}, dashed, from=2-2, to=1-4]
		\arrow["{\pi(k)}"{description}, from=4-2, to=3-4]
	\end{tikzcd}\]
	 We then find the following:
	 \[\begin{tikzcd}
	 	x && y && x \\
	 	z && z && z
	 	\arrow["f" description, from=1-1, to=1-3]
	 	\arrow["k" description, from=1-3, to=1-5]
	 	\arrow["h"', from=1-1, to=2-1]
	 	\arrow[Rightarrow, no head, from=2-1, to=2-3]
	 	\arrow[Rightarrow, no head, from=2-3, to=2-5]
	 	\arrow["g"', squiggly, from=1-3, to=2-3]
	 	\arrow["h", from=1-5, to=2-5]
	 	\arrow["{\id_x}"{description}, curve={height=-18pt}, Rightarrow, no head, from=1-1, to=1-5]
	 	\arrow["{\id_z}"{description}, curve={height=18pt}, Rightarrow, no head, from=2-1, to=2-5]
	 \end{tikzcd}\]
 	So $h$ is a retract of the vertical arrow $g$, hence is itself vertical by~\Cref{lem:vertarr-retr}.
\end{proof}

This setup is enough to prove now the desired result: in a cocartesian family, whenever the pullback of a vertical arrow exists it is vertical as well.

\begin{theorem}[Pullback stability of vertical arrows in a cocartesian family]\label{thm:pb-of-vert-is-vert-cocart-fam}
	Let $B$ be a Rezk type and $P: B \to \UU$ a cocartesian family. Consider a vertical arrow $f:x \vertarr y$ in $P$. If $g:y' \to y$ is a dependent arrow such that the pullback $f' \defeq g^*f$ exists, then $f'$ is vertical, too:
	\[\begin{tikzcd}
		x' && x \\
		{y'} && y
		\arrow["{f'}"', squiggly, from=1-1, to=2-1]
		\arrow["g"', from=2-1, to=2-3]
		\arrow["{g'}", from=1-1, to=1-3]
		\arrow["f", squiggly, from=1-3, to=2-3]
		\arrow["\lrcorner"{anchor=center, pos=0.125}, draw=none, from=1-1, to=2-3]
	\end{tikzcd}\]
\end{theorem}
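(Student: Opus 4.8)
The plan is to deduce pullback stability from the lifting characterization of vertical arrows established in \Cref{prop:vert-rlp}, exactly as one shows that any class of maps defined by a right lifting property is closed under pullback. Since being vertical is a proposition and the right lifting property is phrased up to propositional truncation, it suffices to show that $f' \defeq g^*f$ \emph{merely} has a right lift against every cocartesian arrow; all witnesses produced below therefore only need to be constructed under the truncation. Throughout I use that the given square exhibits $x'$ as $y' \times_y x$ inside the total type $\totalty{P}$, which is Rezk, so that maps into $x'$ correspond to compatible pairs of maps into $y'$ and into $x$, and two maps into $x'$ already agree once their composites with $f'$ and $g'$ agree.

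First I would fix a cocartesian arrow $k: s \cocartarr^P_u t$ together with a lifting square against $f'$, i.e.\ arrows $\beta: s \to x'$ and $\alpha: t \to y'$ in $\totalty{P}$ with $f'\beta = \alpha k$. Post-composing with the two horizontal legs of the pullback square turns this into a lifting square against the vertical arrow $f: x \vertarr y$: the top edge becomes $g'\beta: s \to x$, the bottom edge becomes $g\alpha: t \to y$, and these are compatible because
\[ f\,(g'\beta) = (f g')\,\beta = (g f')\,\beta = g\,(f'\beta) = g\,(\alpha k) = (g\alpha)\,k, \]
where the second equality is commutativity $f g' = g f'$ of the pullback square and the penultimate one is commutativity of the given lifting square.

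Applying the right lifting property of $f$ from \Cref{prop:vert-rlp} to this square yields (merely) a filler $h: t \to x$ with $h k = g'\beta$ and $f h = g\alpha$. Since $f h = g\alpha$, the pair $\pair{\alpha}{h}$ is compatible over $y$, so the universal property of the pullback $x' = y'\times_y x$ produces an arrow $h': t \to x'$ with $f' h' = \alpha$ and $g' h' = h$. It then remains to verify that $h'$ solves the original lifting problem: the equation $f' h' = \alpha$ holds by construction, and for $h' k = \beta$ I would compare the two maps $s \to x'$ after the two projections, finding $f'(h'k) = (f'h')k = \alpha k = f'\beta$ and $g'(h'k) = (g'h')k = h k = g'\beta$, whence $h' k = \beta$ by the uniqueness part of the pullback's universal property. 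Thus $f'$ merely lifts against every cocartesian arrow and is vertical by \Cref{prop:vert-rlp}.

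The conceptual content is entirely the standard pullback-stability-of-a-right-class argument, so I do not expect genuine mathematical difficulty. The main points requiring care are the bookkeeping of the base arrows over which the various dependent arrows live---these are forced by functoriality of $\pi$, with $f'$ lying over the same isomorphism as $f$---and the precise invocation of the mapping-in universal property of the pullback inside $\totalty{P}$, which is legitimate because $\totalty{P}$ is Rezk and the square is a genuine homotopy pullback. I expect getting these coherences and the truncation placement exactly right, rather than any substantive obstacle, to be the main thing to handle.
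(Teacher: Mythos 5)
Your proposal is correct and follows essentially the same route as the paper's proof: reduce to the mere right lifting property of \Cref{prop:vert-rlp}, obtain a lift of the pasted square against the vertical arrow $f$, and then use the universal property of the pullback in $\totalty{P}$ to produce the filler and verify its compatibility with the cocartesian arrow. If anything, your explicit check of $h'k = \beta$ against both pullback projections spells out a step the paper's own proof leaves somewhat compressed.
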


\begin{proof}
 By~\Cref{prop:vert-rlp} it suffices to show that $f'$ (merely) has a right lifting against any cocartesian arrow $f'': z' \cocartarr z$, given arrows $z: k\to y'$ and $k':z' \to x'$:
 \[\begin{tikzcd}
 	{z'} && {x'} \\
 	z && {y'}
 	\arrow["{k'}", from=1-1, to=1-3]
 	\arrow["{f''}"', from=1-1, to=2-1, cocart]
 	\arrow["k"', from=2-1, to=2-3]
 	\arrow["{f'}", from=1-3, to=2-3]
 	\arrow[dashed, from=2-1, to=1-3]
 \end{tikzcd}\]
First, \Cref{prop:vert-rlp} gives us a lift $h$ of the composite outer diagram in:
\[\begin{tikzcd}
	z' && {x'} && x \\
	\\
	z && {y'} && y
	\arrow["{k'}", from=1-1, to=1-3]
	\arrow["{f''}"'{pos=0.3}, from=1-1, to=3-1, cocart]
	\arrow["k"', from=3-1, to=3-3]
	\arrow["{f'}"{pos=0.3}, from=1-3, to=3-3]
	\arrow["g"', from=3-3, to=3-5]
	\arrow["f"{pos=0.3}, squiggly, from=1-5, to=3-5]
	\arrow["{g'}", from=1-3, to=1-5]
	\arrow["\lrcorner"{anchor=center, pos=0.125}, draw=none, from=1-3, to=3-5, crossing over]
	\arrow["h" swap, curve={height=6pt}, dashed, from=3-1, to=1-5, crossing over]
\end{tikzcd}\]
Consider now the induced arrow $m: z \to x'$:
\[\begin{tikzcd}
	z \\
	& {x'} && x \\
	& {y'} && y
	\arrow[from=2-2, to=2-4]
	\arrow["g"', from=3-2, to=3-4]
	\arrow["f", squiggly, from=2-4, to=3-4]
	\arrow["h", curve={height=-12pt}, from=1-1, to=2-4]
	\arrow["k"', curve={height=6pt}, from=1-1, to=3-2]
	\arrow["m", dashed, from=1-1, to=2-2]
	\arrow["{f'}", from=2-2, to=3-2]
	\arrow["\lrcorner"{anchor=center, pos=0.125}, draw=none, from=2-2, to=3-4]
\end{tikzcd}\]
We claim that $m$ is the desired lift. By definition, it satisfies $f'm = k$, so we only have to check the condition $mf'' = k'$. To obtain a witness for this, consider the two pullbacks:
\[\begin{tikzcd}
	z &&&& z \\
	& {x'} && x && {x'} && x \\
	& {y'} && y && {y'} && y
	\arrow["{g'}", from=2-2, to=2-4]
	\arrow["{f'}"', from=2-2, to=3-2]
	\arrow["g"', from=3-2, to=3-4]
	\arrow["f", from=2-4, to=3-4]
	\arrow["{f'}"', from=2-6, to=3-6]
	\arrow["g"', from=3-6, to=3-8]
	\arrow["{g'}", from=2-6, to=2-8]
	\arrow["f", from=2-8, to=3-8]
	\arrow["{g'mf''}", curve={height=-12pt}, from=1-1, to=2-4]
	\arrow["{kf''}"', curve={height=12pt}, from=1-1, to=3-2]
	\arrow["{kf''}"', curve={height=12pt}, from=1-5, to=3-6]
	\arrow["{g'k'}", curve={height=-12pt}, from=1-5, to=2-8]
	\arrow[dashed, from=1-1, to=2-2]
	\arrow[dashed, from=1-5, to=2-6]
	\arrow["\lrcorner"{anchor=center, pos=0.125}, draw=none, from=2-6, to=3-8]
	\arrow["\lrcorner"{anchor=center, pos=0.125}, draw=none, from=2-2, to=3-4]
\end{tikzcd}\]
We have $fg' mf'' = fg' k'$. But then the universal property of the pullbacks implies $mf'' = k'$ as desired.
\end{proof}

\subsection{Cartesian families}

Completely dually, one can formulate a theory of \emph{cartesian families} which are \emph{contravariantly functorial} \wrt~to directed paths. That is, for $P:B \to \UU$ a cartesian family, for any arrow $u:b \to a$ and $d:P\,a$ there exists a \emph{cartesian} lift $f \defeq P^*(u,d): u^*d \cartarr d$, satisfying the dual universal property: For any $v:c \to a$, and any $h:e \to^P_{uv} d$ there exists a filler $g:e \to^P_v u^*\,d$, uniquely up to homotopy, \st~$h=P^*(u,d) \circ g$. In particular, this induces a map
\[ u^*:P\,a \to P\,b.\]
Likewise, we have a notion of \emph{cartesian functor}. The fibered adjoint characterization (respectively, the Chevalley condition) turn out to be a \emph{right} adjoint (respectively, right adjoint right inverse (RARI)) condition instead. Furthermore, the cartesian arrows are pullback stable, and any dependent arrow factors as $(\cdot \rightsquigarrow \cdot \cartarr)$. Sometimes, we will distinguish in the notation between cartesian and cocartesian filling by writing $\cartFill_{\ldots}(\ldots)$ or $\cocartFill_{\ldots}(\ldots)$, respectively. As already introduced, vertical arrows (respectively their types) are denoted by a squiggly arrow $\cdot \rightsquigarrow \cdot$.

\begin{proposition}[Pullback stability of cartesian arrows]
	Let $P:B \to \UU$ be a cartesian family over a Rezk type $B$. Then cartesian arrows are stable under pullback, \ie~whenever $f:e' \cartarr^P e$ is a cartesian arrow in $P$, then whenever for another dependent arrow $g:e'' \to^P e$ the pulled back arrow $f' \defeq g^*f: e''' \to^P e''$ exists it is cartesian, too.
\end{proposition}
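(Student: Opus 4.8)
The plan is to verify the defining universal property of a cartesian arrow for $f'$ directly, reducing it — via the universal property of the given pullback square in $\totalty{P}$ — to the universal property already available for $f$. Recall that, dually to \Cref{def:cocart-arr}, an arrow $f:e' \cartarr^P_u e$ over $u:b \to a$ is cartesian precisely when, for every test object $t:\totalty{P}$, the naturality square

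\[\begin{tikzcd}
\hom_{\totalty{P}}(t,e') & \hom_{\totalty{P}}(t,e) \\
\hom_B(\pi t, b) & \hom_B(\pi t, a)
\arrow["{f \circ (-)}", from=1-1, to=1-2]
\arrow["\pi"', from=1-1, to=2-1]
\arrow["\pi", from=1-2, to=2-2]
\arrow["{u \circ (-)}"', from=2-1, to=2-2]
\end{tikzcd}\]

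is a pullback of types: a lift of a pair $(\beta:t \to e,\, \gamma:\pi t \to b)$ with $\pi \beta = u \gamma$ is exactly a dependent arrow over $\gamma$ whose composite with $f$ is $\beta$. Writing $u' \defeq \pi(f')$, $w \defeq \pi(g)$, and $w' \defeq \pi(g')$, I want to show the corresponding square for $f'$ over $u'$ is a pullback for every $t$.

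The key step is a pasting argument. The given pullback $e''' = e' \times_e e''$ in $\totalty{P}$ yields $\hom_{\totalty{P}}(t,e''') \simeq \hom_{\totalty{P}}(t,e') \times_{\hom_{\totalty{P}}(t,e)} \hom_{\totalty{P}}(t,e'')$, and pasting this with the cartesian square for $f$ (a pullback, by the characterization above) replaces the factor $\hom_{\totalty{P}}(t,e')$, identifying the total type with $\hom_B(\pi t, b) \times_{\hom_B(\pi t, a)} \hom_{\totalty{P}}(t,e'')$, where $\hom_{\totalty{P}}(t,e'') \to \hom_B(\pi t,a)$ is $\delta \mapsto w \circ \pi\delta$. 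Provided the projected square

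\[\begin{tikzcd}
b' & b \\
a' & a
\arrow["{w'}", from=1-1, to=1-2]
\arrow["{u'}"', from=1-1, to=2-1]
\arrow["u", from=1-2, to=2-2]
\arrow["w"', from=2-1, to=2-2]
\end{tikzcd}\]

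is itself a pullback in $B$, a further application of pasting identifies this with $\hom_{\totalty{P}}(t,e'') \times_{\hom_B(\pi t,a')} \hom_B(\pi t,b')$, naturally in $t$; an explicit element computation confirms both sides are $\{(\gamma,\delta) : u\gamma = w\,\pi\delta\}$. This is exactly the naturality square for $f'$, so $f'$ is cartesian. Along the way I would invoke closedness of cartesian arrows under composition — the formal dual of \Cref{prop:cocart-arr-closure} — to recognize suitable composites as cartesian, and the homotopy-uniqueness of cartesian fillers to pin down the induced arrow into $e'''$.

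The main obstacle is the proviso that the projected square is a pullback in $B$; without it the arrow into $e'''$ furnished by the pullback universal property need not lie over the correct base arrow, and the matching of base data fails. This is precisely where the cartesian-family hypothesis enters: since $P$ admits cartesian lifts, any cone $(\gamma:z\to b,\, s:z\to a')$ over the base cospan $b \xrightarrow{u} a \xleftarrow{w} a'$ transports to a cone in $\totalty{P}$ over $e' \times_e e''$ — lift $s$ cartesianly at $e''$ and $\gamma$ cartesianly at $e'$, then compare the two resulting cartesian arrows over $e$ via the universal property of $f$ — and conversely every such cone in $\totalty{P}$ projects back, which exhibits $b' = \pi(e''')$ as $b \times_a a'$. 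I expect this verification, rather than the pasting, to be the technical heart of the argument, though it is entirely parallel to the $1$-categorical fact that the projection of a fibration preserves those pullbacks one of whose legs is cartesian.
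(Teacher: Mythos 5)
Your proposal is correct in outline but takes a genuinely different route from the paper's. The paper argues directly with cones and fillers: given a test arrow $h:d \to e''$, it forms $k \defeq \tyfill_f(gh)$ using cartesianness of $f$, obtains the candidate filler as the gap map $m:d \to e'''$ of the cone $[h,k]$ over the cospan $[g,f]$, and derives uniqueness from terminality of the pullback cone (any $r$ with $f'r = h$ automatically satisfies $g'r = k$, again by cartesianness of $f$). Your representable formulation packages the same two inputs --- the universal property of $f$ and of the pullback --- into a pasting of pullback squares of hom-types; this is arguably cleaner and more systematic, but it forces you to prove an extra lemma the paper never states, namely that the projected square is a pullback in $B$.

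That lemma is true, and you correctly identify it as the technical heart; however, your sketch only really delivers the existence half (lift $s$ cartesianly to $\bar{s}:s^*e'' \to e''$, factor $g\bar{s}$ through $f$ to get a cone over $[g,f]$, and project its gap map $n:s^*e'' \to e'''$ to obtain a base cone map $\pi(n)$). The uniqueness half does not follow from ``every such cone projects back'': given an \emph{arbitrary} cone map $t:z \to b'$ in $B$ you must still show $t = \pi(n)$. One way: take the cartesian lift $\bar{t}:t^*e''' \to e'''$ of $t$, obtain the vertical comparison $c:t^*e''' \to s^*e''$ with $\bar{s}c = f'\bar{t}$ from cartesianness of $\bar{s}$, check $\ell c = g'\bar{t}$ using uniqueness of factorizations through $f$, and conclude $nc = \bar{t}$ (hence $\pi(n) = t$) from uniqueness of gap maps. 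With that step supplied your argument goes through. It is worth noting that the paper's own proof quietly relies on the same fact to see that its filler lies over the prescribed base arrow, so your version makes explicit a point the paper elides.
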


\begin{proof}
	Consider $f$, $g$, and $g'$ as given by the assumption. Let $d:h \to e''$ be some dependent arrow in $P$. Let $k \jdeq \tyfill_f(gh):d \to e'$ be the filler induced from cartesianness of $f$. This determines a cone $[d,h,k]$ over the cospan $[g,f]$. By assumption, $[e''',f',g']$, with $g' \defeq f^*g:e''' \to e'$ is a pullback, \ie, a terminal cone over $[g,f]$, there exists, uniquely up to homotopy, a gap arrow $m:d \to e'''$ with $g' m = k$ and $f'm=h$ as indicated:
	\[\begin{tikzcd}
		d \\
		& {e'''} && {e'} \\
		& {e''} && e
		\arrow["{f'}", from=2-2, to=3-2]
		\arrow["g"', from=3-2, to=3-4]
		\arrow["f", from=2-4, to=3-4, cart]
		\arrow["{k \defeq \tyfill_f(gh)}", curve={height=-12pt}, dashed, from=1-1, to=2-4]
		\arrow["h"', curve={height=6pt}, from=1-1, to=3-2]
		\arrow["\lrcorner"{anchor=center, pos=0.125}, draw=none, from=2-2, to=3-4]
		\arrow["m", dashed, from=1-1, to=2-2]
		\arrow["{g'}", from=2-2, to=2-4]
	\end{tikzcd}\]
	We now want to show that any arrow $r:d \to e'''$ such that $f'r = h$ is homotopic to $m$. Any such arrow $m$ can be canonically coerced into a cone morphism from $[d,h,k]$ to $[e''',f',g']$. But by terminality of $[e''',f',g']$ as a cone over $[g,f]$, the type of these cone morphisms is contractible, giving an identification $m=r$ of cone morphisms which descends to an identification between the respective arrows.

	All in all, this renders $f':e''' \cartarr e''$ a cartesian arrow.

\end{proof}

\subsection{Lex cartesian families}\label{ssec:lex-fam}

\subsubsection{Lex cartesian families}

\begin{figure}
	\[\begin{tikzcd}
		E \\
		& e \\
		& {\zeta_b} && r & {\zeta_b} && {\zeta_a} && r \\
		B & b && z & b && a && z
		\arrow["{!_b}"', from=4-2, to=4-4]
		\arrow[two heads, from=1-1, to=4-1]
		\arrow[""{name=0, anchor=center, inner sep=0}, from=3-2, to=3-4, cart]
		\arrow["{!_e}", from=2-2, to=3-4]
		\arrow["{!^b_{e}}"', dashed, from=2-2, to=3-2]
		\arrow[""{name=1, anchor=center, inner sep=0}, "{!_{\zeta_b}}"', curve={height=12pt}, from=3-2, to=3-4]
		\arrow["u", from=4-5, to=4-7]
		\arrow["{!_a}", from=4-7, to=4-9]
		\arrow[from=3-7, to=3-9, cart]
		\arrow[curve={height=-24pt}, from=3-5, to=3-9, cart]
		\arrow["h"', dashed, from=3-5, to=3-7]
		\arrow["{!_b}"', curve={height=18pt}, from=4-5, to=4-9]
		\arrow[shorten <=2pt, shorten >=2pt, Rightarrow, no head, from=0, to=1]
	\end{tikzcd}\]
	\caption{Construction of fiberwise terminal objects}
	\label{fig:fib-term-obj}	
\end{figure}

A specific class that becomes important in the next chapter are the \emph{lex cartesian families}. These admit terminal elements and pullbacks in each fiber, and they get preserved under cartesian reindexing. We recall the notions of terminal element and pullback from~\cite[Definition~9.6]{RS17} and \cite[Definition~5.1.7]{BW21} (or their dual versions, respectively). See also more generally~\cite[Definitions~3.3 and 3.4]{BM21}.

\begin{definition}[Terminal elements and pullbacks in a Rezk type]
	Let $B$ be a Rezk type.
	\begin{enumerate}
		\item Let $z : B$ be term. We call $z$ a \emph{terminal element of $B$} if the type $\commaty{B}{z}$ is contractible.
		\item Let $\tau : B^{\Lambda_2^2}$, called a \emph{cospan} in $B$. A \emph{pullback over $\tau$} is a terminal element in the type of \emph{cones over $\tau$}
		\[ \tau/B \defeq \ndexten{\Delta^1 \times \Delta^1}{B}{\Lambda_2^2}{\tau}.\]
	\end{enumerate}
\end{definition}

We are looking at the preservation properties only for the case of finite limits, see~\cite[Section~3.2]{BM21} for a more general treatment.

\begin{definition}[Preservation of terminal elements and pullbacks]\label{def:pres-term-pb}
	Let $A,B$ be Rezk types.
	\begin{enumerate}
		\item Let $A$ have a terminal object $z$. A functor $F : A \to B$ \emph{preserves terminal elements} if $F(z)$ is terminal in $B$.
		\item  A functor $F : A \to B$  \emph{preserves (all) pullbacks} if for all cospans $\tau : A^{\Lambda_2^2}$ the induced functor
		\[ F \circ - : A/\tau \to B/(F  \tau)\]
			preserves terminal elements.
	\end{enumerate}
\end{definition}

\begin{definition}[Lex Rezk types and lex cartesian families]
\begin{enumerate}
	\item A Rezk type is \emph{lex} if it it has all pullbacks and a terminal object. A functor between Rezk types is a \emph{lex functor} if it preserves terminal elements and pullbacks in the sense of~\Cref{def:pres-term-pb}.
	\item Let $B$ be a lex Rezk type. A cartesian fibration over $B$ is \emph{lex} if all the fibers of $P$ have terminal objects and pullbacks, and both notions are preserved by the reindexing functors.
\end{enumerate}
	
\end{definition}

We will analyze the lexness conditions further in~\Cref{prop:term-obj-fib,prop:pb-fib}.

This is gives the following alternative characterization: a cartesian family $P : B \to \UU$ over a lex type $B$ is lex if and only if the total type $\totalty{P}$ is lex and the unstraightening $\pi_P : \totalty{P} \to B$ is a lex functor. Note that for a functor $f:A \to B$ preserving a terminal object $z:A$ is a propositional condition. If $z'$ denotes the terminal object in $B$ there is a path $f(z) = z'$ if and only if $f(z):B$ is terminal,~\ie~$\prod_{b:B}\isContr(b \to_B f(z))$ if and only if the homotopically unique arrow $!_{f(z)}: f(z) \to z'$ is an isomorphism. We will not discuss this further here, but similar considerations hold for limits in general, by their defining universal property as terminal objects of the respective Rezk types of cones, see~\cite[Subsection~3.2]{BM21}.

In principle, we also think in the synthetic setting there could be a more uniform and abstract treatment of ``$X$-shaped limit fibrations'', for a given shape or type $X$, after~\cite[Definition~8.5.1]{BorHandb2}, but we do not develop this here. Instead we follow the account of~\cite[Section~8]{streicher2020fibered}, adapting it to the synthetic setting.

\subsubsection{Terminal elements}

The overall aim is to recover the standard characterization of lex cartesian fibrations: Fix a base with the desired limits. Then the total type has those limits and they are preserved by the fibration if and only if the fibers each have the respective limits, and the reindexing functors preserve them. First, we consider the case of terminal elements.

\begin{proposition}\label{prop:term-obj-fib}
	Let $P:B \to \UU$ be a cartesian family and $B$ be a Rezk type with terminal object $z:B$. Denote by $\pi:E \fibarr B$ the unstraightening of $P$. Then the following are equivalent:
	\begin{enumerate}
		\item\label{it:term-obj-fib-i} The total Rezk type $E \defeq \widetilde{P}$ has a terminal object $\totalty{z}$, and $\pi$ preserves it, \ie~$\pi(\totalty{z}):B$ is terminal.
		\item\label{it:term-obj-fib-ii} For all $b:B$, the fiber $P\,b$ has a terminal object, and for all arrows $u:b \to_B a$ the functor $u^*: P\,a \to P\,b$ preserves the terminal object.
	\end{enumerate}
\end{proposition}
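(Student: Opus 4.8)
The plan is to reduce both implications to a single computation of hom-types in the total type $E \defeq \totalty{P}$ and then to invoke contractibility. First I would recall two facts supplied by the preceding subsections. Since $E = \sum_{b:B} P\,b$, an arrow in $E$ from $\pair{b}{e}$ to $\pair{b'}{e'}$ is a pair consisting of an arrow $u : b \to_B b'$ together with a dependent arrow over it, so that
\[ \hom_E(\pair{b}{e}, \pair{b'}{e'}) \simeq \sum_{u : \hom_B(b,b')} \hom^P_u(e,e'). \]
Second, since $P$ is cartesian, post-composition with the cartesian lift $P^*(u,w) : u^* w \cartarr w$ of an arrow $u : b \to_B b'$ and a point $w : P\,b'$ is, by the universal property of cartesian lifts, an equivalence $\hom^P_{\id_b}(e, u^* w) \xrightarrow{\ \simeq\ } \hom^P_u(e,w)$, \ie~$\hom^P_u(e,w) \simeq \hom_{P\,b}(e, u^* w)$. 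I will use throughout that terminal elements in a Rezk type are unique up to a path.

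For the implication \Cref{it:term-obj-fib-i} $\Rightarrow$ \Cref{it:term-obj-fib-ii}, I would write the terminal object of $E$ as $\totalty{z} \jdeq \pair{z}{r}$, which is legitimate because $\pi(\totalty{z})$ is terminal in $B$ and hence identified with $z$. For $b : B$, with $!_b : b \to_B z$ the unique arrow, I set $\zeta_b \defeq (!_b)^* r : P\,b$. Feeding the contractible $\hom_B(b,z)$ (center $!_b$) into the two recalled equivalences yields $\hom_E(\pair{b}{e}, \pair{z}{r}) \simeq \hom_{P\,b}(e, \zeta_b)$ for all $e : P\,b$; as the left-hand side is contractible, $\zeta_b$ is terminal in $P\,b$. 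For stability under reindexing along $u : b \to_B a$ I would use functoriality of cartesian transport (the dual of \Cref{prop:cocart-functoriality}) together with $!_a \circ u = !_b$ to compute $u^* \zeta_a = u^* (!_a)^* r = (!_b)^* r = \zeta_b$, which is terminal; hence $u^*$ preserves terminal objects.

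For \Cref{it:term-obj-fib-ii} $\Rightarrow$ \Cref{it:term-obj-fib-i}, I would take $\totalty{z} \defeq \pair{z}{\zeta_z}$, where $\zeta_z$ is the fiberwise terminal object of $P\,z$, so that $\pi(\totalty{z}) \jdeq z$ is terminal by hypothesis. The same two equivalences give, for any $\pair{b}{e} : E$,
\[ \hom_E(\pair{b}{e}, \pair{z}{\zeta_z}) \simeq \sum_{u : \hom_B(b,z)} \hom_{P\,b}(e, u^* \zeta_z). \]
By the assumed preservation, $u^* \zeta_z$ is terminal in $P\,b$ for each $u$, so each fiber $\hom_{P\,b}(e, u^* \zeta_z)$ is contractible; the total space of a contractible family over $\hom_B(b,z)$ is equivalent to $\hom_B(b,z)$, which is contractible since $z$ is terminal. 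Thus $\hom_E(\pair{b}{e}, \totalty{z})$ is contractible and $\totalty{z}$ is terminal in $E$, lying over the terminal $z$.

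I expect the genuinely delicate point to be not either implication as such, but the coherence of cartesian reindexing used in the preservation step: the pseudofunctorial identification $u^* (!_a)^* \simeq (!_a \circ u)^*$ must be combined with the propositional equality $!_a \circ u = !_b$ so that $u^*$ carries the chosen terminal object of $P\,a$ to the chosen terminal object of $P\,b$, rather than merely to an isomorphic one. The remaining verifications — the hom-decomposition of arrows in $E$ and the cartesian hom-equivalence — are routine given the machinery recalled above.
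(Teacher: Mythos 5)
Your proof is correct, and it reorganizes the argument around a cleaner principle than the paper does. The paper's proof is arrow-by-arrow: for \ref{it:term-obj-fib-i}$\Rightarrow$\ref{it:term-obj-fib-ii} it constructs the filler $!^b_e$ against the cartesian lift $P^*(!_b,r)$ and checks uniqueness by hand, and for the converse it exhibits the factorization $t_e = P^*(!_b,\zeta_z)\circ !^b_e$ and argues that every arrow $e \to \zeta_z$ admits the same factorization. You instead package both directions as a single computation of hom-types, using the decomposition $\hom_E(\pair{b}{e},\pair{b'}{e'}) \simeq \sum_{u}\hom^P_u(e,e')$ together with the cartesian-lift equivalence $\hom^P_u(e,w)\simeq \hom_{P\,b}(e,u^*w)$ (the dual of the proposition following \Cref{prop:cocart-functoriality}), and then conclude by closure of contractibility under $\Sigma$ over a contractible base. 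This buys brevity and makes the logical structure transparent; what the paper's explicit version buys is the concrete description of the terminal projection $t_e$ and of the cartesian arrow $h = P^*(u,\zeta_a)$, which are reused immediately afterwards in \Cref{cor:terminal-totalty} and in the pullback analogue. One remark on the point you flag as delicate: since the fibers are Rezk types, terminal objects are unique up to a path, and condition \ref{it:term-obj-fib-ii} only asks that $u^*\zeta_a$ \emph{be} terminal, so it suffices that $u^*(!_a)^*r$ and $(!_b)^*r$ be identified up to the functoriality path of cartesian transport applied to $!_a\circ u = !_b$; there is no genuine coherence obstruction here, and the paper sidesteps it entirely by producing the cartesian arrow $\zeta_b \to_u \zeta_a$ directly via left cancelation.
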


\begin{proof}
	\begin{description}
		\item[$\ref{it:term-obj-fib-i}\implies\ref{it:term-obj-fib-ii}$] For the visualization of both parts, \cf~\Cref{fig:fib-term-obj}. Denote by $\totalty{z} \defeq \pair{z}{r}:E$ the terminal object of $E$, with $z:B$ terminal. For $b:B$, consider the point $\zeta_b \defeq (!_b)^*r:P\,b$. We claim that this is the terminal object of the fiber $P\,b$. Indeed, consider the canonical arrow $!_r:e \to_{!_b} r$. Then there is a unique arrow $!^b_e:e \to_{P\,b} \zeta_b$ such that $P^*(!_b,r) \circ !^b_e = !_r$. But by terminality of $r$, the cartesian lift $P^*(!_b,r)$ also is propositionally equal to the terminal projection $!_{\zeta_b}: \zeta_b \to r$. Now, for any given map $g:e \to_{P\,b} \zeta_b$ we have that $!_{\zeta_b} \circ g = !_e$, but by the universal property of $!_{\zeta_b}$ there is only a unique such arrow $g$ up to homotopy. Hence, $\zeta_b$ is terminal in $P\,b$.
		
		Let $u:b \to_B a$. We will show that there is a path~$u^*\,\zeta_a = \zeta_b$. As we have just seen, we have $\zeta_a = (!_a)^*(r)$, and similarly for $\zeta_b$. Consider their terminal projections to $r$, which are necessarily cartesian arrows. From this and the identification $!_b = !_a \circ u$ in $B$, we get a unique filler $h:\zeta_b \to_u \zeta_a$. Moreover, $h$ is cartesian by left cancelation, so $h = P^*(u,\zeta_a): \zeta_b \cartarr_u^P \zeta_a$. This establishes the desired path.
		\item[$\ref{it:term-obj-fib-i}\implies\ref{it:term-obj-fib-ii}$] Conversely, consider the section $\zeta:\prod_{b:B} P\,b$ choosing the terminal element in each fiber. Let $b:B$. By assumption, the cartesian lift of the terminal map $!_b:b \to z$ has $\zeta_a$ as its source vertex, up to a path. Let $e:P\,b$ be some point. Since $\zeta_b$ is terminal in $P\,b$, there exists a unique morphism $!_e^b: e \to_{P\,b} \zeta_b$, and post-composition with the cartesian lift $P^*(!_b,\zeta_z): \zeta_b \cartarr \zeta_z$ gives a morphism $t_e: e \to \zeta_z$:
		\[\begin{tikzcd}
			E & e \\
			& {\zeta_b} && {\zeta_z} \\
			B & b && z
			\arrow["{!_b}", from=3-2, to=3-4]
			\arrow[from=2-2, to=2-4, cart]
			\arrow[two heads, from=1-1, to=3-1]
			\arrow["{t_e}", from=1-2, to=2-4]
			\arrow["{!_e^b}"', dashed, from=1-2, to=2-2]
		\end{tikzcd}\]
		Finally, any morphism $f:e \to \zeta_z$, up to homotopy, lies over $!_b: b \to z$, and necessarily has the same factorization again, hence is identified with $t_e$. Therefore, $\pair{z}{\zeta_z}$ defines the (``global'') terminal element of $E$, and we have $\pi(z,\zeta_z) \defeq z$.
	\end{description}
\end{proof}

The end of the proof of~\Cref{prop:term-obj-fib} shows that, with the same preconditions, if either of the conditions is satisfied, the map $\zeta : B \to E$ choosing fiberwise terminal elements presevers terminal elements.
\begin{corollary}\label{cor:terminal-totalty}
	Let $P:B \to \UU$ be a cartesian family and $B$ be a Rezk type with terminal object $z:B$. Denote by $\pi:E \fibarr B$ the unstraightening of $P$ and assume that $E$ has a terminal object and that $\pi$ preserves it. Then $\pair{z}{\zeta_z} : E$ is terminal.
\end{corollary}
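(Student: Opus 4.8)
The plan is to read the corollary as an immediate strengthening of the last part of \Cref{prop:term-obj-fib}. Since the standing hypotheses --- that $E$ has a terminal object which $\pi$ preserves --- are precisely condition~\ref{it:term-obj-fib-i}, I would first invoke \Cref{prop:term-obj-fib} to obtain condition~\ref{it:term-obj-fib-ii}. This furnishes, for every $b:B$, a terminal object $\zeta_b:P\,b$ that is stable under cartesian reindexing, so that the fiberwise-terminal section $\zeta:B\to E$, $b\mapsto\pair{b}{\zeta_b}$, is well defined and in particular $\zeta_z$ makes sense. The goal then reduces to identifying the distinguished element $\pair{z}{\zeta_z}$ with the (necessarily unique) terminal object of $E$.

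The second step is to place the terminal object of $E$ over $z$. Writing $\totalty{z}$ for it, preservation gives that $\pi(\totalty{z})$ is terminal in $B$; since the type of terminal objects of the Rezk type $B$ is a proposition (the comma type $\commaty{B}{-}$ being contractible exactly at a terminal element), there is a path $\pi(\totalty{z})=z$, along which I transport the second component to write $\totalty{z}=\pair{z}{r}$ for some $r:P\,z$. It now suffices to show $\zeta_z=r$. Here I recall from the proof of \ref{it:term-obj-fib-i}$\Rightarrow$\ref{it:term-obj-fib-ii} that $\zeta_b$ was constructed as $(!_b)^*r$; specializing to $b=z$ and using that $z$ is terminal, the endo-arrow $!_z$ inhabits the contractible type $\hom_B(z,z)$ and hence equals $\id_z$, so by functoriality of cartesian reindexing along identities (the cartesian dual of \Cref{prop:cocart-functoriality}) we get $\zeta_z=(!_z)^*r=(\id_z)^*r=r$. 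Thus $\pair{z}{\zeta_z}=\pair{z}{r}=\totalty{z}$ is terminal, as claimed.

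Alternatively, one can bypass the explicit formula $\zeta_z=(!_z)^*r$ and argue directly that $r$ is terminal in the fiber: a vertical arrow $e\to r$ in $P\,z$ is the same datum as an arrow $\pair{z}{e}\to\pair{z}{r}$ in $E$ lying over $\id_z$, and since the map from the contractible total hom into the terminal object $\pair{z}{r}$ down to the contractible base hom $\hom_B(z,z)$ is a map of contractible types, its fiber --- the vertical hom $e\to_{P\,z}r$ --- is contractible too; hence $r$ is terminal in $P\,z$ and so equals $\zeta_z$ by uniqueness of fiberwise terminal objects. I expect no genuine obstacle here: the only points requiring care are the uniqueness-of-terminal-objects arguments, which rest on the Rezk condition and contractibility of the relevant comma types, together with the harmless identification $!_z=\id_z$ and functoriality of reindexing along identities.
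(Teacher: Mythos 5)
Your proposal is correct. The paper itself gives no separate proof of this corollary: it simply points back to the second half of the proof of \Cref{prop:term-obj-fib}, i.e.\ the direction from fiberwise terminal objects (preserved by reindexing) to a global terminal object, which constructs, for each $e:P\,b$, the arrow $t_e = P^*(!_b,\zeta_z)\circ {!}_e^b : e \to \zeta_z$ and checks its uniqueness by refactoring an arbitrary $f : e \to \zeta_z$ the same way. You instead exploit the hypothesis that $E$ already has a terminal object $\totalty{z}$ and reduce the corollary to the identification $\pair{z}{\zeta_z} = \totalty{z}$: after transporting $\totalty{z}$ to lie over $z$ (using that terminal objects of a Rezk type form a proposition), your first route recovers $\zeta_z = ({!}_z)^*r = (\id_z)^*r = r$ from the explicit construction inside the $(1)\Rightarrow(2)$ half of the proposition together with ${!}_z = \id_z$ and functoriality of reindexing along identities; your second route is even more economical, observing that $\pi$ restricts to a map of contractible hom-types $\hom_E(\pair{z}{e},\pair{z}{r}) \to \hom_B(z,z)$ whose fiber over $\id_z$ is $\hom_{P\,z}(e,r)$, so $r$ is fiberwise terminal and hence equals $\zeta_z$. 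Both of your arguments are sound; what the paper's route buys is that it establishes terminality of $\pair{z}{\zeta_z}$ from condition (2) alone (which it needs anyway for the converse direction of the proposition), whereas your identification argument is shorter and, in its second form, essentially self-contained, not even requiring the reindexing-preservation clause of condition (2).
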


\subsubsection{Pullbacks}

We are now turning to the analogous statement for pullbacks, which requires more preparation First, we give two conditions on dependent squares being pullbacks.

\begin{lemma}[\protect{\cite[Lemma~8.1(1)]{streicher2020fibered}}]\label{lem:cart-arr-pb}
	Let $P:B \to \UU$ be a cartesian family over a Rezk type $B$. Then any dependent square in $P$ which lies over a pullback and all of whose sides are cartesian arrows is itself a pullback.
\end{lemma}

\begin{proof}
	Consider a square in $\totalty{P}$ together with a cone, and the fillers $m$, $m'$ over $r$ which are induced by cartesianness of $f'$ and $g'$, respectively:
	\[\begin{tikzcd}
		&& x \\
		{\widetilde{P}} &&& {d'} && d \\
		&&& {e'} && e \\
		&& c \\
		&&& {b'} && {a'} \\
		B &&& b && a
		\arrow["{f'}"', from=2-4, to=3-4, cart]
		\arrow["g"', from=3-4, to=3-6, cart]
		\arrow["{g'}", from=2-4, to=2-6, cart]
		\arrow["f", from=2-6, to=3-6, cart]
		\arrow[""{name=0, anchor=center, inner sep=0}, "h", curve={height=-18pt}, from=1-3, to=2-6]
		\arrow[""{name=1, anchor=center, inner sep=0}, "{h'}"', curve={height=24pt}, from=1-3, to=3-4]
		\arrow["{m'}"', shift right=2, dashed, from=1-3, to=2-4]
		\arrow["m", shift left=2, dashed, from=1-3, to=2-4]
		\arrow["u", from=5-6, to=6-6]
		\arrow["w", curve={height=-18pt}, from=4-3, to=5-6]
		\arrow["{w'}"', curve={height=18pt}, from=4-3, to=6-4]
		\arrow[from=5-4, to=6-4]
		\arrow["{u^*v}", from=5-4, to=5-6]
		\arrow["\lrcorner"{anchor=center, pos=0.125}, draw=none, from=5-4, to=6-6]
		\arrow["r"', dashed, from=4-3, to=5-4]
		\arrow["v"', from=6-4, to=6-6]
		\arrow[two heads, from=2-1, to=6-1]
		\arrow[shorten >=6pt, Rightarrow, no head, from=2-4, to=1]
		\arrow[shorten >=4pt, Rightarrow, no head, from=2-4, to=0]
	\end{tikzcd}\]
	We have $fh = (fg')m$ and $gh'=g(f'm') = (fg')m'$. But also $fh=gh'$, so $(fg')m = (fg')m'$. But since $fg'$ is cartesian as the composition of two cartesian arrows, we get a homotopy $m=m'$ as desired. This in particular induces gives an identification between the two ensuing cone morphisms defined by $m$ and $m'$, respectively, from $[h',h]$ to $[f',g']$, both being cones over the cospan $[g,f]$.
\end{proof}

\begin{lemma}[\protect{\cite[Lemma~8.1(2)]{streicher2020fibered}}]\label{lem:op-sides-pb}
	Let $P:B \to \UU$ be a cartesian family over a Rezk type $B$. Then any dependent square in $P$ of the form
	\[\begin{tikzcd}
		{e'''} && {e''} \\
		{e'} && e
		\arrow["g"', squiggly, from=1-1, to=2-1]
		\arrow["f"', from=2-1, to=2-3, cart]
		\arrow["{f'}", from=1-1, to=1-3, cart]
		\arrow["{g'}", squiggly, from=1-3, to=2-3]
	\end{tikzcd}\]
	is a pullback.
\end{lemma}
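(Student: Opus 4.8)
The plan is to verify the universal property of the pullback by hand, using the contractibility built into the definition of a cartesian arrow, exactly as in the $1$-categorical argument of \cite[Lemma~8.1(2)]{streicher2020fibered}; note that \Cref{lem:cart-arr-pb} does \emph{not} apply directly here, since the vertical sides $g, g'$ are not cartesian (a cartesian arrow over an identity would have to be an isomorphism, by the dual of \Cref{lem:cocart-arrows-isos}). First I would normalize the base. Since $g$ and $g'$ are vertical they lie over isomorphisms, and by the Rezk condition (the remark following the definition of vertical arrows, replacing any biinvertible arrow by a constant path) I may assume $e'''$, $e'$ lie over a common point $b' : B$ and $e''$, $e$ over a common point $a : B$, with $g$ and $g'$ lying over $\id_{b'}$ and $\id_a$. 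Commutativity of the square in $\totalty{P}$ then forces $f$ and $f'$ to lie over one and the same arrow $u : b' \to_B a$, so that the underlying base square is the (trivial) pullback of $u$ along $\id_a$.

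Unfolding the definition, the square is a pullback precisely when the cone $(e''', g, f')$ is terminal among cones over the cospan $e' \xrightarrow{f} e \xleftarrow{g'} e''$, i.e. when for every test object $x : P\,c$ and every cone $(x, p, q)$ — with $p : x \to e'$ over $v \jdeq \pi(p) : c \to_B b'$, with $q : x \to e''$, and with a witness $f\,p = g'\,q$ — the type of cone morphisms into $(e''', g, f')$ is contractible. Since $g'$ lies over $\id_a$, the cone condition $f\,p = g'\,q$ lies over $u\,v$, so $q$ lies over $u\,v$.

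The gap map is then produced by cartesianness of the two horizontal arrows. As $f' : e''' \cartarr^P_u e''$ is cartesian and $q$ lies over $u\,v$, there is a homotopically unique filler $m \jdeq \cartFill_{f'}(q) : x \to^P_v e'''$ with $f'\,m = q$, giving one leg of the cone morphism. For the other leg I would use cartesianness of $f : e' \cartarr^P_u e$: both $g\,m$ and $p$ are arrows $x \to^P_v e'$, and the commutativity $f\,g = g'\,f'$ of the original square yields $f\,(g\,m) = g'\,(f'\,m) = g'\,q = f\,p$; hence $g\,m$ and $p$ are two $f$-fillers over $v$ with the same composite into $e$, so contractibility of the $f$-filler type identifies them, $g\,m = p$. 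This produces a cone morphism, and its uniqueness comes once more from the cartesian filler property of $f'$: any competing $m'$ satisfies $g\,m' = p$, hence lies over $v$, and then $f'\,m' = q$ forces $m' = m$.

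The conceptual content is thus elementary; I expect the only real obstacle to be organizational, namely upgrading the existence-and-uniqueness statement above into a genuine \emph{contractibility} of the cone-morphism type at the level of the Rezk type (where hom-types are not sets). The clean way to discharge this is to pass to the fibered mapping-in characterization: for each $x$ the comparison map $\hom_E(x, e''') \to \hom_E(x, e') \times_{\hom_E(x,e)} \hom_E(x, e'')$ decomposes, over the equivalence on base arrows coming from the base square being a pullback, into the fibrewise filler equivalences provided by $f'$ being cartesian. This exhibits the comparison as an equivalence on the nose and thereby avoids having to argue contractibility of bare identity types of arrows. Throughout I would keep careful track of which base arrow ($v$ versus $u\,v$) each dependent filler is taken over; the normalization in the first step is precisely what keeps this bookkeeping transparent.
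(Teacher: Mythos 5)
Your proposal is correct and follows essentially the same route as the paper's proof: construct the gap map by cartesian filling against the top arrow $f'$, verify the second triangle by observing that both candidate arrows into $e'$ are equalized by the cartesian arrow $f$, and obtain contractibility of the cone-morphism type from the contractibility of the $f'$-filler type. The base-normalization step and the closing remarks on packaging the contractibility are harmless elaborations of what the paper does implicitly.
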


\begin{proof}
	Consider a point $d$ and maps $h:d \to e'$, $h':d \to e''$ such that $g'h' = fh$. By cartesianness of $f'$, there uniquely exists $k:d \to e'''$ such that there is a homotopy $H : f'k = h'$:
	\[\begin{tikzcd}
		&& d \\
		{\widetilde{P}} &&& {e'''} && {e''} \\
		&&& {e'} && e \\
		&& b \\
		&&& {a'} && a \\
		B &&& {a'} && a
		\arrow["g"', squiggly, from=2-4, to=3-4]
		\arrow["f"', from=3-4, to=3-6, cart]
		\arrow["{f'}", from=2-4, to=2-6, cart]
		\arrow["{g'}", squiggly, from=2-6, to=3-6]
		\arrow["k"{description}, dashed, from=1-3, to=2-4]
		\arrow[""{name=0, anchor=center, inner sep=0}, "{h'}", curve={height=-12pt}, from=1-3, to=2-6]
		\arrow[""{name=1, anchor=center, inner sep=0}, "h"', curve={height=12pt}, from=1-3, to=3-4]
		\arrow[shorten <=10pt, shorten >=10pt, Rightarrow, no head, from=3-4, to=2-6]
		\arrow[two heads, from=2-1, to=6-1]
		\arrow["u", from=5-4, to=5-6]
		\arrow[Rightarrow, no head, from=5-6, to=6-6]
		\arrow[Rightarrow, no head, from=5-4, to=6-4]
		\arrow["u"', from=6-4, to=6-6]
		\arrow["uv", curve={height=-12pt}, from=4-3, to=5-6]
		\arrow["v"', curve={height=12pt}, from=4-3, to=6-4]
		\arrow["v", dashed, from=4-3, to=5-4]
		\arrow["\lrcorner"{anchor=center, pos=0.125}, draw=none, from=5-4, to=6-6]
		\arrow[shorten >=4pt, Rightarrow, no head, from=2-4, to=0]
		\arrow["{(?)}"{description}, Rightarrow, draw=none, from=1, to=2-4]
	\end{tikzcd}\]
	To show that also $h= gk$, it suffices to show that $f(gk) = g'h'$, since also $fh = g'h'$, which taken together then would imply $h=gk$ by cartesianness of $f$. Indeed, by the above we have a chain of paths
	\[ f(gk) = (fg)k = (g'f')k = g'(f'k) = g'h',\]
	which implies the claim that $h=gk$. This means that $k : d \to e'''$ induces a morphism from to $[h,h']$ to $[g,f']$ as cones over the cospan $[f,g']$. The type of such cones is equivalent to $\sum_{r : d \to e'''} (gr = h) \times (f' \times r = h')$. By cartesianness of $f'$ the type $\sum_{r : d \to e'''} (f' \times r = h')$ is contractible with center $\pair{k}{H}$. The argument we gave above yields a homotopy $K : (gk = H)$. By strictification, this exhibits $\angled{k,K,H}$ as a terminal projection from $[h,h']$ to $[g,f']$, hence $[g,f']$ as pullback cone.
\end{proof}

The next lemma presents a sufficient condition for the ``local'' pullbacks being ``global'' pullbacks.
\begin{lemma}[\cf~\protect{\cite[Lemma~8.2]{streicher2020fibered}}]\label{lem:loc-pb-is-pb}
	Let $P:B \to \UU$ be a cartesian family and $B$ be a Rezk type where all pullbacks exist. Assuming that all fibers have pullbacks, and these are preserved by the reindexing functors, we have: A pullback in a fiber $P\,b$ is also a pullback in $\totalty{P}$.
\end{lemma}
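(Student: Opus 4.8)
The plan is to verify the universal property of the fiberwise pullback directly in $\totalty{P}$, by testing it against an arbitrary cone and reducing that cone, via cartesian reindexing, to a cone in a single fiber where we may invoke both the fiberwise pullback and its preservation under reindexing. So fix a pullback square in $P\,b$ with corner $w$, projections $q_1 : w \vertarr e_1$, $q_2 : w \vertarr e_2$, and cospan legs $p_1 : e_1 \vertarr e_0$, $p_2 : e_2 \vertarr e_0$, all vertical over $b$. A cone over this cospan in $\totalty{P}$ is a point $x \defeq \pair{c}{\xi}$ together with dependent arrows $h_1 : \xi \to^P e_1$, $h_2 : \xi \to^P e_2$ and a homotopy $p_1 h_1 = p_2 h_2$. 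First I would observe that, since $p_1$ and $p_2$ are vertical, applying $\pi$ to this homotopy forces $h_1$ and $h_2$ to lie over one and the same arrow $u \defeq \pi(h_1) = \pi(h_2) : c \to_B b$.

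Next I would transport everything into the fiber $P\,c$. Factoring each leg through its cartesian lift yields vertical arrows $\bar h_i : \xi \vertarr u^* e_i$ with $h_i = P^*(u, e_i) \circ \bar h_i$, while by definition of the reindexing functor's action on the vertical legs $p_i$ we obtain the naturality squares $p_i \circ P^*(u, e_i) = P^*(u, e_0) \circ u^*(p_i)$, with $u^*(p_i) : u^* e_i \vertarr u^* e_0$. Combining the homotopy $p_1 h_1 = p_2 h_2$ with these squares gives $P^*(u, e_0) \circ \big(u^*(p_1)\,\bar h_1\big) = P^*(u, e_0) \circ \big(u^*(p_2)\,\bar h_2\big)$, and cancelling the cartesian arrow $P^*(u, e_0)$ (both sides being vertical arrows $\xi \vertarr u^* e_0$) shows that $\pair{\xi}{\pair{\bar h_1}{\bar h_2}}$ is a cone over the reindexed cospan in $P\,c$. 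By hypothesis $u^*$ preserves pullbacks, so $u^* w$ together with $u^*(q_1)$, $u^*(q_2)$ is a pullback in $P\,c$; its universal property produces a unique vertical gap arrow $\bar g : \xi \vertarr u^* w$, and I would take the gap arrow in $\totalty{P}$ to be $g \defeq P^*(u, w) \circ \bar g : x \to^P w$.

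It then remains to check that $g$ is a morphism of cones and is unique up to homotopy. For the former, the naturality square $q_i \circ P^*(u,w) = P^*(u, e_i) \circ u^*(q_i)$ together with $u^*(q_i)\,\bar g = \bar h_i$ gives $q_i\,g = P^*(u, e_i)\,\bar h_i = h_i$. For uniqueness, any competing gap arrow $g'$ with $q_i g' = h_i$ necessarily lies over $u$ (project along the vertical $q_1$), hence factors as $g' = P^*(u,w) \circ \bar g'$ with $\bar g'$ a cone morphism in $P\,c$; contractibility of the type of such fiber cone morphisms forces $\bar g' = \bar g$ and thus $g' = g$. Packaging these contractibility statements in the same style as~\Cref{lem:cart-arr-pb,lem:op-sides-pb} then exhibits the square as terminal among cones, i.e.\ as a pullback in $\totalty{P}$. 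The main obstacle I anticipate is the bookkeeping in the middle step: correctly assembling the naturality squares for the cartesian lifts and carrying out the cancellation against $P^*(u,e_0)$ so that the commuting datum of the cone descends faithfully to the fiber, since it is exactly here that the \emph{preservation} of pullbacks under reindexing—rather than their mere existence in the fibers—is essential.
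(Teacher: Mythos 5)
Your proposal is correct and follows essentially the same route as the paper's proof: project the cone to a common base arrow $u$, factor the cone legs through cartesian lifts to obtain vertical components, cancel against a cartesian arrow to see these form a cone in the fiber $P\,c$, invoke preservation of the pullback under $u^*$ to get the vertical gap map, and compose with the cartesian lift $P^*(u,w)$ to produce the global gap arrow, with uniqueness reduced to uniqueness of the fiberwise gap map. The only difference is cosmetic (the paper cancels against the cartesian lift at the cospan vertex rather than phrasing it via naturality squares, and is terser about uniqueness), so no further comment is needed.
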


\begin{proof}
	Let $\pi:E \fibarr B$ be the unstraightening of $P$. For any $b:B$, consider a pullback square in $P\,b$, together with a cone in $E$, as follows:
	\begin{equation}
		\label{cd:local-pb}
		\begin{tikzcd}
			d \\
			& {e'} && {e_2} \\
			& {e_1} && e
			\arrow["{f_1}"', squiggly, from=3-2, to=3-4]
			\arrow["{f_2}", squiggly, from=2-4, to=3-4]
			\arrow["{g_2}", squiggly, from=2-2, to=2-4]
			\arrow["{g_1}", squiggly, from=2-2, to=3-2]
			\arrow["{h_2}", curve={height=-12pt}, from=1-1, to=2-4]
			\arrow["{h_1}"', curve={height=12pt}, from=1-1, to=3-2]
		\end{tikzcd}
	\end{equation}
	Projecting down we find that $\pi(h_1) = u = \pi(h_2)$ for some $u:b \to_B a$. Consider the factorizations
	\[\begin{tikzcd}
		d && {u^*\,e_n} && {e_n}
		\arrow["{m_n}"', squiggly, from=1-1, to=1-3]
		\arrow["{k_n}"', from=1-3, to=1-5, cart]
		\arrow["{h_n}", curve={height=-24pt}, from=1-1, to=1-5]
	\end{tikzcd}\]
	for $n=1,2$.
	We claim that $(u^*\,f_1) \circ m_1 = (u^*\,f_2) \circ m_2$. To see this, consider the following induced diagram
	\[\begin{tikzcd}
		d && {u^*\,e_2} && {e_2} \\
		{u^*\,e_1} && {u^*\,e} && e \\
		{e_1}
		\arrow["{f_2}", squiggly, from=1-5, to=2-5]
		\arrow[from=1-3, to=1-5, cart, "k_2"]
		\arrow[from=2-3, to=2-5, cart, "k"]
		\arrow[squiggly, from=1-1, to=1-3, "m_2"]
		\arrow[from=2-1, to=2-3, squiggly, "{u^*\,f_1}"]
		\arrow[from=2-1, to=3-1, cart, "k_1"]
		\arrow["{h_1}"', curve={height=24pt}, from=1-1, to=3-1]
		\arrow["{f_1}"', squiggly, from=3-1, to=2-5]
		\arrow[squiggly, from=1-3, to=2-3, "{u^*\,f_2}"]
		\arrow["{h_2}", curve={height=-24pt}, from=1-1, to=1-5]
		\arrow["m_1"' swap, squiggly, from=1-1, to=2-1]
		\arrow[shorten <=10pt, shorten >=10pt, Rightarrow, no head, from=3-1, to=2-3]
		\arrow[shorten <=10pt, shorten >=10pt, Rightarrow, no head, from=2-3, to=1-5]
	\end{tikzcd}\]
	in which the right square commutes. Now, in fact the left sub-square commutes as well because both sides are equalized by the cartesian arrow $k$: By assumption we have $(f_1k_1)m_1 = (f_2k_2)m_2$, \ie~$(k (u^*f_1))m_1 = (k (u^*f_2))m_2$, hence $(u^*f_1)m_1 = (u^*f_2)m_2$ as claimed.
	
	Now, by assumption~the square of vertical arrows in~\ref{cd:local-pb}, is a pullback in $P\,b$, and gets preserved by $u^*:P\,a\to P\,b$. Then the gap map $\ell:d \rightsquigarrow u^*\,e'$ as indicated below is vertical:
	\[\begin{tikzcd}
		d \\
		& {u^*\,e'} && {u^*\,e_2} \\
		& {u^*\,e_1} && {u^*\,e}
		\arrow[squiggly, from=2-2, to=3-2]
		\arrow[squiggly, from=2-2, to=2-4]
		\arrow[squiggly, from=2-4, to=3-4]
		\arrow["{m_2}", curve={height=-12pt}, squiggly, from=1-1, to=2-4]
		\arrow["{m_1}"', curve={height=12pt}, squiggly, from=1-1, to=3-2]
		\arrow["\ell", squiggly, from=1-1, to=2-2, dashed]
		\arrow[squiggly, from=3-2, to=3-4]
	\end{tikzcd}\]
	Then, for $k'\defeq P^*(u,e'): u^*\,e' \cartarr_u e'$ we claim that the mediating arrow for the original diagram~\ref{cd:local-pb} is given by
	\[ \ell' \defeq k' \circ \ell: d \to e'.\]
	Indeed, we find
	\[ g_n \ell' = g_n(k'\ell) = (k_n \circ u^*\,g_n)\ell = k_n m_n = h_n\]
	for $n=1,2$.
	Furthermore, $\ell'$ is unique with this property because its vertical component is determined uniquely up to homotopy as a gap map of a pullback in $P\,b$. This exhibits $\ell'$ together with the two homotopies $g_k \ell' = h_k$ for $k=1$ or $k=2$, as the terminal cone morphism from $[h_1,h_2]$ to $[g_1, g_2]$ witnessing that the square $[g_1,g_2,f_1,f_2]$ is a pullback.
\end{proof}

Finally, we can state the desired characterization.

\begin{proposition}[\cf~\protect{\cite[Theorem~8.3]{streicher2020fibered}}]\label{prop:pb-fib}
	Let $P:B \to \UU$ be a cartesian family and $B$ be a Rezk type where all pullbacks exist. Denote by $\pi:E \fibarr B$ the unstraightening of $P$. Then the following are equivalent:
	\begin{enumerate}
		\item\label{it:pb-fib-i} The total Rezk type $E$ has all pullbacks, and $\pi$ preserves them.
		\item\label{it:pb-fib-ii} For all $b:B$, the fiber $P\,b$ has all pullbacks, and for all arrows $u:b \to_B a$ the functor $u^*: P\,a \to P\,b$ preserves them.
	\end{enumerate}
\end{proposition}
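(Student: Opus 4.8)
The plan is to mirror the proof of \Cref{prop:term-obj-fib}, establishing the two implications separately and reusing the three pullback lemmas just proven: \Cref{lem:cart-arr-pb} (a dependent square over a base pullback with all sides cartesian is a pullback), \Cref{lem:op-sides-pb} (a square with two opposite cartesian and two opposite vertical sides is a pullback), and \Cref{lem:loc-pb-is-pb} (a fiberwise pullback is a pullback in $\totalty{P}$). The recurring device is the vertical--cartesian factorization of dependent arrows together with the pullback pasting lemma. Throughout I write $\pi:E\fibarr B$ for the unstraightening and use silently that reindexing is functorial, $p^*u^*\simeq(up)^*$.

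For \ref{it:pb-fib-ii}$\implies$\ref{it:pb-fib-i}: given a cospan $f:e_1\to e$, $g:e_2\to e$ in $E$ lying over $u:a_1\to a$, $v:a_2\to a$, first form the pullback $c\defeq a_1\times_a a_2$ in $B$ with projections $p,q$. Factor $f=P^*(u,e)\circ f^\flat$ and $g=P^*(v,e)\circ g^\flat$ with $f^\flat:e_1\vertarr u^*e$, $g^\flat:e_2\vertarr v^*e$ vertical, and reindex the vertical parts along $p$ and $q$ to obtain vertical arrows $p^*f^\flat:p^*e_1\vertarr(up)^*e$ and $q^*g^\flat:q^*e_2\vertarr(vq)^*e$ in the fiber $P\,c$, sharing their codomain since $up=vq$. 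As $P\,c$ has pullbacks by \ref{it:pb-fib-ii}, form the fiber pullback $w$ of this cospan, with vertical projections $\rho,\sigma$, and set $\pi_1\defeq P^*(p,e_1)\circ\rho$, $\pi_2\defeq P^*(q,e_2)\circ\sigma$. I claim $(w,\pi_1,\pi_2)$ is the pullback of $(f,g)$, which $\pi$ sends to $c$. By \Cref{lem:loc-pb-is-pb} the fiber square for $w$ is already a pullback in $E$; the naturality squares relating $p^*e_1$ to $e_1$ and $q^*e_2$ to $e_2$ are pullbacks by \Cref{lem:op-sides-pb}; and the square of four cartesian arrows over the base pullback $c$ is a pullback by \Cref{lem:cart-arr-pb}. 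Pasting these three kinds of squares assembles the claimed square, and commutativity $f\pi_1=g\pi_2$ follows because both sides factor through $P^*(up,e)$ with equal vertical components by the fiber-pullback relation.

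For \ref{it:pb-fib-i}$\implies$\ref{it:pb-fib-ii}: the key observation is that an $E$-pullback of a vertical cospan in $P\,a$ lands in $P\,a$ with vertical legs, since $\pi$ preserves it and the projected cospan $(\id_a,\id_a)$ has pullback $a$; moreover such an $E$-pullback is automatically a fiber pullback, because any $E$-cone morphism between cones whose legs are vertical is forced to be vertical and hence restricts to a $P\,a$-cone morphism. This gives at once that each fiber has pullbacks and that such a fiber pullback is simultaneously the $E$-pullback of the same cospan. To see that $u^*:P\,a\to P\,b$ preserves pullbacks, take a pullback square $S$ in $P\,a$, which is then a pullback in $E$ by the key observation, and reindex it along $u$ by cartesian lifts to a square $u^*S$ in $P\,b$. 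One shows $u^*S$ is a pullback in $E$ by a cube argument: its side faces are the cartesian naturality squares, pullbacks by \Cref{lem:op-sides-pb}, so any $E$-cone over the reindexed cospan transports along the cartesian lifts to a cone over $S$, whose gap map factors uniquely through $P^*(u,w)$ by cartesianness. Since $u^*S$ lies in $P\,b$, the key observation again upgrades it to a pullback in $P\,b$.

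The main obstacle is the bookkeeping in \ref{it:pb-fib-ii}$\implies$\ref{it:pb-fib-i}: making the pasting of the fiberwise pullback, the cartesian naturality squares, and the all-cartesian square over the base pullback coherent enough that the pasting lemma applies, and checking the universal property against \emph{arbitrary} $E$-cones rather than only fiberwise ones. This is most cleanly handled by reducing the universal property to a fiberwise one: an arbitrary cone apex projects to some $x:B$, inducing $r:x\to c$, and reindexing $w$ along $r$ yields $r^*w\simeq s^*e_1\times_{(us)^*e}t^*e_2$ in $P\,x$ precisely because reindexing preserves pullbacks by \ref{it:pb-fib-ii}; the gap map is then produced in $P\,x$ and transported back by $P^*(r,w)$. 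Verifying the coherence identities $p^*u^*\simeq(up)^*$ used throughout is the only genuinely technical point, everything else being a dualization of \Cref{prop:term-obj-fib} and its supporting lemmas.
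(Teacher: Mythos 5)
Your proposal is correct and follows essentially the same route as the paper: for \ref{it:pb-fib-ii}$\implies$\ref{it:pb-fib-i} it builds the same pasted grid from the vertical--cartesian factorizations, invoking \Cref{lem:cart-arr-pb} for the all-cartesian square over the base pullback, \Cref{lem:op-sides-pb} for the naturality squares, and \Cref{lem:loc-pb-is-pb} for the fiberwise pullback; and for \ref{it:pb-fib-i}$\implies$\ref{it:pb-fib-ii} it uses the same observations that $E$-pullbacks of vertical cospans are fiberwise and that reindexed squares are pullbacks by the cube/pasting argument via \Cref{lem:op-sides-pb}. The closing remarks about reducing the universal property to a fiberwise one are already subsumed by \Cref{lem:loc-pb-is-pb}, so no further work is needed there.
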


\begin{proof}
	\item[$\ref{it:pb-fib-i} \implies \ref{it:pb-fib-ii}$] Since $\pi$ preserves pullbacks, every pullback, \emph{taken in $E$}, of vertical arrows in a fiber $P\,b$ is a pullback \emph{in $P\,b$}, \ie~given a cone of vertical arrows, the mediating arrow is necessarily vertical as well. What is left to show is that the reindexing functors preserve the pullbacks. Let $u:b \to_B a$. Consider a pullback square in $P\,a$, together with the cartesian liftings of $u$ \wrt~to each point. Then, by~\Cref{lem:op-sides-pb} the ensuing squares are pullbacks, as indicated in:
	\[\begin{tikzcd}
		{} && E & {u^*\,e'''} &&& {e'''} \\
		&&&& {u^*\,e''} &&& {e''} \\
		&&& {u^*\,e'} &&& {e'} \\
		&&&& {u^*\,e} &&& e \\
		&& B && b &&& a
		\arrow[from=1-4, to=1-7, cart]
		\arrow[squiggly, from=1-7, to=3-7]
		\arrow[squiggly, from=1-7, to=2-8]
		\arrow[squiggly, from=2-8, to=4-8]
		\arrow[squiggly, from=3-7, to=4-8]
		\arrow[from=3-4, to=3-7, cart]
		\arrow[dashed, from=1-4, to=2-5, squiggly]
		\arrow[dashed, from=3-4, to=4-5, squiggly]
		\arrow[dashed, from=1-4, to=3-4, squiggly]
		\arrow[two heads, from=1-3, to=5-3]
		\arrow["u", from=5-5, to=5-8]
		\arrow["\lrcorner"{anchor=center, pos=0.125, rotate=-45}, draw=none, from=1-7, to=4-8]
		\arrow["\lrcorner"{anchor=center, pos=0.125}, draw=none, from=2-5, to=4-8]
		\arrow["\lrcorner"{anchor=center, pos=0.125}, shift left=2, draw=none, from=1-4, to=3-7]
		\arrow[from=2-5, to=2-8, cart, crossing over]
		\arrow[from=4-5, to=4-8, cart, crossing over]
		\arrow[dashed, from=2-5, to=4-5, crossing over, squiggly]
	\end{tikzcd}\]
	By~\cite[Remark~26.1.5(ii)]{RijIntro}, we obtain that the left hand square is a pullback, as desired.
	\item[$\ref{it:pb-fib-ii} \implies \ref{it:pb-fib-i}$] Conversely, consider a cospan an $E$, comprised of dependent arrows $(f:d \to e \leftarrow d':f')$. First, we consider their vertical/cartesian-factorizations, $f=gk$, $f'=g'm$. This gives rise to the following situation, which we will readily explain:
	\[\begin{tikzcd}
		{d'''} && {d''} && {d'} \\
		{e''''} && {e'''} && {e''} \\
		d && {e'} && e
		\arrow[""{name=0, anchor=center, inner sep=0}, "k"{description}, squiggly, from=3-1, to=3-3]
		\arrow["g", from=3-3, to=3-5, cart]
		\arrow["{g'}"', from=2-5, to=3-5, cart]
		\arrow["m"', squiggly, from=1-5, to=2-5]
		\arrow["{m'}"', squiggly, from=1-3, to=2-3]
		\arrow["{h'}"', from=2-3, to=3-3, cart]
		\arrow[""{name=1, anchor=center, inner sep=0}, "h", from=2-3, to=2-5, cart]
		\arrow[""{name=2, anchor=center, inner sep=0}, "{h'''}", from=1-3, to=1-5, cart]
		\arrow["f"{description}, curve={height=18pt}, from=3-1, to=3-5]
		\arrow["{f'}"{description}, curve={height=-18pt}, from=1-5, to=3-5]
		\arrow["{h''}"', from=2-1, to=3-1, cart]
		\arrow[""{name=3, anchor=center, inner sep=0}, "{k'}"{description}, squiggly, from=2-1, to=2-3]
		\arrow["\lrcorner"{anchor=center, pos=0.125}, draw=none, from=1-3, to=2-5]
		\arrow["\lrcorner"{anchor=center, pos=0.125}, draw=none, from=2-3, to=3-5]
		\arrow["\lrcorner"{anchor=center, pos=0.125}, draw=none, from=2-1, to=3-3]
		\arrow["\tau"{description}, draw=none, from=2-3, to=3-5]
		\arrow["{\ell'}"', squiggly, from=1-1, to=2-1]
		\arrow["\ell", squiggly, from=1-1, to=1-3]
		\arrow["{\tau'}"{description}, "\lrcorner"{anchor=center, pos=0.125}, draw=none, from=1-1, to=2-3]
		\arrow["\sigma"{description}, Rightarrow, draw=none, from=3, to=0]
		\arrow["{\sigma'}"{description}, Rightarrow, draw=none, from=2, to=1]
	\end{tikzcd}\]
	First of all, the diagram $\tau$ is a pulback by~\Cref{lem:cart-arr-pb}. The (vertical) fillers $k'$ and $m'$, respectively are induced by $h'$ and $h$ being cartesian, respectively. Then by~\Cref{lem:op-sides-pb}, the squares $\sigma$, $\sigma'$ are pullbacks, too. Since the fibers have pullbacks, the square $\tau'$ exists. As the reindexings preserve the local pullbacks, we can apply~\Cref{lem:loc-pb-is-pb}, so $\tau'$ is a pullback in $E$. Altogether, this yields the pullback square of $f'$ along $f$.
\end{proof}

The last part of the proof implies that, under the assumptions of the proposition the map $\zeta : B \to E$ that picks the terminal element in each fiber sends pullbacks to pullbacks.

\begin{corollary}\label{cor:pullback-totalty}
	Let $B$ be a Rezk type with all pullbacks. Consider a cartesian family $P:B \to \UU$ . Denote by $\pi:E \fibarr B$ the unstraightening of $P$ and assume that $E$ has all pullbacks and $\pi$ preserves then. Then $\zeta : B \to E$ sends pullbacks to pullbacks.
\end{corollary}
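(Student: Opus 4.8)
The plan is to reduce the statement to \Cref{lem:cart-arr-pb} by establishing one key fact: the functor $\zeta : B \to E$, $b \mapsto \pair{b}{\zeta_b}$, carries \emph{every} arrow of $B$ to a cartesian arrow of $E$. Granting this, if we are given a pullback square in $B$, then its image under $\zeta$ is a dependent square in $P$ which, since $\pi \circ \zeta = \id_B$, lies over that very pullback, and all four of whose edges are cartesian. \Cref{lem:cart-arr-pb} then immediately yields that the image is a pullback in $E \defeq \totalty{P}$, which is exactly the assertion that $\zeta$ sends pullbacks to pullbacks.

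The content of the argument is therefore concentrated in this first step. Fix an arrow $u : b \to_B a$. By (the proof of) \Cref{prop:term-obj-fib} the reindexing functor $u^*$ preserves terminal elements, so there is an identification $u^*\,\zeta_a = \zeta_b$ exhibiting $u^*\,\zeta_a$ as terminal in the fiber $P\,b$, and the cartesian lift $P^*(u,\zeta_a) : \zeta_b \cartarr_u^P \zeta_a$ is cartesian by construction. On the other hand, $\zeta(u) \defeq \lambda t.\pair{u(t)}{\zeta_{u(t)}}$ is an arrow $\zeta_b \to_u \zeta_a$ lying over $u$. To identify the two I would invoke the cartesian dual of \cite[Proposition~5.2.5]{BW21}: dependent arrows $\zeta_b \to_u \zeta_a$ over $u$ correspond to vertical arrows $\zeta_b \to u^*\,\zeta_a$ in $P\,b$. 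Since $u^*\,\zeta_a$ is terminal in $P\,b$, this type of vertical arrows is contractible; hence there is, up to homotopy, a unique arrow over $u$ from $\zeta_b$ to $\zeta_a$, and it must coincide with $P^*(u,\zeta_a)$. In particular $\zeta(u)$ is cartesian.

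I expect the only real obstacle to be the bookkeeping in this first step, namely keeping the identification $u^*\,\zeta_a = \zeta_b$ coherent with the equivalence between dependent arrows over $u$ and vertical arrows in $P\,b$, so that $\zeta(u)$ is genuinely exhibited as \emph{the} cartesian lift rather than merely homotopic to one over a possibly different lift. Once each of the four edges of the $\zeta$-image of the given pullback is certified as cartesian in this uniform way, no further limit computation remains: the appeal to \Cref{lem:cart-arr-pb} closes the argument directly, in parallel with the way the square $\tau$ was recognized as a pullback in the proof of \Cref{prop:pb-fib}.
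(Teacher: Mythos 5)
Your proposal is correct, and it is a mild but genuine streamlining of the paper's argument. The paper factors $\zeta_u$ as a vertical arrow $k : \zeta_b \vertarr u^*\zeta_a$ followed by the cartesian lift $P^*(u,\zeta_a)$, and then re-runs the whole composite-of-pullbacks argument from the proof of \Cref{prop:pb-fib}, which requires all three of \Cref{lem:cart-arr-pb}, \Cref{lem:op-sides-pb}, and \Cref{lem:loc-pb-is-pb}. You instead observe that the vertical component is invertible --- because $u^*\zeta_a$ and $\zeta_b$ are both terminal in $P\,b$, so the type of arrows $\zeta_b \to_u \zeta_a$ over $u$ is contractible and $\zeta(u)$ must coincide with the cartesian lift --- whence every edge of the $\zeta$-image of the square is cartesian and a single appeal to \Cref{lem:cart-arr-pb} finishes the proof. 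What your route buys is economy; what it costs is the hypothesis that reindexing preserves fiberwise terminal objects, which you import from \Cref{prop:term-obj-fib}. That hypothesis is not among the corollary's literal assumptions (which only concern pullbacks), but it is part of the lex package under which $\zeta$ is even defined and under which the corollary is actually used in \Cref{cor:lex-zeta}, so this is a presentational rather than a mathematical gap; it would be worth flagging explicitly that you are invoking the terminal-object half of lexness, whereas the paper's longer route leans only on the pullback half.
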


\begin{proof}
	We first observe that, for any $u:b \to_B a$ we find for $\zeta_u : (\zeta_b \to^P_u \zeta_a)$ the decomposition $\zeta_u = P^*(u,\zeta_e) \circ k$ where $k : \zeta_b \vertarr_b u^*e$ is the filler given by cartesianness. Hence, $\zeta$ maps any (pullback) square in $B$ to a composite square as at the end of the proof of~\Cref{prop:pb-fib}. But by the same arguments, the square exhibits itself as a pullback square.
\end{proof}

Together with the corresponding statement for preservation of terminal elements, we obtain the following corollary, that will be crucial later in constructing the correspondence between lextensive fibrations and lex functors for Moens' Theorem~\Cref{thm:moens-thm}.

\begin{corollary}[Lexness of the fiberwise terminal map]\label{cor:lex-zeta}
	Let $\pi : E \fibarr B$ be a lex cartesian fibration. Then the map $\zeta : B \to E$ that maps $b$ to $\pair{b}{\zeta_b}$ where $\zeta_b$ is the terminal element in the fiber over $b$ is a lex functor.
\end{corollary}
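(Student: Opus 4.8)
The plan is to unwind the definition of lex functor (\Cref{def:pres-term-pb}) and verify the two preservation conditions separately, harvesting the work already done in \Cref{cor:terminal-totalty} and \Cref{cor:pullback-totalty}. Since $\pi : E \fibarr B$ is a lex cartesian fibration, the base $B$ is a lex Rezk type, and by the alternative characterization of lexness recorded after~\Cref{def:pres-term-pb} the total type $E$ is itself lex with $\pi$ a lex functor; in particular $E$ has a terminal object and all pullbacks, and $\pi$ preserves both. This places us squarely in the standing hypotheses of the two corollaries, so the bulk of the argument is already available.

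First I would treat the terminal element. Writing $z : B$ for the terminal object of $B$ and $\zeta_z$ for the fiberwise terminal element over it, we have $\zeta(z) \jdeq \pair{z}{\zeta_z}$ by definition of $\zeta$. Since $E$ has a terminal object and $\pi$ preserves it, \Cref{cor:terminal-totalty} identifies $\pair{z}{\zeta_z}$ as the terminal object of $E$. Hence $\zeta(z)$ is terminal in $E$, which is exactly the condition that $\zeta$ preserves terminal elements in the sense of~\Cref{def:pres-term-pb}.

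Next I would treat pullbacks. Because $B$ has all pullbacks, $E$ has all pullbacks, and $\pi$ preserves them, \Cref{cor:pullback-totalty} applies verbatim and yields that $\zeta$ sends pullback squares in $B$ to pullback squares in $E$. Combining the two verifications, $\zeta$ preserves both terminal elements and pullbacks, and is therefore a lex functor.

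Since the substantive content was already established in the proofs of~\Cref{prop:term-obj-fib} and~\Cref{prop:pb-fib} and distilled into the two corollaries, I do not expect a genuine obstacle here. The only point requiring (minor) care is the bookkeeping that $\pi$ being lex simultaneously supplies the existence of the relevant limits in $E$ \emph{and} their preservation by $\pi$, which are precisely the assumptions under which \Cref{cor:terminal-totalty} and \Cref{cor:pullback-totalty} were stated; once those hypotheses are matched to the unpacked meaning of a lex cartesian fibration, the corollary is immediate.
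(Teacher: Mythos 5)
Your proposal is correct and takes exactly the paper's route: the paper's proof is literally ``Combine \Cref{cor:terminal-totalty,cor:pullback-totalty}'', and you supply the same combination together with the (correct) bookkeeping that the lexness of the fibration, via \Cref{prop:term-obj-fib} and \Cref{prop:pb-fib}, furnishes the hypotheses of those two corollaries.
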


\begin{proof}
	Combine \Cref{cor:terminal-totalty,cor:pullback-totalty}.
\end{proof}

\subsubsection{Pullbacks in slices}

Throughout the paper, we will need to compute pullbacks in slice Rezk types. Recall the (dual) considerations of~\cite[Proposition~5.1.5, Definition~5.1.6, and Definition 5.1.7]{BW21}. We are proving that these pullbacks arise from pullbacks in the base.

\begin{proposition}[Pullbacks in slice Rezk types]
	Let $B$ be a Rezk type with all pullbacks, and $a:B$. Consider morphisms $u:b \to_B a$, $u':b'\to_B a$, and $u'':b''\to_B a$. Let $\kappa \defeq \pair{f}{g}: \Lambda_2^2 \to B/a$ be a cospan in the slice category, given by
\[\begin{tikzcd}
	& {u''} \\
	{u'} & u
	\arrow["g", from=1-2, to=2-2]
	\arrow["f", from=2-1, to=2-2]
\end{tikzcd}\]
for $f : (u' \to_{B/a} u)$ and $g : (u'' \to_{B/a} u)$ (\ie, such that there exist identifications $uf = u'$ and $ug = u''$).
	 Consider a square in $B$:
	 \[\begin{tikzcd}
		 {b'''} & {b''} \\
		 {b'} & b
		 \arrow["g", from=1-2, to=2-2]
		 \arrow["f", from=2-1, to=2-2]
		 \arrow["{g'}"', from=1-1, to=2-1]
		 \arrow["{f'}", from=1-1, to=1-2]
	 \end{tikzcd}\]
	 Assume it gives rise to a cone $\sigma \defeq \angled{u''',f',g'} : (B/a)/\kappa$, \ie, we have:
	\[\begin{tikzcd}
	{u'''} & {u''} \\
	{u'} & u
	\arrow["g", from=1-2, to=2-2]
	\arrow["f", from=2-1, to=2-2]
	\arrow["{g'}"', from=1-1, to=2-1]
	\arrow["{f'}", from=1-1, to=1-2]
	\end{tikzcd}\]
	Then $\sigma$ is terminal if and only if its evaluation at $0$ (\ie, the cone $\angled{b''',f',g'}$ over $\pair{f}{g}$) is a terminal cone in $B$. 
\end{proposition}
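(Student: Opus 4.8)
The plan is to read off the claim from the behaviour of the domain projection $\partial_0 \colon \commaty{B}{a} \to B$ on cones. Terminality of a cone $\sigma$ over $\kappa$ means that for every cone $\sigma'$ over $\kappa$ the type of cone morphisms $\sigma' \to \sigma$ is contractible, and likewise in $B$. So it suffices to produce, via the evaluation-at-$0$ map
\[ \Theta \colon \kappa/(\commaty{B}{a}) \longrightarrow \pair{f}{g}/B , \]
an equivalence between the type of cones over $\kappa$ in $\commaty{B}{a}$ and the type of cones over $\pair{f}{g}$ in $B$, together with a matching equivalence of the associated types of cone morphisms. Granting both, contractibility of cone-morphism types transfers across $\Theta$, and hence $\sigma$ is terminal over $\kappa$ in $\commaty{B}{a}$ if and only if $\angled{b''',f',g'} = \Theta(\sigma)$ is terminal over $\pair{f}{g}$ in $B$. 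The common engine for both equivalences is the standard description of slice hom-types: for objects $v \colon c \to a$ and $w \colon d \to a$ of $\commaty{B}{a}$ one has $\hom_{\commaty{B}{a}}(v,w) \simeq \sum_{h : \hom_B(c,d)} (w \circ h = v)$, so a slice morphism is a base morphism equipped with a (propositional, since $B$ is Rezk) witness of commutation over $a$.

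First I would establish that $\Theta$ is an equivalence of the cone types. A cone over $\kappa$ in $\commaty{B}{a}$ is a square in $\commaty{B}{a}$ extending the cospan $\kappa$; by the universal property of the slice this is the same datum as a square $S$ in $B$ extending $\pair{f}{g}$ together with an augmentation (natural transformation) $S \Rightarrow \mathrm{const}_a$ whose restriction to the horn $\Lambda_2^2$ is the fixed family $u', u'', u$. Thus $\kappa/(\commaty{B}{a}) \simeq \sum_{S : \pair{f}{g}/B} \mathrm{Aug}(S)$ with $\Theta$ the first projection, and it remains to see that each fibre $\mathrm{Aug}(S)$ is contractible. The augmentation at the apex is forced to be $u' \circ g' = u'' \circ f' \colon b''' \to a$ (these agree because $S$ commutes, $f \circ g' = g \circ f'$, and $u f = u'$, $u g = u''$), and the remaining naturality cells are then filled uniquely by the Segal condition; this contractibility makes $\Theta$ an equivalence.

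Next I would show that $\Theta$ also matches cone morphisms. Let $\sigma'$ be a cone over $\kappa$ with apex $v \colon c \to a$ and legs $p \colon v \to u'$, $q \colon v \to u''$. Unwinding the slice-hom description, a morphism of cones $\sigma' \to \sigma$ is a slice map $\phi \colon v \to u'''$ satisfying $g' \circ \phi = p$ and $f' \circ \phi = q$; but the slice-commutation condition $u''' \circ \phi = v$ is automatic, since $u''' \circ \phi = u' \circ g' \circ \phi = u' \circ p = v$. Hence a cone morphism in $\commaty{B}{a}$ is precisely a cone morphism $\angled{c,p,q} \to \angled{b''',f',g'}$ in $B$, under the identification of cones from the previous step. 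Consequently the two comma types of cones coincide, one is contractible exactly when the other is, and the desired equivalence of terminality follows.

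The main obstacle is the contractibility of $\mathrm{Aug}(S)$, that is, the claim that an augmentation of a square to the single fixed object $a$ is uniquely determined by its restriction to the cospan $\Lambda_2^2$. Everything else is bookkeeping with the slice-hom description and the observation that the commutation-over-$a$ conditions are redundant once the cone legs are fixed. This step is where the synthetic content sits: it genuinely uses both that the target is a single object $a$ and that $B$ is Segal, so that the value $u'''$ at the new apex vertex and all higher coherence cells of the augmentation are forced. With this in hand, $\Theta$ is an equivalence matching cone morphisms, and the characterization of terminal cones in $\commaty{B}{a}$ via terminal cones in $B$ follows.
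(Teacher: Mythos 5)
Your proposal is correct and follows essentially the same route as the paper: both arguments reduce everything to $B$ by observing that the augmentation data over $a$ is contractibly determined (the paper's ``strictification'' of $(B/a)/\kappa$ into an extension type of $3$-cubes is exactly your contractibility of $\mathrm{Aug}(S)$, forced by Segal-ness), and both use the identical computation $u''' \circ \ell = (u' \circ g') \circ \ell = u' \circ h = v$ to see that mediating maps lift to the slice. The only difference is packaging --- the paper constructs the terminal cone directly from the pullback $b' \times_b b''$ and verifies its universal property, whereas you transfer terminality along an equivalence of cone types and cone-morphism types induced by evaluation at $0$, which makes the ``if and only if'' slightly more transparent but rests on the same key contractibility lemma and the same key computation.
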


\begin{proof}
	One computes $(B/a)/\kappa \simeq \sum_{c : B} \sum_{v:(c \to_B a)} \sum_{\substack{ h:(v \to_{B/a} u') \\ k:(v \to_{B/a} u'')}} (fh = gk)$. We can strictify this data, getting rid of the homotopy. Indeed, this type is equivalent to an extension type whose terms are $3$-cubes in $B$ (whose definition we will not spell out here) that restrict as indicated:
\[\begin{tikzcd}
	\bullet &&& {b''} \\
	& {b'} &&& b \\
	a &&& a \\
	& a &&& a
	\arrow[dashed, from=1-1, to=1-4]
	\arrow[dashed, from=1-1, to=3-1]
	\arrow[Rightarrow, no head, from=3-1, to=3-4]
	\arrow["{u''}"'{pos=0.7}, from=1-4, to=3-4]
	\arrow[dashed, from=1-1, to=2-2]
	\arrow["f", from=2-2, to=2-5, crossing over]
	\arrow[Rightarrow, no head, from=3-1, to=4-2]
	\arrow[Rightarrow, no head, from=4-2, to=4-5]
	\arrow["u"{description}, from=2-5, to=4-5]
	\arrow["{u'}"'{pos=0.3}, from=2-2, to=4-2, crossing over]
	\arrow["g", from=1-4, to=2-5]
	\arrow[Rightarrow, no head, from=3-4, to=4-5]
\end{tikzcd}\]
Morphisms between two cones are given accordingly:
\[\begin{tikzcd}
	\bullet \\
	&& \bullet &&& {b''} \\
	\bullet &&& {b'} &&& b \\
	&& a &&& a \\
	&&& a &&& a
	\arrow[dashed, from=2-3, to=2-6]
	\arrow[dashed, from=2-3, to=4-3]
	\arrow[Rightarrow, no head, from=4-3, to=4-6]
	\arrow["{u''}"'{pos=0.7}, from=2-6, to=4-6]
	\arrow[dashed, from=2-3, to=3-4]
	\arrow["f", from=3-4, to=3-7, crossing over]
	\arrow[Rightarrow, no head, from=4-3, to=5-4]
	\arrow[Rightarrow, no head, from=5-4, to=5-7]
	\arrow["u"{description}, from=3-7, to=5-7]
	\arrow["{u'}"'{pos=0.3}, from=3-4, to=5-4, crossing over]
	\arrow["g", from=2-6, to=3-7]
	\arrow[Rightarrow, no head, from=4-6, to=5-7]
	\arrow[dashed, from=1-1, to=2-6]
	\arrow[dashed, from=1-1, to=3-1]
	\arrow[Rightarrow, dashed, no head, from=3-1, to=4-3]
	\arrow[dashed, from=1-1, to=3-4]
	\arrow[dashed, from=1-1, to=2-3]
\end{tikzcd}\]
	Since $B$ is a lex Rezk type, we can consider the square
\[\begin{tikzcd}
	{b''' \defeq b'\times_b b''} & {b''} \\
	{b'} & b
	\arrow["g", from=1-2, to=2-2]
	\arrow["f", from=2-1, to=2-2]
	\arrow["{{g'}}"', from=1-1, to=2-1]
	\arrow["{{f'}}", from=1-1, to=1-2]
	\arrow["\lrcorner"{anchor=center, pos=0.125}, draw=none, from=1-1, to=2-2]
\end{tikzcd}\]
	in $B$. Setting $u''' \defeq u'g' = u''f' : b''' \to a$ gives rise to a a cone
	\[ \sigma \defeq \angled{b''':B,u''':b''' \to_B a,f':u''' \to_{B/a} u'',g':u''' \to_{B/a} u'} : (B/a)/\kappa.\]
	Let $\tau \defeq \angled{c : B, v:c \to_B a, h:v \to_{B/a} u', k :v \to_{B/a} u''} : (B/a)/\kappa$ be another cone. By the pullback property of the initially considered square in $B$, its mediating map lifts to define a morphism of cones:
\[\begin{tikzcd}
	c \\
	&& {b'''} &&& {b''} \\
	a &&& {b'} &&& b \\
	&& a &&& a \\
	&&& a &&& a
	\arrow["{f'}", swap, from=2-3, to=2-6]
	\arrow["{u'''}"{description}, from=2-3, to=4-3]
	\arrow[Rightarrow, no head, from=4-3, to=4-6]
	\arrow["{u''}"'{pos=0.7}, from=2-6, to=4-6]
	\arrow["{g'}"', swap, from=2-3, to=3-4]
	\arrow["f", from=3-4, to=3-7, crossing over]
	\arrow[Rightarrow, no head, from=4-3, to=5-4]
	\arrow[Rightarrow, no head, from=5-4, to=5-7]
	\arrow["u"{description}, from=3-7, to=5-7]
	\arrow["{u'}"'{pos=0.3}, from=3-4, to=5-4, crossing over]
	\arrow["g", from=2-6, to=3-7]
	\arrow[Rightarrow, no head, from=4-6, to=5-7]
	\arrow["k"{description}, from=1-1, to=2-6]
	\arrow["v"{description}, from=1-1, to=3-1]
	\arrow[Rightarrow, no head, from=3-1, to=4-3]
	\arrow["h"{description}, from=1-1, to=3-4, curve={height=18pt}, crossing over]
	\arrow["\ell"{description}, dashed, from=1-1, to=2-3]
\end{tikzcd}\]
	Indeed, from a homotopy $g'\ell = h$ we get $u'''\ell = (u'g')\ell = u'h=v$, so $\ell$ really defines a term in $(v \to_{B/a} u''')$. Any such cone in $B/a$ after projection yields a pullback in $B$. Hence $\ell$ is unique up to homotopy with that property.
\end{proof}

\section{Bicartesian families}\label{sec:bicart-fam}

In this section, we consider families that are both cartesian and concartesian, corresponding to \emph{bicartesian fibrations}. Specifically, we are interested in such fibrations satisfying a so-called Beck--Chevalley condition (BCC). This form of the BCC has its origins in the work of B\'{e}nabou--Roubaud leading to their famous characterization of descent data of a fibration. In the $\inftyone$-categorical context such fibrations play a role \eg~in higher ambidexterity, \cf~work by Hopkins--Lurie~\cite{HoLuAmbi} and Heuts's notes~\cite{HeLuAmbi} of Lurie's work. We successively generalize Streicher's exposition and proofs~\cite[Section~15]{streicher2020fibered} to the synthetic $\inftyone$-categorical setting, also making explicit some arguments not detailed in~\emph{op.~cit.}

\subsection{Bicartesian families}

\begin{definition}[Bicartesian family]
	Let $B$ be a Rezk type. A \emph{bicartesian family} is a type family $P:B \to \UU$ which is both cartesian and cocartesian.
\end{definition}

Bicartesian families are hence equipped with both co- and contravariant transport operations for directed arrows. In fact, these induce adjunctions on the fibers.

\begin{proposition}
	Let $P:B \to \UU$ be a bicartesian family. For any $a,b:B$, $u:a \to_B b$, there is an adjunction:
	\[\begin{tikzcd}
		{P\,a} && {P\,b}
		\arrow[""{name=0, anchor=center, inner sep=0}, "{u_!}"{description}, curve={height=-12pt}, from=1-1, to=1-3]
		\arrow[""{name=1, anchor=center, inner sep=0}, "{u^*}"{description}, curve={height=-18pt}, from=1-3, to=1-1]
		\arrow["\dashv"{anchor=center, rotate=-90}, draw=none, from=0, to=1]
	\end{tikzcd}\]
\end{proposition}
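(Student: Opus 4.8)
The plan is to exhibit, for fixed $x:P\,a$ and $y:P\,b$, a natural equivalence between the fiber hom-types $(u_!\,x \to_{P\,b} y)$ and $(x \to_{P\,a} u^*\,y)$, and to conclude that this transposing equivalence witnesses the adjunction $u_! \dashv u^*$. The two halves of the transposition are furnished by the universal properties of cocartesian and cartesian lifts, which are already packaged as hom-equivalences. Since $P$ is cocartesian, the cocartesian lift $P_!(u,x) : x \cocartarr_u^P u_!\,x$ induces, by \cite[Proposition~5.2.5]{BW21}, an equivalence $\Phi : (x \to_u^P y) \xrightarrow{\simeq} (u_!\,x \to_{P\,b} y)$ between dependent arrows over $u$ and vertical arrows in the target fiber. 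Dually, since $P$ is cartesian, the cartesian lift $P^*(u,y) : u^*\,y \cartarr_u y$ yields the dual equivalence $\Psi : (x \to_u^P y) \xrightarrow{\simeq} (x \to_{P\,a} u^*\,y)$ between the same dependent arrows and vertical arrows in the source fiber. Composing, $\Psi \circ \Phi^{-1}$ gives
\[ (u_!\,x \to_{P\,b} y) \;\simeq\; (x \to_{P\,a} u^*\,y). \]

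Next I would verify that this family of equivalences is natural in $x$ and $y$, i.e.\ compatible with precomposition in $P\,a$ and postcomposition in $P\,b$. This reduces to the observation that both $\Phi$ and $\Psi$ are defined by composing with the respective (co)cartesian lift and taking the homotopically unique filler, so that naturality follows from the naturality of cocartesian filling in~\Cref{prop:nat-cocartlift-arr} together with its cartesian dual, and from the compositional behaviour of the (co)cartesian arrows recorded in~\Cref{prop:cocart-arr-closure}. A natural transposing equivalence of hom-types between functors of Rezk types then yields an adjunction by the standard characterization of adjunctions in the synthetic setting~\cite{RS17}.

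Equivalently, and more explicitly, one may read off the unit and counit directly: the unit $\eta_x : x \to_{P\,a} u^*(u_!\,x)$ is the vertical component of the cartesian factorization of the cocartesian lift $P_!(u,x)$, while the counit $\epsilon_y : u_!(u^*\,y) \to_{P\,b} y$ is the vertical component of the cocartesian factorization of the cartesian lift $P^*(u,y)$. I expect the main obstacle to be not the construction but the bookkeeping: establishing naturality and, in the unit/counit presentation, the triangle identities, where one must track how the cocartesian and cartesian factorizations interact. This uses closure of cocartesian arrows under composition and right cancelation (\Cref{prop:cocart-arr-closure}) together with pullback-stability of cartesian arrows, and leans on the Rezk condition to supply the coherent homotopies making the zig-zag equations hold.
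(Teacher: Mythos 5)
Your proposal is correct and follows essentially the same route as the paper: the paper's $\Phi$ and $\Psi$ are exactly your two transposition maps, obtained by composing with the cocartesian lift $P_!(u,d)$ and then cartesian-filling against $P^*(u,e)$ (and dually), the only difference being that the paper verifies the round-trips directly via uniqueness of fillers whereas you obtain invertibility for free by factoring both equivalences through the common type $(x \to_u^P y)$ of dependent arrows over $u$. Your additional remarks on naturality and the unit/counit reading go slightly beyond what the paper's (terse) proof records, but are consistent with it.
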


\begin{proof}
	For fixed $a,b:B$, we define a pair of maps
	\[ \Phi:(u_!\,d \to e) \rightleftarrows (d \to u^*e) : \Psi,\]
	intended to be quasi-inverse to each other, through
	\[\Phi(g) \defeq \cartFill_{P^*(u,e)}^P(g \circ P_!(u,d)), \quad \Psi(k) \defeq \cocartFill_{P_!(u,d)}^P(P^*(u,e)\circ k).\]
	We write $f \defeq P_!(u,d): d \cocartarr u_!\,d$ and $r \defeq P^*(u,e): u^*e \cartarr e$. For a dependent arrow $g:u_!\,d \to^P e$ we have $g' \defeq \Phi(g) \circ r = g \circ f$ by construction. Next, we find that $\Psi(g') \circ f = r \circ \Phi(g)$. Combining these identities it follows that $\Psi(g') = \Psi(\Phi(g)) = g$ by cocartesianness of $f$. The other roundtrip is analogous.
\end{proof}

We now explain a few important constructions producing bicartesian fibrations.

\subsection{The family fibration}\label{ssec:fam-fib}

For an arbitrary map $\pi:E \fibarr B$ between Rezk types one can produce its \emph{free cocartesian fibration}, ~\cite[Theorem~5.2.19]{BW21}.

This cocartesian fibration turns out to be a \emph{bifibration} in case $B$ has all pullbacks, then called the \emph{family fibration}. To analyze the structure of the family fibration, we first recall the codomain fibration.

\begin{proposition}[Codomain fibration]\label{prop:cod-fib}
	If $B$ is a Rezk type that has all pullbacks, the codomain projection
	\[ \partial_1 : B^{\Delta^1} \to B, \partial_1(f) \defeq u(1)\]
	is a cartesian fibration.
\end{proposition}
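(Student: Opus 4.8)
The plan is to construct the cartesian lifts explicitly as pullback squares. Since $B$ is Rezk and $B^{\Delta^1}$ is a cotensor, the family $P \defeq \St_B(\partial_1)$ is isoinner: its fiber over $a:B$ is (up to equivalence) the slice $\commaty{B}{a}$, whose points are arrows $v : c \to_B a$ and which is itself Rezk. Unwinding the extension-type description of $B^{\Delta^1}$ together with that of its arrows, a dependent arrow in $P$ over a base arrow $u : b \to_B a$, from some $w : c' \to_B b$ to $v : c \to_B a$, is precisely a commutative square in $B$ whose bottom edge is $u$ and whose right edge is $v$.

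Next, given $u : b \to_B a$ in the base and a point $v : c \to_B a$ of the fiber over $a$, I invoke the hypothesis that $B$ has all pullbacks to form the pullback of $v$ along $u$. This yields an object $u^* v : b \times_a c \to_B b$ of the fiber over $b$ together with the pullback square, and by the identification above the latter is exactly a dependent arrow $\chi : u^* v \to_u v$ lying over $u$. It then remains to show that $\chi$ is $P$-cartesian, i.e.\ that it satisfies the universal property dual to \Cref{def:cocart-arr}.

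To verify cartesianness I unwind the defining contractibility condition. A competitor consists of a base arrow $s : c_0 \to_B b$, a point $p : d_0 \to_B c_0$ of the fiber over $c_0$, and a dependent arrow $h : p \to_{us} v$; unwinding, such an $h$ is a commutative square with left edge $p$, right edge $v$, and bottom edge $us$, and hence exhibits the domain $d_0$ as the tip of a cone over the cospan $\pair{u}{v}$ in $B$. The universal property of the pullback $b \times_a c$, namely its terminality among such cones, then supplies, uniquely up to homotopy, a filler $g : p \to_s u^* v$ with $\chi \circ g = h$; the contractibility of the $\Sigma$-type of such fillers is precisely this terminality. Hence $\chi = P^*(u,v)$ is cartesian, and since $u$ and $v$ were arbitrary, $\partial_1$ is a cartesian fibration.

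I expect the main obstacle to be purely bookkeeping: matching the type-theoretic universal property of cartesian arrows, phrased with dependent hom-types over the composite $us$ and contractibility of a $\Sigma$-type of fillers, against the universal property of the pullback square in $B$ as a terminal cone. This requires carefully unwinding the extension types defining arrows in $B^{\Delta^1}$ and the coherence homotopy $us = \partial_1(h)$, and then strictifying it, in the same spirit as the manipulations in the proof of \Cref{lem:op-sides-pb} and of the slice-pullback proposition above. One could alternatively deduce the result by duality from the cocartesian comma-object proposition, but since the opposite of a Rezk type is not developed in the present excerpt, the direct pullback construction is the cleaner route.
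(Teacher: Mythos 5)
Your proposal is correct and takes essentially the same approach as the paper: the paper's proof simply appeals to the dual statement in \cite[Proposition~5.2.17]{BW21} and records that the cartesian lift of $u$ at $v:b'\to_B b$ is the pullback square of $v$ along $u$, which is precisely the construction you give. Your verification of the universal property (matching cartesianness of the pullback square against terminality of the pullback cone) is just the detail the paper leaves to the cited dual result.
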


\begin{proof}
	This is dual to~\cite[Proposition~5.2.17]{BW21}. In particular, for an arrow $u : a \to_B b$ in $B$, a point in the fiber over $b$ is given by an arrow $v : b' \to_B b$ in $B$. The cartesian lift of $u$ \wrt $v$ is given by the pullback square:
\[\begin{tikzcd}
	{B^{\Delta^1}} & {a \times_b b'} && {b'} \\
	& a && b \\
	B & a && b
	\arrow["{u^*v}"', from=1-2, to=2-2]
	\arrow["u", from=3-2, to=3-4]
	\arrow["u"', from=2-2, to=2-4]
	\arrow["{v^*u}", from=1-2, to=1-4]
	\arrow["v", from=1-4, to=2-4]
	\arrow["\lrcorner"{anchor=center, pos=0.125}, draw=none, from=1-2, to=2-4]
	\arrow["{\partial_1}"', two heads, from=1-1, to=3-1]
\end{tikzcd}\]
\end{proof}

\begin{proposition}\label{prop:fam-fib-cart}
	Let $B$ be a Rezk type with all pullbacks and $\pi:E \fibarr B$ a cartesian fibration, then the free cocartesian fibration
	\[\begin{tikzcd}
		{L(\pi)} && E \\
		{B^{\Delta^1}} && B \\
		B
		\arrow["\pi", two heads, from=1-3, to=2-3]
		\arrow["{\partial_0}"', from=2-1, to=2-3]
		\arrow[from=1-1, to=1-3]
		\arrow["{\partial_1}", two heads, from=2-1, to=3-1]
		\arrow["{\partial_1'}"', curve={height=24pt}, two heads, from=1-1, to=3-1]
		\arrow["\lrcorner"{anchor=center, pos=0.125}, draw=none, from=1-1, to=2-3]
		\arrow[two heads, from=1-1, to=2-1]
	\end{tikzcd}\]
	is itself a also cartesian fibration, hence a bicartesian fibration.
\end{proposition}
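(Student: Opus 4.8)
The plan is to avoid verifying a universal property from scratch and instead exhibit $\partial_1'$ as a composite of two cartesian fibrations. Write $p : L(\pi) \to B^{\Delta^1}$ for the left-hand vertical map, so that by construction $L(\pi)$ is the pullback of $\pi : E \fibarr B$ along $\partial_0 : B^{\Delta^1} \to B$, and $\partial_1' = \partial_1 \circ p$. Since $B$ is Rezk, the cotensor $B^{\Delta^1}$ is again Rezk, and $L(\pi)$ is Rezk as a pullback of Rezk types along the fibration $\pi$; thus every base and total type in sight is a synthetic $\inftyone$-category and the cartesian analogues of the closure results of~\Cref{prop:cocart-cosm-closure} apply.

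First I would observe that $p$ is a cartesian fibration: $\pi$ is cartesian by hypothesis, $p$ is its pullback along the arbitrary map $\partial_0$, and cartesian families are closed under pullback along arbitrary maps (the formal dual of~\Cref{prop:cocart-cosm-closure}, available by the dualization discussed in the subsection on cartesian families). Second, $\partial_1 : B^{\Delta^1} \to B$ is cartesian by~\Cref{prop:cod-fib}, using that $B$ has all pullbacks. Since cartesian fibrations are closed under composition (again dual to~\Cref{prop:cocart-cosm-closure}), the composite $\partial_1' = \partial_1 \circ p$ is a cartesian fibration. As $\partial_1'$ is already the free cocartesian fibration on $\pi$ recalled above, it is in particular cocartesian, and hence bicartesian.

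It is worth unwinding what the cartesian lifts produced by this argument are, both as a sanity check and for the reader who prefers an explicit construction. A point of $L(\pi)$ lying over $b : B$ is a pair $\pair{v : b' \to_B b}{e}$ with $\pi(e) = b'$. Given $u : a \to_B b$, the lift provided by $\partial_1$ is the pullback square with corner $a \times_b b'$ and new point $u^*v : a \times_b b' \to a$, whose top edge is $v^*u : a \times_b b' \to b'$; the lift provided by $\pi$ is the cartesian arrow $(v^*u)^*e \cartarr e$ over $v^*u$. The cartesian lift of $u$ at $\pair{v}{e}$ in $L(\pi)$ is then $\pair{u^*v}{(v^*u)^*e}$ together with this pair of arrows. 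A direct verification of its universal property would combine the pullback property in $B$ governing the $B^{\Delta^1}$-component with the cartesian filling property of $\pi$ governing the $E$-component.

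The main obstacle is bookkeeping rather than mathematical depth. The one point requiring care is that the closure results of~\Cref{prop:cocart-cosm-closure} are invoked in their cartesian (dual) form, which the text sets up but does not spell out; alternatively, should one run the explicit argument, the two universal properties must be combined coherently up to homotopy while respecting the compatibility constraint $\partial_0 = \pi$ that cuts $L(\pi)$ out of $B^{\Delta^1} \times E$. Either way, no genuinely new lifting computation is needed beyond those already recorded for the codomain fibration and for $\pi$.
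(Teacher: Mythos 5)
Your proof is correct and follows essentially the same route as the paper: the paper likewise notes that $\partial_1'$ is cocartesian as the free cocartesian fibration, that $\partial_1$ is cartesian by \Cref{prop:cod-fib}, and that cartesian fibrations are closed under pullback and composition (dually to \Cref{prop:cocart-cosm-closure}), so $\partial_1' = \partial_1 \circ (\partial_0)^*\pi$ is cartesian. Your explicit description of the cartesian lifts also matches what the paper records separately in \Cref{prop:cartlift-famfib}.
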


\begin{proof}
	For any isoinner fibration $\pi : E \fibarr B$ between Rezk types, the map $\partial_1' : L(\pi) \fibarr B$ is a cocartesian fibration by~\cite[Theorem~5.2.19]{BW21}.

	If (and only if) $B$ has pullbacks the codomain projection $\partial_1 : B^{\Delta^1} \to B$ is a fibration, see~\Cref{prop:cod-fib}. As cartesian fibrations are closed under pullback and composition (dually to~\Cite[Proposition~5.3.17]{BW21}), we get that $\partial_1' \defeq \partial_1 \circ (\partial_0)^* \pi$ is cartesian.
\end{proof}

\begin{definition}[Family fibration]
	Let $B$ be a Rezk type with all pullbacks and $\pi:E \fibarr B$ a cartesian fibration. Then, by~\Cref{prop:fam-fib-cart}, the free cocartesian fibration $\partial_1' : L(\pi) \fibarr B$ is a cartesian fibration, too. This bifibration is called the \emph{family fibration associated to $P$}.
\end{definition}

\begin{proposition}[Cartesian lifts in the family fibration]\label{prop:cartlift-famfib}
	Let $\pi:E \fibarr B$ be a cartesian fibration over a Rezk type $B$ with all pullbacks. The cartesian lift of an arrow $u:a \to b$ in $B$ with respect to $\pair{v:b' \to b}{e:P\,b'}$ in $L(\pi)$ is given by the pullback square of $v$ along $u$, together with the cartesian lift over the upper horizontal arrow as indicated in the following diagram:
\[\begin{tikzcd}
	{L(\pi)} & {(u^*v)^*e} && e \\
	& {a \times_b b'} && {b'} \\
	& a && b \\
	B & a && b
	\arrow["{u^*v}"', from=2-2, to=3-2]
	\arrow["{\partial_1'}"', two heads, from=1-1, to=4-1]
	\arrow["u", from=4-2, to=4-4]
	\arrow["u"', from=3-2, to=3-4]
	\arrow["{v^*u}", from=2-2, to=2-4]
	\arrow["v", from=2-4, to=3-4]
	\arrow["{P^*(u^*v,e)}", from=1-2, to=1-4]
	\arrow["\lrcorner"{anchor=center, pos=0.125}, draw=none, from=2-2, to=3-4]
\end{tikzcd}\]
\end{proposition}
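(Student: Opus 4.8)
The plan is to exploit the factorization $\partial_1' = \partial_1 \circ (\partial_0)^*\pi$ obtained in the proof of \Cref{prop:fam-fib-cart}, which exhibits the family fibration as a composite of two cartesian fibrations: the pullback $(\partial_0)^*\pi : L(\pi) \fibarr B^{\Delta^1}$ of $\pi$ along the domain projection, followed by the codomain fibration $\partial_1 : B^{\Delta^1} \fibarr B$. Since cartesian arrows are closed under composition (dually to \Cref{prop:cocart-arr-closure}), a cartesian lift of $u$ along $\partial_1'$ can be assembled by first lifting $u$ in the base fibration $\partial_1$ and then lifting the result in the pullback fibration $(\partial_0)^*\pi$.

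First I would compute the cartesian lift of $u : a \to_B b$ with respect to the point $v : b' \to_B b$ in the fiber of $\partial_1$ over $b$. By \Cref{prop:cod-fib} this is precisely the pullback square of $v$ along $u$: the morphism $\tilde u$ in $B^{\Delta^1}$ from $u^*v : a \times_b b' \to a$ to $v : b' \to b$ whose domain component is the upper horizontal arrow $v^*u : a \times_b b' \to b'$ and whose codomain component is $u$ itself.

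Next I would lift $\tilde u$ along $(\partial_0)^*\pi$ with respect to the datum $e : P\,b'$. The key point is that cartesian arrows of a pullback of a cartesian fibration are detected on the original fibration: a morphism of $L(\pi)$ lying over $\tilde u$ is $(\partial_0)^*\pi$-cartesian exactly when its $E$-component is $\pi$-cartesian over $\partial_0(\tilde u) = v^*u$. Hence the required lift is $P^*(v^*u, e) : (v^*u)^*e \cartarr e$ in $E$, sitting over the upper horizontal arrow, so that the lifted point of $L(\pi)$ in the fiber over $a$ is $\pair{u^*v}{(v^*u)^*e}$. Composing this with $\tilde u$ yields the cartesian arrow displayed in the statement.

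The step I expect to be the main obstacle is justifying cleanly the detection criterion for cartesian arrows in the pullback fibration $(\partial_0)^*\pi$, namely that $\pi$-cartesianness of the $E$-component is equivalent to $(\partial_0)^*\pi$-cartesianness of the whole arrow; this is the cartesian analogue of the pullback-stability underlying \Cref{prop:cocart-cosm-closure}. Concretely one uses that $L(\pi) = B^{\Delta^1} \times_B E$, so that a morphism into $\pair{v}{e}$ is exactly a compatible pair consisting of a morphism of arrows in $B^{\Delta^1}$ and a dependent arrow into $e$ in $E$, and then transports the universal property of $P^*(v^*u, e)$ through this identification. Once the criterion is in place, the composite lift and the verification that it has the asserted pullback shape are routine.
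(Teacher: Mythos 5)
Your argument is correct and is essentially the paper's own proof unfolded: the paper disposes of this in one line by citing the computation of cartesian lifts in pullback and composite fibrations, which is exactly the decomposition $\partial_1' = \partial_1 \circ (\partial_0)^*\pi$ you use (lift in the codomain fibration via the pullback square, lift in the pulled-back fibration via $P^*(v^*u,e)$, then compose). The detection criterion you flag as the main obstacle is the standard fact about cartesian arrows in a pullback fibration already invoked implicitly by the paper's citation, so no genuine gap remains.
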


\begin{proof}
	This follows by computing the lifts fiberwisely,~\cf~\cite[Subsection~5.2.3]{BW21}.
\end{proof}

\begin{figure}
	\centering
	\[\begin{tikzcd}
		{L(\pi)} && {(u')^*e} && e \\
		&& {a \times_b b'} && b' \\
		&& {a} && b \\
		B && {a} && b
		\arrow["u", from=4-3, to=4-5]
		\arrow["{v'}"', from=2-3, to=3-3]
		\arrow["u"', from=3-3, to=3-5, swap]
		\arrow["{u'}"', from=2-3, to=2-5]
		\arrow["v", from=2-5, to=3-5]
		\arrow["{P^*(u',e)}", from=1-3, to=1-5, cart]
		\arrow[Rightarrow, dotted, no head, from=1-3, to=2-3]
		\arrow[Rightarrow, dotted, no head, from=1-5, to=2-5]
		\arrow["\lrcorner"{anchor=center, pos=0.125}, draw=none, from=2-3, to=3-5]
		\arrow["{\partial_1'}"', two heads, from=1-1, to=4-1]
	\end{tikzcd}\]
	\caption{Cartesian lifts in the family fibration}\label{fig:cart-lifts-famfib}
\end{figure}

\subsection{The Artin gluing fibration}

\begin{definition}[Artin gluing]
	Let $B,C$ be Rezk types and $F:B \to C$ a functor. Then the map $\gl(F): \commaty{C}{F} \fibarr B$ constructed by pullback
	\[\begin{tikzcd}
		{C \downarrow F} && {C^{\Delta^1}} \\
		B && C
		\arrow["F"', from=2-1, to=2-3]
		\arrow["{\partial_1}", two heads, from=1-3, to=2-3]
		\arrow["{\mathrm{gl}(F)}"', two heads, from=1-1, to=2-1]
		\arrow[from=1-1, to=1-3]
		\arrow["\lrcorner"{anchor=center, pos=0.125}, draw=none, from=1-1, to=2-3]
	\end{tikzcd}\]
	is called the \emph{Artin gluing} (or simply \emph{gluing}) of $F$.
\end{definition}

Since the codomain projection $\partial_1 : C^{\Delta^1} \fibarr C$ is always a cocartesian fibration the gluing $\gl(F)$ is a cocartesian fibration as well. We will be concerned with the case that $C$ has all pullbacks. In this case $\partial_1: C^{\Delta^1} \fibarr C$ also is a cartesian fibration, hence a bifibration, and consequently the same is true for $\gl(F): \commaty{C}{F} \fibarr B$. Hence, from the description of the co-/cocartesian lifts in pullback fibrations~\cite[Proposition~5.2.14]{BW21}, the respective lifts in $\gl(F)$ can be computed as illustrated in Fig.~\ref{fig:lifts-gluing}. Given an arrow $u:b \to b'$ in $B$, a cocartesian lift with respect to an arrow $v:c \to F\,b$ in $C$ is the square with boundary $[v,\id_b,F\,u,F\,u \circ v]$, see the left square of~\Cref{fig:lifts-gluing}. A cartesian lift of $u:b\to_B b'$ with respect to a term $w:c \to_C F\,b'$ in the fiber of $b'$ is given by the pullback square on the right of~\Cref{fig:lifts-gluing}.

A vertical arrow in the gluing fibration is exactly given by a square of the form:
\[\begin{tikzcd}
	{C\downarrow F} && c && {c'} \\
	&& {F\,b} && {F\,b} \\
	B && b && b
	\arrow[from=1-3, to=2-3]
	\arrow[Rightarrow, no head, from=2-3, to=2-5]
	\arrow[from=1-3, to=1-5]
	\arrow[from=1-5, to=2-5]
	\arrow[Rightarrow, no head, from=3-3, to=3-5]
	\arrow[two heads, from=1-1, to=3-1]
\end{tikzcd}\]
\begin{figure}
	\[\begin{tikzcd}
		{C \downarrow F} && c && c && {F\,b \times_{F\,b'} c} && c \\
		&& {F\,b} && {F\,b'} && {F\,b} && {F\,b'} \\
		B && b && b' && b && b'
		\arrow["v"', from=1-3, to=2-3]
		\arrow["{F\,u}"', from=2-3, to=2-5]
		\arrow[Rightarrow, no head, from=1-3, to=1-5]
		\arrow["{Fu \circ v}"{pos=0.6}, from=1-5, to=2-5]
		\arrow[from=1-7, to=2-7]
		\arrow["{F\,u}"', from=2-7, to=2-9]
		\arrow[from=1-7, to=1-9]
		\arrow["w", from=1-9, to=2-9]
		\arrow["\lrcorner"{anchor=center, pos=0.125}, draw=none, from=1-7, to=2-9]
		\arrow["u", from=3-3, to=3-5]
		\arrow["u", from=3-7, to=3-9]
		\arrow[two heads, from=1-1, to=3-1]
	\end{tikzcd}\]
	\caption{Cocartesian and cartesian lifts in the gluing fibration}
	\label{fig:lifts-gluing}
\end{figure}

The gluing bifibration of a functor betweeen lex Rezk types is always lex.

\begin{proposition}[Lexness of Artin gluing]\label{prop:gluing-lex}
	Let $F:B \to C$ be a functor between lex Rezk types. Then the bicartesian fibration $\gl(F): \commaty{C}{F} \fibarr B$ is a lex cartesian fibration.
\end{proposition}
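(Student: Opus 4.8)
The plan is to verify directly the two defining conditions of a lex cartesian fibration: that every fibre is a lex Rezk type, and that the cartesian reindexing functors preserve terminal objects and pullbacks. As recalled just before the statement, the hypothesis that $C$ has pullbacks already makes $\gl(F): \commaty{C}{F} \fibarr B$ a bicartesian fibration, so the only thing left to establish is the lexness.

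First I would identify the fibres. The fibre of $\gl(F)$ over $b:B$ is the fibre of the codomain fibration $\partial_1 : C^{\Delta^1} \fibarr C$ over $Fb$, that is, the slice $\commaty{C}{Fb}$. Since $C$ is lex, this slice is itself a lex Rezk type: its terminal object is $\id_{Fb}$, and it has all pullbacks, computed from those of $C$ by the proposition on pullbacks in slice Rezk types proved at the end of~\Cref{ssec:lex-fam}. Hence every fibre is lex.

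Next I would identify the reindexing functors. By the description of cartesian lifts in the gluing fibration (the right-hand square of~\Cref{fig:lifts-gluing}), the cartesian transport along $u:b\to_B b'$ sends an object $w : c \to Fb'$ of $\commaty{C}{Fb'}$ to the pullback projection $Fb \times_{Fb'} c \to Fb$; in other words, $u^*$ is base change $(Fu)^*$ along $Fu : Fb \to Fb'$. It remains to show this functor is lex. The quickest route uses that $\gl(F)$ is bicartesian: by the adjunction on the fibres of a bicartesian family established at the start of~\Cref{sec:bicart-fam}, the cartesian transport $u^*$ is a right adjoint (to the cocartesian transport $u_!$), and right adjoints preserve all limits, in particular terminal objects and pullbacks. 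As already observed in the discussion of Moens families in the introduction, this is precisely why the cartesian transport of a lex bifibration is automatically lex. Combining this with the previous step, all conditions for a lex cartesian fibration hold.

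If one prefers to avoid invoking that right adjoints preserve limits, the preservation can be checked by hand for $(Fu)^*$: it sends the terminal object $\id_{Fb'}$ to $\id_{Fb}$, since the pullback of an identity is an identity, and it preserves pullbacks by the pasting lemma for pullbacks \cite[Remark~26.1.5(ii)]{RijIntro} already employed in~\Cref{prop:pb-fib}. Concretely, a pullback square in $\commaty{C}{Fb'}$ is a pullback in $C$, and pasting its base change along $Fu$ onto the defining pullback squares of $(Fu)^*$ yields a pullback in $C$, hence in $\commaty{C}{Fb}$. I expect this last point, preservation of pullbacks by base change, to be the only step requiring genuine care; the identification of the fibres and of the reindexing functors is bookkeeping against~\Cref{fig:lifts-gluing}.
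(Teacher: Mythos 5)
Your proposal is correct and follows essentially the same route as the paper: reduce to lexness of the fibres and of the reindexing functors, observe that the fibres are slices $\commaty{C}{F\,b}$ of the lex type $C$ (with terminal object the identity), and note that the cartesian reindexings are right adjoints by bicartesianness, hence preserve terminal objects and pullbacks. The paper's proof is terser but makes exactly these moves; your extra identification of $u^*$ as base change along $F\,u$ and the optional hands-on verification via the pasting lemma are consistent elaborations rather than a different argument.
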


\begin{proof}
	By~\Cref{prop:term-obj-fib,prop:pb-fib} it suffices to argue that all the fibers are lex Rezk types and that the cartesian transition maps are lex functors. Since $C$ is assumed to be lex, $\gl(F)$ is bicartesian (see also~\Cref{prop:cod-fib}), hence the cartesian reindexings are right adjoints so they preserve limits by~\cite[Theorem~3.11]{BM21}. Finite completeness of the fibers can be shown analogously to the $1$-categorical case keeping in mind~\Cref{fig:lifts-gluing} and that the terminal element in a slice Rezk type is given by the identity (see~\cite[Lemma~9.8]{RS17} for the dual version of the statement).
\end{proof}

We will freely make use of~\Cref{prop:gluing-lex} in the following sections whenever necessary.

\section{Beck--Chevalley families}\label{sec:bc-fam}

In categorical logic, Beck--Chevalley conditions say that substitution (\ie~pullback) commutes with existential quantification (\ie~dependent sums). This generalizes to the fibrational setting by considering cartesian arrows in place of substitutions (acting contravariantly), and cocartesian arrows in place of dependent sums (acting covariantly). We refer to~\cite[Section~6]{streicher2020fibered} for more explanation.

\subsection{Beck--Chevalley condition}

\begin{definition}[Beck--Chevalley condition, \protect{\cite[Definition~6.1]{streicher2020fibered}}]\label{def:bcc}
	Let $P:B \to \UU$ be a family over a Rezk type all of whose fibers are Rezk. Then $P$ is said to satisfy the \emph{Beck--Chevalley condition (BCC)} if for any dependent square of the form
	\[\begin{tikzcd}
		& {d'} && {e'} \\
		{\totalty{P}} & d && e \\
		& {a'} && {b'} \\
		B & a && b
		\arrow["{f'}", from=1-2, to=1-4]
		\arrow["{g'}"', from=1-2, to=2-2, cart]
		\arrow["g", from=1-4, to=2-4, cart]
		\arrow["f"', from=2-2, to=2-4, cocart]
		\arrow["{u'}", from=3-2, to=3-4]
		\arrow["u"', from=4-2, to=4-4]
		\arrow["v", from=3-4, to=4-4]
		\arrow["{v'}"', from=3-2, to=4-2]
		\arrow[two heads, from=2-1, to=4-1]
		\arrow["\lrcorner"{anchor=center, pos=0.125}, draw=none, from=3-2, to=4-4]
	\end{tikzcd}\]
	it holds that: if $f$ is cocartesian, and $g,g'$ are cartesian, then $f'$ is cocartesian.
\end{definition}

\begin{proposition}[Dual of the Beck--Chevalley conditions]\label{prop:dual-bcc}
	Let $P:B \to \UU$ be a family over a Rezk type $B$ all of whose fibers are Rezk. Then $P$ satisfies the BCC from~\Cref{def:bcc} if and only if it satisfies the \emph{dual BCC}, which says: Given any dependent square
	\[\begin{tikzcd}
		{d'} && {e'} \\
		d && e
		\arrow["{f'}", from=1-1, to=1-3, cocart]
		\arrow["{g'}"', from=1-1, to=2-1, cart]
		\arrow["g", from=1-3, to=2-3]
		\arrow["f"', from=2-1, to=2-3, cocart]
	\end{tikzcd}\]
	in $P$ over a pullback, then: If $f$ and $f'$ are cocartesian, and $g'$ is cartesian, then $g$ is cartesian as well.
\end{proposition}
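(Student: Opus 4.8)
The plan is to prove each implication by lifting the given data to an auxiliary dependent square to which the \emph{other} condition applies, and then recognizing a certain comparison map as a vertical isomorphism. Throughout I work in the bicartesian setting of this section, so that every (co)cartesian lift invoked below exists. Fix the base pullback and write the dependent square as $[f',g',g,f]$ with $f': d' \to e'$, $g': d' \cartarr d$, $g: e' \to e$, and $f: d \cocartarr e$; recall that such a square being given means in particular that it commutes, i.e.\ $g \circ f' = f \circ g'$.

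First I would assume the BCC and deduce the dual BCC. So suppose $f,f'$ are cocartesian and $g'$ is cartesian, and the goal is that $g$ is cartesian. I form the cartesian lift $g_0 \defeq P^*(v,e): v^*e \cartarr e$ over the right edge $v$ of the base square. Since the base square commutes, $f \circ g'$ is an arrow into $e$ over $v$ precomposed with the top edge, so cartesianness of $g_0$ yields a unique dependent arrow $\bar f$ over the top edge with $g_0 \circ \bar f = f \circ g'$. The square $[\bar f, g', g_0, f]$ now lies over the given base pullback with cocartesian bottom $f$ and cartesian sides $g', g_0$, so the BCC makes $\bar f$ cocartesian. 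Separately, cartesianness of $g_0$ applied to $g: e' \to e$ produces a unique \emph{vertical} $\phi: e' \to v^*e$ with $g_0 \circ \phi = g$.

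The crux will be to show $\phi$ is an isomorphism. Using commutativity of the original square, $g_0 \circ (\phi \circ f') = g \circ f' = f \circ g' = g_0 \circ \bar f$, and cartesianness of $g_0$ forces $\phi \circ f' = \bar f$. Since $f'$ and $\bar f = \phi \circ f'$ are both cocartesian, right cancellation (\Cref{prop:cocart-arr-closure}) makes $\phi$ cocartesian, and being vertical it is then an isomorphism by \Cref{lem:cocart-arrows-isos}. Finally $g = g_0 \circ \phi$ is a cartesian arrow composed with an isomorphism, hence cartesian, using that isomorphisms are cartesian and cartesian arrows compose (the duals of \Cref{lem:cocart-arrows-isos} and \Cref{prop:cocart-arr-closure}).

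The converse is exactly dual. Assuming the dual BCC and given $f$ cocartesian with $g,g'$ cartesian, I form the cocartesian lift $f_0 \defeq P_!(u',d'): d' \cocartarr (u')_! d'$ over the top edge and the induced $g_1: (u')_! d' \to e$ with $g_1 \circ f_0 = f \circ g'$. The square $[f_0, g', g_1, f]$ has cocartesian top and bottom and cartesian left edge, so the dual BCC makes $g_1$ cartesian. The comparison $\theta: (u')_! d' \to e'$ determined by $\theta \circ f_0 = f'$ then satisfies $g \circ \theta = g_1$ (again by commutativity of the original square and cancellation against the cocartesian $f_0$), so left cancellation for cartesian arrows together with \Cref{lem:cocart-arrows-isos} exhibits $\theta$ as a vertical isomorphism, whence $f' = \theta \circ f_0$ is cocartesian. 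In both directions the only substantive step is this identification of the comparison map ($\phi$, respectively $\theta$) as a vertical isomorphism; the rest is bookkeeping with the universal properties of the (co)cartesian lifts.
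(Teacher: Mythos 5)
Your proof is correct and follows essentially the same route as the paper's: factor the relevant edge through the cartesian lift $g_0$ (the paper writes $g = k\circ m$ with $m$ vertical), apply the assumed condition to the auxiliary square over the same base pullback, and conclude via right cancellation that the vertical comparison map is cocartesian, hence an isomorphism. The only difference is presentational — you construct $\bar f$ via the universal property of $g_0$ and then identify it with $\phi\circ f'$, where the paper takes $m'\defeq m\circ f'$ directly, and you spell out the converse that the paper dismisses as analogous.
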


\begin{proof}
	Assume, the BCC from~\Cref{def:bcc} is satisfied. We consider a square, factoring the arrow $g$ into a vertical arrow followed by a cartesian arrow, we obtain:
	\[\begin{tikzcd}
		{d'} && {e'} \\
		&&& {v^*e} \\
		d && e
		\arrow["{g'}"', from=1-1, to=3-1, cart]
		\arrow["f", from=3-1, to=3-3, cocart]
		\arrow["{f'}", from=1-1, to=1-3, cocart]
		\arrow["g", from=1-3, to=3-3]
		\arrow["m", squiggly, from=1-3, to=2-4]
		\arrow["k", from=2-4, to=3-3, cart]
		\arrow["{m'}"{description}, from=1-1, to=2-4, crossing over]
	\end{tikzcd}\]
	Then, applying the BCC to the ``smaller'' square (still lying over the same pullback since $m$ is vertical), $m' \defeq m \circ f'$ must be cocartesian. But then $m$ is, too, by right cancelation, as $f'$ is cocartesian. But since it is also vertical, it is an isomorphism, so $g$ is cartesian, as claimed.
	
	The converse direction is analogous.
\end{proof}

\subsection{Beck--Chevalley families}

Preparing the treatment of Moens fibrations, we state a few first results about BC fibrations,~\aka~\emph{fibrations with internal sums}.

\begin{definition}
	A map between $\pi:E \fibarr B$ is a \emph{Beck--Chevalley fibration} or a \emph{cartesian fibration with internal sums} if:
	\begin{enumerate}
		\item The map $\pi$ is a bicartesian fibration, \ie~a cartesian and cocartesian fibration.
		\item The map $\pi$ satisfies the Beck--Chevalley condition.
	\end{enumerate}
\end{definition}

Recall the criterion characterizing cocartesian fibrations via the existence of a fibered left adjoint which acts as the ``cocartesian transport'' functor. The Beck--Chevalley condition is equivalent to this functor being \emph{cartesian}.
\begin{theorem}[Beck--Chevalley fibrations via cartesianness of the cocartesian transport functor, \cf~\protect{\cite[Theorem~6.1]{streicher2020fibered}}, \protect{\cite[Chapter~3, Theorem~1/(iii)]{LietzDip}}]\label{thm:bc-fib-cart-transp}
	Let $P: B \to \UU$ be a cartesian family over a Rezk type $B$ which has all pullbacks. We denote the unstraightening of $P$ by $\pi:E \fibarr B$ which is, in particular, a cartesian fibration. Then $\pi$ is Beck--Chevalley fibration if and only if the following conditions are satisfied:
	\begin{enumerate}
		\item The mediating fibered functor 
		\[ \iota_P: E \to_B \commaty{\pi}{B}, ~ \iota_P(b,e)\defeq \angled{b, \id_b, e} \]
		has a fibered left adjoint:
		\[\begin{tikzcd}
			E && {\pi \downarrow B} \\
			& B
			\arrow["\pi"', two heads, from=1-1, to=2-2]
			\arrow["{\partial_1'}", two heads, from=1-3, to=2-2]
			\arrow[""{name=0, anchor=center, inner sep=0}, "{\tau_\pi}"{description}, curve={height=12pt}, dashed, from=1-3, to=1-1]
			\arrow[""{name=1, anchor=center, inner sep=0}, "{\iota_\pi}"{description}, curve={height=6pt}, from=1-1, to=1-3]
			\arrow["\dashv"{anchor=center, rotate=-91}, draw=none, from=0, to=1]
		\end{tikzcd}\]
		\item The fibered left adjoint $\tau_\pi : \commaty{\pi}{B} \to E$ is a cartesian functor over $B$.
	\end{enumerate}

\end{theorem}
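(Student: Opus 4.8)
The plan is to first reduce to a single equivalence and then identify the Beck--Chevalley squares with the images of cartesian arrows under the transport functor. By \Cref{thm:cocartfams-via-transp}, condition~(1)---the existence of a fibered left adjoint to $\iota_P$---holds if and only if $P$ is cocartesian. Since $P$ is cartesian by hypothesis, condition~(1) is thus equivalent to $\pi$ being bicartesian, and in that case the left adjoint is precisely the cocartesian transport functor $\tau_\pi \colon \commaty{\pi}{B} \to E$, acting on objects by $\tau_\pi\angled{b, u \colon \pi e \to_B b, e} \defeq u_!(e)$. It therefore remains to prove that, \emph{assuming $\pi$ bicartesian}, the family $P$ satisfies the BCC if and only if $\tau_\pi$ preserves cartesian arrows; combining this with the reduction above yields the claimed equivalence between ``Beck--Chevalley fibration'' and conditions~(1) and~(2) together.

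The geometric heart of the argument is the explicit description of cartesian arrows in $\commaty{\pi}{B}$. Identifying $\commaty{\pi}{B}$ with the free cocartesian fibration $L(\pi)$, which is bicartesian over $B$ by \Cref{prop:fam-fib-cart}, \Cref{prop:cartlift-famfib} shows that a cartesian lift of an arrow $u \colon a \to_B b$ is built from a pullback square in $B$ together with a cartesian lift in $E$. First I would take such a cartesian arrow and apply $\tau_\pi$ to it. The resulting dependent square---whose two ``vertical'' sides are the cocartesian lifts computing $\tau_\pi$ on the two endpoints, whose top side is the cartesian arrow in $E$ coming from \Cref{prop:cartlift-famfib}, and whose bottom side is the induced arrow $\tau_\pi$ produces on the morphism---lies over the base pullback and, after the evident reorientation, has exactly the shape of the dual Beck--Chevalley square of \Cref{prop:dual-bcc}. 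Hence, if $P$ satisfies the BCC (equivalently the dual BCC), the bottom arrow is cartesian, so $\tau_\pi$ preserves cartesian arrows.

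For the converse I would run this identification backwards. Given an arbitrary square of dual-BCC shape---two cocartesian arrows $f, f'$ and a cartesian arrow $g'$, sitting over a pullback in $B$---I would recognize it as the image under $\tau_\pi$ of a cartesian arrow of $\commaty{\pi}{B}$: the base pullback supplies the pullback square of \Cref{prop:cartlift-famfib}, the arrow $g'$ supplies the cartesian lift in $E$, and, since cocartesian lifts are unique up to homotopy, the two cocartesian arrows $f, f'$ identify the remaining endpoints with the transports that $\tau_\pi$ computes. Naturality of cocartesian liftings (\Cref{prop:nat-cocartlift-arr}) together with functoriality (\Cref{prop:cocart-functoriality}) then identifies the fourth side $g$ of the square with $\tau_\pi$ applied to this cartesian arrow. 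As $\tau_\pi$ is cartesian, $g$ is cartesian, which is exactly the conclusion of the dual BCC; by \Cref{prop:dual-bcc} the BCC follows.

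The main obstacle I anticipate is the bookkeeping in this last identification: one has to match the canonical square produced by \Cref{prop:cartlift-famfib} with a given abstract square, keeping track of the orientations of the cartesian and cocartesian sides and of the base pullback, and verify---using the uniqueness of (co)cartesian lifts and naturality---that the induced fourth arrow genuinely coincides, up to coherent homotopy, with $\tau_\pi$ evaluated on the cartesian arrow. Once this matching is set up carefully, both directions reduce to \Cref{prop:dual-bcc} and the explicit lift formula, with no further substantial computation.
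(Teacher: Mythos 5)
Your proposal is correct and follows essentially the same route as the paper's proof: reduce condition (1) to bicartesianness via \Cref{thm:cocartfams-via-transp}, compute the image under $\tau_\pi$ of the explicit cartesian lifts of \Cref{prop:cartlift-famfib}, and identify the resulting squares with the dual Beck--Chevalley squares of \Cref{prop:dual-bcc} in both directions. The extra bookkeeping you flag for the converse is exactly what the paper handles by noting that arbitrary pullbacks in the base together with arbitrary dependent squares over them arise this way.
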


\begin{proof}
	Recall from \Cref{thm:cocartfams-via-transp} that the existence of the fibered left adjoint is equivalent to $\pi:E \fibarr B$ being a \emph{cocartesian} fibration (already without requiring $B$ to have pullbacks and $\pi$ to be a cartesian fibration). For $b:B$ the action of the fiberwise map $\tau \defeq \tau_\pi$ at $b:B$ is given by
	\[ \tau_b(v:a \to b, e:P(a)) \defeq \partial_1 \, v_!(e).\]
	Let $u:a \to b$ be a morphism in $B$. Over $u$, the action on arrows of $\tau$ maps a pair $\pair{\sigma}{f}$ consisting of a commutative square $\sigma$ in $B$ with boundary $[v',u',u,v]$ and a dependent arrow $f:d \to^P_{u'} e$ to the dependent arrow
	\[ \tau_u(\sigma,f) = \cocartFill_{P_!(v,e)\circ f}(P_!(v',e)),\]
	cf.~Fig.~\ref{fig:cocart-transp}.
	\begin{figure}
		\[\begin{tikzcd}
			& d && e && d && e \\
			{} & {a'} && {b'} \\
			& a && b && {v'_!(d)} && {v_!(e)}
			\arrow["{v'}"', from=2-2, to=3-2]
			\arrow[""{name=0, anchor=center, inner sep=0}, "u"', from=3-2, to=3-4]
			\arrow[""{name=1, anchor=center, inner sep=0}, "{u'}", from=2-2, to=2-4]
			\arrow["v", from=2-4, to=3-4]
			\arrow["f", from=1-2, to=1-4]
			\arrow[Rightarrow, dotted, no head, from=1-2, to=2-2]
			\arrow[Rightarrow, dotted, no head, from=1-4, to=2-4]
			\arrow["f", from=1-6, to=1-8]
			\arrow[""{name=2, anchor=center, inner sep=0}, "{P_!(v',d)}"{description, pos=0.3}, from=1-6, to=3-6, cocart]
			\arrow["{\tau_u(\sigma,f)}"', dashed, from=3-6, to=3-8]
			\arrow["{P_!(v,e)}"{description, pos=0.3}, from=1-8, to=3-8, cocart]
			\arrow["\rightsquigarrow"{description}, draw=none, from=2-4, to=2]
			\arrow["\sigma"{description}, Rightarrow, draw=none, from=1, to=0]
		\end{tikzcd}\]
		\caption{Action on arrows of $\tau$ (over $u:a \to b$ in $B$)}
		\label{fig:cocart-transp}
	\end{figure} 
	Recall the description of cartesian lifts in the family fibration from~\Cref{prop:cartlift-famfib}. Then, we claim that $\tau$ mapping these $L(\pi)$-cartesian lifts to $P$-cartesian arrows is equivalent to the Beck--Chevalley condition. To see this, consider an arbitrary cartesian arrow in $\partial_1' : \commaty{\pi}{B} \fibarr B$. This is given by the data of a square in the base and a cartesian arrow as in:
\[\begin{tikzcd}
	{(u')^*e} & e \\
	{a \times_b b'} & {b'} \\
	a & b
	\arrow["f", from=1-1, to=1-2, cart]
	\arrow["{v'}"', from=2-1, to=3-1]
	\arrow["u", from=3-1, to=3-2]
	\arrow["v", from=2-2, to=3-2]
	\arrow["\lrcorner"{anchor=center, pos=0.125}, draw=none, from=2-1, to=3-2]
	\arrow["{u'}", from=2-1, to=2-2]
	\arrow[Rightarrow, dotted, no head, from=1-1, to=2-1]
	\arrow[Rightarrow, dotted, no head, from=1-2, to=2-2]
\end{tikzcd}\]
Applying $\tau_\pi$ to this data yields the arrow $g$ as in:
\[\begin{tikzcd}
	{(u')^*e} & e \\
	{(v')_!(u')^*d} & {v_!(e)}
	\arrow[from=1-1, to=2-1, cocart]
	\arrow["g", dashed, from=2-1, to=2-2]
	\arrow[from=1-2, to=2-2, cocart]
	\arrow["f", from=1-1, to=1-2, cart]
\end{tikzcd}\]
If $\pi \colon E \fibarr B$ is a Beck--Chevalley fibration then (by the dual BCC~\Cref{prop:dual-bcc}) $g$ is cartesian, and so $\tau_\pi$ is a cartesian functor.

On the other hand, assuming $\tau_\pi$ to be a cartesian functor means that the arrow $g$ as above is always cartesian. Since we are considering arbitrary pullbacks in the base together with arbitrary dependent squares lying over them, this entails the dual BCC from~\Cref{prop:dual-bcc} which is equivalent to~\Cref{def:bcc}.
\end{proof}

Next up is a useful result stating that any functor (between Rezk types with pullbacks) preserves pullback if and only if its Artin gluing satisfies the Beck--Chevalley condition.

\begin{proposition}[Internal sums for gluing, \protect{\cite[Lemma~13.2]{streicher2020fibered}}]\label{prop:bcc-for-pb-pres}
	Let $A$ and $B$ be Rezk types with pullbacks and $F:A \to B$ an arbitrary functor (hence all its fibers are Rezk). Then the following are equivalent:
	\begin{enumerate}
		\item\label{it:gl-intsums-i} The functor $F$ preserves pullbacks.
		\item\label{it:gl-intsums-ii} The gluing fibration $\gl(F) \jdeq \partial_1: \commaty{B}{F} \fibarr A$ is a Beck--Chevalley fibration.
	\end{enumerate}
\end{proposition}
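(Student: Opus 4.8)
The plan is to first observe that the content of the statement is really about the Beck--Chevalley condition alone. Since $B$ has all pullbacks, the codomain fibration $\partial_1 \colon B^{\Delta^1} \fibarr B$ is bicartesian, and pulling back along $F$ preserves this, so $\gl(F)$ is always a bicartesian fibration (\cf~the discussion preceding \Cref{prop:gluing-lex}). Hence $\gl(F)$ is a Beck--Chevalley fibration if and only if it satisfies the BCC of \Cref{def:bcc}. I would then recall the explicit transports read off from \Cref{fig:lifts-gluing}: over an arrow $u \colon a \to a'$ in $A$, cocartesian transport of a fiber object $\delta \colon D \to Fa$ is postcomposition $u_!(\delta) = Fu \circ \delta$, while cartesian transport of $w \colon C \to Fa'$ is the pullback $u^*(w)$ of $w$ along $Fu$.

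Next I would instantiate the BCC over a chosen pullback square $a' = a \times_b b'$ in $A$, with legs $v' \colon a' \to a$, $u' \colon a' \to b'$ over $u \colon a \to b$, $v \colon b' \to b$. Starting from an arbitrary fiber object $\delta \colon D \to Fa$ as the corner $d$, the cocartesian arrow $f$ forces $e = Fu\circ\delta$, the cartesian arrow $g$ forces $e'$ to be the pullback of $e$ along $Fv$, and $g'$ forces $d'$ to be the pullback of $\delta$ along $Fv'$. The induced arrow $f' \colon d' \to e'$ over $u'$ is cocartesian precisely when the vertical comparison $\phi$ from the cocartesian transport $u'_!(d') = Fu'\circ\delta'$ to $e'$ is an isomorphism in $B$. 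The key computation, carried out by pullback pasting, is to identify the two relevant domains: writing $P \defeq Fa \times_{Fb} Fb'$ for the pullback of $Fu$ along $Fv$, one finds that the domain of $e'$ is $P \times_{Fa} D$ while the domain of $u'_!(d')$ is $Fa' \times_{Fa} D$, and that $\phi$ is exactly the base change along $\delta \colon D \to Fa$ of the canonical comparison map $\theta \colon F(a\times_b b') \to P$.

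With this identification both implications are immediate. For $\ref{it:gl-intsums-i}\Rightarrow\ref{it:gl-intsums-ii}$, if $F$ preserves the pullback then $\theta$ is an isomorphism, hence so is its base change $\phi$, so every such $f'$ is cocartesian and the BCC holds. For $\ref{it:gl-intsums-ii}\Rightarrow\ref{it:gl-intsums-i}$, I would specialize the above to the fiberwise terminal object $\delta \defeq \id_{Fa}$ (recall that identities are terminal in slices); then $D = Fa$, the two domains collapse to $Fa'$ and $P$, and $\phi$ becomes $\theta$ itself, so the BCC forces $\theta$ to be an isomorphism, \ie~$F$ preserves the given pullback. Since the pullback in $A$ was arbitrary, $F$ preserves all pullbacks.

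I expect the main obstacle to be the bookkeeping in the middle step: correctly reading off the four corner objects from \Cref{fig:lifts-gluing}, verifying via repeated pullback pasting that the comparison $\phi$ really is the base change of $\theta$, and checking that invertibility of $\phi$ is genuinely equivalent to cocartesianness of $f'$ (using that a vertical arrow which is cocartesian over an identity is an isomorphism, \Cref{lem:cocart-arrows-isos}). The two implications themselves are then purely formal.
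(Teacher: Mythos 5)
Your proposal is correct and follows essentially the same route as the paper: both first observe that $\gl(F)$ is automatically a bifibration, reduce the BCC to the invertibility of a vertical comparison map computed by pullback pasting (your identification of $\phi$ as the base change of $\theta \colon Fa' \to Fa \times_{Fb} Fb'$ along $\delta$ is exactly the content of the paper's cube argument), and for the converse specialize to the fiberwise terminal object $\id_{Fa}$ so that the comparison becomes $\theta$ itself. The only cosmetic difference is that you package the pasting step as ``base change of $\theta$'' where the paper argues directly on the faces of the cube in Figure~\ref{fig:gluing-depsqare}.
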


\begin{proof}
	We note first that since $B$ has all pullbacks, $\gl(F): \commaty{B}{F} \fibarr A$ is a cartesian fibration since it is a pullback of the fundamental fibration $\partial_1 : B^{\Delta^1} \fibarr B$. But $\partial_1 : B^{\Delta^1} \fibarr B$ is always a cocartesian fibration. Hence, $\gl(F)$ is a bifibration if $B$ has pullbacks.
	
	\begin{description}
		\item[$\ref{it:gl-intsums-i} \implies \ref{it:gl-intsums-ii}$] For a pullback square in $A$ we consider a square lying over in the gluing fibration which is a cube as in Fig.~\ref{fig:gluing-depsqare}
		\begin{figure}
			\[\begin{tikzcd}
				& {a'\times_ac} && {b'\times_bc} \\
				&& c && c \\
				{B \downarrow F} & {a'\defeq b'\times_ba} && {b'} \\
				&& a && b \\
				A & {d'} && {e'} \\
				&& d && e
				\arrow["m"{description}, from=5-2, to=5-4]
				\arrow["g"{description}, from=5-2, to=6-3]
				\arrow["f"{description}, from=6-3, to=6-5]
				\arrow["k"{description}, from=5-4, to=6-5]
				\arrow["\lrcorner"{anchor=center, pos=0.125, rotate=45}, draw=none, from=5-2, to=6-5]
				\arrow["v"{description}, from=3-2, to=4-3]
				\arrow["u"{description}, from=4-3, to=4-5]
				\arrow["w"{description}, from=3-4, to=4-5]
				\arrow["r"{description, pos=0.7}, from=3-2, to=3-4]
				\arrow["{v^*v'}"{description}, from=1-2, to=3-2]
				\arrow["{(v')^*v}"{description}, from=1-2, to=2-3]
				\arrow["{uv'}"{description, pos=0.3}, from=2-5, to=4-5]
				\arrow["{r'}"{description}, from=1-2, to=1-4]
				\arrow["{(uv')^*w}"{pos=0.7}, from=1-4, to=2-5]
				\arrow["{w^*(uv')}"{description, pos=0.8}, from=1-4, to=3-4]
				\arrow["\lrcorner"{anchor=center, pos=0.125, rotate=-45}, draw=none, from=1-4, to=4-5]
				\arrow["\lrcorner"{anchor=center, pos=0.125, rotate=-45}, draw=none, from=1-2, to=4-3]
				\arrow["\lrcorner"{anchor=center, pos=0.125, rotate=45}, draw=none, from=3-2, to=4-5]
				\arrow[two heads, from=3-1, to=5-1]
				\arrow[Rightarrow, no head, from=2-3, to=2-5, crossing over]
				\arrow["{v'}"{description, pos=0.3}, from=2-3, to=4-3, crossing over]
			\end{tikzcd}\]
			\caption{Verifying the Beck--Chevalley condition}
			\label{fig:gluing-depsqare}
		\end{figure}
		
		where the pullback square at the bottom of the cube is the image of the given pullback square in $C$. By composition and right cancelation of pullbacks, the top face of the cube in~\Cref{fig:gluing-depsqare} is also a pullback:
		Then, $r'$ turns out to be an isomorphism. By Rezk-completeness, it can be taken to be the identity $\id_c: c \to c$, exhibiting the back square of the cube as a cocartesian arrow in the glueing fibration, as desired.
		
		\item[$\ref{it:gl-intsums-ii} \implies \ref{it:gl-intsums-i}$] From the Beck--Chevalley condition we obtain, for any pullback square in $A$ a commutative cube above as follows:
		\[\begin{tikzcd}
			& {a'} && {a'} \\
			&& c && c \\
			{B \downarrow F} & {a'} && {b'} \\
			&& a && b \\
			A & {d'} && {e'} \\
			&& d && e
			\arrow["m"{description}, from=5-2, to=5-4]
			\arrow["g"{description}, from=5-2, to=6-3]
			\arrow["f"{description}, from=6-3, to=6-5]
			\arrow["k"{description}, from=5-4, to=6-5]
			\arrow["\lrcorner"{anchor=center, pos=0.125, rotate=45}, draw=none, from=5-2, to=6-5]
			\arrow["v"{description}, from=3-2, to=4-3]
			\arrow["u"{description}, from=4-3, to=4-5]
			\arrow["w"{description}, from=3-4, to=4-5]
			\arrow["r"{description, pos=0.7}, from=3-2, to=3-4]
			\arrow[Rightarrow, no head, from=1-2, to=3-2]
			\arrow[from=1-2, to=2-3]
			\arrow["{uv'}"{description, pos=0.3}, from=2-5, to=4-5]
			\arrow[Rightarrow, no head, from=1-2, to=1-4]
			\arrow["{(uv')^*w}"{pos=0.7}, from=1-4, to=2-5]
			\arrow["{w^*(uv')}"{description, pos=0.75}, from=1-4, to=3-4]
			\arrow["\lrcorner"{anchor=center, pos=0.125, rotate=-45}, draw=none, from=1-4, to=4-5]
			\arrow["\lrcorner"{anchor=center, pos=0.125, rotate=-45}, draw=none, from=1-2, to=4-3]
			\arrow[two heads, from=3-1, to=5-1]
			\arrow[Rightarrow, no head, from=2-3, to=2-5, crossing over]
			\arrow[Rightarrow, no head, from=2-3, to=4-3, crossing over]
		\end{tikzcd}\]
		Since the right outer square is a pullback, by composition also the bottom square is. This shows that $\gl(F)$ preserves pullbacks.
	\end{description}
\end{proof}

\section{Moens families}\label{sec:moens-fam}

We are interested in a specific subclass of BC fibrations that go by the name of \emph{(l)extensive} or \emph{Moens fibrations}. These are a fibrational generalization of (l)extensive categories~\cite{CLWExt}. Ultimately, this leads to Moens' Theorem which says that Moens fibrations over a fixed base type can be identified with lex functors \emph{from} this type into some other lex type. This is crucial to develop the \emph{fibered view of geometric morphisms}, \cf~\cite[Section~15 \emph{et seq}]{streicher2020fibered}, \cite{StrFVGM}. Applications in realizability have been given by Frey in his doctoral thesis~\cite{FreyPhD,FreyMoensFib} and more recently by Frey--Streicher~\cite{FreStr-Tripos}. Again, we are adapting the reasoning from \cite[Section~15]{streicher2020fibered}.

\subsection{(Pre-)Moens families and internal sums}\label{ssec:pre-moens}

Recall from classical $1$-category theory that a category $\mathbb C$ with pullbacks and coproducts is \emph{extensive} (or \emph{lextensive} depending on convention) if and only if, for all small families $(A_i)_{i\in I}$ the induced functor $\prod_{i \in I} \mathbb C/A_i \to \mathbb C/\coprod_{i\in I} A_i$ is an equivalence. This is equivalent to the condition that injections of finite sums are stable under pullback, and for any family of squares
\[\begin{tikzcd}
	{B_k} && B \\
	{A_k} && {\coprod_{i \in I} A_i}
	\arrow["{g_k}", from=1-1, to=1-3]
	\arrow["{f_k}"', from=1-1, to=2-1]
	\arrow[from=2-1, to=2-3]
	\arrow[from=1-3, to=2-3]
\end{tikzcd}\]
all of these are pullbacks if and only if all the maps $g_k: B_k \to B$ exhibit $B$ as the coproduct cone. This generalizes fibrationally as follows.

\begin{definition}[Stable and disjoint internal sums]
	Let $P:B \to \UU$ be a lex fibration with internal sums over a Rezk type $B$. Then $P$ has \emph{stable} internal sums if cocartesian arrows are stable under arbitrary pullbacks. The internal sums of $P$ are \emph{disjoint}\footnote{In a category, a coproduct is \emph{disjoint} if the inclusion maps are monomorphisms, and the intersection of the summands is an initial object.} if for every cocartesian arrow $f:d \cocartarr^P e$ the fibered diagonal is cocartesian, too:
	\[\begin{tikzcd}
		d \\
		& {d \times_ed} && d \\
		& d && e
		\arrow["f"', from=3-2, to=3-4, cocart]
		\arrow[from=2-2, to=2-4]
		\arrow["f", from=2-4, to=3-4, cocart]
		\arrow[curve={height=-24pt}, Rightarrow, no head, from=1-1, to=2-4]
		\arrow[curve={height=18pt}, Rightarrow, no head, from=1-1, to=3-2]
		\arrow[from=2-2, to=3-2]
		\arrow["{\delta_f}", from=1-1, to=2-2, cocart]
		\arrow["\lrcorner"{anchor=center, pos=0.125}, draw=none, from=2-2, to=3-4]
	\end{tikzcd}\]
\end{definition}

\begin{definition}[(Pre-)Moens families]
	Let $B$ be a lex Rezk type. A lex Beck--Chevalley family $P:B \to \UU$ is a \emph{pre-Moens family} if it has stable internal sums. We call a pre-Moens family $P:B \to \UU$ \emph{Moens family} (or \emph{extensive family}  or \emph{pre-geometric family}) if, moreover, all its (stable) internal sums are also disjoint.
\end{definition}

An immediate result is the following.
\begin{lemma}[\cite{streicher2020fibered}, Lemma~15.1]\label{lem:cocart-leg-pb}
	Let $B$ be a lex Rezk type and $\pi:E \fibarr B$ be a Moens fibration. Then, for $d,e,e':E$, and cocartesian morphisms $g:e \cocartarr e'$, in any pullback of the following form, the gap map is cocartesian, too:
	\[\begin{tikzcd}
		d \\
		& {d'} && e \\
		& d && {e'}
		\arrow[from=2-2, to=3-2]
		\arrow["h"', from=3-2, to=3-4]
		\arrow[from=2-2, to=2-4]
		\arrow["g", from=2-4, to=3-4,cocart]
		\arrow["f", curve={height=-12pt}, from=1-1, to=2-4]
		\arrow[curve={height=12pt}, Rightarrow, no head, from=1-1, to=3-2]
		\arrow["\lrcorner"{anchor=center, pos=0.125}, draw=none, from=2-2, to=3-4]
		\arrow["k", dashed, from=1-1, to=2-2, cocart]
	\end{tikzcd}\]
\end{lemma}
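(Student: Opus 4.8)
The plan is to realize the gap map $k$ as a pullback of the \emph{fibered diagonal} of the cocartesian arrow $g$, and then to invoke stability of internal sums. Since $B$ is lex and a Moens fibration is in particular a lex cartesian family, the total type $E \defeq \totalty{P}$ is lex by \Cref{prop:term-obj-fib,prop:pb-fib}, so all pullbacks used below exist in $E$. First I would fix notation for the data: write $p : d' \to d$ and $q : d' \to e$ for the two legs of the given pullback $d' = d \times_{e'} e$, so that $h \circ p = g \circ q$, and let $k : d \to d'$ be the gap map determined by $p \circ k = \id_d$ and $q \circ k = f$, where the cone hypothesis on the outer diagram reads $g \circ f = h$.

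Because $\pi$ is a \emph{Moens} fibration its internal sums are disjoint, so the fibered diagonal $\delta_g : e \cocartarr e \times_{e'} e$ of the cocartesian arrow $g$ is itself cocartesian; here I write $p_1, p_2 : e \times_{e'} e \to e$ for the projections, with $p_1 \circ \delta_g = p_2 \circ \delta_g = \id_e$ and $g \circ p_1 = g \circ p_2$. Next I would build the comparison map $\tilde f : d' \to e \times_{e'} e$. Since $g \circ (f \circ p) = (g \circ f) \circ p = h \circ p = g \circ q$, the pair $f \circ p, q : d' \to e$ is a cone over the cospan defining $e \times_{e'} e$, and so induces $\tilde f$ with $p_1 \circ \tilde f = f \circ p$ and $p_2 \circ \tilde f = q$.

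The central claim is then that the square with top edge $k$, left edge $f$, bottom edge $\delta_g$ and right edge $\tilde f$ is a pullback in $E$, so that $k = \tilde f^*(\delta_g)$. Commutativity $\tilde f \circ k = \delta_g \circ f$ is immediate, as both composites have both projections to $e$ equal to $f$ (using $p \circ k = \id_d$ and $q \circ k = f$). For the universal property I would verify that $p$ supplies the gap map: given a cone $a : W \to e$, $b : W \to d'$ with $\delta_g \circ a = \tilde f \circ b$, comparing the two projections forces $a = f \circ p \circ b = q \circ b$; setting $c \defeq p \circ b$ one checks $f \circ c = a$ and $k \circ c = b$ (the latter since $k \circ c$ and $b$ agree after both $p$ and $q$), and $c$ is unique because $p \circ k = \id_d$.

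Finally, since $\delta_g$ is cocartesian and $P$ has \emph{stable} internal sums, the pullback $\tilde f^*(\delta_g)$ of a cocartesian arrow is again cocartesian; as this pullback is $k$, the lemma follows. The two conceptual ingredients—the diagonal is cocartesian by disjointness, and cocartesian arrows are pullback-stable—are immediate from the hypotheses. The step I expect to be the main obstacle is the identification $k = \tilde f^*(\delta_g)$, i.e.\ establishing the pullback square synthetically: this requires phrasing the cones and cone morphisms through the extension-type description and strictifying the relevant coherences so that the gap map $p$ genuinely exhibits the square as a terminal cone, rather than merely a pointwise verification.
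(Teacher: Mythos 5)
Your proposal is correct and follows essentially the same route as the paper: exhibit the gap map $k$ as a pullback of the fibered diagonal $\delta_g$, which is cocartesian by disjointness, and conclude by stability of internal sums. The only difference is that the paper establishes the key pullback square $k = \tilde f^*(\delta_g)$ by the pullback pasting/cancellation lemma applied to a grid of squares built from the defining pullbacks of $d'$ and $e \times_{e'} e$ (which neatly sidesteps the coherence concerns you raise at the end), whereas you verify the universal property directly.
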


\begin{proof}
	Consider the following diagram, arising from canonical factorizations over the pullbacks $d' \jdeq d \times_{e'} e$ and $e \times_e e'$, resp:
	\[\begin{tikzcd}
		d && e \\
		{d'} && {e \times_{e'} e} && e \\
		d && e && {e'}
		\arrow["k"', from=1-1, to=2-1, cocart]
		\arrow[from=2-1, to=2-3]
		\arrow["f", from=1-1, to=1-3]
		\arrow["{\delta_g}"', from=1-3, to=2-3, cocart]
		\arrow[from=2-1, to=3-1]
		\arrow["f"', from=3-1, to=3-3]
		\arrow[from=2-3, to=3-3]
		\arrow[from=2-3, to=2-5]
		\arrow["g"', from=3-3, to=3-5, cocart]
		\arrow["g", from=2-5, to=3-5, cocart]
		\arrow[Rightarrow, no head, from=1-3, to=2-5]
		\arrow["\lrcorner"{anchor=center, pos=0.125}, draw=none, from=1-1, to=2-3]
		\arrow["\lrcorner"{anchor=center, pos=0.125}, draw=none, from=2-1, to=3-3]
		\arrow["\lrcorner"{anchor=center, pos=0.125}, draw=none, from=2-3, to=3-5]
		\arrow[curve={height=24pt}, Rightarrow, no head, from=1-1, to=3-1, crossing over]
		\arrow[curve={height=40pt}, Rightarrow, no head, from=1-3, to=3-3, crossing over]
	\end{tikzcd}\]
	By disjointness of sums, $\delta_g: e \cocartarr e \times_{e'} e$ is cocartesian, and by general pullback stability, so is $k:d \cocartarr d'$.
\end{proof}

The preceding lemma can be used to characterize disjointness given that stable internal sums exist. 

\begin{proposition}[Characterizations of disjointness of stable internal sums, \cite{streicher2020fibered}, Lem.~15.2]\label{prop:char-disj-stable}
	Let $B$ be a lex Rezk type and $P:B \to \UU$ be a pre-Moens family. Then the following are equivalent:\footnote{Streicher \cite{streicher2020fibered} points out that Items~\ref{it:disj-pre-moens-ii}--\ref{it:disj-pre-moens-iv} only require stability of cocartesian arrows along \emph{vertical} maps.}
	\begin{enumerate}
		\item\label{it:disj-pre-moens-i} The family $P$ is a Moens family, \ie~internal sums are disjoint (and stable).
		\item\label{it:disj-pre-moens-ii} Cocartesian arrows in $P$ satisfy left canceling, \ie~if $g$, $g \circ f$ are cocartesian then so is $f$.
		\item\label{it:disj-pre-moens-iii} Cocartesian transport is conservative, \ie~if $k$ is vertical, and both $f$ and $f \circ k$ are cocartesian, then $k$ is an isomorphism.
		\item\label{it:disj-pre-moens-iv} Any dependent square in $P$ of the form
		\[\begin{tikzcd}
			d && e \\
			{d'} && {e'}
			\arrow["f"', squiggly, from=1-1, to=2-1]
			\arrow["g"', from=2-1, to=2-3, cocart]
			\arrow["h", from=1-1, to=1-3, cocart]
			\arrow["k", squiggly, from=1-3, to=2-3]
		\end{tikzcd}\]
		where $f$, $k$ are vertical and $g$, $h$ are cocartesian is a pullback.
	\end{enumerate}	
\end{proposition}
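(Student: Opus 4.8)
The plan is to run the cyclic chain $\ref{it:disj-pre-moens-i}\Rightarrow\ref{it:disj-pre-moens-ii}\Rightarrow\ref{it:disj-pre-moens-iii}\Rightarrow\ref{it:disj-pre-moens-iv}\Rightarrow\ref{it:disj-pre-moens-ii}$ and to close it off with a short direct argument $\ref{it:disj-pre-moens-ii}\Rightarrow\ref{it:disj-pre-moens-i}$, so that left canceling of cocartesian arrows becomes the pivot to which everything else is reduced. Throughout I use that $\totalty{P}$ is lex (\Cref{prop:pb-fib}), so the pullbacks I build exist, that cocartesian arrows are stable under pullback (the standing pre-Moens hypothesis), and the composition and right-cancellation closure of cocartesian arrows from \Cref{prop:cocart-arr-closure}. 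For $\ref{it:disj-pre-moens-i}\Rightarrow\ref{it:disj-pre-moens-ii}$, given $g\colon e\cocartarr e'$ and $g\circ f\colon d\cocartarr e'$ cocartesian with $f\colon d\to e$, I feed the cocartesian arrow $g$, the map $h\defeq g\circ f$, and the cone leg $f$ into \Cref{lem:cocart-leg-pb}: the induced gap map $k\colon d\cocartarr d\times_{e'}e$ is cocartesian, while the remaining projection $d\times_{e'}e\to e$ is the pullback of the cocartesian arrow $g\circ f$ along $g$, hence cocartesian by stability; since $f$ is their composite, \Cref{prop:cocart-arr-closure} makes $f$ cocartesian. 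Conversely, for $\ref{it:disj-pre-moens-ii}\Rightarrow\ref{it:disj-pre-moens-i}$, given a cocartesian $f\colon d\cocartarr e$ I form $d\times_e d$; both projections are pullbacks of $f$ and so cocartesian, and the second satisfies $p_2\circ\delta_f=\id_d$, which is cocartesian, so left canceling the cocartesian $p_2$ shows $\delta_f$ is cocartesian, i.e.\ disjointness.

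The implication $\ref{it:disj-pre-moens-ii}\Rightarrow\ref{it:disj-pre-moens-iii}$ is immediate: if $k$ is vertical and $f,f\circ k$ are cocartesian, left canceling $f$ makes $k$ cocartesian, and a cocartesian arrow over an isomorphism is an isomorphism by \Cref{lem:cocart-arrows-isos}. For $\ref{it:disj-pre-moens-iv}\Rightarrow\ref{it:disj-pre-moens-ii}$ I would factor $f=v\circ c$ into a cocartesian arrow $c$ followed by a vertical arrow $v$ (the cocartesian lift of $f$ together with its comparison into the codomain); from $g\circ f=(g\circ v)\circ c$ and right cancellation (\Cref{prop:cocart-arr-closure}) the arrow $g\circ v$ is cocartesian, so the square with vertical sides $v$ and $\id_{e'}$ and cocartesian sides $g$ and $g\circ v$ has exactly the shape of $\ref{it:disj-pre-moens-iv}$. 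By $\ref{it:disj-pre-moens-iv}$ it is a pullback, and since the pullback of $g$ along $\id_{e'}$ recovers $e$, the map $v$ is forced to be invertible, whence $f=v\circ c$ is cocartesian.

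The one genuinely delicate step is $\ref{it:disj-pre-moens-iii}\Rightarrow\ref{it:disj-pre-moens-iv}$. Given a square with vertical $f,k$ and cocartesian $g,h$, I form the pullback $P\defeq d'\times_{e'}e$ of $g$ against $k$ in $\totalty{P}$, with projections $q_1,q_2$ and the induced gap map $\gamma\colon d\to P$ satisfying $q_1\circ\gamma=f$ and $q_2\circ\gamma=h$; stability makes $q_2$, being a pullback of the cocartesian $g$, cocartesian. The key point is to check that $\gamma$ is \emph{vertical}: projecting down to $B$, the base square has its two parallel sides $\pi(f),\pi(k)$ invertible, so $\pi(q_1)$ is invertible and the base gap map $\pi(\gamma)=\pi(q_1)^{-1}\circ\pi(f)$ is an isomorphism, which is precisely verticality of $\gamma$. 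With $q_2$ cocartesian, $\gamma$ vertical, and $q_2\circ\gamma=h$ cocartesian, conservativity $\ref{it:disj-pre-moens-iii}$ forces $\gamma$ to be an isomorphism, so the original square is a pullback. I expect this verticality bookkeeping — transporting the square down to $B$ and reducing the codomain isomorphisms to identities by Rezk completeness — to be the main obstacle, since every remaining step reduces to the closure calculus of cocartesian arrows and to pullback stability already in hand.
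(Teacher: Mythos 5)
Your proof is correct and follows essentially the same route as the paper: the same use of \Cref{lem:cocart-leg-pb} together with pullback stability for $\ref{it:disj-pre-moens-i}\Leftrightarrow\ref{it:disj-pre-moens-ii}$, the same vertical/cocartesian factorization plus right cancellation for recovering left cancellation, and the same gap-map argument (cocartesian projection, vertical gap, conservativity) for $\ref{it:disj-pre-moens-iii}\Rightarrow\ref{it:disj-pre-moens-iv}$. The only differences are cosmetic: you close the last three items as a cycle $\ref{it:disj-pre-moens-ii}\Rightarrow\ref{it:disj-pre-moens-iii}\Rightarrow\ref{it:disj-pre-moens-iv}\Rightarrow\ref{it:disj-pre-moens-ii}$ where the paper proves two separate biconditionals, and you verify verticality of the gap map by a direct computation in the base rather than by citing pullback stability and left cancellation of vertical arrows.
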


\begin{proof}
	\begin{description}
		\item[$\ref{it:disj-pre-moens-i}  \implies \ref{it:disj-pre-moens-ii}$] Let $f:e \to e'$ and $g:e' \cocartarr e''$ such that $g \circ f: e \cocartarr e''$ is cocartesian. Since $P$ is a Moens family we can apply~\Cref{lem:cocart-leg-pb} to obtain that the gap map $\varphi: e \cocartarr e' \times_{e''} e$ as in
		\[\begin{tikzcd}
			e \\
			& {e'''} && e \\
			& {e'} && {e''}
			\arrow["{g'}", from=2-2, to=3-2]
			\arrow["g"', from=3-2, to=3-4, cocart]
			\arrow[from=2-2, to=2-4]
			\arrow["gf", from=2-4, to=3-4, cocart]
			\arrow[curve={height=-18pt}, Rightarrow, no head, from=1-1, to=2-4]
			\arrow["f"', curve={height=12pt}, from=1-1, to=3-2]
			\arrow["\varphi", dashed, from=1-1, to=2-2, cocart]
			\arrow["\lrcorner"{anchor=center, pos=0.125}, draw=none, from=2-2, to=3-4]
		\end{tikzcd}\]
		is cocartesian. By stability, $g'$ is cocartesian, too, hence so is $f= g' \circ \varphi$.
		\item[$\ref{it:disj-pre-moens-ii}  \implies \ref{it:disj-pre-moens-i}$] By stability of sums, for a cocartesian arrow $f:d \cocartarr e$, the map $f^*f: d \times_e d \to d$ is cocartesian. Then, the gap map $\delta_d: d \to d \times_e d$ is cocartesian, since $\id_d = f^*f \circ \delta_d$.
		\item[$\ref{it:disj-pre-moens-ii}  \implies \ref{it:disj-pre-moens-iii}$] This follows since an arrow that is vertical and cocartesian necessarily is an isomorphism.
		\item[$\ref{it:disj-pre-moens-iii}  \implies \ref{it:disj-pre-moens-ii}$] Let $g$, $f$ be given such that $gf$ exists, and both $gf$ as well as $g$ are cocartesian. Consider the factorization $f = mh$ where $m$ is vertical and $h$ is cocartesian. By right cancelation of cocartesian arrows, since both $gf = (gm)h$ and $h$ are cocartesian, so must be $gm$. 
		\item[$\ref{it:disj-pre-moens-iii}  \implies \ref{it:disj-pre-moens-iv}$] Consider the induced pullback square:
		\[\begin{tikzcd}
			d \\
			& {e''} && e \\
			& {d'} && {e'}
			\arrow["{k'}"', squiggly, from=2-2, to=3-2]
			\arrow["{g'}", from=2-2, to=2-4, cocart]
			\arrow["k", squiggly, from=2-4, to=3-4]
			\arrow["h", curve={height=-18pt}, from=1-1, to=2-4, cocart]
			\arrow["f"', curve={height=18pt}, squiggly, from=1-1, to=3-2]
			\arrow["{f'}", squiggly, from=1-1, to=2-2, dashed]
			\arrow["\lrcorner"{anchor=center, pos=0.125}, draw=none, from=2-2, to=3-4]
			\arrow["g"', from=3-2, to=3-4, cocart]
		\end{tikzcd}\]
		Since $P$ is a bifibration, the vertical arrows are stable under pullback along any arrow, and satisfy left cancelation. Hence, since $k$ is vertical, so is $k'$, and consequently $f'$ as well (since $f$ is). By the assumption in $(\ref{it:disj-pre-moens-iii})$ since $f'$ is vertical and both $g'$ and $h=g' \circ f'$ are cocartesian $f'$ is an isomorphism.
		\item[$\ref{it:disj-pre-moens-iv}  \implies \ref{it:disj-pre-moens-iii}$] Any square of the form
		\[\begin{tikzcd}
			d && e \\
			{d'} && e
			\arrow["k"', squiggly, from=1-1, to=2-1]
			\arrow["f"', from=2-1, to=2-3,  cocart]
			\arrow["fk", from=1-1, to=1-3, cocart]
			\arrow[Rightarrow, no head, from=1-3, to=2-3]
			\arrow["\lrcorner"{anchor=center, pos=0.125}, draw=none, from=1-1, to=2-3]
		\end{tikzcd}\]
		is a pullback by precondition, hence $k$ is an identity.
		
	\end{description}
\end{proof}

\subsection{Extensive internal sums}\label{ssec:ext}

We can now provide a characterization of Moens families among the BC families. In particular, we obtain a fibered version of \emph{Lawvere extensivity} as an alternative characterization for (internal) extensivity. Classically, a category $\mathbb C$ is \emph{Lawvere-extensive} if for any small set $I$, the categories $\mathbb C^I$ and $\mathbb C/\coprod_{i\in I} \unit$ are canonically isomorphic. To prepare, consider first the following construction. Let $B$ be lex Rezk type and $P:B \to \UU$ be a cocartesian family.

\begin{definition}[Terminal transport functor]\label{def:term-transp}
	For a terminal element $z:B$, we define the functor\footnote{In \cite{streicher2020fibered}, the functor $\omega$ is called $\mathbf{\Delta}$.}\footnote{Note that we can suppress the dependency on a specified terminal element $z:B$.}
	\[ \omega_{P,z} \defeq \omega : \widetilde{P} \to P\,z, \quad \omega \defeq \lambda b,e.(!_b)_!(e). \]
\end{definition}
\begin{figure}
	\[\begin{tikzcd}
		e && {\omega(e)} \\
		{e'} && {\omega(e')} \\
		b && z \\
		{b'}
		\arrow["f"', from=1-1, to=2-1]
		\arrow["{P_!(!_{b'},e')}"', from=2-1, to=2-3,cocart]
		\arrow["{P_!(!_b,e)}", from=1-1, to=1-3,cocart]
		\arrow["{\omega(f)}", dashed, from=1-3, to=2-3]
		\arrow["u"', from=3-1, to=4-1]
		\arrow["{!_b}", from=3-1, to=3-3]
		\arrow["{!_{b'}}"', from=4-1, to=3-3]
	\end{tikzcd}\]
	\caption{Action on morphisms of the transport functor $\omega_{P,z}$}
	\label{fig:transp-term}
\end{figure}
The action on arrows of this functor is illustrated in \ref{fig:transp-term}. The arrow $\omega_f$ is vertical over the terminal element $z:B$, for any $f: \Delta^1 \to B$. 

\begin{definition}[Choice of terminal elements]\label{def:term-fib}
	Let $B$ be a Rezk type and $P:B \to \UU$ be family with Rezk fibers such that every fiber has a terminal element. Then we denote, by the Principle of Choice, the section\footnote{We have ocasionally considered a variant of this map as a function $B \to \totalty{P}$. We hope that this mild abuse of notation does not cause any confusion.} choosing fiberwise terminal elements by
	\[ \zeta_P \defeq \zeta: \prod_{b:B} P\,b,\]
	\ie~for any $b:B$ the element $\zeta_b : P\,b$ is terminal.
\end{definition}

We define
\[ \omega' \jdeq \omega'_P: B \to P\,z, \quad \omega'(b)\defeq \omega(\zeta_b) \jdeq (!_b)_!(\zeta_b).\]

We are now ready for the promised characterization.

\begin{proposition}[Stable disjoint sums in terms of extensive sums, \cite{streicher2020fibered}, Lem.~15.3]\label{prop:ext-sums}
	Let $B$ be a lex Rezk type and $P:B \to \UU$ be a Beck--Chevalley family. Then, the following are equivalent:
	\begin{enumerate}
		\item\label{it:ext-sums-stable} The family $P$ is a Moens family, \ie~$P$ has stable disjoint sums.
		\item\label{it:ext-sums-ext} The bicartesian family $P$ has \emph{internally extensive} sums, \ie~for vertical arrows $f:d \to d'$, $k:e \to e'$, cocartesian arrows $g: e \cocartarr e'$, in a square
		
		\[\begin{tikzcd}
			d && e \\
			{d'} && {e'}
			\arrow["f"', squiggly, from=1-1, to=2-1]
			\arrow["g"', from=2-1, to=2-3, cocart]
			\arrow["h", from=1-1, to=1-3]
			\arrow["k", squiggly, from=1-3, to=2-3]
		\end{tikzcd}\]
		the arrow $h:d \to e$ is a cocartesian arrow if and only if the square is a pullback.
		\item\label{it:ext-sums-lawvere} The internal sums in $P$ are \emph{Lawvere-extensive}, \ie~in any square of the form
		\[\begin{tikzcd}
			d && e \\
			{\zeta_a} && {\omega'(a)}
			\arrow["!_d^a"', from=1-1, to=2-1, squiggly]
			\arrow["{P_!(!_a, \zeta_a)}"', from=2-1, to=2-3, cocart]
			\arrow["h", from=1-1, to=1-3]
			\arrow["k", from=1-3, to=2-3, squiggly]
		\end{tikzcd}\]
		where $k:e \vertarr \omega'(a)$ is vertical the arrow $h:d \to e$ is cocartesian if and only if the given square is a pullback. Here, $\zeta$ and $\omega'$ are as defined in~\Cref{def:term-transp,def:term-fib}.
		\item\label{it:ext-sums-transp} Let $z:B$ be a terminal element in $B$. For any $a:B$, the transport functor $(!_a)_!: P\,a \to P\,z$ reflects isomorphisms and $k^*P_!(!_a,\zeta_a)$ is cocartesian in case $k$ is vertical.
	\end{enumerate}
\end{proposition}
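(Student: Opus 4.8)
The plan is to prove the cycle of implications $\ref{it:ext-sums-stable}\Rightarrow\ref{it:ext-sums-ext}\Rightarrow\ref{it:ext-sums-lawvere}\Rightarrow\ref{it:ext-sums-transp}\Rightarrow\ref{it:ext-sums-stable}$, where all but the closing arrow are essentially bookkeeping and the last implication carries the real content. For $\ref{it:ext-sums-stable}\Rightarrow\ref{it:ext-sums-ext}$, take a square with $f,k$ vertical and $g$ cocartesian. If the top edge $h$ is cocartesian, then $f,k$ are vertical while $g,h$ are cocartesian, so item~\ref{it:disj-pre-moens-iv} of~\Cref{prop:char-disj-stable} (which applies because $P$ is Moens) shows the square is a pullback; conversely, if the square is a pullback then $h$ is the pullback of the cocartesian arrow $g$ along the vertical arrow $k$, so stability of the internal sums makes $h$ cocartesian. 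The same square analysis yields $\ref{it:ext-sums-ext}\Rightarrow\ref{it:ext-sums-stable}$ once we recall that the Beck--Chevalley condition (\Cref{def:bcc}) already gives stability of cocartesian arrows along cartesian arrows: the ``pullback $\Rightarrow$ cocartesian'' half of~\ref{it:ext-sums-ext} supplies stability along vertical arrows, and together with the vertical/cartesian factorization this gives full stability, while the ``cocartesian $\Rightarrow$ pullback'' half is exactly item~\ref{it:disj-pre-moens-iv} of~\Cref{prop:char-disj-stable}, hence disjointness.

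The implication $\ref{it:ext-sums-ext}\Rightarrow\ref{it:ext-sums-lawvere}$ is immediate, since the square in~\ref{it:ext-sums-lawvere} is the instance of the square in~\ref{it:ext-sums-ext} with bottom-left vertex $\zeta_a$, bottom edge the cocartesian lift $P_!(!_a,\zeta_a)$, and left edge the fiberwise terminal projection $!^a_d$, which is vertical. For $\ref{it:ext-sums-lawvere}\Rightarrow\ref{it:ext-sums-transp}$, cocartesianness of $k^\ast P_!(!_a,\zeta_a)$ for vertical $k$ is the ``pullback $\Rightarrow$ cocartesian'' direction of~\ref{it:ext-sums-lawvere}: the base pullback of $!_a$ along $\id_z$ is $a$, so the left leg of this pullback lands in $P\,a$ with codomain $\zeta_a$ and is therefore the terminal projection. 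For the reflection of isomorphisms, suppose $v\colon x\to y$ is vertical over $a$ with $(!_a)_!v$ invertible. Using naturality of cocartesian lifts (\Cref{prop:nat-cocartlift-arr}) I would check that the squares with top $P_!(!_a,x)$ (resp.\ $P_!(!_a,y)$), bottom $P_!(!_a,\zeta_a)$, and right edge $(!_a)_!(!^a_x)$ (resp.\ $(!_a)_!(!^a_y)$) commute with cocartesian top edge, so that the ``cocartesian $\Rightarrow$ pullback'' direction of~\ref{it:ext-sums-lawvere} exhibits $x$ and $y$ as pullbacks of $P_!(!_a,\zeta_a)$ along these two classifying maps. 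Since $(!_a)_!v$ is an isomorphism commuting with the classifying maps, functoriality of pullback identifies $v$ with the induced isomorphism of pullback objects, so $v$ is invertible.

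For the closing implication $\ref{it:ext-sums-transp}\Rightarrow\ref{it:ext-sums-stable}$ I would first reduce stability to the vertical case: an arbitrary dependent arrow factors as a vertical arrow followed by a cartesian one, pullback along the cartesian factor preserves cocartesian arrows by the Beck--Chevalley condition, so only pullback of a cocartesian arrow along a vertical arrow remains. Granting stability, disjointness then follows through item~\ref{it:disj-pre-moens-iii} of~\Cref{prop:char-disj-stable}: for $k$ vertical with $f$ and $f\circ k$ cocartesian over some $u\colon a\to b$, the defining square of $u_!k$ together with uniqueness of cocartesian lifts and right cancellation (item~\ref{it:cocart-arr-cancel} of~\Cref{prop:cocart-arr-closure}) forces $u_!k$ to be an isomorphism, whence $(!_a)_!k=(!_b)_!(u_!k)$ is invertible and the reflection-of-isomorphisms clause makes $k$ an isomorphism.

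The main obstacle is the vertical case of stability for a \emph{general} cocartesian arrow $g\colon e\cocartarr e'$ over $u\colon a\to b$: clause two of~\ref{it:ext-sums-transp} only provides this for the terminal lifts $P_!(!_a,\zeta_a)$. To close the gap I would compare the given pullback square against the terminal-lift square by means of the fiberwise terminal projections $!^a_{(-)}$ and $!^b_{(-)}$, assembling a commutative cube whose two ``terminal'' faces are pullbacks with cocartesian edges supplied by~\ref{it:ext-sums-transp}, and then transport cocartesianness of the remaining edge using the pasting lemmas~\Cref{lem:cart-arr-pb,lem:op-sides-pb,lem:loc-pb-is-pb} together with the reflection of isomorphisms. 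This is the synthetic, fibrational incarnation of the classical passage from Lawvere-extensivity to full extensivity, and it is here, rather than in the preceding steps, that genuine work rather than bookkeeping is required.
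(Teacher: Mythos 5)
Your easy implications are sound. The direction $\ref{it:ext-sums-stable}\Rightarrow\ref{it:ext-sums-ext}$ via item~\ref{it:disj-pre-moens-iv} of \Cref{prop:char-disj-stable} is a legitimate shortcut (the paper re-derives that item by hand via the gap-map factorization, but the content is the same), $\ref{it:ext-sums-ext}\Rightarrow\ref{it:ext-sums-lawvere}$ is indeed an instance, and your observation that $\ref{it:ext-sums-ext}\Rightarrow\ref{it:ext-sums-stable}$ follows from the vertical/cartesian factorization plus the Beck--Chevalley condition is correct and in fact needed, since the paper itself only ever routes back to \ref{it:ext-sums-stable} through \ref{it:ext-sums-ext}. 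Your argument for reflection of isomorphisms in $\ref{it:ext-sums-lawvere}\Rightarrow\ref{it:ext-sums-transp}$ matches the paper's cube argument in spirit. The structural difference is that the paper concentrates the hard work in $\ref{it:ext-sums-lawvere}\Rightarrow\ref{it:ext-sums-ext}$ and makes $\ref{it:ext-sums-transp}\Rightarrow\ref{it:ext-sums-lawvere}$ essentially immediate, whereas you defer everything to a closing implication $\ref{it:ext-sums-transp}\Rightarrow\ref{it:ext-sums-stable}$.

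That closing implication is where your proposal has a genuine gap: you correctly identify that clause two of \ref{it:ext-sums-transp} only controls pullbacks of the terminal lifts $P_!(!_a,\zeta_a)$, but the passage from there to stability of an \emph{arbitrary} cocartesian arrow $g:e\cocartarr_u e'$ along a vertical arrow is precisely the substance of the proposition, and your sketch does not supply the idea that makes it work. The paper's mechanism is the claim $(\dagger)$ in the proof of $\ref{it:ext-sums-lawvere}\Rightarrow\ref{it:ext-sums-ext}$: stacking an arbitrary square with vertical sides on top of its Lawvere square $(\zeta_a \cocartarr \omega'(a))$, one shows that the lower square is \emph{always} a pullback (by Lawvere extensivity applied to the cocartesian middle row), so that by pasting and cancellation of pullbacks the upper square is a pullback exactly when the composite is, i.e.\ exactly when its top edge is cocartesian. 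This claim is then applied \emph{twice} --- once to the comparison square and once to the composite --- and combined with uniqueness of cocartesian lifts and right cancellation to conclude. The tools you invoke instead (\Cref{lem:cart-arr-pb,lem:op-sides-pb,lem:loc-pb-is-pb}) concern cartesian arrows and fiberwise pullbacks and do not by themselves transport \emph{cocartesianness} across your proposed cube; without an analogue of $(\dagger)$ the cube comparison does not close. So the step you flag as ``genuine work'' is not merely unexecuted bookkeeping --- it is the missing core of the argument, and as written the cycle does not close.
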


Again, as remarked by Streicher, the equivalences between all but the first statement hold already in the case that cocartesian arrows are only stable under pullback along vertical arrows (in a lex bifibration).
\begin{proof}
	\begin{description}
		\item[$(\ref{it:ext-sums-stable}) \implies (\ref{it:ext-sums-ext})$] Consider a square as given in (\ref{it:ext-sums-ext}). If it is a pullback we have an identification $h= k^*g$, and by stability $h$ is cocartesian, too. Conversely, given such a square where $h$ is cocartesian, consider the factorization:
		\[\begin{tikzcd}
			d \\
			& {d''} && e \\
			& {d'} && {e'}
			\arrow["k'", from=2-2, to=3-2, squiggly]
			\arrow["g"', from=3-2, to=3-4, cocart]
			\arrow["{g'}"', from=2-2, to=2-4, cocart]
			\arrow["k", from=2-4, to=3-4, squiggly]
			\arrow["h", curve={height=-12pt}, from=1-1, to=2-4, cocart]
			\arrow["f"', curve={height=12pt}, squiggly, from=1-1, to=3-2]
			\arrow["{f'}"', dashed, from=1-1, to=2-2]
			\arrow["\lrcorner"{anchor=center, pos=0.125}, draw=none, from=2-2, to=3-4]
		\end{tikzcd}\]
		The arrow $g' = k^*g$ is cocartesian by stability of sums. The arrow $k' = g^*k$ is vertical since vertical arrows in a cocartesian fibration are stable under pullback by~\Cref{thm:pb-of-vert-is-vert-cocart-fam}. By the same reason, they are left cancelable, hence $f'$ is vertical. But since $P$ is a Moens family, by Prop.~\ref{prop:char-disj-stable}, cocartesian arrows also satisfy left cancelation, hence $f'$ is cocartesian, too, and thus an equivalence.
		\item[$(\ref{it:ext-sums-ext}) \implies (\ref{it:ext-sums-lawvere})$] The latter is an instance of the former.
		\item[$(\ref{it:ext-sums-lawvere}) \implies (\ref{it:ext-sums-ext})$] First, consider the following diagram:
			
\[\begin{tikzcd}
	d && {d'} \\
	e && {e'} \\
	{\zeta_a} && {\omega'(a)}
	\arrow["k"', squiggly, from=1-1, to=2-1]
	\arrow["m \jdeq !_e^a"', squiggly, from=2-1, to=3-1]
	\arrow[""{name=0, anchor=center, inner sep=0}, "g", from=1-1, to=1-3]
	\arrow[""{name=1, anchor=center, inner sep=0}, "f"', from=2-1, to=2-3, cocart]
	\arrow["{k'}", squiggly, from=1-3, to=2-3]
	\arrow["{m'}", squiggly, from=2-3, to=3-3]
	\arrow[""{name=2, anchor=center, inner sep=0}, "{P_!(!_a,\zeta_a)}"', from=3-1, to=3-3, cocart]
	\arrow["{(*)}"{description}, draw=none, from=0, to=1]
	\arrow["{(**)}"{description}, draw=none, from=1, to=2]
\end{tikzcd}\]

		We will first establish the claim that:
		\[ \text{$g$ is cocartesian iff $(*)$ is a pullback.} \tag{$\dagger$}\]
		By Lawvere extensivity~(\ref{it:ext-sums-lawvere}) the square $(**)$ is a pullback. If $(*)$ is a pullback, then the outer square is a pullback, so by Lawvere extensivity $g$ is cocartesian. Conversely, if $g$ is cocartesian, by Lawvere extensivity the outer square is a pullback, so by canceling $(*)$ is a pullback, too.

		Now, consider a square as follows:
\[\begin{tikzcd}
	y && d \\
	x && e
	\arrow["r"', squiggly, from=1-1, to=2-1]
	\arrow[""{name=0, anchor=center, inner sep=0}, "h", from=1-1, to=1-3]
	\arrow[""{name=1, anchor=center, inner sep=0}, "\ell"', from=2-1, to=2-3, cocart]
	\arrow["k", squiggly, from=1-3, to=2-3]
	\arrow["{(+)}"{description}, draw=none, from=1, to=0]
\end{tikzcd}\]
		We are to show that $(+)$ is a pullback if and only if $h$ is cocartesian. Assume that $h$ is cocartesian. Consider the following pasted diagram, involving the diagram $(*)$ from before:
\[\begin{tikzcd}
	y && d && {d'} \\
	x && e && {e'}
	\arrow["r"', squiggly, from=1-1, to=2-1]
	\arrow[""{name=0, anchor=center, inner sep=0}, "h", from=1-1, to=1-3, cocart]
	\arrow[""{name=1, anchor=center, inner sep=0}, "\ell"', from=2-1, to=2-3, cocart]
	\arrow["k", squiggly, from=1-3, to=2-3]
	\arrow["g", from=1-3, to=1-5, cocart]
	\arrow["f"', from=2-3, to=2-5, cocart]
	\arrow["{k'}", squiggly, from=1-5, to=2-5]
	\arrow["\lrcorner"{anchor=center, pos=0.125}, draw=none, from=1-3, to=2-5]
	\arrow["{(+)}"{description}, draw=none, from=1, to=0]
\end{tikzcd}\]
		The square on the right hand side is a diagram by $(\dagger)$. The outer square gives rise to the following pasted diagram
\[\begin{tikzcd}
	y && {d'} \\
	x && {e'} \\
	{\zeta_b} && {\omega'(b)}
	\arrow["gh", from=1-1, to=1-3, cocart]
	\arrow["r"', squiggly, from=1-1, to=2-1]
	\arrow["f\ell"', from=2-1, to=2-3, cocart]
	\arrow["{k'}", squiggly, from=1-3, to=2-3]
	\arrow["{m''}", squiggly, dashed, from=2-3, to=3-3]
	\arrow["{!_x^b}"', squiggly, from=2-1, to=3-1]
	\arrow[from=3-1, to=3-3, cocart]
\end{tikzcd}\]
		where $m''$ is the (vertical) filler induced by the cocartesian arrow $f\ell$. The lower square is a pullback by Lawvere extensivity~(\ref{it:ext-sums-lawvere}). Then by~$(\dagger)$ the upper composite square is a pullback. Thus, the square $(+)$ is a pullback, too, by canceling.

		Assume, conversely, that the square $(+)$ is a pullback. Then, in the diagram below the square on the right is a pulllback, too, by $(\dagger)$:
\[\begin{tikzcd}
	y && d && {d'} \\
	x && e && {e'}
	\arrow["r"', squiggly, from=1-1, to=2-1]
	\arrow["h", from=1-1, to=1-3]
	\arrow["\ell"', from=2-1, to=2-3, cocart]
	\arrow["k"', squiggly, from=1-3, to=2-3]
	\arrow["g", from=1-3, to=1-5, cocart]
	\arrow["f"', from=2-3, to=2-5, cocart]
	\arrow["{k'}", squiggly, from=1-5, to=2-5]
	\arrow["\lrcorner"{anchor=center, pos=0.125}, draw=none, from=1-1, to=2-3]
	\arrow["\lrcorner"{anchor=center, pos=0.125}, draw=none, from=1-3, to=2-5]
\end{tikzcd}\]
		The composite square is a pullback, too. Then, again, as seen before $gh$ is cocartesian by $(\dagger)$. Then consider the following diagram:
\[\begin{tikzcd}
	y && {y'} && {y''} \\
	&& d && {d'}
	\arrow["{h'}", from=1-1, to=1-3, cocart]
	\arrow["{g'}", from=1-3, to=1-5, cocart]
	\arrow["{h''}", squiggly, from=1-3, to=2-3]
	\arrow["{h'g''}", squiggly, from=1-5, to=2-5]
	\arrow["g", from=2-3, to=2-5, cocart]
	\arrow["h", from=1-1, to=2-3]
	\arrow["gh"', curve={height=40pt}, from=1-1, to=2-5, cocart]
\end{tikzcd}\]
		Since both $g'h'$ and $gh$ are cocartesian, $g''$ is an isomorphism. By $(\dagger)$ the right square is a pullback, so $h''$ is an isomorphism, too. But then $h$ is cocartesian, too.

		\item[$(\ref{it:ext-sums-lawvere}) \implies (\ref{it:ext-sums-transp})$]
		Let $f: d \vertarr d'$ be a vertical arrow in $P\,a$ such that $(!_a)_!(f):(!_a)_!(d) \to (!_a)_!(d')$ is a path. Then:
		\[\begin{tikzcd}
			d && {(!_a)_!(d)} \\
			{d'} && {(!_a)_!(d')}
			\arrow["f"', squiggly, from=1-1, to=2-1]
			\arrow[from=1-1, to=1-3,  cocart]
			\arrow[from=2-1, to=2-3,  cocart]
			\arrow["{(!_a)_!(f)}", Rightarrow, no head, from=1-3, to=2-3]
		\end{tikzcd}\]
		Consider the cube induced by cocartesian filling \wrt~$P_!(!_a,\zeta_a) \circ !_d$ and $P_!(!_a,\zeta_a) \circ !_d'$, respectively:
		\[\begin{tikzcd}
			d && {(!_a)_!(d)} \\
			& {\zeta_a} && {\omega'(a)} \\
			{d'} && {(!_a)_!(d')} \\
			& {\zeta_a} && {\omega'(a)}
			\arrow["{!_{d'}}", squiggly, from=3-1, to=4-2, swap]
			\arrow[from=4-2, to=4-4,  cocart, near end]
			\arrow[from=3-1, to=3-3,  cocart, near end]
			\arrow[squiggly, from=3-3, to=4-4, "\tyfill"]
			\arrow[from=1-1, to=3-1, "f", swap, squiggly]
			\arrow[from=1-1, to=1-3,  cocart, near end]
			\arrow[Rightarrow, no head, from=1-3, to=3-3]
			\arrow[Rightarrow, no head, from=2-2, to=4-2, curve={height=15pt}]
			\arrow["{!_d}", squiggly, from=1-1, to=2-2, swap]
			\arrow[squiggly, from=1-3, to=2-4, "\tyfill"]
			\arrow["\lrcorner"{anchor=center, pos=0.125, rotate=45}, draw=none, from=3-1, to=4-4]
			\arrow["\lrcorner"{anchor=center, pos=0.125, rotate=45}, draw=none, from=1-1, to=2-4]
			\arrow[Rightarrow, no head, from=2-4, to=4-4, crossing over]
			\arrow[from=2-2, to=2-4,  cocart, near end, crossing over]
		\end{tikzcd}\]
		The bottom and top squares are pullbacks by Lawvere extensivity. Then, by~\cite[Proposition~A.2]{W22-2sCart}, the map $f$ is an identity as well.
		
		For $a:B$ and a vertical arrow $k: e \vertarr_z \omega'(a)$ consider the pullback square:
		\[\begin{tikzcd}
			{e'} && e \\
			{\zeta_a} && {\omega'(a)}
			\arrow["{k'}"', from=1-1, to=2-1]
			\arrow["{P_!(!_a,\zeta_a)}"', from=2-1, to=2-3, cocart]
			\arrow["f", from=1-1, to=1-3]
			\arrow["k", squiggly, from=1-3, to=2-3]
			\arrow["\lrcorner"{anchor=center, pos=0.125}, draw=none, from=1-1, to=2-3]
		\end{tikzcd}\]
		Since $P$ is in particular a cocartesian fibration, $k'$ is vertical by~\Cref{thm:pb-of-vert-is-vert-cocart-fam}. By Lawvere extensivity, $f$ is cocartesian.
		\item[$(\ref{it:ext-sums-transp}) \implies (\ref{it:ext-sums-lawvere})$] Given a square as in $(\ref{it:ext-sums-lawvere})$, we see that the arrow $h$ is cocartesian if and only if it is a pullback, by the second condition in $(\ref{it:ext-sums-lawvere})$.
	\end{description}
\end{proof}

\begin{corollary}[Cocartesian transport and slices in Moens families~\protect{\cite[Corollary~15.4]{streicher2020fibered}}]
	Let $B$ be a Rezk type and $P:B \to \UU$ a Moens family.
	
	Then, for all arrows $u:a \to b$ in $B$ and points $d:P\,a$, the functor
	\begin{align*}
		\commaty{u_!}{d} &: \commaty{P\,a}{d} \to \commaty{P\,b}{u_!d}, \\
		\commaty{u_!}{d} & (d', f : d' \vertarr_a d) \defeq \pair{u_! d'}{f' : u_!\,d' \vertarr_b u_!\,d} ,
	\end{align*}
	where $f' \defeq \cocartFill_{P_!(u,d')}(P_!(u,d) \circ f)$,
	is an equivalence.
	In particular,  we have equivalences
	\[ \commaty{u_!}{\zeta_a} : \commaty{P\,a}{\zeta_a} \equiv \commaty{P\,b}{u_!\zeta_a}, \quad \commaty{(!_a)_!}{\zeta_a}:\commaty{P\,a}{\zeta_a} \simeq \commaty{P\,z}{\omega'(a)},\]
	and by terminality we have $P\,a \equiv \commaty{P\,a}{\zeta_a}$.
\end{corollary}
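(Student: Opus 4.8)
The plan is to exhibit an explicit quasi-inverse $\Psi$ to the functor $\Phi \defeq \commaty{u_!}{d}$ by \emph{pulling back along the cocartesian lift} $P_!(u,d): d \cocartarr u_!\,d$, and then to verify both roundtrips using the extensivity criterion~$(\ref{it:ext-sums-ext})$ of \Cref{prop:ext-sums}. Since $P$ is a Moens family, $\totalty{P}$ is lex, so all the pullbacks below exist; moreover $P$ is both cartesian and cocartesian, so we may freely invoke \Cref{thm:pb-of-vert-is-vert-cocart-fam} (pullback stability of vertical arrows) together with $(\ref{it:ext-sums-ext})$. Recall finally that in this synthetic setting any map between Rezk types is automatically a functor, so it will suffice to produce mutually inverse \emph{maps} of the (Rezk) slice types.

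First I would define $\Psi: \commaty{P\,b}{u_!\,d} \to \commaty{P\,a}{d}$. Given $\pair{e}{k : e \vertarr_b u_!\,d}$, form the pullback of $k$ along $P_!(u,d)$ in $\totalty{P}$, producing an object $e'$, a gap arrow $k' : e' \to d$, and $h : e' \to e$. This square lies over the pullback of $u : a \to b$ along $\id_b$ in $B$, whose apex is $a$; hence $e' : P\,a$, the leg $k'$ is vertical over $a$ by \Cref{thm:pb-of-vert-is-vert-cocart-fam}, and $h$ lies over $u$. Applying criterion~$(\ref{it:ext-sums-ext})$ to this pullback square, whose bottom edge $P_!(u,d)$ is cocartesian and whose two vertical edges $k, k'$ are vertical, shows that $h$ is cocartesian; being a cocartesian lift of $u$ with source $e'$, it exhibits $e \simeq u_!\,e'$. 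Contractibility of the type of pullback cones makes $\Psi$ a well-defined map, sending $\pair{e}{k}$ to $\pair{e'}{k'}$.

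For the roundtrip $\Psi \circ \Phi$, start from $\pair{d'}{f : d' \vertarr_a d}$. Then $\Phi$ returns $\pair{u_!\,d'}{f'}$ with $f' \circ P_!(u,d') = P_!(u,d) \circ f$, i.e.\ the naturality square of cocartesian transport, whose vertical edges $f, f'$ are vertical and whose bottom edge is cocartesian. Since its top edge $P_!(u,d')$ is cocartesian, criterion~$(\ref{it:ext-sums-ext})$ forces this square to be a pullback, so applying $\Psi$ recovers $\pair{d'}{f}$ up to the canonical identification. For the roundtrip $\Phi \circ \Psi$, start from $\pair{e}{k}$ and produce $\pair{e'}{k'}$ with $h : e' \cocartarr e$ cocartesian over $u$. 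By uniqueness of cocartesian lifts $h$ is identified with $P_!(u,e')$, whence $e \simeq u_!\,e'$; the commuting pullback square then displays $k$ as a filler of $P_!(u,d) \circ k'$ along $P_!(u,e')$, so $k = \cocartFill_{P_!(u,e')}(P_!(u,d) \circ k')$ by uniqueness of fills, giving $\Phi\,\Psi\,\pair{e}{k} \simeq \pair{e}{k}$. Thus $\Phi$ and $\Psi$ are mutually inverse functors and $\Phi$ is an equivalence.

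The two displayed special cases follow by specializing $d \defeq \zeta_a$, respectively $u \defeq {!_a}$ together with $\omega'(a) \jdeq (!_a)_!(\zeta_a)$; and the final equivalence $P\,a \simeq \commaty{P\,a}{\zeta_a}$ holds because $\zeta_a$ is terminal in $P\,a$, so the projection $\commaty{P\,a}{\zeta_a} \jdeq \sum_{d' : P\,a} \hom_{P\,a}(d', \zeta_a) \to P\,a$ has contractible fibers. I expect the main obstacle to be the bookkeeping around well-definedness and coherence of $\Psi$: one must confirm that the defining pullback genuinely lies over the base square with apex $a$ (so that $h$ is cocartesian \emph{over $u$} and not over some other arrow), and then apply criterion~$(\ref{it:ext-sums-ext})$ correctly in each direction — recognizing the transport-naturality square as a pullback in the forward roundtrip, and identifying the pulled-back cocartesian leg $h$ with $P_!(u,e')$ in the backward one.
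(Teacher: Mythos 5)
Your proposal is correct and follows essentially the same route as the paper's proof: the quasi-inverse is pullback along $P_!(u,d)$, the roundtrip $\Psi\circ\Phi$ uses the extensivity criterion of \Cref{prop:ext-sums}~(\ref{it:ext-sums-ext}) to recognize the transport-naturality square as a pullback, and the roundtrip $\Phi\circ\Psi$ uses the same criterion to identify the pulled-back leg with the cocartesian lift $P_!(u,e')$ and hence $k$ with the cocartesian filler. Your extra care in checking that the pullback lies over the base cospan $a \xrightarrow{u} b \xleftarrow{\id_b} b$ (so that $e' : P\,a$ and $k'$ is vertical) is a welcome explication of a step the paper leaves implicit.
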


\begin{proof}
	First, note that application of $\commaty{u_!}{d}$ to a vertical arrow $f : d' \vertarr_a d$ gives rise to the square:
	\[\begin{tikzcd}
		{d'} && {u_!\,d'} \\
		d && {u_!\,d}
		\arrow["{P_!(u,d')}", cocart, from=1-1, to=1-3]
		\arrow["f"', squiggly, from=1-1, to=2-1]
		\arrow["{P_!(u,d)}"', cocart, from=2-1, to=2-3]
		\arrow["{f'}", squiggly, from=1-3, to=2-3]
	\end{tikzcd}\]
	Since $P$ is Moens the square is a pullback, by~\Cref{prop:ext-sums}~(\ref{it:ext-sums-ext}).
	
	We claim that an inverse to $\commaty{u_!}{d}$ is given by the pullback functor
	\begin{align*} 
		P_!(u,d)^* & : \commaty{P\,b}{u_!\,d} \to \commaty{P\,a}{d}, \\
		P_!(u,d)^* & (e, g : e \vertarr_b u_!\,d) \defeq \pair{d \times_{u_!\,d} e}{ P_!(u,d)^*(g) : d \times_{u_!\,d} e \vertarr_a d}.
	\end{align*}
	
	To prove $P_!(u,d)^*  \circ \commaty{u_!}{d} = \id_{ \commaty{P\,a}{d}}$, we start with a vertical arrow $f : d' \vertarr_a d$. Pulling back $f' :u_!\, d' \vertarr_b u_!\, d$ along $P_!(u,d)$ gives back $f$ again as just argued.
	
	For the other identification $\commaty{u_!}{d} \circ P_!(u,d)^* = \id_{\commaty{P\,b}{u_!\,d}}$, consider a vertical arrow $g : e \vertarr_b u_!\,d$. Applying $ P_!(u,d)^*$ gives the pullback square:
	\[\begin{tikzcd}
		{d \times_{u_!\,d}e} && e \\
		d && {u_!\,d}
		\arrow["{P_!(u,d)^*g}"', squiggly, from=1-1, to=2-1]
		\arrow["{P_!(u,d)}"', from=2-1, to=2-3, cocart]
		\arrow["{\overline{g}}", from=1-1, to=1-3]
		\arrow["g", squiggly, from=1-3, to=2-3]
		\arrow["\lrcorner"{anchor=center, pos=0.125}, draw=none, from=1-1, to=2-3]
	\end{tikzcd}\]
	Again, by~\Cref{prop:ext-sums}~(\ref{it:ext-sums-ext}), this means that $\overline{g} : d \times_{u_!\,d}e \to e$ is cocartesian, so the above square is identified with:
	\[\begin{tikzcd}
		{d \times_{u_!\,d}e} && e \\
		d && {u_!\,d}
		\arrow["{P_!(u,d)^*g}"', squiggly, from=1-1, to=2-1]
		\arrow["{P_!(u,d)}"', from=2-1, to=2-3, cocart]
		\arrow["{P_!(u,d \times_{u_!\,d} e)}", cocart, from=1-1, to=1-3]
		\arrow["g", squiggly, from=1-3, to=2-3]
		\arrow["\lrcorner"{anchor=center, pos=0.125}, draw=none, from=1-1, to=2-3]
	\end{tikzcd}\]
	But this exhibits $g$ to be (homotopic to) the filler
	\[ \cocartFill_{{P_!(u,d \times_{u_!\,d} e)}}(P_!(u,d) \circ P_!(u,d)^*g),\]
	as desired. 
\end{proof}

\begin{corollary}[Pullback preservation of cocartesian transport in Moens families, \protect{\cite[Corollary~15.5]{streicher2020fibered}}]\label{cor:lex-transp-moens}
	Let $B$ be a Rezk type and $P:B \to \UU$ a Moens family.
	
	Then for all $u:a \to b$ in $B$, the covariant transport functor
	\[ u_!:P\,a \to P\,b\]
	preserves pullbacks.
\end{corollary}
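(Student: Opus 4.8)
The plan is to read off pullback preservation directly from the slice equivalence established in the preceding corollary, with essentially no further computation. The underlying observation is purely formal: in any Rezk type with pullbacks, the pullback of a cospan $d_1 \xrightarrow{f} d_0 \xleftarrow{g} d_2$ is precisely the binary product of the objects $(d_1, f)$ and $(d_2, g)$ in the slice Rezk type over $d_0$, this being just the universal property of pullbacks transcribed into the slice (cf.~\cite{BM21}). Since $P$ is a Moens family it is in particular lex, so the fibers $P\,a$ and $P\,b$ have all pullbacks and this reformulation is available throughout.

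First I would record that the functor $\commaty{u_!}{d_0} \colon \commaty{P\,a}{d_0} \to \commaty{P\,b}{u_!\,d_0}$ appearing in the preceding corollary is exactly the functor induced on slices by the transport functor $u_! \colon P\,a \to P\,b$: by the formula given there it carries an object $(d', f \colon d' \vertarr_a d_0)$ to $\pair{u_!\,d'}{u_!(f)}$, where $u_!(f)$ is the image of the vertical arrow $f$ under $u_!$. Consequently the square relating the two domain projections $\commaty{P\,a}{d_0} \to P\,a$ and $\commaty{P\,b}{u_!\,d_0} \to P\,b$ to $u_!$ along $\commaty{u_!}{d_0}$ commutes, so that the underlying object in $P\,b$ of the image of any slice object is obtained simply by applying $u_!$.

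Next I would invoke that $\commaty{u_!}{d_0}$ is an \emph{equivalence} of Rezk types (this is the content of the preceding corollary, which rests on \Cref{prop:ext-sums}), and hence preserves all limits, in particular binary products. Thus, given a pullback of a cospan $d_1 \xrightarrow{f} d_0 \xleftarrow{g} d_2$ in $P\,a$ with apex $d_1 \times_{d_0} d_2$, regarded as the product of $(d_1, f)$ and $(d_2, g)$ in $\commaty{P\,a}{d_0}$, its image under $\commaty{u_!}{d_0}$ is the product of $\pair{u_!\,d_1}{u_!(f)}$ and $\pair{u_!\,d_2}{u_!(g)}$ in $\commaty{P\,b}{u_!\,d_0}$. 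Unwinding the forgetful functors via the commuting square of the previous step, this product has underlying object $u_!(d_1 \times_{d_0} d_2)$, whereas a product over the cospan $u_!\,d_1 \to u_!\,d_0 \leftarrow u_!\,d_2$ in the slice is, by the correspondence above, the pullback $u_!\,d_1 \times_{u_!\,d_0} u_!\,d_2$ in $P\,b$. Matching these, together with the fact that the slice equivalence respects the projection maps, exhibits the $u_!$-image of the original pullback square as a pullback square in $P\,b$.

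The only point requiring genuine care, and the main thing to pin down, is the bookkeeping in the middle step: namely that the slice functor of the preceding corollary really is the one induced by $u_!$ on the fibers, so that the forgetful functors intertwine $\commaty{u_!}{d_0}$ and $u_!$, and that the canonical projections of the product in $\commaty{P\,b}{u_!\,d_0}$ are identified with the $u_!$-images of the pullback projections. Once this identification is checked, preservation of pullbacks follows formally from the preservation of binary products by an equivalence of slice Rezk types.
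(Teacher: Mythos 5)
Your argument is correct, and it rests on the same essential input as the paper's proof --- the slice equivalence $\commaty{u_!}{d}$ of the preceding corollary --- but extracts pullback preservation from it by a somewhat different route. The paper factors the functor $u_!$ itself, once and for all, as the composite $P\,a \simeq \commaty{P\,a}{\zeta_a} \xrightarrow{\;\simeq\;} \commaty{P\,b}{u_!\,\zeta_a} \xrightarrow{\;\partial_0\;} P\,b$, using the slice equivalence over the \emph{fiberwise terminal} object $\zeta_a$; the first two maps preserve pullbacks because they are equivalences, and the last because the domain projection from a slice preserves pullbacks. You instead work cospan by cospan, invoking the equivalence $\commaty{u_!}{d_0}$ over the cospan target $d_0$ and translating ``pullback over $d_0$'' into ``binary product in the slice over $d_0$''. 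The trade-off is mild: the paper's factorization needs the (standard) fact that $\partial_0$ preserves pullbacks but requires no per-diagram bookkeeping, whereas your version replaces that fact by the pullback-as-slice-product dictionary together with the check that the forgetful functors intertwine $\commaty{u_!}{d_0}$ with $u_!$ --- the point you rightly single out as the one needing care, and which holds essentially by the definition of $\commaty{u_!}{d_0}$ via cocartesian filling, i.e.\ as the action of $u_!$ on vertical arrows. Both proofs ultimately rest on \Cref{prop:ext-sums} through the preceding corollary, so your proposal is a legitimate, complete alternative.
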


\begin{proof}
	Observe that $u_! : P\,a \to P\,b$ factors as follows:
	\[\begin{tikzcd}
		{P\,a} & {P\,a\downarrow \zeta_a} && {P\,b\downarrow u_!\, \zeta_a} && {P\,b} \\
		\\
		a & {\begin{pmatrix} a \\ \Big \downarrow \\ \zeta_a\end{pmatrix}} && {\begin{pmatrix} u_!\,a \\ \Big \downarrow  \\ u_!\,\zeta_a\end{pmatrix}} && {u_!\,a}
		\arrow["{u!\downarrow \zeta_a}", "\simeq"', from=1-2, to=1-4]
		\arrow["{\partial_0}", from=1-4, to=1-6]
		\arrow["\simeq"{description}, draw=none, from=1-1, to=1-2]
		\arrow["{u_!}"{description}, curve={height=24pt}, from=1-1, to=1-6]
		\arrow[maps to, from=3-4, to=3-6]
		\arrow[maps to, from=3-1, to=3-2]
		\arrow[maps to, from=3-2, to=3-4]
	\end{tikzcd}\]
	The first two maps are equivalences, so they preserve pullbacks. The domain projection 
\end{proof}

As a crucial ingredient for Moens' Theorem, the gluing of pullback-preserving functors always is a Moens fibration.
\begin{proposition}\label{prop:gluing-moens}
	Let $B$ and $C$ be a lex Rezk types and $F:B \to C$ be a pullback-preserving functor. Then $\gl(F): \commaty{C}{F} \fibarr B$ is a Moens fibration.	
\end{proposition}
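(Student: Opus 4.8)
The plan is to verify the three defining properties of a Moens family in turn: that $\gl(F)$ is a lex Beck--Chevalley family, that its internal sums are stable, and that they are disjoint. The first is already in hand: by \Cref{prop:gluing-lex} the gluing of any functor between lex Rezk types is a lex cartesian fibration, and since $F$ preserves pullbacks, \Cref{prop:bcc-for-pb-pres} shows that $\gl(F)$ is a Beck--Chevalley fibration. Hence $\gl(F)$ is a lex Beck--Chevalley family, and only stability and disjointness remain. The key observation I would isolate first is the explicit description of the relevant classes of dependent arrows in $\commaty{C}{F}$, read off from \Cref{fig:lifts-gluing}: a morphism $(\phi,u):(c \to Fb) \to (c' \to Fb')$ (with $C$-component $\phi:c \to c'$ lying over $u:b \to_B b'$) is \emph{cocartesian} exactly when its top edge $\phi$ is an isomorphism in $C$ (the canonical cocartesian lift has $\phi = \id_c$, and a general cocartesian arrow is such a lift postcomposed with a vertical isomorphism), it is \emph{cartesian} exactly when the defining square is a pullback in $C$, and it is \emph{vertical} exactly when $u$ is an identity. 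Note that composition in $\commaty{C}{F}$ composes top edges in $C$ and base arrows in $B$ separately.

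For \textbf{stability} I would compute, for a cocartesian arrow $g:e \cocartarr e'$ and an arbitrary dependent arrow $h:e'' \to e'$, the pullback $e \times_{e'} e''$ in $\commaty{C}{F}$. Since $\commaty{C}{F}$ is the pullback of $\partial_1:C^{\Delta^1} \fibarr C$ along $F$, finite limits in it are jointly created by the projections to $B$ and to $C^{\Delta^1}$. Writing the base cospan as $b_1 \to_B b_0 \leftarrow_B b_2$ and the $C$-components of the three objects as $c_1, c_0, c_2$, the candidate pullback has base $b_1 \times_{b_0} b_2$ and $C$-domain $c_1 \times_{c_0} c_2$, with structure map into $Fb_1 \times_{Fb_0} Fb_2$. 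These two pieces assemble into a genuine object of $\commaty{C}{F}$ precisely because $F$ preserves the pullback $b_1 \times_{b_0} b_2$, so that $Fb_1 \times_{Fb_0} Fb_2 \simeq F(b_1 \times_{b_0} b_2)$; this is the one point where pullback-preservation of $F$ is essential. The top edge of the pulled-back copy of $g$ is then the pullback projection $c_1 \times_{c_0} c_2 \to c_2$, which is an isomorphism because the top edge $c_1 \to c_0$ of the cocartesian arrow $g$ is. Hence the pullback of $g$ is again cocartesian, and the internal sums are stable; thus $\gl(F)$ is a pre-Moens family.

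For \textbf{disjointness} I would argue directly: for a cocartesian $f:d \cocartarr e$ with invertible top edge $c_1 \to c_0$, the object $d \times_e d$ has $C$-domain $c_1 \times_{c_0} c_1 \simeq c_1$, and the fibered diagonal $\delta_f$ has top edge the diagonal $c_1 \to c_1 \times_{c_0} c_1$, which is an isomorphism; hence $\delta_f$ is cocartesian. (Alternatively, since cocartesian arrows are exactly those with invertible top edge, they satisfy left cancellation, so \Cref{prop:char-disj-stable} applied to the already-established pre-Moens family yields disjointness at once.) Either way $\gl(F)$ is a Moens fibration. The main obstacle is the pullback computation of the second step: one must justify carefully that the pullback in $\commaty{C}{F}$ is assembled from the pointwise pullback in $C$ and the pullback in $B$, and that these glue — which is exactly where the hypothesis that $F$ preserves pullbacks enters. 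Once the top edge of the pulled-back arrow is correctly identified as a pullback of an isomorphism, both stability and disjointness become formal.
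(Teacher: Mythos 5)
Your proof is correct, but it takes a genuinely different route from the paper. The paper, after establishing the Beck--Chevalley condition via \Cref{prop:bcc-for-pb-pres} exactly as you do, does not verify stability and disjointness from the definition; instead it invokes the characterization of Moens families among BC families by \emph{internal extensivity} (\Cref{prop:ext-sums}, item (2)) and checks that single condition by inspecting the dependent cube of \Cref{fig:gluing-moens}, reducing everything to the observation that the top square of the cube is a pullback iff the arrow $k:c\to c'$ of $C$-components is an isomorphism. You instead verify the two defining properties directly, using the explicit descriptions of cocartesian arrows (invertible top edge), cartesian arrows (pullback squares), and pullbacks in $\commaty{C}{F}$ (assembled from pullbacks in $B$ and in $C$, which is where $F$ preserving pullbacks enters). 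Both arguments ultimately rest on the same two computations; your version is more self-contained in that it does not route through \Cref{prop:ext-sums}, at the cost of having to justify the componentwise description of pullbacks in the gluing explicitly --- a point you correctly identify as the delicate step, and which the paper's cube argument relies on just as much, if more implicitly (via \Cref{prop:gluing-lex} and the analysis behind \Cref{prop:pb-fib}). Your parenthetical alternative for disjointness (left cancellation of cocartesian arrows plus \Cref{prop:char-disj-stable}) is also valid and is closer in spirit to how the paper organizes the surrounding material.
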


\begin{proof}
	Since $F$ preserves pullback, $\gl(F)$ is a Beck--Chevalley fibration by~\Cref{prop:bcc-for-pb-pres}. Then, by~\Cref{prop:ext-sums}, it suffices to prove that the internal sums in $\gl(F)$ are extensive. But this follows from considering a dependent cube as given in~\Cref{fig:gluing-moens} and the fact that $k:c \to c'$ is an isomorphism if and only if the top square is a pullback.\footnote{The identities in the cube are part of the prerequisites to prove extensivity.}
\end{proof}

\begin{figure}
	\[\begin{tikzcd}
		&&& c && {c'} \\
		&&&& {F\,b'} && {F\,b'} \\
		{C \downarrow F} &&& {F\,b} && {F\,b'} \\
		&&&& {F\,b} && {F\,b'} \\
		B &&& b && {b'} \\
		&&&& b && {b'}
		\arrow[Rightarrow, no head, from=5-4, to=6-5]
		\arrow[from=5-4, to=5-6]
		\arrow[Rightarrow, no head, from=5-6, to=6-7]
		\arrow[from=6-5, to=6-7]
		\arrow["h"{description, pos=0.3}, from=1-4, to=3-4]
		\arrow[Rightarrow, no head, from=3-4, to=4-5]
		\arrow["{F\,u}"{description}, from=4-5, to=4-7]
		\arrow[Rightarrow, no head, from=3-6, to=4-7]
		\arrow["{F\,u}"{description, pos=0.7}, from=3-4, to=3-6]
		\arrow["f"{description}, from=1-4, to=2-5]
		\arrow["k"{description}, from=1-4, to=1-6]
		\arrow["{f'}"{description}, from=1-6, to=2-7]
		\arrow["{Fu\circ g}"{description, pos=0.3}, from=2-7, to=4-7]
		\arrow["\lrcorner"{anchor=center, pos=0.125, rotate=45}, draw=none, from=5-4, to=6-7]
		\arrow["\lrcorner"{anchor=center, pos=0.125, rotate=45}, draw=none, from=3-4, to=4-7]
		\arrow["{h'}"{description, pos=0.3}, from=1-6, to=3-6]
		\arrow[two heads, from=3-1, to=5-1]
		\arrow["g"{description, pos=0.3}, from=2-5, to=4-5, crossing over]
		\arrow[Rightarrow, no head, from=2-5, to=2-7, crossing over]
	\end{tikzcd}\]
	\caption{Extensivity of sums in $\gl(F)$}
	\label{fig:gluing-moens}
\end{figure}

\begin{lemma}[\cf~\protect{\cite[Lemma~15.6]{streicher2020fibered}}]\label{lem:gap-to-vert-cocart}
	Let $B$ be a lex Rezk type and $P:B \to \UU$ be a Moens fibration. Then the gap arrow in any diagram of the form
	\[\begin{tikzcd}
		d &&&& {d'} \\
		&& {e'''} && {e'} \\
		{d''} && e && {e''}
		\arrow["{f'}"', from=1-1, to=3-1,cocart]
		\arrow["f", from=1-1, to=1-5,cocart]
		\arrow["\lrcorner"{anchor=center, pos=0.125}, draw=none, from=2-3, to=3-5]
		\arrow["{h'}"', from=3-3, to=3-5, squiggly]
		\arrow["{g'}"', from=3-1, to=3-3,cocart]
		\arrow["g", from=1-5, to=2-5, cocart]
		\arrow["h", squiggly, from=2-5, to=3-5]
		\arrow["k", from=2-3, to=2-5, squiggly]
		\arrow["m"', squiggly, from=2-3, to=3-3]
		\arrow["\lrcorner"{anchor=center, pos=0.125}, draw=none, from=1-1, to=2-3]
		\arrow["{f''}"', curve={height=12pt}, dashed, from=1-1, to=2-3, cocart]
	\end{tikzcd}\]
	is cocartesian as well.
\end{lemma}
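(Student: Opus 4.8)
The plan is to identify the gap arrow $f''$ as a pullback of a cocartesian arrow and then invoke stability of internal sums, using the second supplied pullback to put the comparison map into the right shape, with disjointness entering only to certify the intermediate square. First I would record the composite cocartesian arrows: closure under composition (\Cref{prop:cocart-arr-closure}~(\ref{it:cocart-arr-comp})) makes $p \defeq g \circ f : d \cocartarr e'$ and $q \defeq g' \circ f' : d \cocartarr e$ cocartesian. The legs $k,m$ of the fibered pullback $e''' \simeq e' \times_{e''} e$ are vertical, being pullbacks of the vertical arrows $h',h$ (\Cref{thm:pb-of-vert-is-vert-cocart-fam}), and $f''$ is the comparison map determined by $k \circ f'' = p$ and $m \circ f'' = q$. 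Hence everything reduces to showing that the comparison of the cone $\pair{p}{q}$ into the fiberwise pullback $e'''$ is cocartesian.

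The crucial move is to read off from the remaining given pullback that the upper face, with legs $f$ and $f''$ over the cospan $g,k$, is itself a pullback, i.e.\ that $d \simeq d' \times_{e'} e'''$ with $f''$ the pullback of the cocartesian leg $g$ along the vertical $k$ (symmetrically, $f''$ is the pullback of $g'$ along $m$). Granting this, $f''$ is cocartesian directly by stability of internal sums in the Moens family $P$. To establish that the upper face is a pullback I would paste it with the front pullback $e''' \simeq e' \times_{e''} e$ along the shared edge $k$, apply the pasting law together with the pullback hypothesis recorded in the diagram, and --- where the comparison only presents as a vertical arrow --- invoke \Cref{lem:cocart-leg-pb}, the special case in which one leg of the inducing cone is an identity. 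Left-cancellation of cocartesian arrows in a Moens family (\Cref{prop:char-disj-stable}~(\ref{it:disj-pre-moens-ii})) then turns a vertical-and-cocartesian comparison into an isomorphism by \Cref{lem:cocart-arrows-isos}~(\ref{it:cocart-arrows-over-ids-are-isos}).

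The step I expect to be the genuine obstacle is certifying that the upper face is a pullback, equivalently that the vertical comparison produced by the cone is invertible: mere stability of sums does not suffice, since it is false in general that a map both of whose pullback projections are invertible is itself invertible, so this is precisely where disjointness of the internal sums must be used. The delicate part is therefore the cube bookkeeping --- tracking which faces are pullbacks and which edges are vertical as opposed to cocartesian --- so that \Cref{prop:ext-sums}~(\ref{it:ext-sums-ext}) and \Cref{lem:cocart-leg-pb} apply cleanly; once the upper face is seen to be a pullback the conclusion is immediate from stability, giving the synthetic transcription of Streicher's diagram chase.
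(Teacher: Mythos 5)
Your reduction to stability hinges on the claim that the ``upper face'' $[f,f'',g,k]$ is a pullback, i.e.\ that $d \simeq d' \times_{e'} e'''$, and this claim is false in general; no appeal to disjointness can repair it. The hypothesis recorded by the outer pullback marker is that $d$ is the pullback of the cospan $(h g \colon d' \to e'',\; h' g' \colon d'' \to e'')$, so $d \simeq d' \times_{e''} d''$, whereas $d' \times_{e'} e''' \simeq d' \times_{e''} e$; these differ by a pullback of the cocartesian arrow $g' \colon d'' \to e$, which is generally not invertible. Your proposed pasting of the upper face with the front pullback along $k$ produces the square $[f,\, g'f',\, hg,\, h']$ over the cospan $(hg, h')$ with apexes $d'$ and $e$ --- but that is not the square the hypothesis asserts to be a pullback. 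A concrete counterexample lives in the codomain fibration $\partial_1 \colon \mathbf{Set}^{\Delta^1} \to \mathbf{Set}$ (the gluing of $\mathrm{id}_{\mathbf{Set}}$, a Moens fibration), where a morphism is cocartesian iff it is a bijection on total spaces and pullbacks are computed componentwise: take $e = e' = e'' = e''' = (\ast \to \ast)$ with $h = h' = k = m = \mathrm{id}$, $d' = (\ast \to \mathbf{2})$, $d'' = (\ast \to \mathbf{3})$ with $g, g'$ the evident cocartesian arrows. Then $d = d' \times_{e''} d'' = (\ast \to \mathbf{6})$ and $f''$ is cocartesian as the lemma asserts, but $d' \times_{e'} e''' = (\ast \to \mathbf{2}) \not\simeq d$, so your upper face is not a pullback (and neither is the symmetric one).

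The paper's proof instead accepts that one pullback is not enough and factors $f''$ through an intermediate object: it pulls back $g$ along the vertical $k$ to get a cocartesian arrow $x \to e'''$ with $x \simeq d' \times_{e'} e'''$ (this much of your argument is correct and is exactly where stability along vertical arrows enters), then pulls back $g'$ once more to form $y' \simeq x \times_{e'''} (d'' \times_{e} e''')$, whose projection $y' \to x$ is again cocartesian by stability of internal sums under arbitrary pullbacks. The pasting law, applied to the resulting $3\times 3$ grid together with the two hypothesized pullbacks, identifies $d \simeq y' \simeq d' \times_{e''} d''$, and $f''$ is then the composite $d \simeq y' \to x \to e'''$ of two cocartesian arrows, hence cocartesian by \Cref{prop:cocart-arr-closure}. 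In particular neither disjointness nor \Cref{lem:cocart-leg-pb} is needed here --- the extra cocartesian factor $y' \to x$ that your argument tries to argue away is precisely the content of the lemma.
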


\begin{proof}
	Since $P$ is a Moens family cocartesian arrows are stable under pullback along vertical arrows (by \emph{internal extensivity}, see~\Cref{prop:ext-sums}, (\ref{it:ext-sums-ext})), and since $P$ is a cocartesian family vertical arrows are stable under pullback along all arrows (by~\Cref{thm:pb-of-vert-is-vert-cocart-fam}). By the Pullback Lemma, this gives rise to the following diagram:
	\[\begin{tikzcd}
		d \\
		& {y'} && x && {d'} \\
		& y && {e'''} && {e'} \\
		& {d''} && e && {e''}
		\arrow["{g'}"', from=4-2, to=4-4, cocart]
		\arrow["{h'}"', squiggly, from=4-4, to=4-6]
		\arrow["h", squiggly, from=3-6, to=4-6]
		\arrow["g", from=2-6, to=3-6, cocart]
		\arrow["m"', squiggly, from=3-4, to=4-4]
		\arrow["{m''}"', from=2-4, to=3-4, cocart, swap]
		\arrow["{k'}"', squiggly, from=2-4, to=2-6]
		\arrow["k"', squiggly, from=3-4, to=3-6]
		\arrow["\lrcorner"{anchor=center, pos=0.125}, draw=none, from=3-4, to=4-6]
		\arrow["\lrcorner"{anchor=center, pos=0.125}, draw=none, from=2-4, to=3-6]
		\arrow["{m'}"', squiggly, from=3-2, to=4-2]
		\arrow["{g''}"', from=3-2, to=3-4, cocart]
		\arrow["{m'''}"', from=2-2, to=3-2, cocart]
		\arrow["{g'''}"', from=2-2, to=2-4, cocart]
		\arrow["\lrcorner"{anchor=center, pos=0.125}, draw=none, from=2-2, to=3-4]
		\arrow["\lrcorner"{anchor=center, pos=0.125}, draw=none, from=3-2, to=4-4]
		\arrow["f", curve={height=-18pt}, from=1-1, to=2-6, cocart]
		\arrow["{f'}"', curve={height=18pt}, from=1-1, to=4-2, cocart]
		\arrow["\cong"{description}, dashed, from=1-1, to=2-2]
		\arrow[curve={height=6pt}, from=2-2, to=3-4]
		\arrow["{f''}", from=1-1, to=3-4, crossing over,curve={height=-27pt}]
	\end{tikzcd}\]
	Hence, $f''$ is (up to identification) the composition of two cocartesian arrows, hence cocartesian itself.
\end{proof}

\begin{proposition}[Left exactness of terminal transport, \protect{\cite[Lemma~15.7]{streicher2020fibered}}]\label{prop:ttrans-lex}
	Let $B$ be a lex Rezk type and $P:B \to \UU$ be a Moens family. Then the terminal transport functor $\omega: \widetilde{P} \to P\,z$ is lex.
\end{proposition}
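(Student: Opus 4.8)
The plan is to check the two clauses of lexness separately, namely that $\omega$ preserves the terminal object and that it preserves pullbacks. Observe first that, since $P$ is Moens hence lex over the lex base $B$, the total type $\totalty P$ and the projection $\pi$ are lex (by the characterization recorded after the definition of lex cartesian families), and the fibre $P\,z$ is a lex Rezk type; so these are the only two conditions to verify. For the terminal object, recall from \Cref{cor:terminal-totalty} that $\pair{z}{\zeta_z}$ is terminal in $\totalty P$. Since $z$ is terminal in $B$, the map $!_z : z \to z$ is (homotopic to) $\id_z$, so functoriality of cocartesian transport (\Cref{prop:cocart-functoriality}) gives $\omega\pair{z}{\zeta_z} = (!_z)_!(\zeta_z) = (\id_z)_!(\zeta_z) = \zeta_z$, which is terminal in $P\,z$. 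Hence $\omega$ preserves the terminal object.

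For pullbacks, I would start from a pullback square $\Sigma$ in $\totalty P$. Because $\pi$ is lex, $\Sigma$ lies over a pullback square in $B$ with apex $a_0$ and cospan $a_1 \to a_3 \leftarrow a_2$, and by \Cref{prop:pb-fib} its apex is (up to the cartesian reindexings) a pullback taken inside a single fibre. Since all four terminal projections factor through $a_3$ (so $(!_{a_i})_!$ decomposes through $(!_{a_3})_!$ by functoriality), and since $(!_{a_3})_!$ preserves pullbacks by \Cref{cor:lex-transp-moens}, it suffices to prove the following reduced statement: the cocartesian pushforward of $\Sigma$ into the common fibre $P\,a_3$ — obtained by transporting the four vertices along $v_1,v_2,v_1w_1$ and $\id$ respectively — is again a pullback in $P\,a_3$. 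Equivalently, working directly in $P\,z$, one forms the commuting cube whose bottom face is $\Sigma$, whose vertical edges are the cocartesian lifts to the terminal fibre, and whose top face is $\omega(\Sigma)$, the cube commuting by naturality of cocartesian lifting (\Cref{prop:nat-cocartlift-arr}); the task is to propagate the pullback property from bottom to top.

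To carry this out I would compute the pullback of the pushed-forward cospan in the relevant lex fibre and identify it with the pushforward of the apex. The canonical comparison arrow lies over an identity in $B$, hence is vertical, so it is enough to show it is \emph{cocartesian}, whereupon it is an isomorphism by \Cref{lem:cocart-arrows-isos}. Cocartesianness is exactly what \Cref{lem:gap-to-vert-cocart} is designed to deliver: using internal extensivity, i.e.\ stability of cocartesian arrows under pullback along vertical maps (\Cref{prop:ext-sums}~(\ref{it:ext-sums-ext})), together with pullback stability of vertical arrows (\Cref{thm:pb-of-vert-is-vert-cocart-fam}) and the usual pullback pasting and cancellation, one exhibits the comparison map as a gap arrow of the shape handled by that lemma, which forces it to be cocartesian. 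Assembling the cocartesian cells in the correct order so that the hypotheses of \Cref{lem:gap-to-vert-cocart} are literally met is the delicate bookkeeping here.

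The main obstacle is precisely this cross-fibre matching. The vertices of $\Sigma$ live in different fibres, and the side faces of the cube lie over the cocartesian-factorization squares $\bigl[a_i \to a_j,\ \text{(terminal/transport)},\ \id\bigr]$ in $B$, which are \emph{not} pullbacks; consequently the Beck--Chevalley condition cannot be invoked on them directly, and one genuinely needs the full strength of extensivity — stability together with disjointness (\Cref{prop:char-disj-stable} and \Cref{prop:ext-sums}) — to compute the pullback of a pushed-forward cospan as the pushforward of the pullback. This is the synthetic counterpart of the model computation in $\mathrm{Fam}(\mathbb C)$, where lextensivity yields $\coprod_i c_i \times_{\coprod_k e_k} \coprod_j d_j \simeq \coprod_{(i,j)\in I\times_K J}(c_i \times_{e_k} d_j)$; the equivalence there is exactly disjointness and stability at work, and reproducing it fibrationally, via the cocartesianness of the gap map, is the technical heart of the proof.
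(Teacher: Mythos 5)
Your proposal is correct and follows essentially the same route as the paper: preservation of the terminal object via $!_z = \id_z$, and for pullbacks the same decomposition — factor the cospan legs through cocartesian lifts into a common fibre, identify the transported apex with the fibrewise pullback of the transported cospan via a vertical comparison map that Lemma~\ref{lem:gap-to-vert-cocart} (together with \Cref{prop:ext-sums} and \Cref{thm:pb-of-vert-is-vert-cocart-fam}) forces to be cocartesian hence an isomorphism, and then finish with the fibrewise pullback preservation of \Cref{cor:lex-transp-moens} and pasting. The only cosmetic difference is that you push the whole square into the fibre over the cospan's target before applying the terminal transport, whereas the paper applies $\omega$ piecewise to the subdivided diagram; the deferred ``bookkeeping'' is exactly the subdivision diagram the paper writes out, and it goes through as you anticipate.
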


\begin{proof}
	Preservation of the terminal object follows from 
	\[ \omega(z,\zeta_z) \jdeq (!_z)_!(\zeta_z) = \id_{\zeta_z}.\]
	Consider a dependent pullback
	\[\begin{tikzcd}
		{e'''} && {e''} \\
		{e'} && e
		\arrow["{f'}", from=1-1, to=1-3]
		\arrow["{g'}"', from=1-1, to=2-1]
		\arrow["f"', from=2-1, to=2-3]
		\arrow["g", from=1-3, to=2-3]
		\arrow["\lrcorner"{anchor=center, pos=0.125}, draw=none, from=1-1, to=2-3]
	\end{tikzcd}\]
	in $\totalty{P}$, where $f$ lies over an arrow $u$, and $g$ over an arrow $v$ in $B$. Considering the induced diagram
	\[\begin{tikzcd}
		{e'''} &&&& {e''} \\
		&& d && {v_!\,e''} \\
		{e'} && {u_!\,e'} && e
		\arrow["m", squiggly, from=3-3, to=3-5]
		\arrow["k", from=3-1, to=3-3, cocart]
		\arrow["{r'}"', squiggly, from=2-3, to=3-3]
		\arrow["{m'}", squiggly, from=2-3, to=2-5]
		\arrow["\ell"', from=1-5, to=2-5, cocart]
		\arrow["{f'}", from=1-1, to=1-5]
		\arrow["{g'}"', from=1-1, to=3-1]
		\arrow["\lrcorner"{anchor=center, pos=0.125}, draw=none, from=2-3, to=3-5]
		\arrow["r"', squiggly, from=2-5, to=3-5]
		\arrow["h", from=1-1, to=2-3]
		\arrow["\lrcorner"{anchor=center, pos=0.125}, shift right=4, draw=none, from=1-1, to=2-3]
		\arrow["f"{description}, curve={height=14pt}, from=3-1, to=3-5]
		\arrow["g"{description}, curve={height=-18pt}, from=1-5, to=3-5]
		\arrow["\sigma"{description}, draw=none, from=2-3, to=3-5]
	\end{tikzcd}\]
	by~\Cref{lem:gap-to-vert-cocart}, we find that the gap map $h$ is cocartesian.

	The image of the lower left square under $\omega$ is the front square of the following cube:
\[\begin{tikzcd}
	{e'''} && d \\
	& {\omega(e''')} && {\omega(d)} \\
	{e'} && {u_!\,e} \\
	& {\omega(e')} && {\omega(u_!\,e)}
	\arrow["h"{pos=0.4}, from=1-1, to=1-3, cocart]
	\arrow["{g'}"'{pos=0.7}, from=1-1, to=3-1]
	\arrow["k"{pos=0.3}, from=3-1, to=3-3, cocart]
	\arrow["{r'}"{pos=0.7}, squiggly, from=1-3, to=3-3]
	\arrow[from=1-1, to=2-2, cocart]
	\arrow["{\omega(h)}"{pos=0.2}, Rightarrow, no head, from=2-2, to=2-4]
	\arrow[from=1-3, to=2-4, cocart]
	\arrow[from=3-1, to=4-2, cocart]
	\arrow["{\omega(k)}"{pos=0.3},Rightarrow, no head, from=4-2, to=4-4]
	\arrow[from=3-3, to=4-4, cocart]
	\arrow["{\omega(r')}"{pos=0.6}, squiggly, from=2-4, to=4-4]
	\arrow["{\omega(g')}"{pos=0.8}, curve={height=12pt}, squiggly, from=2-2, to=4-2, crossing over]
\end{tikzcd}\]
	The arrows $\omega(h)$ and $\omega(k)$ are both isomorphisms: as images of arrows under $\omega$ they are both vertical, and by right cancelation of cocartesian arrows they are cocartesian, too, hence isomorphisms.

	By~\Cref{cor:lex-transp-moens} the pullback square $\sigma$ gets mapped to a pullback square under $\omega$, too (actually, by internal extensivity, all of the squares of the ensuing cube are pullbacks):
\[\begin{tikzcd}
	d && {v_!\,e''} \\
	& {\omega(d)} && {\omega(v_!\,e'')} \\
	{u_!\,e} && e \\
	& {\omega(u_!\,e)} && {\omega(e)}
	\arrow["{m'}"{pos=0.4}, squiggly, from=1-1, to=1-3]
	\arrow["{r'}"'{pos=0.7}, squiggly, from=1-1, to=3-1]
	\arrow["m"{pos=0.8}, squiggly, from=3-1, to=3-3]
	\arrow["r"{pos=0.7}, squiggly, from=1-3, to=3-3]
	\arrow[from=1-1, to=2-2, cocart]
	\arrow[from=1-3, to=2-4, cocart]
	\arrow[from=3-1, to=4-2, cocart]
	\arrow[from=3-3, to=4-4, cocart]
	\arrow["{\omega(r)}"{pos=0.3}, squiggly, from=2-4, to=4-4]
	\arrow["{\omega(m)}"{pos=0.3}, squiggly, from=4-2, to=4-4]
	\arrow["\lrcorner"{anchor=center, pos=0.125}, draw=none, from=2-2, to=4-4]
	\arrow["{\omega(m')}"{pos=0.2}, squiggly, from=2-2, to=2-4, crossing over]
	\arrow["{\omega(r')}"'{pos=0.2}, squiggly, from=2-2, to=4-2, crossing over]
	\arrow["\lrcorner"{anchor=center, pos=0.125}, shift right=4, draw=none, from=1-1, to=3-3]
\end{tikzcd}\]

	Applying $\omega$ to the upper square in the subdivided diagram gives, by \Cref{cor:lex-transp-moens}:
\[\begin{tikzcd}
	{e'''} && {e''} \\
	& {\omega(e''')} && {\omega(e'')} \\
	d && {v_!\,e''} \\
	& {\omega(d)} && {\omega(v_!\,e'')}
	\arrow["h"'{pos=0.3}, from=1-1, to=3-1, cocart]
	\arrow["{m'}"{pos=0.3}, squiggly, from=3-1, to=3-3]
	\arrow["{f'}"{pos=0.4}, squiggly, from=1-1, to=1-3]
	\arrow[from=1-1, to=2-2, cocart]
	\arrow[from=3-1, to=4-2, cocart]
	\arrow["{\omega(m')}"{pos=0.3}, squiggly, from=4-2, to=4-4]
	\arrow[Rightarrow, no head, from=2-4, to=4-4]
	\arrow[Rightarrow, no head, from=2-2, to=4-2]
	\arrow[from=3-3, to=4-4, cocart]
	\arrow[from=1-3, to=2-4, cocart]
	\arrow["{\omega(f')}"{pos=0.2}, squiggly, from=2-2, to=2-4, crossing over]
	\arrow["\lrcorner"{anchor=center, pos=0.125}, shift right=4, draw=none, from=1-1, to=3-3]
	\arrow["\lrcorner"{anchor=center, pos=0.125}, draw=none, from=2-2, to=4-4]
	\arrow["{\ell}"{pos=0.3}, from=1-3, to=3-3, cocart]
\end{tikzcd}\]

	Patching these three squares together yields a diagram of the following shape which contracts to a pullback lying in $P\,z$:
\[\begin{tikzcd}
	\bullet & \bullet \\
	& \bullet & \bullet \\
	\bullet & \bullet & \bullet
	\arrow[Rightarrow, no head, from=3-1, to=3-2]
	\arrow[squiggly, from=3-2, to=3-3]
	\arrow[squiggly, from=2-2, to=3-2]
	\arrow[squiggly, from=2-2, to=2-3]
	\arrow[squiggly, from=2-3, to=3-3]
	\arrow[Rightarrow, no head, from=1-2, to=2-3]
	\arrow[squiggly, from=1-1, to=1-2]
	\arrow[Rightarrow, no head, from=1-1, to=2-2]
	\arrow[squiggly, from=1-1, to=3-1]
	\arrow["\lrcorner"{anchor=center, pos=0.125}, draw=none, from=2-2, to=3-3]
	\arrow["\lrcorner"{anchor=center, pos=0.125}, draw=none, from=1-1, to=2-3]
\end{tikzcd}\]
\end{proof}

\subsection{Moens'~Theorem}\label{ssec:moens-thm}

We are now ready to prove a version of Moens' Theorem characterizing the type of Moens fibrations over a fixed lex base as the type of lex functors from this type into some other lex type. The proof is an adaptation of~\cite[Theorem~15.18]{streicher2020fibered}, \cf~also~\cite[Exercise~9.2.13]{jacobs-cltt}. Ideally, this equivalence should go between two categories, \ie, Rezk types in our case. However, since the version of simplicial type theory at hand lacks the Rezk universe of small Rezk types, we rather construct the respective types as as $\Sigma$-types, which are not expected to be (complete Segal). They have the right objects but not the correct kinds of arrows and higher simplices. However, replacing the $\Sigma$-types we are about to define by the correct Rezk analogues (namely, their Rezk completions) should yield, essentially by the same proof, the improved, ``categorical'' version of Moens' Theorem. After all, we are working in simplicial type theory, so functions between Rezk types are automatically functors. The condition of being a Moens family is propositional, hence this gives rise to a predicate
\[ \isMoensFam : \big( \sum_{B:\UU} \UU^B \big) \to \Prop\]
witnessing that a family $P :B \to \UU$ is a Moens family (over a small Rezk type $B$). We do not spell out a definition of $\isMoensFam$ but it can be read off of our definitions and characterizations of Moens families.

Similarly, we have a predicate $\mathrm{isLex} \colon \Rezk \to \Prop$ on the type $\Rezk \defeq \sum_{A: \UU} \mathrm{isRezk}(A)$ of small Rezk types, witnessing that a small Rezk type $A$ is lex. This gives rise to a subuniverse
\[ \LexRezk \defeq \sum_{A:\UU}  \mathrm{isRezk}(A) \times \mathrm{isLex}(A) \hookrightarrow \Rezk \]
of lex Rezk types. Likewise, for any $B,C : \UU$, our definition of lex exact functor gives rise to a predicate
\[ \mathrm{isLexFun} _{B,C} : (B \to C) \to \Prop,\]
giving rise to the type 
\[  (B \to^\lex C) \defeq \sum_{F: B \to C}  \mathrm{isLexFun} _{B,C}(F) \]
of lex exact functors from $B$ to $C$, which is a subtype of the functor type $B \to C$.
\begin{theorem}[Moens'~Theorem, \protect{\cite[Theorem~15.18]{streicher2020fibered}}, \protect{\cite[Section~5, Proposition~12]{LietzDip}}]\label{thm:moens-thm}
	For a small lex Rezk type $B:\UU$ the type
	\[ \MoensFam(B) \defeq \sum_{P:B \to \UU} \isMoensFam \,P\]
	of $\UU$-small Moens families is equivalent to the type
	\[ B \downarrow^\lex \mathrm{LexRezk} \defeq\sum_{C:\LexRezk} (B \to^\lex C)\]
	of lex functors from $B$ into the type $\LexRezk$ of $\UU$-small lex Rezk types.
\end{theorem}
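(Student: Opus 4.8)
The plan is to show that the two maps $\Phi$ and $\Psi$ displayed above are mutually inverse on underlying data. First I would check that both are well-defined. For $\Psi$, given a lex functor $F \colon B \to C$, \Cref{prop:gluing-moens} already shows that $\gl(F)$ is a Moens fibration, so $\Psi(F) : \MoensFam(B)$. For $\Phi$, given a Moens family $P$, the fiber $P\,z$ over the terminal object $z:B$ is a lex Rezk type, being a fiber of a lex cartesian family; the terminal transport functor $\omega_P \colon \totalty{P} \to P\,z$ is lex by \Cref{prop:ttrans-lex}, and the fiberwise terminal section $\zeta$ is lex by \Cref{cor:lex-zeta}, so the composite $\omega'_P = \omega_P \circ \zeta \colon B \to P\,z$ is a lex functor and $\Phi(P) : (B \downarrow^\lex \LexRezk)$. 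Since $\isMoensFam$, $\isRezk$, $\isLex$, and $\mathrm{isLexFun}$ are all propositions, equality in either $\Sigma$-type reduces to equality of the first (underlying) components, so it remains to produce the two round-trip identifications.

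For the composite $\Phi \circ \Psi$, I would start from a lex functor $F \colon B \to C$ and compute $\Phi(\gl(F)) = \bigl(\,(\commaty{C}{F})_z,\ \omega'_{\gl(F)}\,\bigr)$. The fiber of $\gl(F)$ over $b:B$ is the slice $\commaty{C}{F\,b}$, whose fiberwise terminal element is the identity arrow $\id_{F\,b}$. As $F$ is lex, $F\,z$ is terminal in $C$, so the fiber over $z$ is $\commaty{C}{F\,z} \simeq C$, a slice over a terminal object. Reading off the cocartesian lifts in $\gl(F)$ from \Cref{fig:lifts-gluing} (cocartesian transport is post-composition with $F$), one finds $\omega'_{\gl(F)}(b) = (!_b)_!(\id_{F\,b}) = F(!_b)$, which corresponds to its domain $F\,b$ under the equivalence $\commaty{C}{F\,z} \simeq C$. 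By univalence this identifies the two lex Rezk types, and by function extensionality it identifies $\omega'_{\gl(F)}$ with $F$, yielding $\Phi(\Psi(F)) = F$.

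For the composite $\Psi \circ \Phi$, I would compare $\St_B(\gl(\omega'_P))$ with $P$. The fiber of $\gl(\omega'_P)$ over $b:B$ is the slice $\commaty{P\,z}{\omega'_P(b)}$, and the corollary on cocartesian transport and slices in Moens families (the corollary immediately following \Cref{prop:ext-sums}) supplies an equivalence $P\,b \simeq \commaty{P\,z}{\omega'_P(b)}$, constructed uniformly in $b$ via $\commaty{(!_b)_!}{\zeta_b}$ and terminality of $\zeta_b$ in $P\,b$. That corollary also records the naturality of these equivalences with respect to the transport functors $u_!$, so they constitute a genuine dependent function $\prod_{b:B}\bigl(P\,b \simeq \St_B(\gl(\omega'_P))\,b\bigr)$; by univalence and function extensionality this gives $P = \St_B(\gl(\omega'_P))$, and hence $\Psi(\Phi(P)) = P$ after applying propositionality of $\isMoensFam$.

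The step I expect to be the main obstacle is exactly this last assembly: producing the fiber equivalences functorially in $b$ so that they amount to an equality of families over $B$, and verifying that the reconstructed fibration agrees with $P$ not merely fiberwise but as a Moens fibration. The naturality in the cocartesian-transport-and-slices corollary is what makes the assembly possible, but one must check that the induced fibered functor is a cartesian and cocartesian equivalence so that the Beck--Chevalley and extensivity structure is carried across correctly; here the internal-extensivity characterization of \Cref{prop:ext-sums}~(\ref{it:ext-sums-ext}) and pullback stability of vertical arrows (\Cref{thm:pb-of-vert-is-vert-cocart-fam}) are the tools guaranteeing that the slice equivalences respect the relevant classes of dependent arrows.
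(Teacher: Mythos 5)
Your proposal is correct and follows essentially the same route as the paper: the same maps $\Phi$ and $\Psi$, the same well-typedness checks via \Cref{cor:lex-zeta}, \Cref{prop:ttrans-lex}, and \Cref{prop:gluing-moens}, the same computation identifying $\omega'_{\gl(F)}$ with $F$ via terminality of $F\,z$, and the same fiberwise equivalence $P\,b \simeq \commaty{P\,z}{\omega'_P(b)}$ for the other round trip --- the paper just constructs the quasi-inverses $\varphi_b(e) = \mu_e$ and $\psi_b(f) = \zeta_b \times_{\omega'(b)} e$ directly and verifies them with \Cref{prop:ext-sums} rather than citing the slice corollary, which packages the identical maps. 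Your closing worry about carrying the cartesian/cocartesian structure across is resolved exactly as you suspect: since $\isMoensFam$ is a proposition, equality of the underlying families $B \to \UU$ (obtained from the dependent family of fiber equivalences by univalence and function extensionality) already suffices.
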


\begin{proof}
	We define a pair of quasi-inverses
	\[\begin{tikzcd}
		{\mathrm{MoensFam}(B)} &&& {B \downarrow^\lex \mathrm{LexRezk}}
		\arrow[""{name=0, anchor=center, inner sep=0}, "\Phi", curve={height=-18pt}, from=1-1, to=1-4]
		\arrow[""{name=1, anchor=center, inner sep=0}, "\Psi", curve={height=-18pt}, from=1-4, to=1-1]
		\arrow["\simeq"{description}, Rightarrow, draw=none, from=0, to=1]
	\end{tikzcd}\]
	by setting
	\begin{align*} 
		\Phi(P:B \to \UU) & \defeq \omega_P' \jdeq \lambda b.(!_b)_!(\zeta_b): B \to P\,z, \\
		\Psi(F:B \to C)&  \defeq \big(\gl(F):\commaty{C}{F} \to B\big).
	\end{align*} 
	Indeed, the definitions of $\Phi$ and $\Psi$ are well-typed.

	First, we have $\Phi \jdeq \omega' \jdeq \omega \circ \zeta$. The map $\zeta$ is lex by~\Cref{cor:lex-zeta}. Finally, the functions are indeed well-typed due to~\Cref{prop:ttrans-lex,prop:gluing-moens}.
	
	For the first roundtrip, let $P:B \to \UU$ be a Moens family. We have $\Phi(P) \jdeq \omega_P':B \to P\,z$ and $\Psi(\Phi(P)) \jdeq \gl(\omega'): \commaty{P\,z}{\omega'_P} \fibarr B$. We want to give an identification $P =  \gl(\omega'_P)$ in the type of Moens families over $B$ which amounts to a fiberwise equivalence $\widetilde{P} \simeq_B \commaty{P\,z}{\omega_P'}$. Abbreviating $\omega' \defeq \omega'_P$ we are to define a pair of fibered quasi-inverses:
	\[\begin{tikzcd}
		{\widetilde{P}} && {P\,z \downarrow \omega'} \\
		& B
		\arrow[two heads, from=1-1, to=2-2]
		\arrow[two heads, from=1-3, to=2-2]
		\arrow[""{name=0, anchor=center, inner sep=0}, "\varphi"{description}, curve={height=-12pt}, from=1-1, to=1-3]
		\arrow[""{name=1, anchor=center, inner sep=0}, "\psi"{description}, curve={height=-6pt}, from=1-3, to=1-1]
		\arrow["\simeq"{description}, Rightarrow, draw=none, from=0, to=1]
	\end{tikzcd}\]
	
	We introduce the following notation. For $b:B$, $e:P\,b$, consider the following canonical square:
	\[\begin{tikzcd}
		{\widetilde{P}} && e && {(!_b)_!e} \\
		&& {\zeta_b} && {\omega'(b)} \\
		B && b && z
		\arrow["{!_b}", dashed, from=3-3, to=3-5]
		\arrow["{\nu_b}"', from=2-3, to=2-5, cocart]
		\arrow[from=1-1, to=3-1, two heads]
		\arrow["{!_e^b}"', dashed, from=1-3, to=2-3]
		\arrow["{\kappa_e}", from=1-3, to=1-5, cocart]
		\arrow["{\mu_e}", dashed, from=1-5, to=2-5]
	\end{tikzcd}\]
	We denote by $!_e^b:e \to \zeta_b$ the terminal map of $e$ in $P\,b$, and by $\nu_b: \zeta_b \cocartarr_{!_b} \omega'(b)$ the cocartesian lift of $!_b$ \wrt~$\zeta_b:P\,b$.
	
	By $\kappa_e$, we denote the cocartesian lift of $!_b$ \wrt~$e:P\,b$. We abbreviate by $\mu_e:(!_b)_!e \to \omega'(b)$ the filler
	\[ \mu_e \defeq \tyfill_{\kappa_e}(\nu_e \circ !_e^b).\]
	Then, we define 
	\[ \varphi:\prod_{b:B} P\,b \to \commaty{P\,z}{\omega'\,b}, \quad \varphi_b(e)\defeq \mu_{e}: (!_b)_!e \vertarr_z \omega'(b)  \]
	and
	\[ \psi:\prod_{b:B}\commaty{P\,z}{\omega'\,b} \to P\,b, \quad \psi_b(f:e\vertarr_z \omega'(b)) \defeq \zeta_b \times_{\omega'(b)} e : P\,b. \]
	For the first part of the round trip, we take $e:P\,b$ which gets mapped to $\varphi_b(e) = \mu_e: (!_b)_!e \vertarr_z \omega'(b)$. Computing the pullback $\psi_b(\mu_e)$ recovers $e$ by~\Cref{prop:ext-sums}, (\ref{it:ext-sums-lawvere}) (or (\ref{it:ext-sums-ext})).
	The reverse direction is established as follows. Starting with a (vertical) arrow $f:e \vertarr_z \omega'(b)$, we consider the dependent pullback, with a pasted identity of arrows:
	\[\begin{tikzcd}
		&& {(!_b)_!e'} \\
		{e'} && e \\
		{\zeta_b} && {\omega'(b)}
		\arrow[squiggly, from=2-1, to=3-1, "!_e^b", swap]
		\arrow["{\nu_b}"', from=3-1, to=3-3, cocart]
		\arrow[from=2-1, to=2-3, cocart]
		\arrow[squiggly, from=2-3, to=3-3, "f"]
		\arrow["\lrcorner"{anchor=center, pos=0.125}, draw=none, from=2-1, to=3-3]
		\arrow[from=2-1, to=1-3, cocart]
		\arrow[Rightarrow, no head, from=1-3, to=2-3]
	\end{tikzcd}\]
	Then, the arrow $\psi_b(\varphi_b(e))$ is given by the composite
	\[ (!_b)_!e'  \xlongequal{~~} e \xlongrightarrow{f} \omega'(b) \]
	which can be identified with $f$. In sum, we have proven $\Psi \circ \Phi = \id$. We are left with the other direction.
	
	Let $C$ be a lex Rezk type and $F:B \to C$ a lex functor. Then $\Psi(F) = \gl(F): \commaty{C}{F} \fibarr B$. To compute the application of $\Phi$, we have to first specialize the action of the fiberwise terminal element section for $\gl(F)$. It is given by
	\[ \zeta \defeq \lambda b.\angled{b,F\,b, \id_{F\,b}}:\prod_{b:B} \sum_{c:C} c \to F\,b.\]
	Since $F$ is lex we have an identification $F(z) = y$ where $y:C$ is terminal. Then the terminal transport functor of the fibration $\gl(F)$ yields
	\[ \omega' \defeq \pair{F\,b}{F(!_z):F\,b \to y}: B \to \sum_{c:C} c \to y. \]
	We have $\Phi(\Psi(F)) = \omega'_{\St_B(\gl(F))}$, and since $y$ is terminal, we can identify this map with $F$. 
\end{proof}

\subsection{Generalized Moens families}\label{ssec:gen-moens}

Streicher observes at the end of~\cite[Section~5]{streicher2020fibered} that a generalized version of Moens' theorem holds even in the absence of internal sums, \ie~the Beck--Chevalley condition. The idea is the following. Given any functor $F$ between lex Rezk types the gluing $\gl(F)$ is always a lex bifibration that moreover satisfies internal extensivitity in the sense of~\Cref{prop:ext-sums}, \Cref{it:ext-sums-ext}---even in the absence of internal sums. We call such families \emph{generalized Moens}. Now, if $F$ happens to preserve terminal elements we can argue essentially as in~\Cref{thm:moens-thm}. Conversely, for any generalized Moens family $P:B \to \UU$ the functor $\omega_P': B \to P\,z$ preserves the terminal element.

\begin{definition}[Generalized Moens family]\label{def:gen-moens}
	Let $P:B \to \UU$ be a lex bicartesian family over a Rezk type $B$. We call $P$ a \emph{generalized Moens family} if the following two propositions hold:
	\begin{enumerate}
		\item In $P$, cocartesian arrows are stable under pullback along vertical arrows.
		\item Any dependent square whose neighboring and opposing faces are vertical and cocartesian, respectively, as indicated in the figure is a pullback:
		\[\begin{tikzcd}
			d && e \\
			{d'} && e
			\arrow["f"', squiggly, from=1-1, to=2-1]
			\arrow["g"', cocart, from=2-1, to=2-3]
			\arrow["h", cocart, from=1-1, to=1-3]
			\arrow["k", squiggly, from=1-3, to=2-3]
			\arrow["\lrcorner"{anchor=center, pos=0.125}, draw=none, from=1-1, to=2-3]
		\end{tikzcd}\]
	\end{enumerate}
\end{definition}
For lex bicartesian families $P:B \to \UU$ this gives rise to a proposition
\[ \isGMoensFam(P) \]
witnessing that $P$ is generalized Moens. We do not spell out the definition but it is straightforward, see the discussion at the beginning of~\Cref{ssec:moens-thm}

Recall that the gluing of \emph{any} functor between lex Rezk types is always a lex bifibration, \cf~\Cref{prop:gluing-lex}. It is straightforward to check that in a gluing Rezk type a dependent cube as below is a pullback if and only if the arrow $h$ is an isomorphism:
\[\begin{tikzcd}
	\bullet &&& \bullet \\
	& \bullet &&& \bullet \\
	\bullet &&& \bullet \\
	& \bullet &&& \bullet
	\arrow[from=1-1, to=3-1]
	\arrow["h", from=1-1, to=1-4]
	\arrow[from=1-4, to=3-4]
	\arrow[from=1-1, to=2-2]
	\arrow[Rightarrow, no head, from=3-1, to=4-2]
	\arrow[from=4-2, to=4-5]
	\arrow[from=2-5, to=4-5]
	\arrow[from=1-4, to=2-5]
	\arrow[Rightarrow, no head, from=3-4, to=4-5]
	\arrow["\lrcorner"{anchor=center, pos=0.125}, draw=none, from=3-1, to=4-5]
	\arrow["\lrcorner"{anchor=center, pos=0.125}, draw=none, from=1-1, to=2-5]
	\arrow[from=3-1, to=3-4]
	\arrow[from=2-2, to=4-2, crossing over]
	\arrow[Rightarrow, no head, from=2-2, to=2-5, crossing over]
\end{tikzcd}\]
Without detailing this further this establishes the following proposition:

\begin{proposition}\label{prop:gl-gmoens}
	Let $F: C \to B$ be a functor between small lex Rezk types $B$ and $C$. Then the Artin gluing $\gl(F): \commaty{C}{B} \fibarr B$ is generalized Moens.
\end{proposition}

Note that for any Rezk type $B$ with a terminal element and any cocartesian family $P:B \to \UU$ whose fibers have terminal elements the map $\omega'_P: B \to P\,z$ preserves the terminal object. This observation together with~\Cref{prop:gl-gmoens} implies that we can extend the pair of maps from~\Cref{thm:moens-thm}. Noting further that in the proof of~\Cref{thm:moens-thm} we actually do not need $F:B \to C$ to preserve pullbacks but only terminal elements implies Streicher's generalized version of Moens' Theorem. Again, we are looking at the (easy to define) sub-type $(B \to^\ter C)$ of functors $(B \to C)$ that are only demanded to preserve the terminal element.

\begin{theorem}[Generalized Moens'~Theorem, \protect{\cite[Theorem~15.19]{streicher2020fibered}}]\label{thm:gen-moens-thm}
	For a small lex Rezk type $B:\UU$ the type
	\[ \GMoensFam(B) \defeq \sum_{P:B \to \UU} \isGMoensFam \,P\]
	of $\UU$-small Moens families is equivalent to the type
	\[ B \downarrow^\ter \mathrm{LexRezk} \defeq\sum_{C:\LexRezk} (B \to^\ter C)\]
	of terminal element-preserving functors from $B$ into the type $\LexRezk$ of $\UU$-small lex Rezk types.
\end{theorem}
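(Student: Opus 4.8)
The plan is to mirror the proof of the (non-generalized) Moens' Theorem \Cref{thm:moens-thm} almost verbatim, replacing throughout ``Moens family'' by ``generalized Moens family'' and ``lex functor'' by ``terminal-element--preserving functor''. Concretely, I would define the same pair of maps
\[
\Phi(P) \defeq \omega'_P \jdeq \lambda b.(!_b)_!(\zeta_b) : B \to P\,z, \qquad \Psi(F) \defeq \big(\gl(F) : \commaty{C}{F} \fibarr B\big).
\]
First I would verify well-typedness on both sides. That $\Psi(F)$ lands in $\GMoensFam(B)$ for an arbitrary functor $F$ between lex Rezk types---in particular one that need \emph{not} preserve pullbacks---is exactly \Cref{prop:gl-gmoens}. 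That $\Phi(P)$ lands in $B\downarrow^\ter\LexRezk$ uses two facts: the fibers of the lex family $P$ assemble into the lex Rezk type $P\,z$, and, as noted just before the statement, $\omega'_P$ preserves the terminal element whenever $P$ is a cocartesian family with fiberwise terminal elements over a base with a terminal element. No lexness of $\omega'_P$ is claimed or needed here.

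The two roundtrips reuse the constructions from the proof of \Cref{thm:moens-thm}. For $\Psi\circ\Phi = \id$, starting from a generalized Moens family $P$ I would exhibit the fiberwise equivalence $\widetilde{P} \simeq_B \commaty{P\,z}{\omega'_P}$ via the same mutually inverse fibered maps $\varphi_b(e) \defeq \mu_e$ and $\psi_b(f) \defeq \zeta_b \times_{\omega'(b)} e$. The only substantive input into checking that the two composites are identities is internal extensivity in the Lawvere form, \Cref{prop:ext-sums}~(\ref{it:ext-sums-lawvere}) (equivalently (\ref{it:ext-sums-ext})). By the remark following \Cref{prop:ext-sums}, the equivalence of statements (\ref{it:ext-sums-ext})--(\ref{it:ext-sums-transp}) already holds once cocartesian arrows are stable under pullback along \emph{vertical} arrows---which is precisely part~(1) of \Cref{def:gen-moens}, with part~(2) supplying the extensivity square. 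Thus the first roundtrip never invokes the Beck--Chevalley condition nor stability along arbitrary arrows, and goes through for generalized Moens families.

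For $\Phi\circ\Psi = \id$, starting from a terminal-element--preserving functor $F : B \to C$ I would recompute $\Phi(\Psi(F)) = \omega'_{\St_B(\gl(F))}$. Specializing the fiberwise terminal section of $\gl(F)$ to $\zeta_b = \angled{b,F\,b,\id_{F\,b}}$ and applying the terminal transport functor produces $\pair{F\,b}{F(!_z):F\,b \to y}$, which one identifies with $F$ using only the single identification $F(z) = y$ to a terminal $y:C$. This is exactly the place where the original proof used lexness of $F$, and it consumed only the terminal-preservation part of it; hence the argument is unchanged.

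The main obstacle is not any new construction but a careful audit of the proof of \Cref{thm:moens-thm}: I must confirm, step by step, that every appeal there to the full Moens hypothesis can be downgraded to the generalized-Moens hypotheses. The two delicate points are (i) that the inverse-pair verification in the first roundtrip genuinely factors through the vertical-stability form of \Cref{prop:ext-sums} rather than through the Beck--Chevalley condition, and (ii) that the second roundtrip consumes only terminal-preservation of $F$. Both are anticipated by Streicher's observations and by the remark after \Cref{prop:ext-sums}, so once these two reductions are made explicit the two roundtrip identifications established above exhibit $\Phi$ and $\Psi$ as quasi-inverse, yielding the desired equivalence of types.
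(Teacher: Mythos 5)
Your proposal is correct and follows essentially the same route as the paper: the paper's proof of \Cref{thm:gen-moens-thm} simply reuses the maps $\Phi$ and $\Psi$ from \Cref{thm:moens-thm}, with the surrounding discussion supplying exactly the observations you make explicit (well-typedness via \Cref{prop:gl-gmoens} and terminal-preservation of $\omega'_P$, the first roundtrip factoring through the vertical-stability form of \Cref{prop:ext-sums}, and the second roundtrip consuming only terminal-preservation of $F$). Your audit of which hypotheses each step actually uses is a faithful, slightly more detailed rendering of the paper's argument.
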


\begin{proof}
	The equivalence is given as in~\Cref{thm:moens-thm}.
\end{proof}

\subsection{Zawadowski's definition}\label{ssec:zawadowski}

In~\cite{zawadowski-lax-mon} Zawadowski introduces a variant of lex bifibrations\footnote{therein called \emph{cartesian bifibrations} but we won't use this term to avoid confusions with our preexisting terminology} where the covariant reindexing $u_! \dashv u^*$ preserves pullbacks and the units and counits of these adjunctions are cartesian natural transformations. Streicher has shown~\cite{streicher-cartbifib} that these conditions are in fact equivalent to the fibration being generalized Moens. We provide a slight complementation of his proof (using the generalized Moens' Theorem).

\begin{figure}
\[\begin{tikzcd}
	{\totalty{P}} && {u^* u_!\,x} &&&&&& {u_!u^*\,y} \\
	&& x && {u_!\,a} && {u^*\,y} && y \\
	B && a && b && a && b
	\arrow["u", from=3-3, to=3-5]
	\arrow[two heads, from=1-1, to=3-1]
	\arrow[from=2-3, to=2-5, cocart]
	\arrow[from=2-5, to=1-3, cart]
	\arrow[from=2-7, to=2-9, cart]
	\arrow[dashed, from=1-9, to=2-9]
	\arrow["u", from=3-7, to=3-9]
	\arrow[from=2-7, to=1-9, cocart]
	\arrow[dashed, from=2-3, to=1-3]
\end{tikzcd}\]
\label{fig:bicart}
\caption{Unit and counit from the transport in a bicartesian fibration}
\end{figure}

Recall that in a cartesian bifibration, $P : B \to \UU$ over a Rezk type $B$, for any $u : a \to_B b$ we have an adjunction $u_! \dashv u^*$, with $u_! : P\,a \to P\,b$ and $u^*: P\,b \to P\,a$. This gives rise to the unit
\[ \eta \defeq \eta_{P,u}: \id_{P\,a} \Rightarrow u^*u_!, \quad \eta_x \defeq \cartFill_{P^*(u,u_!\,x)}(P_!(u,x)) : x \vertarr_a^P u^*u_!\,x\]
and the counit, respectively:
\[ \varepsilon \defeq \varepsilon_{P,u}:  u_! u^*  \Rightarrow \id_{P\,b}, \quad \varepsilon_y \defeq \cocartFill_{P_!(u,u^*\,y)}(P^*(u,y)) : u_! u^* y \vertarr_b^P y\]
See~\Cref{fig:bicart} for an illustration.

\begin{theorem}\label{thm:zawadowski}
Let $P:B \to \UU$ be a lex bifibration over a Rezk type $B$. Then $P$ is generalized Moens if and only if the following two propositions are satisfied:
\begin{enumerate}
	\item\label{it:zawad-lex} The cocartesian reindexing maps $u_!:P \,a \to P\,b$ preserve pullbacks, for all $u:a \to_B b$.
	\item\label{it:zawad-cart} The units $\eta \defeq \eta_{P,u}: \id_{P\,a} \Rightarrow u^*u_!$ and counits $\varepsilon \defeq \varepsilon_{P,u} :u_! u^* \Rightarrow \id_{P\,b}$ are \emph{cartesian} natural transformations, for all $u: a \to_B b$. This means for all $f:x\to_{P\,a} x'$, $g:y \to_{P\,b} y'$, the naturality squares are pullbacks:
	\[\begin{tikzcd}
		x && {u^*\,u_!\,x} && {u_!\,u^*\,y} && y \\
		{x'} && {u^*\,u_!\,x'} && {u_!\,u^*\,y'} && {y'}
		\arrow["f"', from=1-1, to=2-1]
		\arrow["{\eta_x}", from=1-1, to=1-3]
		\arrow["{u_!\,u^*\,g}"', from=1-5, to=2-5]
		\arrow["{\varepsilon_{y'}}", from=2-5, to=2-7]
		\arrow["{\varepsilon_{y}}", from=1-5, to=1-7]
		\arrow["g", from=1-7, to=2-7]
		\arrow["\lrcorner"{anchor=center, pos=0.125}, draw=none, from=1-5, to=2-7]
		\arrow["\lrcorner"{anchor=center, pos=0.125}, draw=none, from=1-1, to=2-3]
		\arrow["{\eta_{x'}}", from=2-1, to=2-3]
		\arrow["{u^*u_!\,f}", from=1-3, to=2-3]
	\end{tikzcd}\]
\end{enumerate}
\end{theorem}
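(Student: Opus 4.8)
The plan is to prove the two implications separately, leveraging the Generalized Moens' Theorem (\Cref{thm:gen-moens-thm}) for the forward direction and a direct adjunction argument for the converse. Throughout I will freely use pullback stability of vertical arrows, \Cref{thm:pb-of-vert-is-vert-cocart-fam}, to certify that the comparison arrows produced below are vertical.

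For the forward direction (generalized Moens $\implies$ \ref{it:zawad-lex},\ref{it:zawad-cart}): both being generalized Moens and satisfying Zawadowski's conditions are propositions invariant under fibered equivalence, and \Cref{thm:gen-moens-thm} exhibits every generalized Moens family over $B$ as fiberwise equivalent to the gluing $\gl(F)$ of its associated terminal-element-preserving functor $F \defeq \omega'_P : B \to P\,z$. Hence it suffices to verify \ref{it:zawad-lex} and \ref{it:zawad-cart} for an arbitrary gluing $\gl(F)$, where the fiber over $b$ is the slice $C/F\,b$, the cocartesian transport $u_!$ is postcomposition $\Sigma_{Fu}$ and the cartesian transport $u^*$ is pullback $(Fu)^*$, as read off from \Cref{fig:lifts-gluing}. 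For \ref{it:zawad-lex}, the forgetful functors $C/F\,b \to C$ create pullbacks and commute with $\Sigma_{Fu}$, so $u_!$ preserves pullbacks regardless of whether $F$ does. For \ref{it:zawad-cart}, I would compute $\eta$ and $\varepsilon$ explicitly as the canonical gap maps into the defining pullbacks in $C$ (cf.~\Cref{fig:bicart}) and check by the universal property that each naturality square is again a pullback in $C$; the verification uses only that the relevant triangles over $F\,b$ resp.\ $F\,b'$ commute.

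For the converse (\ref{it:zawad-lex},\ref{it:zawad-cart} $\implies$ generalized Moens) I establish the two clauses of \Cref{def:gen-moens} directly. For clause~(1), stability of a cocartesian arrow $f = P_!(u,d) : d \cocartarr u_!\,d$ under pullback along a vertical $g : e' \vertarr u_!\,d$: form the fiber pullback $\bar d \defeq d \times_{u^* u_!\,d} u^* e'$ in $P\,a$ along the unit $\eta_d$ and $u^* g$. Transposing along $u_! \dashv u^*$ shows $\bar d$ carries the universal property of the total-space pullback $d' \defeq g^* f$, so $\bar d \cong d'$ with vertical legs. Applying $u_!$ (which preserves pullbacks by \ref{it:zawad-lex}) and pasting the resulting square against the counit-naturality pullback for $g$ (cartesian by \ref{it:zawad-cart}), the triangle identity $\varepsilon_{u_!\,d} \circ u_!(\eta_d) = \id_{u_!\,d}$ collapses the iterated pullback to $u_!\,\bar d \cong e'$, with the comparison identified with $g$; this exhibits the pulled-back arrow as cocartesian. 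For clause~(2), that a naturality square $[d, u_!\,d, d', u_!\,d']$ of cocartesian transport along $u$ with vertical sides $f$ and $u_!(f)$ is a pullback: by clause~(1) just established, the pullback $\hat d$ of the cocartesian $P_!(u,d') : d' \cocartarr u_!\,d'$ along the vertical $u_!(f)$ is cocartesian over $u$, and the commuting square induces a vertical comparison $\gamma : d \to \hat d$ with $u_!(\gamma)$ an isomorphism. Feeding $\gamma$ into the unit-naturality square (cartesian by \ref{it:zawad-cart}), whose right-hand edge $u^* u_!(\gamma)$ is an isomorphism, and using that a pullback of an isomorphism is an isomorphism, I conclude $\gamma$ is an isomorphism, hence the square is a pullback.

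The main obstacle is the converse, and within it clause~(1): the bookkeeping that combines $u_!$-preservation of pullbacks, the counit being cartesian, and the triangle identity into the single identification $u_!\,\bar d \cong e'$, together with the transposition argument identifying $\bar d$ with the genuine total-space pullback $d'$. The pullback-pasting chain must be organized so that the universal properties match up; once clause~(1) is in place, clause~(2) follows from the short ``pullback of an isomorphism is an isomorphism'' argument.
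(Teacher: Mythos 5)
Your proposal is correct, and it splits into one half that mirrors the paper and one half that replaces a citation with an actual argument. For the direction ``generalized Moens $\implies$ \ref{it:zawad-lex},\ref{it:zawad-cart}'' you do exactly what the paper does: transport the problem along the fibered equivalence $\totalty{P} \simeq_B \commaty{P\,z}{\omega'}$ furnished by \Cref{thm:gen-moens-thm}, read off $u_!$ as postcomposition and $u^*$ as pullback in the slice, and verify both conditions by pullback pasting (the paper carries out the unit/counit check via explicit prism diagrams, which is the computation you defer to ``the universal property''). For the converse, the paper does not give a proof at all --- it cites Streicher's note \cite{streicher-cartbifib} and asserts the argument carries over --- whereas you supply a self-contained derivation: identifying the total-space pullback of $P_!(u,d)$ along a vertical $g$ with the fiber pullback $\bar d = d \times_{u^*u_!\,d} u^*e'$ (which one can also see by factoring $P_!(u,d)$ through the unit and pasting a cartesian-vertical square as in \Cref{lem:op-sides-pb} with a fiberwise pullback as in \Cref{lem:loc-pb-is-pb}), then using \ref{it:zawad-lex}, the cartesian counit square, and the triangle identity to show the cocartesian transpose $u_!\bar d \to e'$ is invertible, so the pulled-back arrow is cocartesian; and finally deducing clause~(2) of \Cref{def:gen-moens} by showing the comparison map $\gamma$ into the pullback $\hat d$ has $u_!(\gamma)$ invertible and feeding it into the cartesian unit square. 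This is exactly the kind of adjunction yoga Streicher's note contains, so your write-up makes the theorem genuinely self-contained at the cost of the bookkeeping you yourself flag as the main obstacle; the paper's version is shorter but leaves half the equivalence external to the text.
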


\begin{proof}
	In~\cite{streicher-cartbifib} Streicher proves that a lex bifibration satisfying Zawadowski's Conditions~\ref{it:zawad-lex} and~\ref{it:zawad-cart} is generalized Moens. His proof carries over to our synthetic higher setting as well. It therefore remains to show the converse,~\ie, that any given generalized Moens family satisfies~\ref{it:zawad-lex} and~\ref{it:zawad-cart}.
	
	Let $B$ be a lex Rezk type with terminal element $z:B$. From the (proof of) generalized Moens' Theorem we obtain a fibered equivalence
	\[ \totalty{P} \simeq_B \commaty{P\,z}{\omega'}, \]
	for a given generalized Moens family $P:B \to \UU$. Let $u:a \to_B b$ be an arrow in $B$. The transport functors are given by
	\begin{align*}
		u_!(f:x \to_{P\,z} \omega'(a)) & = \omega'(u) \circ f: x \to_{P\,b} \omega'(b), \\
		u^*(g:x \to_{P\,z} \omega'(b)) & = \omega'(u)^* g: y \times_{\omega'(b)} \omega'(a) \to_{P\,b} \omega'(b). \\
	\end{align*}
	We first verify Condition~\ref{it:zawad-lex}. A pullback in $\commaty{P\,z}{\omega'(a)} \simeq P\,z/\omega'(a)$ is given by pullback in $P\,z$ fibered over $\omega'(a)$. The action of $u_!$ on this diagram is given by postcomposing with $\omega'(u):\omega'(a) \to \omega'(b)$:
	\[\begin{tikzcd}
		\bullet && \bullet \\
		& \bullet && \bullet \\
		& {\omega'(a)} &&& {\omega'(b)}
		\arrow[from=1-1, to=2-2]
		\arrow[from=1-3, to=2-4]
		\arrow[from=1-1, to=1-3]
		\arrow["\lrcorner"{anchor=center, pos=0.125}, draw=none, from=1-1, to=2-4]
		\arrow[from=1-1, to=3-2]
		\arrow[from=2-2, to=3-2]
		\arrow[from=2-4, to=3-2]
		\arrow[from=3-2, to=3-5]
		\arrow[from=1-3, to=3-2]
		\arrow[from=2-2, to=2-4, crossing over]
	\end{tikzcd}\]
	This, in turn, constitutes a pullback in $P\,z/\omega'(b) \simeq \commaty{P\,z}{\omega'(b)}$ again, as desired.
	
	We now turn to Condition~\ref{it:zawad-cart}. As for the unit, consider the following induced decomposition diagram:
	\[\begin{tikzcd}
		x && {x'} && x \\
		& y && {y'} && y \\
		{\omega'(a)} && {\omega'(a)} && {\omega'(b)}
		\arrow["{\eta_f}"{pos=0.7}, from=1-1, to=1-3]
		\arrow[from=1-3, to=1-5]
		\arrow[Rightarrow, no head, from=3-1, to=3-3]
		\arrow[from=1-3, to=3-3]
		\arrow["\omega'(u)", from=3-3, to=3-5]
		\arrow["f\circ \omega'(u)"'{pos=0.3}, from=1-5, to=3-5]
		\arrow["\alpha", from=1-5, to=2-6]
		\arrow[dashed, from=1-3, to=2-4]
		\arrow["\alpha", from=1-1, to=2-2]
		\arrow["\lrcorner"{anchor=center, pos=0.125}, draw=none, from=2-4, to=3-5]
		\arrow["\lrcorner"{anchor=center, pos=0.125}, shift right=2, draw=none, from=1-3, to=3-5]
		\arrow["f"', from=1-1, to=3-1]
		\arrow["{\eta_g}"{pos=0.7}, from=2-2, to=2-4, crossing over]
		\arrow["g", from=2-2, to=3-1, crossing over]
		\arrow[from=2-4, to=3-3, crossing over]
		\arrow[from=2-4, to=2-6, crossing over]
		\arrow["g \circ \omega'(u)", from=2-6, to=3-5, crossing over]
		\arrow[shift left=2, curve={height=-18pt}, Rightarrow, no head, from=1-1, to=1-5, crossing over]
		\arrow[curve={height=18pt}, Rightarrow, no head, from=2-2, to=2-6, crossing over, shorten <=12pt, shorten >=8pt]
		\arrow["\lrcorner"{anchor=center, pos=0.125}, shift left=5, draw=none, from=1-1, to=2-6]
	\end{tikzcd}\]
	In the right-hand prism diagram, the front and back squares are pullbacks by construction, therefore the top one is, too. Since the composite square on the top is a pullback as well, so is the top square of the left-hand prism. This proves the condition.
	
	Analogously, the counit square appears in the following decomposition where the back and front composite squares are pullbacks:
	\[\begin{tikzcd}
		x' && x' && x \\
		& y' && y' && y \\
		{\omega'(a)} && {\omega'(b)} && {\omega'(b)}
		\arrow[Rightarrow, no head, from=1-1, to=1-3]
		\arrow["\varepsilon_f", from=1-3, to=1-5]
		\arrow[dashed, from=1-1, to=2-2]
		\arrow[from=1-3, to=2-4]
		\arrow["\alpha", from=1-5, to=2-6]
		\arrow["\omega'(u)^*f"', from=1-1, to=3-1]
		\arrow["{\omega'(u)}"', from=3-1, to=3-3]
		\arrow[from=1-3, to=3-3]
		\arrow[Rightarrow, no head, from=3-3, to=3-5]
		\arrow["g", from=2-6, to=3-5]
		\arrow["\omega'(u)^*g", from=2-2, to=3-1, crossing over]
		\arrow[from=2-4, to=3-3, crossing over]
		\arrow["\lrcorner"{anchor=center, pos=0.125}, shift left=5, draw=none, from=1-1, to=2-6]
		\arrow["f"'{pos=0.3}, from=1-5, to=3-5]
		\arrow[curve={height=-24pt}, from=1-1, to=1-5, crossing over, shorten <=12pt, shorten >=12pt]
		\arrow[Rightarrow, no head, from=2-2, to=2-4, crossing over]
		\arrow[curve={height=18pt}, from=2-2, to=2-6, shorten <=12pt, shorten >=8pt, crossing over]
		\arrow["\varepsilon_g"{pos=0.3}, from=2-4, to=2-6, crossing over]
		\arrow["\lrcorner"{anchor=center, pos=0.125}, shift right=5, draw=none, from=2-2, to=3-5]
	\end{tikzcd}\]
	We only need to argue that the top square of the right-hand prism is a pullback, but this is clear since the composite square is a pullback, and it factors as indicated over the identities.
\end{proof}

\section*{Acknowledgments}
I am grateful for financial support by the US Army Research Office under MURI Grant W911NF-20-1-0082. Furthermore, I thank the MPIM~Bonn for its hospitality and financial support.

I wish to thank Ulrik Buchholtz, Emily Riehl, Thomas Streicher, for many helpful discussions, valuable feedback, and steady guidance.

Ulrik Buchholtz has been my collaborator for the initial work on cocartesian fibrations of synthetic $\inftyone$-categories which was the basis for all the follow-up work in my PhD thesis, including the present text. Emily Riehl has very actively accompanied all stages of my graduate work with essential advice and key insights. Thomas Streicher has been my doctoral advisor. He engaged tirelessly in countless explanations and discussions concerning fibered category theory, in particular from the points of view of B\'{e}nabou, Jibladze, and Moens, as layed out in his excellent lecture notes~\cite{streicher2020fibered} which form an essential basis for the work at hand.

Additionally, Ulrik Buchholtz and Thomas Streicher both pointed out some mistaken proofs in a draft version of this text. I am highly indebted to the anonymous referee for a very detailed review and many helpful corrections and comments regarding the presentation of this material as well as the content of the proofs.

I furthermore thank Mathieu Anel, Tim Campion, Rune Haugseng, Sina Hazratpour, Louis Martini, Maru Sarazola, and Sebastian Wolf for insightful discussions and feedback.

\phantomsection%
\printbibliography[heading=bibintoc]

\end{document}